\numberwithin{equation}{section}
\let\e=\varepsilon
\let\la=\lambda
\let\f=\frac
\let\om=\omega
\let\Om=\Omega
\let\na=\nabla
\let\pa=\partial
\def\R{\mathbb R}
\def\T{\mathbb T}
\def\Z{\mathbb Z}
\newcommand{\beq}{\begin{equation}}
\newcommand{\eeq}{\end{equation}}
\newcommand{\ben}{\begin{eqnarray}}
\newcommand{\een}{\end{eqnarray}}
\newcommand{\beno}{\begin{eqnarray*}}
\newcommand{\eeno}{\end{eqnarray*}}
\newtheorem{theorem}{Theorem}[section]
\newtheorem{lemma}[theorem]{Lemma}
\newtheorem{proposition}[theorem]{Proposition}
\newtheorem{corollary}[theorem]{Corollary}
\begin{document}
\begin{CJK*}{UTF8}{gkai}
\title[Transition threshold for the 2-D Couette flow]{Transition threshold for the 2-D Couette flow in a finite channel}

\author{Qi Chen}
\address{School of Mathematical Science, Peking University, 100871, Beijing, P. R. China}
\email{chenqi940224@gmail.com}

\author{Te Li}
\address{School of Mathematical Science, Peking University, 100871, Beijing, P. R. China}
\email{little329@163.com}

\author{Dongyi Wei}
\address{School of Mathematical Science, Peking University, 100871, Beijing, P. R. China}
\email{jnwdyi@163.com}

\author{Zhifei Zhang}
\address{School of Mathematical Science, Peking University, 100871, Beijing, P. R. China}
\email{zfzhang@math.pku.edu.cn}

\date{\today}

\maketitle

\begin{abstract}
In this paper, we study the transition threshold problem for the 2-D Navier-Stokes equations around the Couette flow $(y,0)$ at large Reynolds number $Re$ in a finite channel. We develop a systematic method to establish the resolvent estimates of the linearized operator and space-time estimates of the linearized Navier-Stokes equations. In particular, three kinds of important effects: enhanced dissipation, inviscid damping and boundary layer, are integrated into the
space-time estimates in a sharp form. As an application, we prove that if the initial velocity $v_0$ satisfies $\|v_0-(y, 0)\|_{H^2}\le cRe^{-\f12}$ for some small $c$ independent of $Re$, then the solution of the 2-D Navier-Stokes equations remains within $O(Re^{-\f12})$ of the Couette flow for any time.
\end{abstract}

\section{Introduction}

Since Reynolds's famous experiment \cite{Rey}, the hydrodynamics stability at high Reynolds number has been an important field in fluid mechanics \cite{Sch, Yag}, which is mainly concerned with how the laminar flows become unstable and transition to turbulence. Theoretical analysis shows that some laminar flows such as plane Couette flow and pipe Poiseuille flow are linearly stable for any Reynolds number \cite{Rom, DR}. However, the experiments show that they could be unstable and transition to turbulence for small but finite perturbations at high Reynolds number \cite{TA, DH}. In addition, some laminar flows such as plane Poiseuille flow become turbulent at much lower Reynolds number than the critical Reynolds number of linear instability. The resolution of these paradoxes is a long-standing problem in fluid mechanics.

There are many attempts to understand these paradoxes(see \cite{Cha} and references therein). One resolution going back to Kelvin \cite{Kel} is that the basin of attraction of the laminar flow shrinks as $Re\to \infty$ so that the flow could become nonlinearly unstable for small but finite perturbations. Then an important question firstly proposed by Trefethen et al. \cite{TTR}(see also \cite{BGM}) is that \smallskip

{\it
Given a norm $\|\cdot\|_X$, determine a $\beta=\beta(X)$ so that
\beno
&&\|u_0\|_X\le Re^{-\beta}\Longrightarrow  {stability},\\
&&\|u_0\|_X\gg Re^{-\beta}\Longrightarrow  {instability}.
\eeno
}
The exponent $\beta$ is referred to as the transition threshold in the applied literature. \smallskip

The goal of this paper is to study the transition threshold for the 2-D incompressible Navier-Stokes equations in a finite channel $\Omega=\big\{(x,y): x\in \T, y\in I=(-1,1)\big\}$:
\begin{align}
\left\{
\begin{aligned}
&\partial_t v-\nu\Delta v+v\cdot\nabla v+\na P=0,\\
&\nabla\cdot v=0,\\
&v(0,x,y)=v_0(x,y),
\end{aligned}
\right.\label{eq:NS}
\end{align}
where $\nu\sim Re^{-1}$ is the viscosity coefficient, $v(t,x,y)=(v^1,v^2)$ is the velocity, $P(t,x,y)$ is the pressure.

For the 2-D fluid, nonlinear effect is weak. More importantly, the vorticity $\om=\pa_yv^1-\pa_xv^2$ has the following beautiful structure:
\begin{align}\label{eq:NS-vorticity}
\partial_t\om-\nu\Delta \om+v\cdot\nabla \om=0.
\end{align}
Compared with strong nonlinear effect in 3-D(especially, lift-up effect), it seems to mean that the 2-D Navier-Stokes equations are not a suitable model describing the transition to turbulence. However, the boundary layer effect is still very strong in 2-D for large Reynolds number if we impose non-slip boundary condition for the velocity. This effect could lead to the instability of the flows. So, the study of the 2-D fluid is very interesting and should be an important step toward understanding
the stability of the 3-D fluid in the presence of the physical boundary.

We will first study the stability of the Couette flow $U=(y,0)$, which is a solution of \eqref{eq:NS} and linearly stable for any Reynolds number. Let $u=v-U$ be the perturbation of the velocity, which satisfies
\begin{align}\label{eq:NS-p}
\left\{
\begin{aligned}
&\partial_t u-\nu\Delta u+y\partial_x u+(u_2,0)+u\cdot\nabla u+\na P=0,\\
&\nabla\cdot u=0,\\
&u(t,x,\pm 1)=0,\\
&u(0,x,y)=u_0(x,y).
\end{aligned}
\right.
\end{align}
Here we impose non-slip boundary condition for the perturbation $u$.


There are a lot of works \cite{DBL, LHR, RSB, Yag}  in applied mathematics and physics devoted to estimating transition threshold for various flows such as Couette flow and Poiseuille flow. Recently, Bedrossian, Germain, Masmoudi  et  al.  made an important progress on the stability threshold problem for the Couette flow in a series of works \cite{BGM1, BGM2, BGM3, BMV, BWV}. Roughly speaking, their results could be summarized as follows.\smallskip

When $\Om=\T\times \R\times \T$,

\begin{itemize}

\item if the perturbation is in Gevrey class, then $\beta=1$ \cite{BGM1,BGM2};

\item if the perturbation is in Sobolev space, then $\beta\le \f32$ \cite{BGM3}.

\end{itemize}

When $\Om=\mathbb{T}\times \R$,

\begin{itemize}

\item if the perturbation is in Gevrey class, then $\beta=0$ \cite{BMV};

\item if the perturbation is in Sobolev space, then $\beta\le \f12$\cite{BWV}.

\end{itemize}

The results in \cite{BGM1, BGM3} seem to mean that the regularity of the initial data has an important effect on the transition threshold. In a recent work \cite{WZ},  Wei and Zhang  proved that the transition threshold $\beta\le 1$ still holds in Sobolev regularity for the Couette flow in $\Om=\T\times \R\times \T$. This result confirms  the transition threshold conjecture proposed in \cite{TTR}(see also \cite{Cha}).  On the other hand, the threshold is much smaller in 2-D due to the absence of lift-up effect.

\smallskip


Previous results show that three kinds of linear effects(including enhanced dissipation, inviscid damping, 3-D lift-up) and nonlinear structure play a key role in determining the transition threshold in the absence of the boundary. In this paper, we would like to understand how various effects, especially the boundary layer effect, influence the transition threshold in the presence of the boundary. There are some mathematical papers \cite{Rom, KLH, LK,Sil} devoting to nonlinear stability of the Couette flow in a channel, where they gave a rough bound of $\beta$, for example, $\beta\le 3$ in 2-D and $\beta\le 4$ in 3-D.

To study nonlinear stability, the key step is to establish the space-times estimates for the linearized Navier-Stokes equations around the Couette flow, which takes as follows
\begin{align*}
\left\{
\begin{aligned}
&\partial_t u-\nu\Delta u+y\partial_x u+(u_2,0)+\na P=0,\\
&\nabla\cdot u=0.
\end{aligned}
\right.
\end{align*}
In terms of the vorticity $w=\pa_yu^1-\pa_xu^2,$ it takes
\beno
\pa_t w-\nu\Delta w+y\pa_xw=0.
\eeno
When $\Om=\T\times \R$, the space-time estimates could be established
by using the Fourier transform in $(x,y)$. When $\Om=\T\times I$, we need to use the resolvent estimates of the linearized operator. Let us emphasize that the spectrum of the operator $A$ is not enough to determine the behaviour of semigroup $e^{tA}$, when $A$ is a non normal operator \cite{Tre}.  In \cite{KLH, TTR, LK-SIAM,AK,Sil-JDE, Sil-SIAM}, the authors established some resolvent estimates in some regimes of parameters $Re$ and wave number by using the rigorous analysis combined with numerical computations. However, based on these estimates, one can only establish a rough bound of transition threshold.

In this paper, we first develop a systematic method  to establish some sharp resolvent estimates for the linearized operator under the Navier-slip boundary condition and non-slip boundary condition respectively. In particular, for non-slip boundary condition, we use many deep properties of the Airy function to give precise $L^p$ bounds on the solutions of homogeneous Orr-Sommerfeld equation. Moreover, our resolvent estimates show that the linearized operator
has a much wider spectrum gap $O({\nu^\f13 k^\f23})$(usually $O(\nu)$)  when the wave number $k\neq 0$, which is related to the enhanced dissipation induced by mixing due to the Couette flow. See \cite{BW, LWZ-K,IMM, WZZ3, LX} for the enhanced dissipation induced by the Kolmogorov flow and \cite{CKR} for more general situation.

With the resolvent estimates at hand, a standard method for semigroup estimate is to use the Dunford integral and choose a suitable contour including the spectrum of the linearized operator. However, the semigroup estimate obtained in this way is not enough to obtain a sharp threshold. In section 5, we
develop a complete new method to establish the space-time estimates of the solution of the linearized Navier-Stokes equations. Our space-time estimates include $\|u\|_{L^2_tL^2_y}$ due to inviscid damping, some estimates due to enhanced dissipation, and $L^\infty$ estimate of the velocity in sprit of
maximum principle. Recently, the invisicd damping as an analogue of Landau damping has been well understood at least at the linear level \cite{BM, Zil, WZZ1, WZZ2, WZZ3, BCV}.

We believe that the method we develop to establish the resolvent estimates and space-time estimates could be used to other related  problems such as
the transition threshold for general flows and the stability analysis of boundary layer.

As an application of the space-time estimates, we prove the following nonlinear stability of the Couette flow.  To state our result, we define
\beno
P_0f=\overline{f}(t,y)=\int_{\T}f(t,x,y)dx, \quad f_{\neq}=f-\overline{f}=\sum_{k\neq 0}f_k(t,y)e^{ikx}.
\eeno

Our result is stated as follows.

\begin{theorem}\label{thm:stability}
Assume that $u_0\in H^1_0(\Om)\cap H^2(\Om)$ with $\text{div}\,u_0=0$. There exist constants $\nu_0$ and $c, C>0$ independent of $\nu$ so that if $\|u_0\|_{H^2}\le c\nu^\f12$, ${0<\nu\leq \nu_0}$,
then the solution $u$ of the system \eqref{eq:NS-p} is global in time and satisfies the following stability estimate:
\beno
\sum_{k\in \Z}E_k\le Cc\nu^\f12,
\eeno
where $E_0=\|\overline{w}\|_{L^{\infty}L^2}$ and for $k\neq 0$,
\begin{align*}
E_k=&\|(1-|y|)^{\f12}w_k\|_{L^{\infty}L^2}+|k|\|u_k\|_{L^2L^2}+|k|^\f12\|u_k\|_{L^{\infty}L^{\infty}}+(\nu k^2)^{\f14}\|w_k\|_{L^2L^2}.
\end{align*}
\end{theorem}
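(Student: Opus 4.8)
\emph{Proof strategy.} The plan is to run a continuity (bootstrap) argument built on the space-time estimates of Section~5. First we pass to the perturbed vorticity $w=\pa_y u^1-\pa_x u^2$, recover $u=(\pa_y\psi,-\pa_x\psi)$ from $\Delta\psi=w$ with $\psi|_{y=\pm1}=\pa_y\psi|_{y=\pm1}=0$, and write the nonlinear equation as $\pa_t w-\nu\Delta w+y\pa_x w=-\dive(uw)$. Splitting $w=\overline w+w_{\neq}$ and using $\overline{u^2}\equiv0$ (incompressibility plus the boundary condition), the zero mode obeys the one-dimensional heat equation $\pa_t\overline w-\nu\pa_y^2\overline w=-\pa_y P_0(u^2_{\neq}w_{\neq})$, while the nonzero modes obey
\beno
\pa_t w_{\neq}-\nu\Delta w_{\neq}+y\pa_x w_{\neq}=-\pa_x(\overline{u^1}w_{\neq})-\pa_x(u^1_{\neq}\overline w)-\pa_y(u^2_{\neq}\overline w)-\dive(u_{\neq}w_{\neq})_{\neq}.
\eeno
We keep the reaction terms in divergence form so that no derivative lands on $\overline w$, which is controlled only in $L^\infty L^2$. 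Each term on the right is at least quadratic in the perturbation --- the transport term $\pa_x(\overline{u^1}w_{\neq})$ because $\overline{u^1}$ is itself a perturbation quantity, with $\|\overline{u^1}\|_{L^\infty_{t,y}}\lesssim\|\overline w\|_{L^\infty L^2}=E_0$ (one-dimensional embedding and $\overline{u^1}|_{y=\pm1}=0$) --- so $(y,0)$ is genuinely the linearization and Section~5 applies.

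Next we set $X=\sum_{k\in\Z}E_k$ and run the bootstrap on $X$ (together with a few stronger auxiliary quantities, e.g.\ sharper enhanced-dissipation and $\pa_y$-regularity norms needed to feed the nonlinearity) under the hypothesis $X\le2C_0c\nu^{\f12}$ on the maximal interval of existence. For $k\neq0$, Duhamel's formula together with the space-time estimates of Section~5 gives $E_k\lesssim\|w_k(0)\|_{Y_k}+\|f_k\|_{Y_k'}$, with $f_k$ the $k$-th mode of the right-hand side above, $Y_k$ the relevant data norm (dominated by $\|u_0\|_{H^2}$ through Biot-Savart) and $Y_k'$ its dual; for the zero mode, a heat energy estimate --- carried out at the level of $\overline w=\pa_y\overline{u^1}$, using $\overline{u^1}|_{y=\pm1}=0$ so that $\pa_t\overline{u^1}$ vanishes at the walls --- yields $E_0\lesssim\|\overline w(0)\|_{L^2}+\nu^{-\f12}\|u^2_{\neq}w_{\neq}\|_{L^2L^2}$, the remaining boundary contribution vanishing since $u^2_{\neq}|_{y=\pm1}=0$. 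The scheme then closes provided the nonlinear contributions are bounded by $\nu^{-\f12}X^2$: for then $X\le C\big(c\nu^{\f12}+\nu^{-\f12}(2C_0c\nu^{\f12})^2\big)=Cc\nu^{\f12}(1+4C_0^2c)$, which improves to $X\le C_0c\nu^{\f12}$ once $C_0=2C$ and $c\le(16C^2)^{-1}$; combined with local well-posedness of \eqref{eq:NS-p} in $H^1_0\cap H^2$ this gives the global bound.

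The technical heart is then the family of bilinear estimates, bounding each quadratic term by $\nu^{-\f12}X^2$ in the norm dual to $E_k$ (resp.\ in $L^2L^2$ for the zero mode). They use all three mechanisms built into $E_k$ in a balanced way: the inviscid-damping bound $|k|\|u_k\|_{L^2L^2}$, combined with $\pa_y u^2_k=-ik u^1_k$, puts $u^2_{\neq}$ into $L^2_tL^\infty$; the enhanced-dissipation bound $(\nu k^2)^{\f14}\|w_k\|_{L^2L^2}$ (and sharper auxiliary versions) together with the maximum-principle bound $|k|^{\f12}\|u_k\|_{L^\infty L^\infty}$ give $L^2_tL^2$ control of $w_{\neq}$ and $L^\infty$ control of $u_{\neq}$; and the weighted bound $\|(1-|y|)^{\f12}w_k\|_{L^\infty L^2}$ is matched with the Biot-Savart estimate $|u^2_k(y)|\le|k|(1-|y|)\|u^1_k\|_{L^\infty_y}$ to soak up the boundary weight in every product containing $u^2_{\neq}$. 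The $x$-frequencies are summed by Young's convolution inequality, and the whole scheme is calibrated so that, after the parabolic smoothing of the linear operator is used on the derivatives in front of the quadratic terms, each of them lands at $\nu^{-\f12}X^2$.

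We expect the main obstacle to be that the threshold $\beta=\f12$ leaves no slack: every bilinear estimate must lose \emph{precisely} $\nu^{-\f12}$ and no more, and this is delicate exactly in the boundary layers $1-|y|\lesssim(\nu/|k|)^{\f13}$, where inviscid damping and enhanced dissipation both degenerate and only the weighted quantity $(1-|y|)^{\f12}w_k$ is under control. Thus the reaction terms $\pa_x(u^1_{\neq}\overline w)$, $\pa_y(u^2_{\neq}\overline w)$ and the $L^\infty$ velocity estimate near $y=\pm1$ --- where products of boundary-layer-localized factors must still close against the weight with the correct power counting in $(\nu/|k|)^{\f13}$, exploiting the extra wall-vanishing of $u^2_{\neq}$ and $\psi_{\neq}$ --- are where the real work lies; the interior estimates should reduce to soft interpolation.
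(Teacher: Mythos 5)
Your overall architecture matches the paper's proof: vorticity formulation with the nonlinearity kept in divergence form, a continuity argument on $X=\sum_{k}E_k$, the space-time estimates of Section~5 for $k\neq0$, and a weighted $H^1$-type energy estimate at the level of $\overline{u}^1$ for the zero mode (your remark that the test function vanishes at $y=\pm1$ because $\overline{u}^1(\pm1)=0$ is exactly the point that avoids the a priori uncontrolled value of $\overline{w}(\pm1)$). Global existence is indeed soft in 2-D, so only the uniform bound needs propagating.

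There is, however, one genuine gap in the bilinear step that prevents the scheme from closing at $\nu^{1/2}$ as you have written it. For $f^2_k=\sum_l u^2_l w_{k-l}$ one must produce an $L^2_tL^2_y$ bound with no $\nu$-loss, pairing the only $L^\infty_t$ control available on $w$, namely $\|(1-|y|)^{1/2}w_{k-l}\|_{L^\infty L^2}$, against $u^2_l/(1-|y|)^{1/2}$ in $L^2_tL^\infty_y$. You propose to handle the weight via the pointwise bound $|u^2_l(y)|\le |l|(1-|y|)\|u^1_l\|_{L^\infty_y}$ (your ``Biot-Savart'' estimate). But this route does not convert into $L^2_t$ without loss: taking $L^2_t$ requires $\|u^1_l\|_{L^2_tL^\infty_y}\lesssim|l|^{-1/2}\|w_l\|_{L^2L^2}\lesssim |l|^{-1/2}(\nu l^2)^{-1/4}E_l$, giving $\|f^2_k\|_{L^2L^2}\lesssim\nu^{-1/4}\sum_l E_lE_{k-l}$ and hence a closure threshold $\nu^{3/4}$, not $\nu^{1/2}$; and using $\|u^1_l\|_{L^\infty L^\infty}$ instead places both factors in $L^\infty_t$ and again forces a lossy $L^2_t$ bound on $w_{k-l}$. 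What closes the estimate with no loss is the sharper Cauchy--Schwarz/Hardy inequality exploiting $u^2_l(\pm1)=0$:
\begin{align*}
|u^2_l(y)|=\Big|\int_{\pm1}^y\pa_zu^2_l\,dz\Big|\leq(1-|y|)^{1/2}\|\pa_yu^2_l\|_{L^2_y},
\end{align*}
so that $\|u^2_l/(1-|y|)^{1/2}\|_{L^2_tL^\infty_y}\leq\|\pa_yu^2_l\|_{L^2L^2}=|l|\|u^1_l\|_{L^2L^2}\leq E_l$, converting the inviscid-damping entry of $E_l$ directly into the required weighted $L^2_tL^\infty_y$ bound. With this in hand, $\|f^2_k\|_{L^2L^2}\leq 2\sum_l E_lE_{k-l}$ and $\|f^1_k\|_{L^2L^2}\lesssim(\nu k^2)^{-1/4}\sum_l E_lE_{k-l}$, and the bootstrap closes with $E_k$ alone. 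The ``stronger auxiliary quantities'' you anticipate needing are therefore unnecessary, and the bilinear estimates --- once this inequality is identified --- are a few lines rather than the technical heart: the hard analysis is entirely in proving Proposition~\ref{prop:sapce-times}.
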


To our surprise, the threshold is the same as one obtained by \cite{BWV} for the 2-D Navier-Stokes equation in $\Om=\T\times \R$. This means that the boundary layer effect does not give rise to strong instability for the Couette flow due to weak nonlinear effect in 2-D fluid. However, in 3-D case, nonlinear structure of the system is more complex so that it is very hard to analyze how stabilizing mechanism (enhanced dissipation and inviscid damping) and destabilizing mechanism(boundary layer and lift-up) influence different modes of the solution and different components of the velocity, and complex interactions among them.
We will leave the transition threshold problem in 3-D to our future work.

\section{The linearized equation and key ideas of the proof}

\subsection{The linearized equation}
The linearized Navier-Stokes equations around the Couette flow, which takes as follows
\begin{align}\label{eq:LNS-u}
\left\{
\begin{aligned}
&\partial_t u-\nu\Delta u+y\partial_x u+(u_2,0)+\na P=0,\\
&\nabla\cdot u=0.
\end{aligned}
\right.
\end{align}
Due to the unknown pressure, it is not easy to handle the velocity equation directly.
Thus, we introduce two formulations in terms of the vorticity $w$ and the stream function $\Phi$ respectively,
which are defined by
\beno
w=\pa_yu^1-\pa_xu^2,\quad u=\nabla^{\perp}\Phi=\big(\pa_y\Phi,-\pa_x\Phi\big).
\eeno
Then the vorticity formulation of \eqref{eq:LNS-u} takes
\ben\label{eq:LNS-w}
\partial_tw-\nu\Delta w+y\partial_xw=0.
\een
Thanks to $\Delta \Phi=w$, we have
\ben\label{eq:LNS-stream}
\partial_t\Delta \Phi-\nu\Delta^2 \Phi+y\partial_x\Delta \Phi=0.
\een
Due to the beautiful structure \eqref{eq:LNS-w}, the 2-D Navier-Stokes equations have no lift-up effect.

Taking the Fourier transform in $x\in \mathbb{T}$, we get
\beno
w(t,x,y)=\sum\limits_{k\in\mathbb{Z}}\widehat{w}_k(t,y)e^{ikx}=\sum\limits_{k\in\mathbb{Z}}e^{ikx}(\partial_y^2-k^2)\widehat{\Phi}_k(t,y).
\eeno
Then we have
\begin{align}\label{eq:LNS-w-F}
\partial_t\widehat{w}_k(t,y)+L_{k}\widehat{w}_k(t,y)=0,
\end{align}
where $L_k=\nu(k^2-\partial_y^2)+iky$.

For the linearized equations, we will study two kinds of boundary conditions.
The first one is the non-slip boundary condition:
\ben\label{eq:bc-nonslip}
u(t,x,\pm 1)=0.
\een
In this case, the stream function $\widehat{\Phi}_k(t,y), k\neq 0$ takes on the boundary:
\ben\label{eq:bc-nonslip-stream}
\widehat{\Phi}_k(t,\pm 1)=\widehat{\Phi}'_k(t,\pm 1)=0.
\een
The second one is the Navier-slip boundary condition:
\ben\label{eq:bc-navier}
u^2(t,x,\pm 1)=0,\quad \pa_yu^1(t,x,\pm 1)=0.
\een
So, $w(t,x,\pm 1)=0$ and for $k\neq 0$
\ben\label{eq:bc-navier-stream}
\widehat{\Phi}_k(t,\pm 1)=\widehat{\Phi}''_k(t,\pm 1)=0.
\een

Standard method for the stability is to make the eigenvalue analysis for the linearized equation.
That is, we seek the solution of the form
\beno
\widehat{w}_k(t,y)=\om_k(y)e^{-ikt\lambda},\quad \widehat{\Phi}_k(t,y)=\varphi_k(y)e^{-ik\lambda t}.
\eeno
Then $w_k(y)$ and $\varphi_k(y)$ satisfy the following Orr-Sommerfeld(OS) equation
\ben
&&-\nu(\partial_y^2-k^2)\om_k+ik(y-\lambda)\om_k=0,\label{eq:eigen-w}\\
&&-\nu(\partial_y^2-k^2)^2\varphi_k+ik(y-\lambda)(\partial_y^2-k^2)\varphi_k=0.\label{eq:eigen-stream}
\een
If there exists a nontrivial solution of \eqref{eq:eigen-stream} with the boundary condition \eqref{eq:bc-nonslip-stream} or
\eqref{eq:bc-navier-stream} for $\lambda, k>0$ with $\rm{Im}\lambda>0$, we say that the flow is linearly unstable.

For Navier-slip boundary condition, it is easy to see that the Couette flow is linearly stable for any $\nu>0$. Indeed, it follows from \eqref{eq:eigen-w} and $\om_k(\pm1)=0$
that
\beno
\int_{-1}^1\nu\big(|\pa_y\om_k|^2+k^2|\om_k|^2\big)+k{\rm Im}\lambda|\om_k|^2 dy=0,
\eeno
which implies that $\om_k=0$ if $k\rm{Im}\lambda>0$.

For non-slip boundary condition, Romanov \cite{Rom} proved that the Couette flow is also linearly stable for any $\nu>0$. In this case, the proof is highly nontrivial. In fact, Romanov studied the Navier-Stokes equations in an infinite channel $\R\times I$, and proved that
the eigenvalues must lie in a region with ${\rm Im}\lambda<-c\nu$ for some $c>0$.
In a finite channel $\T\times I$, we proved that the eigenvalues must lie in a region with ${\rm Im}\lambda<-ck^{\f 23}\nu^\f13$
for some $c>0$ if the wave number $|k|\ge 1$, which is related to the enhanced dissipation induced by the Couette flow. Thus, it is very interesting
to investigate the long wave effect on nonlinear stability. Indeed, the instability of many plane shear flows is due to the long wave. For example,
the unstable wave numbers $k$ lie in a band $O(Re^{-\f17})\le k\le O(Re^{-\f1 {11}})$ for the plane Poiseuille flow \cite{DR}. So,
it is also linearly stable for any $\nu>0$ in a finite channel.
\smallskip

\subsection{Key ideas and structure of the paper}

In section 3, we study the resolvent estimates of the linearized operator under the Navier-slip boundary condition:
\begin{align*}
-\nu(\partial_y^2-k^2)w+ik(y-\lambda)w=F,\quad w(\pm1)=0,
\end{align*}
The proof used the idea of multiplier introduced in \cite{LWZ-O, LWZ-K}.

In section 4, we study the resolvent estimates of the linearized operator under the non-slip boundary condition:
\begin{align*}
\left\{
\begin{aligned}
&-\nu(\partial_y^2-k^2)^2\varphi+ik(y-\lambda)(\partial_y^2-k^2)\varphi=F,\\
&\varphi(\pm1)=0,\quad\varphi'(\pm1)=0,
\end{aligned}
\right.
\end{align*}
and $w=(\pa_y^2-k^2)\varphi$. First of all, we decompose $w$ into the solution $w_{Na}$ of the inhomogeneous OS equation with the Navier-slip boundary condition and the solutions $w_1,w_2$ of the homogeneous OS equation, i.e.,
\beno
w=w_{Na}+c_1w_1+c_2w_2.
\eeno
One key point is that we derive the explicit formula of the coefficients $c_1,c_2$ and give very precise estimates based on the resolvent estimates of
the linearized operator under the Navier-slip boundary condition, especially, a weak type resolvent estimate. Another
key point is that we derive the sharp $L^p$ bounds and weighted $L^2$ bounds on $w_1,w_2$, which will be proved in section 5.
To this end, we need to use many deep estimates of the Airy function derived in section 8.
In fact, some estimates have been implied in Romanov's beautiful paper \cite{Rom}.

In section 6, we derive the space-time estimates of the linearized Navier-Stokes equations. To apply them to nonlinear problem,
we consider the following inhomogeneous problem:
\begin{align*}
\partial_t\omega+L_k\omega=-ikf_1-\partial_yf_2,\quad\omega|_{t=0}=\omega_0(y),
\end{align*}
where $\omega=(\partial_y^2-k^2)\varphi$ with $\varphi(\pm1)=\varphi'(\pm1)=0$.
Usually, the space-time estimates can be obtained by using the Dunford integral and resolvent estimates.
Indeed, for Navier-slip boundary condition, we can obtain the sharp bound of semigroup by using Gearhart-Pr\"uss theorem, since the linearized
operator $L_k$ is accretive in this case. However, for non-slip boundary condition, one can only obtain a rough bound of semigroup in this way.
Using this rough bound, we can improve previous results on the transition threshold. However, it is far from the bound obtained in Theorem \ref{thm:stability}.

Our key idea is that we do not use the resolvent estimates directly, and instead use the ideas of establishing the resolvent estimates
in order to know how the boundary layer influences the space-time estimates more precisely. To this end, we first decompose the problem into inhomogeneous problem and homogeneous problem. After taking the Laplace transform for the inhomogeneous problem, we use the ideas of establishing the resolvent estimates to prove $L^2_tL^2$ estimate, and then use energy method combined with the precise estimates for $c_1, c_2$ and $w_1,w_2$ to obtain $L^\infty_tL^2$ estimate. For the solution $\om_H$ of the homogeneous problem, we split it into three parts:
\beno
\omega_{H}=\omega_{H}^{(1)}+\omega_{H}^{(2)}+\omega_{H}^{(3)},
\eeno
where $\omega_{H}^{(1)}(t,k,y)=e^{-(\nu k^2)^{1/3}t-itky}\omega_0(k,y)$, and $\omega_{H}^{(2)}$ solves
\begin{align*}
&(\partial_t-\nu(\partial_y^2-k^2)+iky)\omega_{H}^{(2)}=-(\nu k^2-(\nu k^2)^{\f13})\omega_{H}^{(1)}+\nu\partial_y^2\omega_{H}^{(1)},\\
&\omega_{H}^{(2)}|_{t=0}=0,\quad \langle\omega_{H}^{(2)},e^{\pm ky}\rangle=0,
\end{align*}
and $\omega_{H}^{(3)}$ solves
\begin{align*}
(\partial_t-\nu(\partial_y^2-k^2)+iky)\omega_{H}^{(3)}=0, \ \omega_{H}^{(3)}|_{t=0}=0,\ \langle\omega_{H}^{(3)}(t)+\omega_{H}^{(1)}(t),e^{\pm ky}\rangle=0.
\end{align*}
Now the estimates for $\omega_{H}^{(1)}$ are direct. For $\omega_{H}^{(2)}$, we can use the space-time estimates obtained for the inhomogeneous problem.
For $\omega_{H}^{(3)}$, we again need to use the $L^p$ estimates of $w_1,w_2$ and new weighted $L^2$ estimates for $c_1, c_2$.

In summary, our space-time estimates take as follows
\begin{align*}
&|k|\|u\|_{L^\infty L^\infty}^2+k^2\|u\|_{L^2L^2}^2+(\nu k^2)^{\f12}\|\omega\|_{L^2L^2}^2+\|(1-|y|)^{\f12}\omega\|_{L^{\infty}L^2}^2\\
&\leq C\big(\|\omega(0)\|_{L^2}^2+k^{-2}\|\partial_y\omega(0)\|_{L^2}^2\big)+C\big(\nu^{-\f12}|k|\|f_1\|_{L^2L^2}^2+\nu^{-1}\|f_2\|_{L^2L^2}^2\big).
\end{align*}
The estimate  $\|u\|_{L^2L^2}$ is due to the inviscid damping and plays an important role in this work(also in \cite{WZ}). The proof is relatively easier than the polynomial decay  established in \cite{WZZ2}. The $L^\infty$ estimate of the velocity is very surprising and takes in the homogeneous case($f_1=f_2=0$):
\beno
|k|\|u\|_{L^\infty L^\infty}^2\le C\big(\|\omega(0)\|_{L^2}^2+k^{-2}\|\partial_y\omega(0)\|_{L^2}^2\big)
\eeno
In some sense, this result means that maximum principle still holds for the linearized Navier-Stokes equation \eqref{eq:LNS-u}.
This is similar to Abe and Giga's breakthrough work on the analyticity of the Stokes semigroup in spaces of bounded functions \cite{AG}.

In section 7, we prove nonlinear stability by using the vorticity formulation and the space-time estimates. \smallskip

Throughout this paper, we always assume $\nu\in (0,1]$ and $|k|\ge 1$, and denote by $C$ a constant independent
of $\nu, k, \lambda$, which may be different from line to line.

\section{Resolvent estimates with Navier-slip boundary condition}
In this section, we study the resolvent estimates of the linearized operator under the Navier-slip boundary condition.
More precisely, we consider the vorticity equation
\begin{align}\label{eq:R-w-navier}
-\nu(\partial_y^2-k^2)w+ik(y-\lambda)w=F,\quad w(\pm1)=0,
\end{align}
and the stream function equation
\begin{align}
\label{eq:R-phi-navier}
\left\{
\begin{aligned}
&-\nu(\partial_y^2-k^2)^2\varphi+ik(y-\lambda)(\partial_y^2-k^2)\varphi=F,\\
&\varphi(\pm1)=0,\quad\varphi''(\pm1)=0.
\end{aligned}
\right.
\end{align}

\subsection{Resolvent estimate from $L^2$ to $H^2$}

First of all, we consider the case of $\lambda\in \R$.
\begin{proposition}\label{prop:R-navier-v1}
Let $w\in H^2(I)$ be a solution of \eqref{eq:R-w-navier} with $\lambda\in \R$ and $F\in L^2(I)$. Then it holds that
\begin{align*}
&\nu^{\f16}|k|^{\f43}\|u\|_{L^2}+{\nu^{\f16}|k|^{\f56}\|w\|_{L^1}}+\nu^{\f23}k^{\f13}\|w'\|_{L^2} +(\nu k^2)^{\f13}\|w\|_{L^2} +|k|\|(y-\lambda)w\|_{L^2}\leq C\|F\|_{L^2},
\end{align*}
where $u=(\pa_y\varphi, {-ik\varphi})$ and $(\pa_y^2-k^2)\varphi=0$ with $\varphi(\pm 1)=0$.
\end{proposition}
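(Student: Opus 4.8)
The plan is to reduce everything to the enhanced dissipation bound $(\nu k^2)^{\frac13}\|w\|_{L^2}\le C\|F\|_{L^2}$, which carries the main difficulty, and then to read off the remaining four quantities by elementary interpolation and a localization at the critical layer $y=\lambda$. (Here I take the intended relation $(\partial_y^2-k^2)\varphi=w$, $\varphi(\pm1)=0$, so that $u=(\varphi',-ik\varphi)$ is the associated velocity.) Throughout, $\|\cdot\|_{L^p}=\|\cdot\|_{L^p(I)}$ and I use freely that $w(\pm1)=0$ so all boundary terms in the integrations by parts below vanish.

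First I would record the basic energy identities obtained by testing \eqref{eq:R-w-navier} against $\bar w$ over $I$: separating real and imaginary parts gives
\[
\nu\|w'\|_{L^2}^2+\nu k^2\|w\|_{L^2}^2=\mathrm{Re}\!\int_I F\bar w\le\|F\|_{L^2}\|w\|_{L^2},\qquad |k|\,\Big|\int_I (y-\lambda)|w|^2\,dy\Big|\le\|F\|_{L^2}\|w\|_{L^2}.
\]
The key step is a second multiplier in the spirit of \cite{LWZ-O,LWZ-K}: testing \eqref{eq:R-w-navier} against $\overline{(y-\lambda)w}=(y-\lambda)\bar w$, integrating the viscous term by parts once, and taking imaginary parts yields, after absorbing the right-hand side by Young's inequality,
\[
|k|\,\|(y-\lambda)w\|_{L^2}^2\le 2\nu\|w'\|_{L^2}\|w\|_{L^2}+|k|^{-1}\|F\|_{L^2}^2.
\]
Now I would split $\|w\|_{L^2}^2=\int_{|y-\lambda|<\delta}|w|^2+\int_{|y-\lambda|\ge\delta}|w|^2$ with $\delta=(\nu/|k|)^{\frac13}$. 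The far piece is $\le\delta^{-2}\|(y-\lambda)w\|_{L^2}^2$; the near piece is $\le 2\delta\|w\|_{L^\infty}^2\le C\delta\|w\|_{L^2}\|w'\|_{L^2}$ by the Gagliardo--Nirenberg inequality $\|w\|_{L^\infty}^2\le 2\|w\|_{L^2}\|w'\|_{L^2}$ (valid since $w(\pm1)=0$). Feeding in $\|w'\|_{L^2}\le\nu^{-\frac12}\|F\|_{L^2}^{\frac12}\|w\|_{L^2}^{\frac12}$ from the first identity together with the bound on $\|(y-\lambda)w\|_{L^2}$, and using $\delta=(\nu/|k|)^{\frac13}$, everything collapses to an inequality of the form
\[
\|w\|_{L^2}^2\le C\nu^{-\frac16}|k|^{-\frac13}\|F\|_{L^2}^{\frac12}\|w\|_{L^2}^{\frac32}+C\nu^{-\frac23}|k|^{-\frac43}\|F\|_{L^2}^2,
\]
and one more application of Young's inequality gives $\|w\|_{L^2}\le C(\nu k^2)^{-\frac13}\|F\|_{L^2}$.

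With this in hand the rest is bookkeeping. The bound on $w'$ follows from $\nu\|w'\|_{L^2}^2\le\|F\|_{L^2}\|w\|_{L^2}\le C(\nu k^2)^{-\frac13}\|F\|_{L^2}^2$, i.e. $\nu^{\frac23}|k|^{\frac13}\|w'\|_{L^2}\le C\|F\|_{L^2}$; the bound $|k|\|(y-\lambda)w\|_{L^2}\le C\|F\|_{L^2}$ comes from substituting the dissipation and $w'$ bounds back into the second multiplier identity. For $\|w\|_{L^1}$ I would again split at $|y-\lambda|=\delta$ with $\delta=(\nu/|k|)^{\frac13}$: the near part is $\le(2\delta)^{\frac12}\|w\|_{L^2}$ by Cauchy--Schwarz on a set of measure $\le2\delta$, while the far part is $\le\big\|(y-\lambda)^{-1}\big\|_{L^2(|y-\lambda|\ge\delta)}\|(y-\lambda)w\|_{L^2}\le(2/\delta)^{\frac12}\|(y-\lambda)w\|_{L^2}$, and combining the two already-proven $L^2$ bounds gives $\|w\|_{L^1}\le C\nu^{-\frac16}|k|^{-\frac56}\|F\|_{L^2}$. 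Finally, testing $(\partial_y^2-k^2)\varphi=w$ against $\bar\varphi$ gives $\|u\|_{L^2}^2=\|\varphi'\|_{L^2}^2+k^2\|\varphi\|_{L^2}^2=-\int_I w\bar\varphi\le\|w\|_{L^1}\|\varphi\|_{L^\infty}$, and since $\|\varphi\|_{L^\infty}^2\le2\|\varphi\|_{L^2}\|\varphi'\|_{L^2}\le|k|^{-1}\big(\|\varphi'\|_{L^2}^2+k^2\|\varphi\|_{L^2}^2\big)=|k|^{-1}\|u\|_{L^2}^2$ (again using $\varphi(\pm1)=0$), we get $\|u\|_{L^2}\le|k|^{-\frac12}\|w\|_{L^1}$, hence $\nu^{\frac16}|k|^{\frac43}\|u\|_{L^2}\le\nu^{\frac16}|k|^{\frac56}\|w\|_{L^1}\le C\|F\|_{L^2}$.

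The main obstacle is the enhanced dissipation estimate $(\nu k^2)^{\frac13}\|w\|_{L^2}\le C\|F\|_{L^2}$: one must make the two‑multiplier plus localization argument close cleanly and, crucially, uniformly as $\lambda$ ranges over all of $\mathbb R$ --- including $\lambda$ near or outside $[-1,1]$, where the critical layer reaches the boundary and the interpolation region $\{|y-\lambda|<\delta\}\cap I$ can be arbitrarily small (a case in which the transport term only helps). The Dirichlet condition $w(\pm1)=0$ is exactly what makes the integrations by parts and the Gagliardo--Nirenberg step clean, and no information on the spectrum of $L_k$ is used.
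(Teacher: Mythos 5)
Your proposal follows essentially the same route as the paper: testing against $\bar w$ and $(y-\lambda)\bar w$, localizing at scale $\delta=(\nu/|k|)^{1/3}$, using the Gagliardo--Nirenberg bound $\|w\|_{L^\infty}^2\lesssim\|w\|_{L^2}\|w'\|_{L^2}$ to close for $\|w\|_{L^2}$, then reading off $\|w'\|_{L^2}$, $\|(y-\lambda)w\|_{L^2}$, $\|w\|_{L^1}$ by the same splits, and finally bounding $\|u\|_{L^2}$ by $|k|^{-1/2}\|w\|_{L^1}$ via the stream-function pairing (the paper's Lemma~\ref{lem:phi-w}). The argument is correct and matches the paper's proof step for step, including the correct reading of the misprinted relation $(\partial_y^2-k^2)\varphi=w$.
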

\begin{proof}
By integration by parts, we get
\beno
\langle F,w\rangle=\nu\|w'\|_{L^2}^2+\nu k^2\|w\|_{L^2}+ik\int_{-1}^1(y-\lambda)|w|^2dy.
\eeno
Taking the real part, we get
\beno
\nu\|w'\|_{L^2}^2\leq\|F\|_{L^2}\|w\|_{L^2}.
\eeno

We also get by integration by parts that
\begin{align*}
&\langle F,(y-\lambda)w\rangle=-\nu\int_{-1}^1w''(y-\lambda)\bar{w}dy+\nu k^2\int_{-1}^1(y-\lambda)|w|^2dy+ik\|(y-\lambda)w\|_{L^2}^2\\&=\nu\int_{-1}^1w'\bar{w}dy+\nu\int_{-1}^1(y-\lambda)|w'|^2dy+\nu k^2\int_{-1}^1(y-\lambda)|w|^2dy+ik\|(y-\lambda)w\|_{L^2}^2.
\end{align*}
Taking the imaginary part, we get
\begin{align*}
&|k|\|(y-\lambda)w\|_{L^2}^2\leq \|F\|_{L^2}\|(y-\lambda)w\|_{L^2}+\nu \|w'\|_{L^2}\|w\|_{L^2},\\
&\|(y-\lambda)w\|_{L^2}^2\leq |k|^{-2}\|F\|_{L^2}^2+2|k|^{-1}\nu\|w'\|_{L^2}\|w\|_{L^2}.
\end{align*}

Let $\delta=\nu^{\f13}|k|^{-\f13}$, $E=(-1,1)\cap(\lambda-\delta,\lambda+\delta)$, $E^c=(-1,1)\setminus(\lambda-\delta,\lambda+\delta)$. Then we have
\begin{align*}
\|w\|_{L^2}^2
=&\|w\|_{L^2(E^c)}^2+\|w\|_{L^2(E)}^2\leq \delta^{-2}\|(y-\lambda)w\|_{L^2}^2+2\delta\|w\|_{L^{\infty}}^2\\ \leq&\delta^{-2}|k|^{-2}\|F\|_{L^2}^2+2\delta^{-2}|k|^{-1}\nu\|w'\|_{L^2}\|w\|_{L^2}+2\delta\|w'\|_{L^2}\|w\|_{L^2}\\
=&(\nu k^2)^{-\f23}\|F\|_{L^2}^2+4\delta\|w'\|_{L^2}\|w\|_{L^2}\leq (\nu k^2)^{-\f23}\|F\|_{L^2}^2+4\delta\nu^{-\f12}\|F\|_{L^2}^{\f12}\|w\|_{L^2}^{\f32},
\end{align*}
which implies
\begin{align*}
&\|w\|_{L^2}^2
\leq C\big((\nu k^2)^{-\f23}+\delta^4\nu^{-2}\big)\|F\|_{L^2}^2\leq C(\nu k^2)^{-\f23}\|F\|_{L^2}^2,
\end{align*}
and thus,
\begin{align*}
\nu\|w'\|_{L^2}^2\leq\|F\|_{L^2}\|w\|_{L^2}\lesssim(\nu k^2)^{-\f13}\|F\|_{L^2}^2.
\end{align*}

For $\|(y-\lambda)w\|_{L^2}$, we have
\begin{align*}
\|(y-\lambda)w\|_{L^2}^2\leq& |k|^{-1}\|F\|_{L^2}^2+2|k|^{-1}\nu\|w'\|_{L^2}\|w\|_{L^2}\\ \leq& |k|^{-2}\|F\|_{L^2}^2+C|k|^{-1}\nu(\nu^{-\f23}k^{-\f13}\|F\|_{L^2})(\nu k^2)^{-\f13}\|F\|_{L^2}\\
\leq& C|k|^{-2}\|F\|_{L^2}^2.
\end{align*}
Notice that
\begin{align*}
\|w\|_{L^1}&=\int_E|w|dy+\int_{(-1,1)\setminus E}|w|dy\\&\leq\delta^{\f12}\|w\|_{L^2}+\Big(\int _{(-1,1)\setminus E}\f{1}{(y-\lambda)^2}dy\Big)^{\f12}\|(y-\lambda)w\|_{L^2},
\end{align*}
which gives
\begin{align*}
\|w\|_{L^1}\lesssim\delta^{\f12} (\nu k^2)^{-\f13}\|F\|_{L^2}+\delta^{-\f12}|k|^{-1}\|F\|_{L^2}\lesssim\nu^{-\f16}|k|^{-\f56}\|F\|_{L^2},
\end{align*}
from which and Lemma \ref{lem:phi-w}, we infer that
\beno
 \|u\|_{L^2}^2\leq \|\varphi'\|_{L^2}^2+k^2\|\varphi\|_{L^2}^2\lesssim|k|^{-1}\|w\|_{L^1}^2.
\eeno

 Summing up, we conclude the proof.
\end{proof}

Proposition \ref{prop:R-navier-v1} implies that the resolvent set of the linearized operator $L_{k}$ contains the region
\begin{align*}
\Omega=\big\{\lambda=\lambda_r+i\lambda_i: \lambda_r\leq \epsilon\nu^{\f13}|k|^{\f23},\lambda_i\in\R\big\}
 \end{align*}
for some small $\epsilon\le C^{-1}$ with $C$ given by Proposition \ref{prop:R-navier-v1}. As a corollary of Proposition \ref{prop:R-navier-v1}, we can deduce the following resolvent estimate for
$\lambda\in \Gamma_{\epsilon}=\big\{\lambda=-i\epsilon \nu^{\f13}|k|^{\f23}+\lambda_r: \lambda_r\in \R\big\}$.
\begin{corollary}\label{cor:R-Navier-v1}
Let $w\in H^2(I)$ be a solution of \eqref{eq:R-w-navier} with $\lambda\in \Gamma_{\epsilon}$ and $F\in L^2(I)$. Then it holds that
\begin{align*}
&\nu^{\f16}|k|^{\f43}\|u\|_{L^2}+{\nu^{\f16}|k|^{\f56}\|w\|_{L^1}}+\nu^{\f23}k^{\f13}\|w'\|_{L^2}+(\nu k^2)^{\f13}\|w\|_{L^2}+|k|\|(y-\lambda)w\|_{L^2}\leq C\|{F}\|_{L^2}.
\end{align*}
\end{corollary}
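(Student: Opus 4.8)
The plan is to deduce the estimate on the shifted contour $\Gamma_{\epsilon}$ from the real-$\lambda$ estimate of Proposition \ref{prop:R-navier-v1} by a perturbative argument. Write $\lambda=\lambda_r-i\epsilon\nu^{\f13}|k|^{\f23}$ with $\lambda_r\in\R$, and split the imaginary part off: the equation \eqref{eq:R-w-navier} becomes
\begin{align*}
-\nu(\partial_y^2-k^2)w+ik(y-\lambda_r)w=F-k\epsilon\nu^{\f13}|k|^{\f23}w=:\widetilde F.
\end{align*}
Since $\lambda_r\in\R$, Proposition \ref{prop:R-navier-v1} applies with right-hand side $\widetilde F$, giving in particular $(\nu k^2)^{\f13}\|w\|_{L^2}\le C\|\widetilde F\|_{L^2}$, together with the companion bounds for $\|u\|_{L^2}$, $\|w\|_{L^1}$, $\|w'\|_{L^2}$ and $\|(y-\lambda_r)w\|_{L^2}$, all controlled by $C\|\widetilde F\|_{L^2}$. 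Now
\begin{align*}
\|\widetilde F\|_{L^2}\le \|F\|_{L^2}+\epsilon\,\nu^{\f13}|k|^{\f53}\|w\|_{L^2}=\|F\|_{L^2}+\epsilon\,(\nu k^2)^{\f13}\|w\|_{L^2},
\end{align*}
so the term $(\nu k^2)^{\f13}\|w\|_{L^2}$ on the left (from Proposition \ref{prop:R-navier-v1}) absorbs the perturbation provided $C\epsilon\le \f12$, i.e. for $\epsilon$ small enough depending only on the constant $C$ of Proposition \ref{prop:R-navier-v1}. This yields $\|\widetilde F\|_{L^2}\le 2\|F\|_{L^2}$, hence all five quantities are bounded by $C\|F\|_{L^2}$ with $\lambda_r$ in place of $\lambda$.

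It remains to replace $\lambda_r$ by $\lambda$ in the weighted term $|k|\|(y-\lambda)w\|_{L^2}$. Since $\lambda-\lambda_r=-i\epsilon\nu^{\f13}|k|^{\f23}$, we have
\begin{align*}
|k|\|(y-\lambda)w\|_{L^2}\le |k|\|(y-\lambda_r)w\|_{L^2}+\epsilon\,\nu^{\f13}|k|^{\f53}\|w\|_{L^2}\le C\|F\|_{L^2}+\epsilon\,(\nu k^2)^{\f13}\|w\|_{L^2},
\end{align*}
and the last term is already bounded by $C\|F\|_{L^2}$ from the previous step. All the other quantities in the stated estimate ($\|u\|_{L^2}$, $\|w\|_{L^1}$, $\|w'\|_{L^2}$, $\|w\|_{L^2}$) do not involve $\lambda$ explicitly, so they carry over verbatim. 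Summing the five contributions gives the claimed inequality.

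The only delicate point is the absorption step: one must make sure that the borrowed factor is genuinely the \emph{same} quantity $(\nu k^2)^{\f13}\|w\|_{L^2}$ that appears on the left of Proposition \ref{prop:R-navier-v1}, with a clean numerical constant, so that choosing $\epsilon\le (2C)^{-1}$ closes the estimate; this is exactly why the statement fixes $\epsilon\le C^{-1}$ with $C$ the constant of Proposition \ref{prop:R-navier-v1}. There is no real analytic obstacle here — the corollary is a soft consequence of the proposition — the care is purely in bookkeeping the constants.
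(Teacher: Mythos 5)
Your strategy — shift $\lambda$ to the real axis, absorb the resulting zeroth-order term into the $(\nu k^2)^{1/3}\|w\|_{L^2}$ term of Proposition~\ref{prop:R-navier-v1}, and replace $\lambda_r$ by $\lambda$ in the weighted norm at the end — is exactly the paper's argument, which sets $\widetilde F = F + \epsilon\nu^{1/3}|k|^{2/3}w$ and runs the same absorption. So the plan is right. But there is an arithmetic error at the absorption step that is not cosmetic.

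You write $\lambda=\lambda_r-i\epsilon\nu^{1/3}|k|^{2/3}$ and correctly compute the zeroth-order term to be $k\epsilon\nu^{1/3}|k|^{2/3}w$, so $\|\widetilde F\|_{L^2}\le\|F\|_{L^2}+\epsilon\nu^{1/3}|k|^{5/3}\|w\|_{L^2}$. You then claim $\nu^{1/3}|k|^{5/3}=(\nu k^2)^{1/3}$. This is false: $(\nu k^2)^{1/3}=\nu^{1/3}|k|^{2/3}$, so $\nu^{1/3}|k|^{5/3}=|k|\,(\nu k^2)^{1/3}$. The perturbation is therefore $\epsilon|k|\,(\nu k^2)^{1/3}\|w\|_{L^2}$, a factor $|k|$ too large, and the absorption into $(\nu k^2)^{1/3}\|w\|_{L^2}$ would force $\epsilon\lesssim|k|^{-1}$, which destroys the uniformity in $k$ that the corollary requires. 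The same factor-of-$|k|$ issue recurs in your final estimate of $|k|\|(y-\lambda)w\|_{L^2}$.

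What your miscalculation obscures is that the extra factor of $k$ does not actually belong in $\widetilde F$ at all: writing $ik(y-\lambda)=ik(y-\lambda_r)+k\,\mathrm{Im}\lambda$, the zeroth-order term is $-k\,\mathrm{Im}\lambda\cdot w$. For this to equal $\epsilon\nu^{1/3}|k|^{2/3}w$ (the term the paper's proof absorbs, and the scale at which the absorption is uniform in $k$), one needs $\mathrm{Im}\lambda=-\epsilon\nu^{1/3}|k|^{2/3}/k$, i.e.\ a vertical shift of size $\epsilon\delta$ with $\delta=\nu^{1/3}|k|^{-1/3}$, not $\epsilon\nu^{1/3}|k|^{2/3}$. (This is also what the spectral picture dictates: $z=ik\lambda$, so a gap $\mathrm{Re}\,z\gtrsim(\nu k^2)^{1/3}$ for $L_k$ corresponds to $\mathrm{Im}\lambda\lesssim-\nu^{1/3}|k|^{-1/3}$.) The paper's displayed definition of $\Gamma_\epsilon$ appears to carry a typo in the exponent of $|k|$, which the paper's own proof silently corrects by taking $\widetilde F=F+\epsilon\nu^{1/3}|k|^{2/3}w$. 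Taking the displayed definition at face value and then equating $\nu^{1/3}|k|^{5/3}$ with $(\nu k^2)^{1/3}$ to force the absorption through is not a legitimate fix: as written the step fails. Either correct the contour to $\mathrm{Im}\lambda=-\epsilon\delta$ (after which your argument goes through verbatim and is indeed the paper's), or, if you keep the shift $\epsilon\nu^{1/3}|k|^{2/3}$, accept that $\epsilon$ must shrink like $|k|^{-1}$, which is not the statement being proved.
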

\begin{proof} Let $\widetilde{F}={F}+\epsilon\nu^{\f13}|k|^{\f23}w$. Then by Proposition \ref{prop:R-navier-v1}, we get
\begin{align*}
&\|{F}\|_{L^2}\geq\|\widetilde{F}\|_{L^2}-\epsilon\nu^{\f13}|k|^{\f23}\|w\|_{L^2} \geq \|\widetilde{F}\|_{L^2}-C\epsilon\|\widetilde{F}\|_{L^2}=(1-C\epsilon) \|\widetilde{F}\|_{L^2}\\
&\quad\gtrsim\big(\nu^{\f16}|k|^{\f43}\|u\|_{L^2} +{\nu^{\f16}|k|^{\f56}\|w\|_{L^1}}+\nu^{\f23}k^{\f13}\|w'\|_{L^2}+(\nu k^2)^{\f13}\|w\|_{L^2}+|k|\|(y-\lambda)w\|_{L^2}\big),
\end{align*}
if we take $\epsilon$ so that $C\epsilon\le \f12$.
\end{proof}

\subsection{Resolvent estimate from $H^{-1}$ to $H^1$}
For this, we need to use the stream function formulation \eqref{eq:R-phi-navier} for $\lambda\in \Gamma_{\epsilon}=\big\{-i\epsilon \nu^{\f13}|k|^{\f23}+\lambda: \lambda \in \R\big\}$ with small $\epsilon$ as above. That is,
\begin{align}
\label{eq:R-navier-phi2}
\left\{
\begin{aligned}
&-\nu(\partial_y^2-k^2)^2\varphi+ik(y-\lambda)(\partial_y^2-k^2)\varphi- \epsilon\nu^{\f13}|k|^{\f23}(\partial_y^2-k^2)\varphi=F,\\
&\varphi(\pm1)=0,\quad \varphi''(\pm1)=0,
\end{aligned}
\right.
\end{align}
with $\lambda\in \R$.

\begin{proposition}\label{prop:R-navier-v2}
Let $\varphi\in H^3(I)$ be a solution of \eqref{eq:R-navier-phi2} with  $F\in H^{-1}(I)$. Then it holds that
\begin{align*}
 (\nu |k|^2)^{\f12}\|u\|_{L^2}+\nu\|w'\|_{L^2}+\nu^{\f23}|k|^{\f13}\|w\|_{L^2} \leq C\|{F}\|_{H^{-1}} .
\end{align*}
Here $u=({\partial_y\varphi,-ik\varphi})$ and $w=(\partial_y^2-k^2)\varphi$.
\end{proposition}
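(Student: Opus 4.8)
The plan is to test the equation \eqref{eq:R-navier-phi2} against $\varphi$ itself, and separately against $(y-\lambda)w$ (or a suitable variant), mimicking the structure of the proof of Proposition~\ref{prop:R-navier-v1} but now working at the level of the stream function so that the pairing with $F\in H^{-1}$ makes sense. First I would pair with $\varphi$: writing $\langle F,\varphi\rangle$ and integrating by parts, the biviscous term $-\nu(\partial_y^2-k^2)^2\varphi$ produces $\nu\|(\partial_y^2-k^2)\varphi\|_{L^2}^2=\nu\|w\|_{L^2}^2$ after using the boundary conditions $\varphi(\pm1)=\varphi''(\pm1)=0$ (these are exactly what kills the boundary terms), the transport term $ik(y-\lambda)w$ paired with $\varphi$ contributes a term whose real part can be integrated by parts into something controlled by $\|u\|_{L^2}^2$, and the shift term $-\epsilon\nu^{1/3}|k|^{2/3}(\partial_y^2-k^2)\varphi$ paired with $\varphi$ gives $+\epsilon\nu^{1/3}|k|^{2/3}\|u\|_{L^2}^2$ after integration by parts. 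Taking real and imaginary parts and using $|\langle F,\varphi\rangle|\le\|F\|_{H^{-1}}\|\varphi\|_{H^1}\lesssim\|F\|_{H^{-1}}\|u\|_{L^2}$ (since $\|\varphi\|_{H^1}\sim\|u\|_{L^2}$ for $|k|\ge 1$), the real part already yields $\nu\|w\|_{L^2}^2\lesssim\|F\|_{H^{-1}}\|u\|_{L^2}$ plus lower-order contributions, which is a promising start but not yet enough: it does not isolate $\|u\|_{L^2}$ nor give the sharp powers.

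The key extra ingredient is a localization argument near $y=\lambda$, exactly as in Proposition~\ref{prop:R-navier-v1}. I would introduce $\delta=\nu^{1/3}|k|^{-1/3}$, split $(-1,1)$ into $E=(-1,1)\cap(\lambda-\delta,\lambda+\delta)$ and its complement, and estimate $\|u\|_{L^2}^2$ by $\delta^{-2}\|(y-\lambda)u\|_{L^2}^2+2\delta\|u\|_{L^\infty}^2$, controlling $\|u\|_{L^\infty}\lesssim\|u\|_{L^2}^{1/2}\|u'\|_{L^2}^{1/2}\lesssim\|u\|_{L^2}^{1/2}\|w\|_{L^2}^{1/2}$ (using $u'\sim w$ modulo lower order in $k$). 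The quantity $\|(y-\lambda)u\|_{L^2}$, or more precisely a bound for the "off-diagonal" part of $u$, should come from pairing the equation against $(y-\lambda)\varphi$ or against a primitive of $(y-\lambda)w$: the imaginary part of that pairing produces $|k|\|(y-\lambda)^{1/2}u\|$-type control against $\|F\|_{H^{-1}}$ times $\|\varphi\|_{H^1}$ and lower-order viscous terms. Alternatively, and perhaps more cleanly, I would apply Corollary~\ref{cor:R-Navier-v1} directly to $w=(\partial_y^2-k^2)\varphi$, which solves \eqref{eq:R-w-navier} with right-hand side $F$ (the $L^2$-to-$H^2$ estimate on $w$ is already in hand there, for $\lambda\in\Gamma_\epsilon$) — but since here $F\in H^{-1}$ only, I would instead write $F=\partial_y G$ or $F=-ikf_1-\partial_y f_2$ with $\|f_i\|_{L^2}\lesssim\|F\|_{H^{-1}}$ and redo the Navier-slip vorticity estimates tracking the $H^{-1}$ norm of $F$; integration by parts moves the derivative off $F$ onto the test function at the cost of one power of $k$ or one derivative, which is exactly accounted for by the shift from $|k|$ to $(\nu|k|^2)^{1/2}$ on $\|u\|_{L^2}$ and from $(\nu k^2)^{1/3}$ to $\nu^{2/3}|k|^{1/3}$ on $\|w\|_{L^2}$.

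The main obstacle I anticipate is keeping the bookkeeping of powers of $\nu$ and $k$ sharp through the interpolation/localization step while the right-hand side lives only in $H^{-1}$: one must avoid losing a factor of $k$ when integrating by parts against $F$, and the weighted estimate $\|(y-\lambda)u\|_{L^2}$ has to be extracted at the stream-function level rather than the vorticity level, which couples $u$ and $w$ in a way that Proposition~\ref{prop:R-navier-v1} handled more directly. Concretely, the delicate point is showing $\delta^4\nu^{-2}$-type error terms are absorbed, i.e. that $\delta=\nu^{1/3}|k|^{-1/3}$ is precisely the scale at which the diagonal and off-diagonal contributions balance, giving $\nu|k|^2\|u\|_{L^2}^2\lesssim\|F\|_{H^{-1}}^2$; once $\|u\|_{L^2}$ is pinned down with the sharp constant, the bounds $\nu\|w'\|_{L^2}$ and $\nu^{2/3}|k|^{1/3}\|w\|_{L^2}$ follow by feeding it back into the energy identities from the first paragraph. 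I would also double-check that the shift term's sign ($\lambda\in\Gamma_\epsilon$ means its real part helps) is used, not fought, so that $\epsilon$ small suffices exactly as in Corollary~\ref{cor:R-Navier-v1}.
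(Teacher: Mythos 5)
Your proposal correctly identifies the overall structure (an energy identity plus a localization argument at scale $\delta=\nu^{1/3}|k|^{-1/3}$, and the need to route the $H^{-1}$ pairing carefully), but it misses the two specific multipliers that make the paper's proof go through, and the substitutes you suggest have real problems.

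For the $\|w\|_{L^2}$ and $\|w'\|_{L^2}$ bounds (the paper's Lemma~\ref{lem:R-navier-2}): you begin by pairing with $\varphi$, which yields $\nu\|w\|_{L^2}^2$ plus terms of size $\|u\|_{L^2}^2$ on the left, against $\|F\|_{H^{-1}}\|u\|_{L^2}$ on the right. Feeding in the target $\nu k^2\|u\|_{L^2}^2\lesssim\|F\|_{H^{-1}}^2$ only gives $\nu|k|\|w\|_{L^2}\lesssim\|F\|_{H^{-1}}$, which is weaker than the claimed $\nu^{2/3}|k|^{1/3}\|w\|_{L^2}\lesssim\|F\|_{H^{-1}}$ when $\nu k^2\leq 1$. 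The missing observation is that under the Navier-slip boundary conditions $\varphi''(\pm1)=0$ one has $w(\pm1)=0$, so $w\in H^1_0$ and you may pair $F\in H^{-1}$ with $w$ \emph{directly}, getting $\nu\|w'\|_{L^2}^2+\nu k^2\|w\|_{L^2}^2\lesssim\|F\|_{H^{-1}}\|w\|_{H^1}+\epsilon\nu^{1/3}|k|^{2/3}\|w\|_{L^2}^2$ with no loss of a $k$. The localization is then done by pairing with $\rho w$ where $\rho$ is the explicit $\sin$-profile cutoff \eqref{def:cutoff-rho} (odd about $y=\lambda$, so the transport term's imaginary part gives coercivity off the critical layer), and closed using $\|w\|_{L^\infty}^2\leq\|w\|_{L^2}\|w'\|_{L^2}$. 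Your plan to instead decompose $F=\partial_y G$ and ``redo the vorticity estimates'' could in principle work but is unnecessary: the $H^{-1}$ pairing with $w$ is already well defined.

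For $\|u\|_{L^2}$ (the paper's Lemma~\ref{lem:R-navier-4}): your multiplier $(y-\lambda)\varphi$ goes in the \emph{opposite} direction from what is needed. Pairing $ik(y-\lambda)w$ with $(y-\lambda)\bar\varphi$ and integrating by parts gives, for the imaginary part, $k\|\varphi\|_{L^2}^2-k\|(y-\lambda)u\|_{L^2}^2$; the $k\|\varphi\|_{L^2}^2$ term has the wrong sign and is not obviously absorbable, and even if you got control of $\|(y-\lambda)u\|_{L^2}$ you would still have to localize $\|u\|_{L^2}$ separately, creating a circular dependency. The paper's key idea is to pair instead with $\chi\varphi$ where $\chi$ is the regularized reciprocal $\chi\approx(y-\lambda)^{-1}$ from \eqref{def:cutoff-chi}: then $ik(y-\lambda)w\cdot\chi\bar\varphi\approx ik w\bar\varphi$, and $\langle w,\varphi\rangle=-\|u\|_{L^2}^2$, so the transport term \emph{produces} $ik\|u\|_{L^2}^2$ on the nose, modulo a correction supported on the critical layer where $\chi$ is mollified. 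This is the inviscid-damping-type multiplier; without it I do not see how to close the $\|u\|_{L^2}$ estimate at the claimed sharp power $(\nu k^2)^{1/2}$.
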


The proposition is a direct consequence of Lemma \ref{lem:R-navier-2} and Lemma \ref{lem:R-navier-4}.

\begin{lemma}\label{lem:R-navier-2}
Let $w$ {be} as in Proposition \ref{prop:R-navier-v2}.
It holds that
\begin{align*}
\nu\|w'\|_{L^2}+\nu^{\f23}|k|^{\f13}\|w\|_{L^2}\leq C\|{F} \|_{H^{-1}}.
\end{align*}
\end{lemma}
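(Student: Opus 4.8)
The plan is to recast \eqref{eq:R-navier-phi2} in vorticity form and adapt the multiplier argument of Proposition~\ref{prop:R-navier-v1} to the case of $H^{-1}$ data. Since $\varphi$ satisfies the Navier-slip conditions $\varphi(\pm1)=\varphi''(\pm1)=0$, the vorticity $w=(\partial_y^2-k^2)\varphi$ vanishes at $y=\pm1$, so $w\in H^1_0(I)$ and $w$ solves, in $H^{-1}(I)$,
\beno
-\nu(\partial_y^2-k^2)w+ik(y-\lambda)w-\epsilon\nu^{\f13}|k|^{\f23}w=F,\qquad \lambda\in\R.
\eeno
I would test this equation against $\bar w$ and against $(y-\lambda)\bar w$, both of which lie in $H^1_0(I)$ because $w(\pm1)=0$, and separate real and imaginary parts exactly as in Proposition~\ref{prop:R-navier-v1}; the only change is that $\langle F,\cdot\rangle$ is now the $H^{-1}$--$H^1$ duality bracket. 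After integration by parts the $\bar w$-test gives
\beno
\nu\|w'\|_{L^2}^2+\nu k^2\|w\|_{L^2}^2-\epsilon\nu^{\f13}|k|^{\f23}\|w\|_{L^2}^2={\rm Re}\,\langle F,w\rangle,\qquad k\int_{-1}^1(y-\lambda)|w|^2\,dy={\rm Im}\,\langle F,w\rangle,
\eeno
while the imaginary part of the $(y-\lambda)\bar w$-test reproduces the identity from Proposition~\ref{prop:R-navier-v1} (the new term $-\epsilon\nu^{\f13}|k|^{\f23}\langle w,(y-\lambda)w\rangle$ being real) and leads to
\beno
|k|\,\|(y-\lambda)w\|_{L^2}^2\le\nu\|w'\|_{L^2}\|w\|_{L^2}+\big|\langle F,(y-\lambda)w\rangle\big|.
\eeno

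The heart of the argument is then the same critical-layer localization as in Proposition~\ref{prop:R-navier-v1}. Setting $\delta=\nu^{\f13}|k|^{-\f13}$ and $E=I\cap(\lambda-\delta,\lambda+\delta)$, one has $\|w\|_{L^2(E^c)}\le\delta^{-1}\|(y-\lambda)w\|_{L^2}$, while on $E$ the elementary bound $\|w\|_{L^\infty}^2\lesssim\|w\|_{L^2}\|w'\|_{L^2}$ (valid for $w\in H^1_0(I)$) gives $\|w\|_{L^2(E)}^2\lesssim\delta\|w\|_{L^2}\|w'\|_{L^2}$, whence $\|w\|_{L^2}^2\lesssim\delta^{-2}\|(y-\lambda)w\|_{L^2}^2+\delta\|w'\|_{L^2}\|w\|_{L^2}$. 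Feeding into this the bound for $\|(y-\lambda)w\|_{L^2}^2$ above and the bound for $\nu\|w'\|_{L^2}^2$ from the real-part identity, and then running Young's inequality as in Proposition~\ref{prop:R-navier-v1}, should close the coupled inequalities and produce $\nu^{\f23}|k|^{\f13}\|w\|_{L^2}\lesssim\|F\|_{H^{-1}}$; the bound $\nu\|w'\|_{L^2}\lesssim\|F\|_{H^{-1}}$ then follows from the real-part identity. In this step the spectral-shift term contributes a quantity of the form $\epsilon\,\nu^{-1}\big(\nu^{\f23}|k|^{\f13}\|w\|_{L^2}\big)^2$, which is absorbed by taking $\epsilon$ small, precisely as in Corollary~\ref{cor:R-Navier-v1}. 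It is convenient to treat the regime $\nu k^2\ge1$ separately: there $\epsilon\nu^{\f13}|k|^{\f23}\le\epsilon\nu k^2$, so the real-part identity alone already yields both bounds by simple coercivity (using $k^2\ge1$ and $(\nu k^2)^{-\f13}\le1$) and no localization is needed.

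The genuinely new difficulty — essentially the only point where this differs substantively from Proposition~\ref{prop:R-navier-v1} — is the estimation of $\langle F,(y-\lambda)w\rangle$: since $F$ is only in $H^{-1}$, this requires controlling $\|(y-\lambda)w\|_{H^1}$, hence $\|(y-\lambda)w'\|_{L^2}$, and the latter is comparable to $\|w'\|_{L^2}$ only when $|y-\lambda|$ is bounded on $I$. I would therefore, inside the regime $\nu k^2<1$, split according to $\mathrm{dist}(\lambda,I)$. If $\mathrm{dist}(\lambda,I)\le2$, then $|y-\lambda|\le4$ on $I$, so $\|(y-\lambda)w\|_{H^1}\lesssim\|w\|_{L^2}+\|w'\|_{L^2}$ and the scheme above runs through. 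If $\mathrm{dist}(\lambda,I)>2$, then $y-\lambda$ has a fixed sign and $|y-\lambda|\gtrsim1$ uniformly on $I$, so the imaginary part of the $\bar w$-test already gives coercivity, $|k|\,\mathrm{dist}(\lambda,I)\,\|w\|_{L^2}^2\lesssim\|F\|_{H^{-1}}\|w\|_{H^1}$; combined with the real part, and using $\nu^{\f13}|k|^{\f23}=(\nu k^2)^{\f13}<1$ to absorb the shift, this case is elementary and the multiplier $(y-\lambda)\bar w$ is never needed. I expect the bookkeeping of the powers of $\nu$ and $|k|$ in the Young step of the remaining case ($\mathrm{dist}(\lambda,I)\le2$, $\nu k^2<1$) — in particular verifying that the shift term is absorbed with a constant independent of $\nu,k,\lambda$ — to be the most delicate part of the write-up.
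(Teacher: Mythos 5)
Your reduction to a vorticity equation for $w\in H^1_0(I)$, the energy identity from testing against $\bar w$, and the easy regimes $\nu k^2\geq1$ and $\mathrm{dist}(\lambda,I)>2$ are all fine. The gap is in the main case $\nu k^2<1$, $\mathrm{dist}(\lambda,I)\leq2$: the multiplier $(y-\lambda)\bar w$, which works in Proposition~\ref{prop:R-navier-v1} because $F$ is in $L^2$, does \emph{not} close in the $H^{-1}$ setting, and the bookkeeping you flag as ``delicate'' in fact fails. Write $A=\|w\|_{L^2}$, $B=\|w'\|_{L^2}$, $C=\|(y-\lambda)w\|_{L^2}$, $G=\|F\|_{H^{-1}}$, $\delta=\nu^{1/3}|k|^{-1/3}$. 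Your chain gives $|k|C^2\lesssim G(A+B)+\nu AB+|k|^{-1}G^2$ and then, via the localization $A^2\lesssim\delta AB+\delta^{-2}C^2$,
\begin{align*}
A^2\ \lesssim\ \delta AB+\delta^{-2}|k|^{-1}GA+\delta^{-2}|k|^{-1}GB+\delta^{-2}|k|^{-2}G^2.
\end{align*}
All terms close against the target $A^2\lesssim\nu^{-4/3}|k|^{-2/3}G^2$ except $\delta^{-2}|k|^{-1}GB$: substituting the only available bound $B\lesssim\nu^{-1}G+\sqrt{\epsilon}\,\delta^{-1}A$ produces $\delta^{-2}|k|^{-1}\nu^{-1}G^2=\nu^{-5/3}|k|^{-1/3}G^2$, which exceeds the target by the (unbounded) factor $\delta^{-1}=(|k|/\nu)^{1/3}$; the cross term $\sqrt\epsilon\,\delta^{-3}|k|^{-1}GA=\sqrt\epsilon\,\nu^{-1}GA$ is similarly off by $\delta^{-2}$ after Young, and the fixed constant $\sqrt\epsilon$ cannot compensate a factor depending on $\nu,k$.

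The source of the loss is structural. Testing against $(y-\lambda)\bar w$ captures \emph{two} powers of $(y-\lambda)$ in the coercive term $|k|\int(y-\lambda)^2|w|^2$, so on $E^c=I\setminus(\lambda-\delta,\lambda+\delta)$ you only gain $\delta^2$ and must pay $\delta^{-2}$ to recover $\|w\|_{L^2(E^c)}$; meanwhile the price you pay on the right is $G\|(y-\lambda)w\|_{H^1}\sim G(A+B)$ with an $O(1)$ coefficient on $B$. The paper instead tests against $\rho\bar w$, where $\rho$ is the bounded, smoothed sign function defined in \eqref{def:cutoff-rho}. Its two key features are $\|\rho\|_{L^\infty}=1$, $\|\rho'\|_{L^2}\lesssim\delta^{-1/2}$, so $\|\rho w\|_{H^1}\lesssim\|w\|_{H^1}+\delta^{-1}\|w\|_{L^2}$ (the $\|w\|_{H^1}$ contribution still has $O(1)$ coefficient), and $\rho(y)(y-\lambda)\geq\delta$ on $E^c$, so the coercive term captures one power of $(y-\lambda)$ and only a $\delta^{-1}$ division is needed. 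This replaces the offending $\delta^{-2}|k|^{-1}GB$ by $\delta^{-1}|k|^{-1}GB$, and a direct check shows $\delta^{-1}|k|^{-1}\nu^{-1}=\nu^{-4/3}|k|^{-2/3}$ hits the target exactly. (The byproduct $\nu\|w\|_{L^\infty}\|w'\|_{L^2}\|\rho'\|_{L^2}$ from integrating by parts the $\nu\partial_y^2$ term against $\rho$ is also borderline but closes, as the paper's Young step shows.) In short: the idea you need but don't supply is the bounded multiplier $\rho$ in place of $(y-\lambda)$; without it the argument is short by a factor $\delta^{-1}$.
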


\begin{proof}
Let $\delta=\nu^{\f13}|k|^{-\f13}$. By integration by parts, we have
\begin{align}
\nonumber\|{F}\|_{H^{-1}}\|w\|_{H^1}&\geq|\langle {F},w\rangle|=|\langle -\nu(\partial_y^2-k^2)w+ik(y-\lambda)w- \epsilon\nu^{\f13}|k|^{\f23}w,w\rangle|\\
&\geq\nu\|w'\|_{L^2}^2+\nu k^2\|w\|_{L^2}^2- \epsilon\nu^{\f13}|k|^{\f23}\|w\|^2_{L^2}\\
&\geq\nu\|w\|_{H^1}^2-\epsilon\nu^{\f13}|k|^{\f23}\|w\|^2_{L^2},\nonumber
\end{align}
which gives
\begin{align*}
\nu\|w\|^2_{H^1}\leq\|{F}\|_{H^{-1}}\|w\|_{H^1} +\epsilon\nu^{\f13}|k|^{\f23}\|w\|^2_{L^2}\leq\f{\nu^{-1}}{2}\|{F}\|_{H^{-1}}^2 +\f{\nu}{2}\|w\|^2_{H^1}+\epsilon\nu^{\f13}|k|^{\f23}\|w\|^2_{L^2}.
\end{align*}
This shows that
\begin{align}\label{eq:w-H1-t1}
  \|w\|_{H^1}\leq\nu^{-1}\|{F}\|_{H^{-1}}+\sqrt{2\epsilon}\delta^{-1}\|w\|_{L^2}.
\end{align}


Let us introduce a cutoff function $\rho(y)$ as follows
\begin{align}\label{def:cutoff-rho}
\rho(y)=\left\{
  \begin{aligned}
&-1\quad\qquad\qquad\qquad y\in(-1,1)\cap(-1,\lambda-\delta),\\
&\sin\big(\f{\pi(y-\lambda)}{2\delta}\big)\qquad\quad y\in(-1,1)\cap(\lambda-\delta,\lambda+\delta),\\
&1\qquad\qquad\qquad\qquad y\in(-1,1)\cap(\lambda+\delta,1).
\end{aligned}
\right.
\end{align}
We get by integration by parts that
\begin{align*}
&\big|{\rm Im}\langle ik(y-\lambda)w-\nu(\partial_y^2-k^2)w - \epsilon\nu^{\f13}|k|^{\f23}w,\rho w\rangle \big|\\
&\geq \delta|k|\int_{(-1,1)\setminus(\lambda-\delta,\lambda+\delta)}|w|^2dy-\nu \Big|{\rm Im}\int_{-1}^{1}\partial^2_yw\overline{w}\rho dy \Big|\\
&\geq  \delta|k|\int_{(-1,1)\setminus(\lambda-\delta,\lambda+\delta)}|w|^2dy-\nu \Big|\int_{-1}^{1}w'\overline{w}\rho'dy\Big|\\
&\geq  \delta|k|\int_{(-1,1)\setminus(\lambda-\delta,\lambda+\delta)}|w|^2dy-\nu\|w\|_{L^\infty}\|w'\|_{L^2}\|\rho'\|_{L^2},
\end{align*}
which implies
\begin{align*}
\delta|k|\int_{(-1,1)\setminus(\lambda-\delta,\lambda+\delta)}|w|^2dy \lesssim& \|{F}\|_{H^{-1}}\|\rho w\|_{H^1}+\nu\delta^{-\f12}\|w'\|_{L^2}\|w\|_{L^{\infty}}.
\end{align*}
This along with $\|\rho w\|_{H^1}\lesssim\|w\|_{H^1}+\delta^{-1}\|w\|_{L^2}$ shows
\begin{align*}
\|w\|_{L^2((-1,1)\setminus(\lambda-\delta,\lambda+\delta))}^{{2}}
\lesssim\delta^{-1}|k|^{-1}\big(\|{F}\|_{H^{-1}}\|w\|_{H^1}+\delta^{-1}\|{F}\|_{H^{-1}}\|w\|_{L^2}+ \delta^{-\f12}\nu\|w'\|_{L^2}\|w\|_{L^{\infty}}\big),
\end{align*}
which implies
\begin{align}
\|w\|_{L^2}^2\lesssim\delta\|w\|_{L^{\infty}}^2+ \f{\|{F}\|_{H^{-1}}\|w\|_{H^1}}{|k|\delta}+\f{\|{F}\|_{H^{-1}}\|w\|_{L^2}}{|k|\delta^2}+ \f{\nu\|w'\|_{L^2}\|w\|_{L^{\infty}}}{|k|\delta^{\f32}}.
\end{align}
Thanks to $\|w\|_{L^{\infty}}^2\leq\|w\|_{L^2}\|w'\|_{L^2}$ and \eqref{eq:w-H1-t1}, we get
\begin{align*}
\|w\|_{L^2}^2\lesssim&\delta\|w\|_{H^1}\|w\|_{L^2}+\f{\|{F}\|_{H^{-1}}\|w\|_{H^1}}{|k|\delta}+\f{\|{F}\|_{H^{-1}}\|w\|_{L^2}}{|k|\delta^2}+ \f{\nu\|w\|^{\f32}_{H^1}\|w\|^{\f12}_{L^2}}{|k|\delta^{\f32}}\\
\lesssim&\f{\delta}{\nu}\|{F}\|_{H^{-1}}\|w\|_{L^2}+\f{\|{F}\|^2_{H^{-1}}}{|k|\delta\nu}+\f{ (1+\sqrt{\epsilon})\|{F}\|_{H^{-1}}\|w\|_{L^2}}{|k|\delta^2}+(\epsilon^{\f34}+\sqrt{\epsilon})\|w\|^2_{L^2}.
\end{align*}
Due to $\delta=\nu^{\f13}|k|^{-\f13}$ and $\epsilon$ small, we get by Young's inequality that
\begin{align*}
  \|w\|_{L^2}^2\leq C\nu^{-\f43}|k|^{-\f23}\|{F}\|^2_{H^{-1}}.
\end{align*}
This along with \eqref{eq:w-H1-t1} shows that $\nu\|w\|_{H^1}\lesssim\|{F}\|_{H^{-1}}$.
\end{proof}

\begin{lemma}\label{lem:R-navier-4}
Let $u$ {be} as in Proposition \ref{prop:R-navier-v2}.
It holds that
\begin{align*}
 (\nu |k|^2)^{\f12}\|u\|_{L^2}\leq C\|{F} \|_{H^{-1}}.
\end{align*}
\end{lemma}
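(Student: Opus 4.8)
The statement to prove is the velocity bound $(\nu|k|^2)^{\f12}\|u\|_{L^2}\le C\|F\|_{H^{-1}}$. Since $w:=(\partial_y^2-k^2)\varphi$ satisfies $w(\pm1)=0$ by the Navier-slip conditions $\varphi(\pm1)=\varphi''(\pm1)=0$, and since integration by parts gives $\|u\|_{L^2}^2=\|\varphi'\|_{L^2}^2+k^2\|\varphi\|_{L^2}^2=-\langle w,\varphi\rangle$ together with $\|\varphi\|_{L^2}\le|k|^{-1}\|u\|_{L^2}$ by Biot--Savart, it suffices to bound $\|u\|_{L^2}$, and I would split according to the size of $\nu|k|^2$. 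When $\nu|k|^2\gtrsim1$ the estimate is immediate: $\|u\|_{L^2}^2=-\langle w,\varphi\rangle\le|k|^{-1}\|w\|_{L^2}\|u\|_{L^2}$ gives $\|u\|_{L^2}\le|k|^{-1}\|w\|_{L^2}$, so by Lemma \ref{lem:R-navier-2}, $(\nu|k|^2)^{\f12}\|u\|_{L^2}\le\nu^{\f12}\|w\|_{L^2}\le C\nu^{-\f16}|k|^{-\f13}\|F\|_{H^{-1}}=C(\nu|k|^2)^{-\f16}\|F\|_{H^{-1}}\le C\|F\|_{H^{-1}}$. (The same conclusion also follows by testing the vorticity equation $-\nu(\partial_y^2-k^2)w+ik(y-\lambda)w-\epsilon\nu^{\f13}|k|^{\f23}w=F$ against the stream function $g$ of $\varphi$, i.e.\ $(\partial_y^2-k^2)g=\varphi$, $g(\pm1)=0$; two integrations by parts together with $(\partial_y^2-k^2)((y-\lambda)g)=(y-\lambda)\varphi+2g'$, which makes the leading part of $\langle(y-\lambda)w,g\rangle$ real, produce $\nu\|u\|_{L^2}^2=\mathrm{Re}\langle F,g\rangle+2k\,\mathrm{Im}\langle\varphi,g'\rangle+\epsilon\nu^{\f13}|k|^{\f23}\|\varphi\|_{L^2}^2$, whose right-hand side, using $\|g\|_{H^1}\lesssim|k|^{-1}\|\varphi\|_{L^2}$ and $|\langle\varphi,g'\rangle|\lesssim|k|^{-1}\|\varphi\|_{L^2}^2$, is bounded by $\|u\|_{L^2}$ times $|k|^{-2}\|F\|_{H^{-1}}+(|k|^{-2}+\nu^{\f13}|k|^{-\f43})\|u\|_{L^2}$, and this closes once $\nu|k|^2\gtrsim1$.)

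When $\nu|k|^2\lesssim1$ the bound $\|u\|_{L^2}\le|k|^{-1}\|w\|_{L^2}$ is too lossy, because $w$ concentrates in the critical layer $\{|y-\lambda|\le\delta\}$, $\delta=\nu^{\f13}|k|^{-\f13}$, and the global $L^2$ bound of Lemma \ref{lem:R-navier-2} overcounts the mass that actually feeds into $u$. Here I would instead use the sharper Biot--Savart estimate $\|u\|_{L^2}\lesssim|k|^{-\f12}\|w\|_{L^1}$ (Lemma \ref{lem:phi-w}, as in the proof of Proposition \ref{prop:R-navier-v1}) and then bound $\|w\|_{L^1}$ by the multiplier/cutoff method behind Proposition \ref{prop:R-navier-v1} and Lemma \ref{lem:R-navier-2}, now carried out for $H^{-1}$ data: test the vorticity equation against $w$, against $(y-\lambda)w$, and against $\rho w$ with the cutoff $\rho$ of \eqref{def:cutoff-rho}, take real and imaginary parts, and feed in the localized bound on $\|w\|_{L^2(I\setminus(\lambda-\delta,\lambda+\delta))}$ already obtained in the proof of Lemma \ref{lem:R-navier-2}. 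This should yield $\|w\|_{L^1}\le(2\delta)^{\f12}\|w\|_{L^2(\lambda-\delta,\lambda+\delta)}+\|(y-\lambda)^{-1}\|_{L^2(I\setminus(\lambda-\delta,\lambda+\delta))}\|(y-\lambda)w\|_{L^2}\lesssim\nu^{-\f13}|k|^{-\f16}\|F\|_{H^{-1}}$, hence $\|u\|_{L^2}\lesssim|k|^{-\f12}\|w\|_{L^1}\lesssim\nu^{-\f13}|k|^{-\f23}\|F\|_{H^{-1}}$, so that $(\nu|k|^2)^{\f12}\|u\|_{L^2}\lesssim(\nu|k|^2)^{\f16}\|F\|_{H^{-1}}\le\|F\|_{H^{-1}}$.

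The main obstacle is precisely this low-wavenumber step: extracting the sharp powers of $\nu$ and $|k|$ from the $L^1$/critical-layer estimate with only $H^{-1}$ data. In the critical layer the weight $(y-\lambda)$ buys nothing, so Lemma \ref{lem:R-navier-2} cannot be applied as a black box — one has to re-run its cutoff argument while keeping track of how $w$ concentrates, in particular to get the improved bounds on $\|w\|_{L^2(\lambda-\delta,\lambda+\delta)}$ and $\|(y-\lambda)w\|_{L^2}$ that the proof needs. One should also peel off the easy case $\operatorname{dist}(\lambda,I)\gtrsim1$ (where $|y-\lambda|$ is bounded below on $I$ and the estimate is elementary) in order to legitimately control $\|(y-\lambda)w\|_{H^1}$ by $\|w\|_{H^1}$ in the multiplier identities.
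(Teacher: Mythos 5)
Your proof for $\nu k^2\geq 1$ is correct and matches the paper's (both use $\|u\|_{L^2}\leq|k|^{-1}\|w\|_{L^2}$ plus Lemma~\ref{lem:R-navier-2}). The problem is the low-frequency branch $\nu k^2\leq 1$, where you replace the paper's argument with the chain $\|u\|_{L^2}\lesssim|k|^{-\f12}\|w\|_{L^1}$ plus an $L^1$ bound obtained by testing against $w$, $(y-\lambda)w$, and $\rho w$. This route has a genuine gap, and moreover your stated target $\|w\|_{L^1}\lesssim\nu^{-\f13}|k|^{-\f16}\|F\|_{H^{-1}}$ is already inconsistent with your own decomposition: the first term $(2\delta)^{\f12}\|w\|_{L^2}$, with $\delta=\nu^{\f13}|k|^{-\f13}$ and $\|w\|_{L^2}\lesssim\nu^{-\f23}|k|^{-\f13}\|F\|_{H^{-1}}$ from Lemma~\ref{lem:R-navier-2}, already equals $\nu^{-\f12}|k|^{-\f12}\|F\|_{H^{-1}}$, which exceeds $\nu^{-\f13}|k|^{-\f16}\|F\|_{H^{-1}}$ by $(\nu k^2)^{-\f16}\geq 1$.

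Even after correcting the target to the (sufficient) $\|w\|_{L^1}\lesssim\nu^{-\f12}|k|^{-\f12}\|F\|_{H^{-1}}$, the second term of your decomposition, $\delta^{-\f12}\|(y-\lambda)w\|_{L^2}$, requires $\|(y-\lambda)w\|_{L^2}\lesssim\nu^{-\f13}|k|^{-\f23}\|F\|_{H^{-1}}$. But for $H^{-1}$ data the only available estimate comes from testing the equation against $(y-\lambda)w$, and the $H^{-1}$--$H^1_0$ duality costs $\|(y-\lambda)w\|_{H^1}\lesssim\|w\|_{H^1}\lesssim\nu^{-1}\|F\|_{H^{-1}}$, which gives only $\|(y-\lambda)w\|_{L^2}\lesssim\nu^{-\f12}|k|^{-\f12}\|F\|_{H^{-1}}$ — off by the factor $(\nu^{-1}|k|)^{\f16}\geq|k|^{\f12}$ in the regime $\nu k^2\leq1$. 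The same loss of $\nu^{-1}$ from $\|w\|_{H^1}$ defeats the $\rho w$ multiplier as well. You flagged this obstacle yourself, but the sketch you propose to overcome it (re-running the cutoff argument ``keeping track of how $w$ concentrates'') is precisely the missing content: nothing in the outline actually produces the improved $\|(y-\lambda)w\|_{L^2}$ bound that the $L^1$ route needs.

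The paper avoids the issue entirely by never going through $\|w\|_{L^1}$. It tests the equation against $\chi\varphi$ with $\chi$ as in \eqref{def:cutoff-chi} (roughly $(y-\lambda)^{-1}$ away from the critical layer), so that the $ik(y-\lambda)w$ term is converted, on the exterior region, into $ik\langle w,\varphi\rangle=-ik\|u\|_{L^2}^2$ — the exact quantity being estimated. Because $\varphi$ is two derivatives smoother than $w$, the duality cost is only $\|\chi\varphi\|_{H^1}\lesssim\delta^{-1}\|\varphi'\|_{L^2}+\delta^{-\f32}\|\varphi\|_{L^\infty}\lesssim\nu^{-\f12}\|u\|_{L^2}$, which is exactly the right size to close $\|u\|_{L^2}^2\lesssim\nu^{-\f12}|k|^{-1}\|F\|_{H^{-1}}\|u\|_{L^2}$. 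In short: the paper pairs against a stream-function--weighted test function so that the dangerous $(y-\lambda)$ factor is cancelled rather than carried into a vorticity-weighted $H^1$ norm.
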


\begin{proof}
Let $\delta=\nu^{\f13}|k|^{-\f13}$ and introduce a cut-off function $\chi(y)$ as follows
\begin{align}\label{def:cutoff-chi}
\chi(y)=\left\{
  \begin{aligned}
&\f{1}{y-\lambda}\quad\quad\quad\quad\quad\quad\quad\quad y\in(-1,1)\cap(-1,\lambda-\delta),\\
&2\f{y-\lambda}{\delta^2}-\f{(y-\lambda)^3}{\delta^4}\quad\quad y\in(-1,1)\cap(\lambda-\delta,\lambda+\delta),\\
&\f{1}{y-\lambda}\quad\quad\quad\quad\quad\quad\quad\quad y\in(-1,1)\cap(\lambda+\delta,1),
\end{aligned}
\right.
\end{align}
which satisfies
\beno
\|\chi\|_{L^2}\lesssim\delta^{-\f12},\quad \|\chi\|_{L^{\infty}}\lesssim\delta^{-1},\quad \|\chi'\|_{L^2}\lesssim\delta^{-\f32}.
\eeno

We get by integration by parts that
\begin{align*}
  \big\langle {F},\chi\varphi\big\rangle =&\nu\int_{-1}^{1}w'(\chi\overline{\varphi})'dy +\nu k^2\int_{-1}^{1}w\chi\overline{\varphi} dy +ik\int_{(-1,1)\setminus(\lambda-\delta,\lambda+\delta)}w\overline{\varphi}dy \\
  &+ik\int_{(-1,1)\cap(\lambda-\delta,\lambda+\delta)}\big((y-\lambda)\chi\big) w\overline{\varphi}dy-\epsilon\nu^{\f13}|k|^{\f23}\int_{-1}^{1}w\chi\overline{\varphi}dy.
\end{align*}
Using the facts that $\|w\|_{L^{\infty}}^2\leq\|w'\|_{L^2}\|w\|_{L^2}$ and $\|\varphi\|_{L^{\infty}}^2\leq\|\varphi'\|_{L^2}\|\varphi\|_{L^2}$, we get
\begin{align*}
   &\|u\|^{2}_{L^2}=\big\langle w,\varphi\big\rangle = \int_{(-1,1)\cap(\lambda-\delta,\lambda+\delta)}w\overline{\varphi} dy +\int_{(-1,1)\setminus(\lambda-\delta,\lambda+\delta)}w\overline{\varphi}dy\\
  &\lesssim \delta\|w\|_{L^\infty}\|\varphi\|_{L^\infty}+|k|^{-1}|\langle{F},\chi\varphi\big\rangle|+\nu|k|^{-1}\|w'\|_{L^2}\|(\chi\varphi)'\|_{L^2}+\nu |k|\|w\|_{L^2}\|\chi\|_{L^2}\|\varphi\|_{L^\infty} \\
 &\quad+\|w\|_{L^2}\|\varphi\|_{L^\infty}\|(y-\lambda)\chi\|_{L^{2}((-1,1)\cap(\lambda-\delta,\lambda+\delta))} +\epsilon\nu^{\f13}|k|^{-\f13}\|w\|_{L^2}\|\chi\|_{L^2}\|\varphi\|_{L^\infty}\\
&\lesssim \delta\|w'\|^{\f12}_{L^2}\|w\|^{\f12}_{L^2}\|\varphi'\|_{L^2}^{\f12}\|\varphi\|_{L^2}^{\f12}+|k|^{-1}\|{F}\|_{H^{-1}}\|\chi\varphi\|_{H^1}+ \nu|k|^{-1}\|w'\|_{L^2}\|(\chi\varphi)'\|_{L^2}\\
  &\quad+\nu|k|\delta^{-\f12}\|w\|_{L^2}\|\varphi'\|_{L^2}^{\f12}\|\varphi\|_{L^2}^{\f12}+ \delta^{\f12}\|w\|_{L^2}\|\varphi'\|_{L^2}^{\f12}\|\varphi\|_{L^2}^{\f12} +\epsilon\nu^{\f13}|k|^{-\f13}\delta^{-\f12}\|w\|_{L^2}\|\varphi'\|_{L^2}^{\f12}\|\varphi\|_{L^2}^{\f12}.
  \end{align*}
Thanks to $\|(\chi\varphi)'\|_{L^2}\lesssim\delta^{-1}\|\varphi'\|_{L^2}+\delta^{-\f32}\|\varphi\|_{L^{\infty}}$,
we get by Lemma {\ref{lem:R-navier-2}} that
\begin{align*}
   \|u\|^{2}_{L^2} \lesssim&\delta\nu^{-\f56}|k|^{-\f16}\|{F}\|_{H^{-1}}\|\varphi'\|_{L^2}^{\f12}\|\varphi\|_{L^2}^{\f12} +|k|^{-1}\|{F}\|_{H^{-1}}(\delta^{-1}\|\varphi'\|_{L^2}+\delta^{-\f32}\|\varphi\|_{L^{\infty}})\\
  &+\delta^{-\f12}\nu^{\f13}|k|^{\f23}\|{F}\|_{H^{-1}}\|\varphi'\|_{L^2}^{\f12}\|\varphi\|_{L^2}^{\f12} +\delta^{\f12}\nu^{-\f23}|k|^{-\f13}\|{F}\|_{H^{-1}}\|\varphi'\|_{L^2}^{\f12}\|\varphi\|_{L^2}^{\f12}\\ &+\epsilon\nu^{-\f13}|k|^{-\f23}\delta^{-\f12}\|{F}\|_{H^{-1}}\|\varphi'\|_{L^2}^{\f12}\|\varphi\|_{L^2}^{\f12}.
\end{align*}
As $\|u\|_{L^2}\thicksim\|\varphi'\|_{L^2}+|k|\|\varphi\|_{L^2}, \|\varphi'\|_{L^2}^{\f12}|k|^{\f12}\|\varphi\|_{L^2}^{\f12}\leq\|u\|_{L^2}$, we have
\begin{align*}
  \|u\|^{2}_{L^2}  \lesssim&(\delta\nu^{-\f56}|k|^{-\f23}+\delta^{-1}|k|^{-1}+\delta^{-\f32}|k|^{-\f32}+\delta^{-\f12}\nu^{\f13}|k|^{\f16} +\delta^{\f12}\nu^{-\f23}|k|^{-\f56}\\
  &+\epsilon\nu^{-\f13}|k|^{-\f76}\delta^{-\f12})\|{F}\|_{H^{-1}}\|u\|_{L^2}\\
  \lesssim&(\nu^{-\f12}|k|^{-1}+\nu^{-\f13}|k|^{-\f23}+\nu^{\f16}|k|^{\f13})\|{F}\|_{H^{-1}}\|u\|_{L^2}.
\end{align*}
When $\nu k^2\leq1$, we deduce that $\nu^{\f12}|k|\|u\|_{L^2}\lesssim\|{F}\|_{H^{-1}}$. When $\nu k^2\geq1$, we get by Lemma {\ref{lem:R-navier-2}} that
\begin{align*}
  \|u\|_{L^2} & \lesssim |k|^{-1}\|w\|_{L^2}\lesssim \nu^{-\f{1}{2}}|k|^{-1}(\nu^{-\f{1}{6}}|k|^{-\f{1}{3}})\|{F}\|_{H^{-1}} \leq\nu^{-\f{1}{2}}|k|^{-1}\|{F}\|_{H^{-1}}.
\end{align*}
\end{proof}

\subsection{Weak type resolvent estimate}

\begin{lemma}\label{lem:R-navier-weak}
Let $(\varphi, w)$ be as in Proposition \ref{prop:R-navier-v2}. Assume that $\nu k^2\le 1$ and $f\in H^1(-1,1),$  {$j\in\{\pm1\}$ and $f(-j)=0$}.
Then it holds that
\begin{align*}
|\langle w,f\rangle|\leq& C|k|^{-1}\|{F}\|_{H^{-1}}\big(\delta^{-\f32}\|f\|_{L^{\infty}((-1,1)\cap(\lambda-\delta,\lambda+\delta))}\\
&\quad+|f(j)|(|j-\lambda|+\delta)^{-\f34}\delta^{-\f34}
+\|f\chi\|_{H^1}+\delta^{-1}\|f\chi\|_{L^2}\big).
\end{align*}
Here $\delta=\nu^{\f13}|k|^{-\f13}$ and $ \chi$ is a cut-off function defined in \eqref{def:cutoff-chi}.
\end{lemma}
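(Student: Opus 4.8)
The plan is to test the equation \eqref{eq:R-navier-phi2}, rewritten in the vorticity form $-\nu(\partial_y^2-k^2)w+ik(y-\lambda)w-\epsilon\nu^{\f13}|k|^{\f23}w=F$, against a suitable multiplier built from $f$, and to reconstruct $\langle w,f\rangle$ from the resulting identity after isolating the degenerate factor $(y-\lambda)$. The natural choice is to split according to the Airy-type scale $\delta=\nu^{\f13}|k|^{-\f13}$ into the critical layer $(\lambda-\delta,\lambda+\delta)$, where the factor $y-\lambda$ is small and one pays $\delta\|w\|_{L^\infty}$-type losses controlled by Lemma \ref{lem:R-navier-2} via $\|w\|_{L^\infty}^2\le\|w'\|_{L^2}\|w\|_{L^2}$, and the outer region, where $(y-\lambda)^{-1}$ is bounded by $\delta^{-1}$ and the multiplier $\chi$ from \eqref{def:cutoff-chi} is the honest reciprocal of $y-\lambda$. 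On the outer region we write $f=(y-\lambda)\cdot(f\chi)$ and replace $ik(y-\lambda)w$ by $F+\nu(\partial_y^2-k^2)w+\epsilon\nu^{\f13}|k|^{\f23}w$, so that $\langle w,f\rangle$ on that region becomes $(ik)^{-1}\langle F+\nu(\partial_y^2-k^2)w+\epsilon\nu^{\f13}|k|^{\f23}w,\,\overline{f\chi}\rangle$ plus boundary terms; integrating the viscous term by parts twice produces $\nu\langle w',(f\chi)'\rangle$, $\nu k^2\langle w,f\chi\rangle$ and a boundary contribution at $y=j$ (the endpoint where $f$ need not vanish, since $f(-j)=0$ kills the other one).

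First I would set up this decomposition carefully, being explicit about which boundary terms survive: the term $\nu\langle\partial_y^2 w,\overline{f\chi}\rangle$ integrated by parts gives $-\nu\langle w',(f\chi)'\rangle + \nu[w'\overline{f\chi}]_{-1}^1$, and since $\varphi(\pm1)=\varphi''(\pm1)=0$ one has $w(\pm1)=0$, so $f\chi$ vanishes at $\pm1$ and the boundary term drops; however there is a genuine interface term at $y=\lambda\pm\delta$ coming from the cut in $\chi$, and this is where the $|f(j)|(|j-\lambda|+\delta)^{-\f34}\delta^{-\f34}$ contribution must be extracted, using that $w$ near the critical layer is controlled in $L^\infty$ and that the Airy scaling forces $\|w\|_{L^\infty((\lambda-\delta,\lambda+\delta))}$ to interpolate between $\|w\|_{L^2}\sim\nu^{-\f23}|k|^{-\f13}\|F\|_{H^{-1}}$ and $\|w'\|_{L^2}\sim\nu^{-1}\|F\|_{H^{-1}}$. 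Second, on the inner region, I would simply bound $\big|\int_{(\lambda-\delta,\lambda+\delta)}w\bar f\big|\le\|f\|_{L^\infty((\lambda-\delta,\lambda+\delta))}\int_{(\lambda-\delta,\lambda+\delta)}|w|$ and then use $\int_{|y-\lambda|<\delta}|w|\le(2\delta)^{\f12}\|w\|_{L^2}\lesssim\delta^{\f12}\nu^{-\f23}|k|^{-\f13}\|F\|_{H^{-1}}=\delta^{\f12}\cdot\delta^{-2}|k|^{-1}\|F\|_{H^{-1}}=|k|^{-1}\delta^{-\f32}\|F\|_{H^{-1}}$, which is exactly the first term on the right-hand side. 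Third, I would collect the outer-region estimates: $|k|^{-1}\|F\|_{H^{-1}}\|f\chi\|_{H^1}$ is immediate from the $H^{-1}$--$H^1$ pairing; $\nu|k|^{-1}\|w'\|_{L^2}\|(f\chi)'\|_{L^2}$ and $\nu|k|\|w\|_{L^2}\|f\chi\|_{L^2}$ are absorbed, using Lemma \ref{lem:R-navier-2} and $\nu k^2\le1$, into $|k|^{-1}\|F\|_{H^{-1}}(\|f\chi\|_{H^1}+\delta^{-1}\|f\chi\|_{L^2})$ after checking the $\nu$-powers; and the $\epsilon$-term is lower order by the same bookkeeping.

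The main obstacle will be the interface/boundary term at $y=j$, i.e.\ showing it is bounded by $|k|^{-1}\|F\|_{H^{-1}}|f(j)|(|j-\lambda|+\delta)^{-\f34}\delta^{-\f34}$. This requires a sharp pointwise bound on $w$ (or on $w'$) near the endpoint $j$ that reflects the Airy structure: one expects $|w(y)|$ to decay like $(|y-\lambda|+\delta)^{-1}$ away from the critical layer while $\|w\|$ in the relevant scaled norm is of size $\delta^{-2}|k|^{-1}\|F\|_{H^{-1}}$, so a careful interpolation of the form $|w(j)|\lesssim(|j-\lambda|+\delta)^{-\f34}\delta^{-\f54}|k|^{-1}\|F\|_{H^{-1}}$ (or the analogous statement for the flux $w'$ paired against $f\chi$) must be derived, presumably by running an energy estimate with the weight $(y-\lambda)$ localized to the right of $\lambda$ and using the monotonicity afforded by the sign of $k(y-\lambda)$ on that half. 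I would handle this by a separate localized energy identity: multiply the equation by $\overline{w}$ times a cutoff supported on $(\lambda,1)$ (or $(-1,\lambda)$ depending on the sign of $j-\lambda$), integrate, take the appropriate (real or imaginary) part so that the indefinite term $ik(y-\lambda)|w|^2$ has a favorable sign, and read off control of $|w|$ at the endpoint; the powers $\tfrac34$ should then emerge from optimizing the width of an intermediate cutoff between $\delta$ and $|j-\lambda|$. Everything else is routine integration by parts and Young's inequality with the substitution $\delta=\nu^{\f13}|k|^{-\f13}$.
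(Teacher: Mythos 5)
Your overall decomposition is the right one and matches the paper's in spirit: separate the critical layer from the outer region, use $\chi$ to invert $y-\lambda$ outside, and bound the inner correction via $\|w\|_{L^2}$ and $\|f\|_{L^\infty}$. Your inner-region estimate $\big|\int_{|y-\lambda|<\delta}w\bar f\big|\le (2\delta)^{1/2}\|w\|_{L^2}\|f\|_{L^\infty}\lesssim |k|^{-1}\delta^{-3/2}\|F\|_{H^{-1}}\|f\|_{L^\infty}$ is exactly what the paper does and is correct. The outer-region idea of testing the equation against $f\chi$ and reading off $\langle F,f\chi\rangle$, $\nu\langle w',(f\chi)'\rangle$, $\nu k^2\langle w,f\chi\rangle$, and the $\epsilon$-term is also sound and reproduces the $\|f\chi\|_{H^1}+\delta^{-1}\|f\chi\|_{L^2}$ contributions.

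The gap is in how you propose to extract the $|f(j)|(|j-\lambda|+\delta)^{-3/4}\delta^{-3/4}$ term, and it is a real gap. Two specific problems. First, you attribute part of this term to ``genuine interface terms at $y=\lambda\pm\delta$ coming from the cut in $\chi$'', but the cubic piece of $\chi$ in \eqref{def:cutoff-chi} is designed so that $\chi$ and $\chi'$ both match $1/(y-\lambda)$ and its derivative at $y=\lambda\pm\delta$; $\chi$ is $C^1$ and $f\chi\in H^1$, so there are no interface boundary terms. Second, the only true boundary term in your integration by parts is the one at $y=j$, of the form $\nu(ik)^{-1}w'(j)\overline{f(j)\chi(j)}$, and to bound it as required you would need a pointwise estimate of roughly $\nu|w'(j)|\lesssim\|F\|_{H^{-1}}(|j-\lambda|+\delta)^{1/4}\delta^{-3/4}$. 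You have $\nu\|w'\|_{L^2}\lesssim\|F\|_{H^{-1}}$ from Lemma \ref{lem:R-navier-2}, but no control of $\|w''\|_{L^2}$, so no route to a pointwise bound on $w'$; your proposed ``localized energy identity'' is unworked and there is no evidence it closes. Note also that you at one point suggest bounding $|w(j)|$, but $w(j)=0$ by the Navier-slip boundary condition, so that quantity carries no information.

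The paper avoids this difficulty entirely. It never tests the equation against a non-$H_0^1$ function and so never sees a boundary term involving $w'(j)$. Instead it first establishes the weak estimate \eqref{eq:w-weak} for $\phi\in H_0^1$, then extends it to $\phi\in H^1$ with $\phi(-j)=0$ by writing $\phi=\phi_1+\phi(j)\chi_1$ where $\chi_1$ is a linear ramp of width $\delta_*$ at $y=j$ and $\phi_1\in H_0^1$. The ramp contribution $\phi(j)\langle w,(y-\lambda)\chi_1\rangle$ is then bounded using only $w(j)=0$ and $\|w'\|_{L^2}$ via $\|w\|_{L^1([1-\delta_*,1])}\le\delta_*^{3/2}\|w'\|_{L^2}$ --- no pointwise information on $w'$ is needed --- and the exponent $3/4$ emerges from optimizing $\delta_*=(|j-\lambda|+\delta)^{-1/2}\delta^{3/2}$. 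That thin-ramp trick, exploiting the vanishing of $w$ (not $w'$) at the boundary, is the missing idea in your plan.
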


\begin{proof}
For $\phi\in H_0^1(-1,1)$, we get by integration by parts that
\begin{align*}
\|{F}\|_{H^{-1}}\|\phi\|_{H^1}&\geq|\langle{F},\phi\rangle|=\big|\langle -\nu(\partial_y^2-k^2)w+ik(y-\lambda)w- \epsilon\nu^{\f13}|k|^{\f23}w,\phi\rangle\big|\\
&\geq-\nu\|w'\|_{L^2}\|\phi\|_{L^2}-\big|\nu k^2-\epsilon\nu^{\f13}|k|^{\f23}\big|\|w\|_{L^2}\|\phi\|_{L^2}+|\langle k(y-\lambda)w,\phi\rangle|.
\end{align*}
As $\nu k^2\leq1$, we have $\left|\nu k^2-\epsilon\nu^{\f13}|k|^{\f23}\right|\leq \nu^{\f13}|k|^{\f23}$. Then by Lemma \ref{lem:R-navier-2}, we get
\begin{align}
|k||\langle w,(y-\lambda)\phi\rangle|\leq&\|{F}\|_{H^{-1}}\|\phi\|_{H^1}+\nu\|w'\|_{L^2}\|\phi\|_{L^2}+
\nu^{\f13}|k|^{\f23}\|w\|_{L^2}\|\phi\|_{L^2}\nonumber\\
\leq& C\|{F}\|_{H^{-1}}\|\phi\|_{H^1}+C\nu^{-\f13}|k|^{\f13}\|{F}\|_{H^{-1}}\|\phi\|_{L^2}\nonumber\\
=& C\|{F}\|_{H^{-1}}\big(\|\phi\|_{H^1}+\delta^{-1}\|\phi\|_{L^2}\big).\label{eq:w-weak}
\end{align}

Next we deal with the case when $\phi\in H^1(-1,1),\ \phi(-1)=0.$
In this case, for every $\delta_*\in(0,\delta]\subset(0,1]$, let $\chi_1(y)=\max(1-(1-y)/\delta_*,0)$ and $\phi_1(y)=\phi(y)-\phi(1)\chi_1(y)$ for $y\in[-1,1]$.
Then we have $\chi_1\in H^1(-1,1),\ \phi_1\in H_0^1(-1,1),\, {\rm supp}\chi_1=[1-\delta_*,1]$ and
\beno
&&\|\chi_1\|_{L^{\infty}}=1,\quad \|\chi_1\|_{L^{2}}\leq\delta_*^{\f12},\quad \|\chi_1'\|_{L^{2}}\leq\delta_*^{-\f12},\\ &&\|(y-\lambda)\chi_1\|_{L^{\infty}}\leq\|y-\lambda\|_{L^{\infty}([1-\delta_*,1])}\|\chi_1\|_{L^{\infty}}\leq |1-\lambda|+\delta_*.
\eeno
As $ w(1)=0$, we have $|w(y)|=|\int_y^1w'(z)dz|\leq |1-y|^{\f12}\|w'\|_{L^2}\leq \delta_*^{\f12}\|w'\|_{L^2}$ for $y\in[1-\delta_*,1]$ and $\|w\|_{L^{1}([1-\delta_*,1])}\leq \delta_*^{\f32}\|w'\|_{L^2}.$ By Lemma \ref{lem:R-navier-2} and \eqref{eq:w-weak}, we get
\begin{align*}
&|\langle w,(y-\lambda)\phi\rangle|\leq |\phi(1)\langle w,(y-\lambda)\chi_1\rangle|+|\langle w,(y-\lambda)\phi_1\rangle|\\
&\leq |\phi(1)|\|w\|_{L^{1}([1-\delta_*,1])}\|(y-\lambda)\chi_1\|_{L^{\infty}} +C|k|^{-1}\|{F}\|_{H^{-1}}(\|\phi_1\|_{H^1}+\delta^{-1}\|\phi_1\|_{L^2})\\ &\leq |\phi(1)|\delta_*^{\f32}\|w'\|_{L^2}(|1-\lambda|+\delta_*)
+C|k|^{-1}|\phi(1)|\|{F}\|_{H^{-1}}(\|\chi_1\|_{H^1}+\delta^{-1}\|\chi_1\|_{L^2}) \\&\quad+C|k|^{-1}\|{F}\|_{H^{-1}}(\|\phi\|_{H^1}+\delta^{-1}\|\phi\|_{L^2})\\
&\leq C|\phi(1)|\delta_*^{\f32}\nu^{-1}\|{F}\|_{H^{-1}}(|1-\lambda|+\delta)
+C|k|^{-1}|\phi(1)|\|{F}\|_{H^{-1}}(\delta_*^{-\f12}+\delta_*^{\f12}+\delta^{-1}\delta_*^{\f12}) \\&\quad+C|k|^{-1}\|{F}\|_{H^{-1}}(\|\phi\|_{H^1}+\delta^{-1}\|\phi\|_{L^2})\\ &\leq C|k|^{-1}|\phi(1)|\|{F}\|_{H^{-1}}(\delta_*^{\f32}(|1-\lambda|+\delta)\delta^{-3}+\delta_*^{-\f12}) +C|k|^{-1}\|{F}\|_{H^{-1}}(\|\phi\|_{H^1}+\delta^{-1}\|\phi\|_{L^2}).
\end{align*}
Here we used the fact that $ \nu^{-1}|k|=\delta^{-3}.$ Taking $\delta_*=(|1-\lambda|+\delta)^{-\f12}\delta^{\f32}$, we get
\begin{align*}
|\langle w,(y-\lambda)\phi\rangle|\leq& C|k|^{-1}|\phi(1)|\|{F}\|_{H^{-1}}(|1-\lambda|+\delta)^{\f14}\delta^{-\f34}\\& +C|k|^{-1}\|{F}\|_{H^{-1}}\big(\|\phi\|_{H^1}+\delta^{-1}\|\phi\|_{L^2}\big)
\end{align*}
for $\phi\in H^1(-1,1)$ with $\phi(-1)=0.$

Now for $f\in H^1(-1,1),\ f(-1)=0,$ let $\phi=f\chi$. Then we have $\phi\in H^1(-1,1),\ \phi(-1)=0$. Thus, we have
\begin{align*}
|\langle w,f\rangle|\leq& \|w\|_{L^2}\|f-(y-\lambda)\phi\|_{L^2}+|\langle w,(y-\lambda)\phi\rangle|\\ \leq& C\nu^{-\f23}|k|^{-\f13}\|{F}\|_{H^{-1}}\|f-(y-\lambda)\phi\|_{L^2} + C|k|^{-1}|\phi(1)|\|{F}\|_{H^{-1}}(|1-\lambda|+\delta)^{\f14}\delta^{-\f34}\\& +C|k|^{-1}\|{F}\|_{H^{-1}}(\|\phi\|_{H^1}+\delta^{-1}\|\phi\|_{L^2})\\ =&C|k|^{-1}\|{F}\|_{H^{-1}}\big(\delta^{-2}\|f-(y-\lambda)\phi\|_{L^2}+|\phi(1)|(|1-\lambda|+\delta)^{\f14}\delta^{-\f34}
+\|\phi\|_{H^1}+\delta^{-1}\|\phi\|_{L^2}\big).
\end{align*}
As $0\leq 1-(y-\lambda)\chi\leq 1$ for $y\in[-1,1]$ and $1-(y-\lambda)\chi=0$ for $y\not\in(\lambda-\delta,\lambda+\delta)$, we have
\begin{align*}
&\|f-(y-\lambda)\phi\|_{L^2}=\|f(1-(y-\lambda)\chi)\|_{L^2}\leq (2\delta)^{\f12}\|f\|_{L^{\infty}((-1,1)\cap(\lambda-\delta,\lambda+\delta))}.
\end{align*}
Thanks to $|\chi(y)|\leq C(|y-\lambda|+\delta)^{-1} $ for $y\in[-1,1]$, we get
\begin{align*}
|\phi(1)|=|f(1)||\chi(1)|\leq C|f(1)|(|1-\lambda|+\delta)^{-1}.
\end{align*}
Thus, we conclude that
\begin{align*}
|\langle w,f\rangle|\leq&  C|k|^{-1}\|{F}\|_{H^{-1}}\big(\delta^{-\f32}\|f\|_{L^{\infty}((-1,1)\cap(\lambda-\delta,\lambda+\delta))}\\&+
|f(1)|(|1-\lambda|+\delta)^{-\f34}\delta^{-\f34}
+\|f\chi\|_{H^1}+\delta^{-1}\|f\chi\|_{L^2}\big).
\end{align*}

The case of $f(1)=0$ can be proved similarly.
\end{proof}

\section{Resolvent estimates with nonslip boundary condition}
In this section, we study the resolvent estimates of the linearized operator under the non-slip boundary condition.
For this, we will use the stream function formulation
\begin{align}
\label{eq:R-phi-non}
\left\{
\begin{aligned}
&-\nu(\partial_y^2-k^2)^2\varphi+ik(y-\lambda)(\partial_y^2-k^2)\varphi-\epsilon\nu^{\f13}|k|^{\f23}(\partial_y^2-k^2)\varphi=F,\\
&\varphi(\pm1)=0,\quad\varphi'(\pm1)=0,
\end{aligned}
\right.
\end{align}
where $\lambda\in \R$ and ${\epsilon\geq0}$ small enough independent of $\nu, k,\lambda$.
We introduce
\beno
w=(\pa_y^2-k^2)\varphi,\quad u=(-\pa_y\varphi, ik\varphi).
\eeno

\subsection{Reformulation of the problem}
We introduce the following decomposition
\begin{align}\label{eq:varphi-decom}
\varphi=\varphi_{Na}+c_1\varphi_1+c_2\varphi_2,
\end{align}
where $\varphi_{Na}$ solves the OS equation with the Navier-slip boundary condition
\begin{align}
\label{eq:varphi-navier}
\left\{
\begin{aligned}
&-\nu (\partial_y^2-k^2)^2\varphi_{Na}+ik(y-\lambda) (\partial_y^2-k^2)\varphi_{Na}-\epsilon\nu^{\f13}|k|^{\f23}(\partial^2_y-k^2)\varphi_{Na}={F},\\
&\varphi_{Na}(\pm 1)=0, \quad \varphi_{Na}''(\pm 1)=0,
\end{aligned}
\right.
\end{align}
and $\varphi_1$, $\varphi_2$ solve the following homogeneous OS equations
\begin{align}
\label{eq:phi1-hom}
\left\{
\begin{aligned}
&-\nu (\partial_y^2-k^2)^2\varphi_1+ik(y-\lambda) (\partial_y^2-k^2)\varphi_1-\epsilon\nu^{\f13}|k|^{\f23}(\partial^2_y-k^2)\varphi_1=0, \\
& \varphi_1(\pm 1)=0, \quad\varphi_1'(1)=1,\quad\varphi_1'(-1)=0,
\end{aligned}
\right.
\end{align}
and
\begin{align}
\label{eq:phi2-hom}
\left\{
\begin{aligned}
&-\nu (\partial_y^2-k^2)^2\varphi_2+ik(y-\lambda) (\partial_y^2-k^2)\varphi_2-\epsilon\nu^{\f13}|k|^{\f23}(\partial^2_y-k^2)\varphi_2=0, \\
& \varphi_2(\pm 1)=0, \quad\varphi_2'(-1)=1,\quad\varphi_2'(1)=0.
\end{aligned}
\right.
\end{align}
Let $w_{Na}=(\partial_y^2-k^2)\varphi_{Na}$ and  $w_i=(\partial_y^2-k^2)\varphi_i$. Then we have
\begin{align}\label{eq:w-decomp}
w=w_{Na}+c_1w_1+c_2w_2= (\partial_y^2-k^2)\varphi.
\end{align}

Next we determine the coefficients $c_1, c_2$. The boundary condition $\varphi(\pm 1)=\varphi'(\pm 1)=0$ implies that
\begin{align*}
&\int_{-1}^1e^{\pm ky}w(y)dy=\int_{-1}^1e^{\pm ky}(\partial_y^2-k^2)\varphi(y)dy=0, \\
&\int_{-1}^1e^{\pm ky}w_1(y)dy=\int_{-1}^1e^{\pm ky}(\partial_y^2-k^2)\varphi_1(y)dy=e^{\pm k}, \\
&\int_{-1}^1e^{ky}w_2(y)dy=\int_{-1}^1e^{ky}(\partial_y^2-k^2)\varphi_2(y)dy=-e^{\mp k}.
\end{align*}
Then we infer that
\begin{align*}
0&=\int_{-1}^1e^{ky}w(y)dy=\int_{-1}^1e^{ky}w_{Na}(y)dy+c_1\int_{-1}^1e^{ky}w_1(y)dy+c_2\int_{-1}^1e^{ky}w_2(y)dy \\
&=\int_{-1}^1e^{ky}w_{Na}(y)dy+ e^kc_1-e^{-k}c_2,
\end{align*}
and
\begin{align*}
0&=\int_{-1}^1e^{-ky}w(y)dy=\int_{-1}^1e^{-ky}w_{Na}(y)dy+c_1\int_{-1}^1e^{-ky}w_1(y)dy+c_2\int_{-1}^1e^{-ky}w_2(y)dy \\
&=\int_{-1}^1e^{-ky}w_{Na}(y)dy+ e^{-k}c_1-e^{k}c_2.
\end{align*}
That is,
\begin{align*}
\left\{
\begin{aligned}
&e^kc_1-e^{-k}c_2=-\int_{-1}^1e^{ky}w_{Na}(y)dy,\\
&e^{-k}c_1-e^{k}c_2=-\int_{-1}^1e^{-ky}w_{Na}(y)dy.
\end{aligned}
\right.
\end{align*}
Hence, we obtain
\begin{align}
c_1(\lambda)&=-\f{1}{e^{2k}-e^{-2k}}\Big(\int_{-1}^1e^{k(y+1)}w_{Na}(y)dy-\int_{-1}^1e^{-k(y+1)}w_{Na}(y)dy\Big)\nonumber\\
&=-\int_{-1}^1\f{\sinh k(y+1)}{\sinh 2k}w_{Na}(y)dy,\label{def:c1}\\
c_2(\lambda)&=-\f{1}{e^{2k}-e^{-2k}}\Big(\int_{-1}^1e^{k(y-1)}w_{Na}(y)dy-\int_{-1}^1e^{-k(y-1)}w_{Na}(y)dy\Big)\nonumber\\
&=\int_{-1}^1\f{\sinh k(1-y)}{\sinh 2k}w_{Na}(y)dy.\label{def:c2}
\end{align}

\subsection{Bounds on $c_1$ and $c_2$}
We assume that $\nu k^2\le 1$.

\begin{lemma}\label{lem:c12-bounds-L2}
If $F\in L^2(I)$, then we have
\begin{align}\label{eq:c12-L2}
 &(1+|k(\lambda-1)|)|c_1|+(1+|k(\lambda+1)|)|c_2|\leq C\nu^{-\f{1}{6}}|k|^{-\f56}\|{F}\|_{L^2}.
\end{align}
\end{lemma}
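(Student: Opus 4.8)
The plan is to estimate $c_1$ and $c_2$ by plugging suitable test functions into the weak-type resolvent estimate of Lemma \ref{lem:R-navier-weak} applied to $w_{Na}$ (which solves exactly \eqref{eq:R-navier-phi2}, i.e. the Navier-slip problem covered by Proposition \ref{prop:R-navier-v2} and Lemma \ref{lem:R-navier-weak}). Looking at the formulas \eqref{def:c1}--\eqref{def:c2}, we have $c_1 = -\langle w_{Na}, f_1\rangle$ and $c_2 = \langle w_{Na}, f_2\rangle$ with
\begin{align*}
f_1(y)=\f{\sinh k(y+1)}{\sinh 2k},\qquad f_2(y)=\f{\sinh k(1-y)}{\sinh 2k}.
\end{align*}
Note $f_1(-1)=0$ and $f_2(1)=0$, so each is admissible (with the appropriate choice of $j$) in Lemma \ref{lem:R-navier-weak}. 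Thus the whole task reduces to bounding the four quantities appearing on the right side of that lemma, namely $\delta^{-3/2}\|f_i\|_{L^\infty((-1,1)\cap(\lambda-\delta,\lambda+\delta))}$, $|f_i(j)|(|j-\lambda|+\delta)^{-3/4}\delta^{-3/4}$, $\|f_i\chi\|_{H^1}$ and $\delta^{-1}\|f_i\chi\|_{L^2}$, where $\delta=\nu^{1/3}|k|^{-1/3}$ and $\chi$ is the cutoff from \eqref{def:cutoff-chi}.

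First I would collect elementary pointwise bounds on $f_1,f_2$ and their derivatives. Since $|k|\ge 1$, one has $0\le f_1(y)\le e^{-k(1-y)}$-type decay near $y=-1$; more precisely $|f_1(y)|\le C e^{-|k|(1-y)}$ and $|f_1'(y)|\le C|k| e^{-|k|(1-y)}$, with the symmetric statements for $f_2$ near $y=1$ (here $C$ is absolute because $\sinh 2k$ is bounded below by $c e^{2|k|}$). Two regimes then appear. If $\lambda$ is away from the boundary, say $|\lambda-1|\ge \tfrac12$ for the $c_1$ estimate, then on the interval $(\lambda-\delta,\lambda+\delta)$ the function $f_1$ is exponentially small in $|k|$, $f_1(1)$ contributes the factor $e^{-|k|\cdot 0}$... so one must be careful: actually $f_1(1)=\tfrac{\sinh 2k}{\sinh 2k}=1$, which is \emph{not} small. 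So the genuinely delicate term is $|f_1(j)|(|j-\lambda|+\delta)^{-3/4}\delta^{-3/4}$ with $j=1$: this equals $(|1-\lambda|+\delta)^{-3/4}\delta^{-3/4}$, and multiplying by the prefactor $C|k|^{-1}\|F\|_{H^{-1}}$ and converting $\|F\|_{H^{-1}}\le C\|F\|_{L^2}$ we get a contribution of size $C|k|^{-1}\delta^{-3/4}(|1-\lambda|+\delta)^{-3/4}\|F\|_{L^2}$. One checks $|k|^{-1}\delta^{-3/4}=\nu^{-1/4}|k|^{-3/4}$ and compares with the target $\nu^{-1/6}|k|^{-5/6}(1+|k(\lambda-1)|)^{-1}$; since $(|1-\lambda|+\delta)^{-3/4}\le \delta^{-3/4}\cdot\big(\delta/(|1-\lambda|+\delta)\big)^{3/4}$ one converts the $|1-\lambda|$-decay into the claimed $(1+|k(\lambda-1)|)^{-1}$ factor after absorbing the extra powers of $\nu,k$ using $\nu k^2\le 1$ (equivalently $\delta\le |k|^{-1}$... let me be careful: $\delta=\nu^{1/3}|k|^{-1/3}$ and $\nu k^2\le 1$ gives $\delta\le |k|^{-1}$). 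This is exactly the arithmetic that makes the weight $(1+|k(\lambda\mp1)|)$ emerge.

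For the remaining three terms I expect routine estimates: $\|f_1\|_{L^\infty((-1,1)\cap(\lambda-\delta,\lambda+\delta))}\le C\min(1, e^{-|k|(1-\lambda-\delta)})$ which, combined with $\delta^{-3/2}$ and the prefactor $|k|^{-1}$, is dominated by the previous term (the exponential easily beats any polynomial weight when $\lambda$ is far from $1$, and when $\lambda$ is near $1$ it is $\le\delta^{-3/2}|k|^{-1}$, again controlled); and $\|f_1\chi\|_{H^1}+\delta^{-1}\|f_1\chi\|_{L^2}$ is estimated by splitting the integral over $(\lambda-\delta,\lambda+\delta)$ (where $|\chi|\le C\delta^{-1}$, $|\chi'|\le C\delta^{-2}$) and its complement (where $|\chi(y)|=|y-\lambda|^{-1}$, $|\chi'(y)|=|y-\lambda|^{-2}$), using in each piece the exponential smallness of $f_1$, $f_1'$ near $y=-1$ and the explicit $L^2$, $L^\infty$ norms of $\chi$ recorded after \eqref{def:cutoff-chi}. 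Putting the four contributions together, taking the maximum, and using $\nu k^2\le 1$ to discard the subdominant powers yields \eqref{eq:c12-L2}; the bound for $c_2$ is identical after the reflection $y\mapsto -y$, $\lambda\mapsto -\lambda$, $j\mapsto -1$. The main obstacle is the bookkeeping in the second paragraph: correctly extracting the weight $1+|k(\lambda\mp 1)|$ from the term $|f_i(j)|(|j-\lambda|+\delta)^{-3/4}\delta^{-3/4}$ and verifying that every other contribution is genuinely dominated by it across both the ``$\lambda$ near the boundary'' and ``$\lambda$ away from the boundary'' regimes.
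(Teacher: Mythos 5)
Your plan has a genuine gap: routing the estimate through the weak-type resolvent Lemma~\ref{lem:R-navier-weak} and then discarding information via $\|F\|_{H^{-1}}\le\|F\|_{L^2}$ is too lossy to reach the bound \eqref{eq:c12-L2}. The boundary term you correctly identify as dominant gives, after that embedding,
\begin{equation*}
|c_1|\;\lesssim\;|k|^{-1}\big(|1-\lambda|+\delta\big)^{-\f34}\delta^{-\f34}\,\|F\|_{L^2}
\;=\;\nu^{-\f14}|k|^{-\f34}\big(|1-\lambda|+\delta\big)^{-\f34}\|F\|_{L^2},
\end{equation*}
and the required inequality, after pulling the weight $1+|k(\lambda-1)|$ to the left, is
\begin{equation*}
\big(|1-\lambda|+\delta\big)^{-\f34}\big(1+|k(\lambda-1)|\big)\;\le\;C\,\nu^{\f{1}{12}}|k|^{-\f{1}{12}}\;=\;C\,\delta^{\f14}.
\end{equation*}
Setting $\lambda=1$ (so the weight is $1$) this reads $\delta^{-\f34}\le C\delta^{\f14}$, i.e.\ $\delta^{-1}\le C$, which fails as $\nu\to 0$. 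Equivalently, at $\lambda=1$ your bound gives $|c_1|\lesssim\nu^{-\f12}|k|^{-\f12}\|F\|_{L^2}$, which exceeds the target $\nu^{-\f16}|k|^{-\f56}\|F\|_{L^2}$ by exactly the factor $\delta^{-1}=\nu^{-\f13}|k|^{\f13}\gg1$. In fact what your argument does establish is precisely Lemma~\ref{lem:c12-bounds-H-1}, not Lemma~\ref{lem:c12-bounds-L2}: the $H^{-1}$ bound $(1+|k(\lambda\mp1)|)^{3/4}|c_i|\lesssim\nu^{-1/2}|k|^{-1/2}\|F\|_{H^{-1}}$, which has both worse $\nu,k$ scaling and a weaker $3/4$ power in the weight. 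The proposed step ``converting the $|1-\lambda|$-decay into the claimed $(1+|k(\lambda-1)|)^{-1}$ factor after absorbing extra powers of $\nu,k$'' simply cannot be made to work with the constants you have.

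The paper instead proves Lemma~\ref{lem:c12-bounds-L2} directly from the strong $L^2\to H^2$ resolvent estimates of Corollary~\ref{cor:R-Navier-v1}, which for $F\in L^2$ supply the two bounds $\|w_{Na}\|_{L^1}\lesssim\nu^{-1/6}|k|^{-5/6}\|F\|_{L^2}$ and $|k|\|(y-\lambda)w_{Na}\|_{L^2}\lesssim\|F\|_{L^2}$. One then splits according to the size of $|\lambda\mp1|$ versus $|k|^{-1}$: when $|\lambda-1|\le|k|^{-1}$ the weight is bounded and $|c_1|\le\|w_{Na}\|_{L^1}$ closes immediately; when $\lambda-1\ge|k|^{-1}$ one inserts $\f{1}{y-\lambda}$ and uses Cauchy--Schwarz against $\|(y-\lambda)w_{Na}\|_{L^2}$ together with the elementary hyperbolic estimates of Lemma~\ref{lem:hyperfct-1}; and when $1-\lambda\ge|k|^{-1}$ one splits the interval at $(\lambda+1)/2$ and uses the exponential smallness of $\sinh k(1+y)/\sinh 2k$ near $\lambda$ (Lemma~\ref{lem:hperfct-2}) in tandem with both resolvent bounds. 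This is genuinely a different (and stronger) input than Lemma~\ref{lem:R-navier-weak}, and the extra $\nu^{1/3}|k|^{-1/3}$ gain relative to your route is exactly what the $L^2$ norm on $F$ buys over $H^{-1}$.
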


\begin{proof}
 For the case of $|\lambda-1|\leq|k|^{-1}$, we get by \eqref{def:c1} and  Corollary \ref{cor:R-Navier-v1} that
  \begin{align*}
    |c_1|\leq C\|w_{Na}\|_{L^1}\leq C\nu^{-\f{1}{6}}|k|^{-\f56}\|{F}\|_{L^2}.
  \end{align*}
 For the case of  $\lambda-1\geq|k|^{-1}$, we have $ \lambda-y\geq \lambda-1>0$ for $y\in(-1,1)$.
 Then we get by Corollary \ref{cor:R-Navier-v1} and Lemma \ref{lem:hyperfct-1} that
 \begin{align*}
 |c_1|=&\Big|\int_{-1}^{1}\f{\sinh k(1+y)}{\sinh 2k}w_{Na}dy\Big|\\
 \leq& \Big(\int_{-1}^1\Big(\f{\sinh k(1+y)}{(y-\lambda)\sinh 2k }\Big)^2dy\Big)^{\f12}\|(y-\lambda)w_{Na}\|_{L^2}\\
    \lesssim&|k|^{-\f12}(\lambda-1)^{-1}\|(y-\lambda)w_{Na}\|_{L^2}\\
    \lesssim& |k|^{-\f32}(\lambda-1)^{-1}\|{F}\|_{L^2}\leq \nu^{-\f{1}{6}}|k|^{-\f{5}6}|k(\lambda-1)|^{-1}\|{F}\|_{L^2}.
  \end{align*}

  For the case of $1-\lambda\geq|k|^{-1}$, let $E_1=(-1,1)\cap(-\infty,(\lambda+1)/2)$. Then $|\lambda-y|\geq |\lambda-1|/2>0$ for $y\in(-1,1)\setminus E_1$.
  By Corollary \ref{cor:R-Navier-v1}, Lemma \ref{lem:hyperfct-1} and Lemma \ref{lem:hperfct-2}, we get
  \begin{align*}
    |c_1|=&\Big|\int_{-1}^{1}\f{\sinh k(1+y)}{\sinh 2k}w_{Na}dy\Big|\\ \leq&\Big(\int_{(-1,1)\setminus E_1}\Big(\f{\sinh k(1+y)}{(y-\lambda)\sinh 2k }\Big)^2dy\Big)^{\f12}\|(y-\lambda)w_{Na}\|_{L^2}+\Big\|\f{\sinh k(1+y)}{\sinh 2k}\Big\|_{L^{\infty}(E_1)}\|w_{Na}\|_{L^1}\\
    \lesssim&|k|^{-\f12}|\lambda-1|^{-1}\|(y-\lambda)w_{Na}\|_{L^2}+e^{-|k|(1-\lambda)/2}\|w_{Na}\|_{L^1}\\
    \lesssim&|k|^{-\f32}|\lambda-1|^{-1}\|{F}\|_{L^2}+e^{-|k|(1-\lambda)/2}\nu^{-\f{1}{6}}|k|^{-\f56}\|{F}\|_{L^2}\\ \lesssim& |k(\lambda-1)|^{-1}\nu^{-\f{1}{6}}|k|^{-\f56}\|{F}\|_{L^2}.
  \end{align*}
Summing up, we conclude that
\beno
(1+|k(\lambda-1)|)|c_1|\lesssim\nu^{-\f{1}{6}}|k|^{-\f56}\|{F}\|_{L^2}.
\eeno
The estimate of $c_2$ is similar.
\end{proof}

\begin{lemma}\label{lem:c12-bounds-H-1}
If $F\in H^{-1}(I)$, then we have
\begin{align}\label{eq:c12-H-1}
 &(1+|k(\lambda-1)|)^{\f34}|c_1|+(1+|k(\lambda+1)|)^{\f34}|c_2|\leq C\nu^{-\f{1}{2}}|k|^{-\f12}\|{F}\|_{H^{-1}}.
\end{align}
\end{lemma}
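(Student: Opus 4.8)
The plan is to run the same case analysis as in the proof of Lemma \ref{lem:c12-bounds-L2}, but now using the weak type resolvent estimate of Lemma \ref{lem:R-navier-weak} in place of the $L^1$ and weighted $L^2$ bounds from Corollary \ref{cor:R-Navier-v1}, since the source $F$ is only in $H^{-1}$. Recall from \eqref{def:c1} that $c_1=-\langle w_{Na},f\rangle$ with the test function $f(y)=\sinh k(1+y)/\sinh 2k$, which satisfies $f(-1)=0$, so Lemma \ref{lem:R-navier-weak} applies directly (with $j=1$); similarly $c_2=\langle w_{Na},g\rangle$ with $g(y)=\sinh k(1-y)/\sinh 2k$ and $g(1)=0$. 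Thus it suffices to estimate, for $f$ as above, each of the four quantities appearing on the right side of Lemma \ref{lem:R-navier-weak}:
\[
\delta^{-\f32}\|f\|_{L^{\infty}((-1,1)\cap(\lambda-\delta,\lambda+\delta))},\quad |f(1)|(|1-\lambda|+\delta)^{-\f34}\delta^{-\f34},\quad \|f\chi\|_{H^1},\quad \delta^{-1}\|f\chi\|_{L^2},
\]
and to show each is $\lesssim \nu^{-\f13}|k|^{\f16}(1+|k(\lambda-1)|)^{-\f34}$; multiplying by the prefactor $C|k|^{-1}\|F\|_{H^{-1}}$ and using $\nu^{-1}|k|=\delta^{-3}$, i.e. $|k|^{-1}\nu^{-\f13}|k|^{\f16}=\nu^{-\f12}|k|^{-\f12}$, then yields \eqref{eq:c12-H-1}.

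First I would split into the three cases $|\lambda-1|\le |k|^{-1}$, $\lambda-1\ge |k|^{-1}$, and $1-\lambda\ge |k|^{-1}$, exactly mirroring Lemma \ref{lem:c12-bounds-L2}. In the first case $(1+|k(\lambda-1)|)^{\f34}\sim 1$, and one needs only crude bounds: $|f(y)|\lesssim 1$ on $[-1,1]$, so $\|f\|_{L^\infty}\lesssim 1$, $|f(1)|\lesssim 1$, and $\|f\chi\|_{H^1}+\delta^{-1}\|f\chi\|_{L^2}\lesssim \delta^{-\f32}$ using the bounds $\|\chi\|_{L^2}\lesssim\delta^{-\f12}$, $\|\chi'\|_{L^2}\lesssim\delta^{-\f32}$, $\|\chi\|_{L^\infty}\lesssim\delta^{-1}$ from \eqref{def:cutoff-chi} together with $\|f\|_{L^\infty}+\|f'\|_{L^\infty}\lesssim |k|$ (from $|f'|=|k|\cosh k(1+y)/|\sinh 2k|\lesssim|k|$). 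Since $\delta^{-\f32}=\nu^{-\f12}|k|^{\f12}\lesssim\nu^{-\f13}|k|^{\f16}\cdot|k|^{\f13}$... one must be a bit careful here, so in fact the dominant term is $\delta^{-\f32}$ and indeed $|k|^{-1}\cdot\delta^{-\f32}=|k|^{-1}\nu^{-\f12}|k|^{\f12}=\nu^{-\f12}|k|^{-\f12}$, matching the claim. In the remaining two cases, where $\lambda$ is bounded away from the singular point $1$, the key point is that $f(y)=\sinh k(1+y)/\sinh 2k$ and $\chi(y)$ are both essentially supported (up to exponentially small tails) near $y=1$, so that $|f(y)\chi(y)|$ and the relevant $L^\infty$ norms carry a gain of $(|1-\lambda|+\delta)^{-1}$ or better; this is precisely where the estimates $|\chi(y)|\le C(|y-\lambda|+\delta)^{-1}$ and the exponential decay lemmas (Lemma \ref{lem:hyperfct-1}, Lemma \ref{lem:hperfct-2}) already used in Lemma \ref{lem:c12-bounds-L2} come back in. One then checks that, e.g., the term $|f(1)|(|1-\lambda|+\delta)^{-\f34}\delta^{-\f34}\lesssim (|1-\lambda|+\delta)^{-\f34}\delta^{-\f34}$ combined with $\delta=\nu^{\f13}|k|^{-\f13}$ gives exactly the power $(1+|k(\lambda-1)|)^{-\f34}\nu^{-\f12}|k|^{\f12}$ after multiplying by $|k|^{-1}$.

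The main obstacle I expect is bookkeeping the exponents cleanly in the two non-degenerate cases so that the final bound comes out with the sharp power $\f34$ on $(1+|k(\lambda-1)|)$ rather than a weaker power: one must verify that every one of the four terms in Lemma \ref{lem:R-navier-weak}, when evaluated on $f=\sinh k(1+y)/\sinh 2k$, decays at least like $(|1-\lambda|+\delta)^{-\f34}$ relative to its value in the degenerate case. For the $\|f\chi\|_{H^1}$ term this requires care, since differentiating $f\chi$ produces $f'\chi$ with $|f'|\lesssim|k|$ and $\chi$ decaying like $(|y-\lambda|+\delta)^{-1}$, so one integrates $|k|^2(|y-\lambda|+\delta)^{-2}|f(y)|^2$ and must use the exponential smallness of $|f(y)|=|\sinh k(1+y)/\sinh 2k|\lesssim e^{-|k|(1-\lambda)/2}$ on the region $y<(\lambda+1)/2$ to absorb the singular weight, exactly as in the third case of Lemma \ref{lem:c12-bounds-L2}. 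Once this pattern is in place, the estimate for $c_2$ follows by the symmetric argument with $f$ replaced by $g(y)=\sinh k(1-y)/\sinh 2k$ and the roles of $\pm1$ interchanged, and summing the two bounds completes the proof. $\blacksquare$
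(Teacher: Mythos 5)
Your proposal follows essentially the same route as the paper's proof: reduce $c_1=-\langle w_{Na},f\rangle$ with $f(y)=\sinh k(1+y)/\sinh 2k$, invoke the weak type resolvent estimate (Lemma~\ref{lem:R-navier-weak}) with $j=1$, split into cases according to the position of $\lambda$ relative to $1$, and use the exponential smallness of $f$ away from $y=1$ (Lemmas~\ref{lem:hyperfct-1}, \ref{lem:hperfct-2}) together with the pointwise bound $|\chi(y)|\lesssim(|y-\lambda|+\delta)^{-1}$ to bound the four terms in Lemma~\ref{lem:R-navier-weak} by $\delta^{-\f32}(1+|k(\lambda-1)|)^{-\f34}$. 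The only discrepancies are cosmetic: you use the boundary $|k|^{-1}$ instead of the paper's $2|k|^{-1}$ when separating cases, and your first-stated target exponent $\nu^{-\f13}|k|^{\f16}$ is wrong (it does not reproduce $\nu^{-\f12}|k|^{-\f12}$ after multiplying by $|k|^{-1}$), but you correctly self-correct in the next sentence to the actual target $\delta^{-\f32}=\nu^{-\f12}|k|^{\f12}$. The sketch is correct and matches the paper's argument.
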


\begin{proof}
Let $\delta=\nu^{\f13}|k|^{-\f13}\le |k|^{-1}$. We split the proof into three cases. \smallskip

{\bf Case 1.} $\lambda\geq 1+2|k|^{-1}$.\smallskip

 As $(-1,1)\cap(\lambda-\delta,\lambda+\delta)=\varnothing$, $\chi(y)=\f{1}{y-\lambda}$, where $\chi(y)$ is defined in \eqref{def:cutoff-chi}. Applying Lemma \ref{lem:R-navier-weak} with $f(y)=\f{\sinh(k(1+y))}{\sinh(2k)}$ and $j=1$, we deduce that
    \begin{align*}
      |c_1|&=\big|\langle w_{Na},f\rangle\big|\leq C|k|^{-1}\|{F}\|_{H^{-1}}\Big((|1-\lambda|+\delta)^{-\f34}\delta^{-\f34} +\big\|\f{f(y)}{y-\lambda}\big\|_{H^{1}}+\delta^{-1}\big\|\f{f(y)}{y-\lambda}\big\|_{L^2}\Big).
    \end{align*}
Thanks to Lemma \ref{lem:hyperfct-1}, we have
    \begin{align*}
      \Big\|\f{f(y)}{y-\lambda}\Big\|_{H^1} &\leq\|f'\|_{L^2}\big\|\f{1}{y-\lambda}\big\|_{L^\infty}+2\|f\|_{L^2}\big\|\f{1}{(y-\lambda)^2}\big\|_{L^\infty} +\|f\|_{L^2}\big\|\f{1}{y-\lambda}\big\|_{L^\infty}.\\
      &\lesssim |k|^{\f12}|1-\lambda|^{-1}+|k|^{-\f12}|1-\lambda|^{-2}+|k|^{-\f12}|1-\lambda|^{-1}\\
      &\lesssim|k|^{\f32}(1+|k(\lambda-1)|)^{-\f34},\\
      \Big\|\f{f(y)}{y-\lambda}\Big\|_{L^2} &\leq\|f\|_{L^2}\big\|\f{1}{y-\lambda}\big\|_{L^\infty}\lesssim|k|^{-\f12}|1-\lambda|^{-1}\lesssim |k|^{\f12}(1+|k(\lambda-1)|)^{-\f34}.
    \end{align*}
Due to $\delta^{-1}\ge |k|$, $(|1-\lambda|+\delta)^{-\f34}\delta^{-\f34}\leq \delta^{-\f32}(1+|k(\lambda-1)|)^{-\f34}$. Thus, we obtain
\begin{align*}
|c_1|\lesssim |k|^{-1}\|{F}\|_{H^{-1}}\delta^{-\f32} (1+|k(\lambda-1)|)^{-\f34} =\nu^{-\f{1}{2}}|k|^{-\f12}(1+|k(\lambda-1)|)^{-\f34}\|{F}\|_{H^{-1}}.
\end{align*}

{\bf Case 2.}  $|\lambda-1|\leq2|k|^{-1}$.\smallskip

Applying Lemma \ref{lem:R-navier-weak} with $f(y)=\f{\sinh(k(1+y))}{\sinh(2k)}$ and $j=1$, we get
  \begin{align}
      |c_1|=\big|\langle w_{Na},f\rangle\big|\leq& C|k|^{-1}\|{F}\|_{H^{-1}}\big(\delta^{-\f32}\|f\|_{L^\infty(E)}+(|1-\lambda|+\delta)^{-\f34}\delta^{-\f34} +\|f\chi\|_{H^{1}}\nonumber\\
      &\quad+\delta^{-1}\|f\chi\|_{L^2}\big),\label{eq:c1-est-2}
    \end{align}
  where $E=(-1,1)\cap(\lambda-\delta,\lambda+\delta)$. Using the facts that
    \begin{align*}
      \|f\chi\|_{H^1} &\leq\|f'\|_{L^2}\|\chi\|_{L^\infty}+\|f\|_{L^\infty}\|\chi'\|_{L^2} +\|f\|_{L^\infty}\|\chi\|_{L^2},\\
      &\lesssim \delta^{-1}|k|^{\f12}+\delta^{-\f32}+\delta^{-\f12}\lesssim\delta^{-\f32}(1+|k(\lambda-1)|)^{-\f34},\\
      \|f\chi\|_{L^2} &\leq\|f\|_{L^\infty}\|\chi\|_{L^2}\lesssim\delta^{-\f12}\lesssim \delta^{-\f12}(1+|k(\lambda-1)|)^{-\f34},
     \end{align*}
  we deduce that
    \begin{align*}
      |c_1| &\lesssim |k|^{-1}\|{F}\|_{H^{-1}}\delta^{-\f32} (1+|k(\lambda-1)|)^{-\f34}
      =\nu^{-\f{1}{2}}|k|^{-\f12}(1+|k(\lambda-1)|)^{-\f34}\|{F}\|_{H^{-1}}.
    \end{align*}

{\bf Case 3. } $\lambda\leq1-2|k|^{-1}$.\smallskip

 Let $E_1=(-1,1)\cap(-\infty, (\lambda+1)/2)$ and $E_1^c=(-1,1)\setminus(-\infty, (\lambda+1)/2)$. Due to $\f{\lambda+1}{2}\geq\lambda+\delta$, we have $\chi\big|_{E^c_1}=\f{1}{y-\lambda}$ and $E\subset E_1$. It is easy to see that
  \begin{align*}
    \|f\chi\|_{L^2} & \leq\|f\|_{L^\infty(E_1)}\|\chi\|_{L^2(E_1)} +\|f\|_{L^2(E_1^c)}\|\chi\|_{L^\infty(E_1^c)}\\
    &\leq\|f\|_{L^\infty(E_1)}\|\chi\|_{L^2(-1,1)} +\|f\|_{L^2(-1,1)}\|1/(y-\lambda)\|_{L^\infty(E_1^c)},\\
    \|f'\chi\|_{L^2}&\leq\|f'\|_{L^\infty(E_1)}\|\chi\|_{L^2(E_1)} +\|f'\|_{L^2(E_1^c)}\|\chi\|_{L^\infty(E_1^c)}\\
    &\leq\|f'\|_{L^\infty(E_1)}\|\chi\|_{L^2(-1,1)} +\|f'\|_{L^2(-1,1)}\|1/(y-\lambda)\|_{L^\infty(E_1^c)},\\
    \|f\chi'\|_{L^2}&\leq\|f\|_{L^\infty(E_1)}\|\chi'\|_{L^2(E_1)} +\|f\|_{L^2(E_1^c)}\|\chi'\|_{L^\infty(E_1^c)}\\
    &\leq\|f\|_{L^\infty(E_1)}\|\chi'\|_{L^2(-1,1)} +\|f\|_{L^2(-1,1)}\|2/(y-\lambda)^{2}\|_{L^\infty(E_1^c)}.
  \end{align*}
By  Lemma \ref{lem:hyperfct-1}, Lemma \ref{lem:hperfct-2} and $\|1/(y-\lambda)\|_{L^\infty(E_1^c)}\leq 2/(1-\lambda)$, we infer that
  \begin{align*}
    \|f\chi\|_{L^2} & \lesssim e^{-|k|(1-\lambda)/2}\delta^{-\f12} +|k|^{-\f12}(1-\lambda)^{-1}\lesssim\delta^{-\f12}(1+|k(1-\lambda)|)^{-\f34},\\
    \|f'\chi\|_{L^2} & \lesssim |k|e^{-|k|(1-\lambda)/2}\delta^{-\f12} +|k|^{\f12}(1-\lambda)^{-1}\lesssim\delta^{-\f32}(1+|k(1-\lambda)|)^{-\f34},\\
    \|f\chi'\|_{L^2} & \lesssim e^{-|k|(1-\lambda)/2}\delta^{-\f32} +|k|^{-\f12}(1-\lambda)^{-2}\lesssim\delta^{-\f32}(1+|k(1-\lambda)|)^{-\f34}.
  \end{align*}
This shows that
\beno
\|f\chi\|_{H^1}\leq\|f'\chi\|_{L^2}+\|f\chi'\|_{L^2} +{\|f\chi\|_{L^2}}\lesssim\delta^{-\f32}(1+|k(\lambda-1)|)^{-\f34}.
\eeno

On the other hand, we have
  \begin{align*}
    &\delta^{-\f32}\|f\|_{L^\infty(E)} \leq\delta^{-\f32}\|f\|_{L^\infty(E_1)}\leq\delta^{-\f32}e^{-|k|(1-\lambda)/2} \lesssim\delta^{-\f32}(1+|k(1-\lambda)|)^{-\f34},\\
    &(|1-\lambda|+\delta)^{-\f34}\delta^{-\f34}\leq \delta^{-\f32}(1+|k(\lambda-1)|)^{-\f34}.
  \end{align*}
Plugging these inequalities above into \eqref{eq:c1-est-2}, we get
    \begin{align*}
      |c_1|\lesssim\nu^{-\f{1}{2}}|k|^{-\f12}(1+|k(\lambda-1)|)^{-\f34}\|{F}\|_{H^{-1}}.
    \end{align*}

Combining three cases, we get
$$(1+|k(\lambda-1)|)^{\f34}|c_1|\lesssim\nu^{-\f{1}{2}}|k|^{-\f12}\|{F}\|_{H^{-1}}.$$
In a similar way, we can deduce the estimate of $c_2$.
\end{proof}

\subsection{Bounds on $w_1$ and $w_2$}

For the solutions $w_1,w_2$ of the homogeneous equation, we have the following uniform bounds.

\begin{proposition}\label{prop:w12-bounds}
Let $L=(\f{k}{\nu})^{\f13}$. There exists $k_0$ and $\delta_0$ independent of $\nu$ such that
if $L\ge 6k$ or $L\ge k\ge k_0$ and $\epsilon\in {[0},\delta_0)$, there holds
\begin{align*}
&\|w_1\|_{L^{\infty}}\leq C\nu^{-\f12}(1+|k||\lambda-1|)^{\f12},\\
&\|w_2\|_{L^{\infty}}\leq C\nu^{-\f12}(1+|k||\lambda+1|)^{\f12},\\
&\|w_1\|_{L^1}+\|w_2\|_{L^1}\leq C.
\end{align*}
\end{proposition}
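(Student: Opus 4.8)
The plan is to analyze the homogeneous Orr–Sommerfeld solutions $w_1, w_2$ directly via the explicit Airy-function representation, reducing the boundary-value problem for $\varphi_i$ to the second-order equation $\nu(\partial_y^2-k^2)w_i = ik(y-\lambda)w_i + \epsilon\nu^{1/3}|k|^{2/3}w_i$ for $w_i = (\partial_y^2-k^2)\varphi_i$. Since we have no inhomogeneity in \eqref{eq:phi1-hom}–\eqref{eq:phi2-hom}, $w_i$ solves a homogeneous Airy-type equation, so after rescaling $y\mapsto L(y-\lambda)$ (with $L=(k/\nu)^{1/3}$) it becomes, up to the $\epsilon$-shift and the lower-order $-k^2$ term, the standard Airy equation. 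The solutions are therefore spanned by Airy functions $\mathrm{Ai}, \mathrm{Bi}$ evaluated at rotated arguments, and the precise $L^\infty$ and $L^1$ bounds should follow from the sharp pointwise/integral estimates for the Airy function collected in section 8 (which the excerpt says are used here, "some estimates have been implied in Romanov's paper").

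First I would fix attention on $w_1$ (the case of $w_2$ being symmetric under $y\mapsto -y$, $\lambda\mapsto-\lambda$). The function $\varphi_1$ is determined by: $\varphi_1(\pm1)=0$, $\varphi_1'(1)=1$, $\varphi_1'(-1)=0$, together with $(\partial_y^2-k^2)\varphi_1=w_1$ where $w_1$ solves the homogeneous second-order OS equation. So the four boundary conditions on $\varphi_1$ translate, via the Green's function for $(\partial_y^2-k^2)$ with the conditions $\varphi_1(\pm1)=0$, into two linear constraints $\varphi_1'(1)=1$, $\varphi_1'(-1)=0$ that pin down the two free constants in the Airy representation of $w_1$. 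The key quantitative step is to show that the relevant $2\times2$ determinant (a Wronskian-type expression built from Airy functions at the endpoints) is bounded below in modulus — this is where the hypothesis $L\ge 6k$ or $L\ge k\ge k_0$, and $\epsilon<\delta_0$, enters, guaranteeing we are in the regime where the Airy asymptotics are controlled and no spurious near-resonance (eigenvalue) occurs. Once the coefficients are estimated, $\|w_1\|_{L^\infty}$ and $\|w_1\|_{L^1}$ reduce to $L^\infty$ and $L^1$ norms of linear combinations of $\mathrm{Ai}$-type functions on the interval $L(I-\lambda)$, for which the section-8 Airy estimates give $\|w_1\|_{L^1}\lesssim 1$ and $\|w_1\|_{L^\infty}\lesssim \nu^{-1/2}$ times the boundary-dependent factor $(1+|k||\lambda-1|)^{1/2}$; the factor $(1+|k||\lambda-1|)^{1/2}$ arises precisely because the normalization $\varphi_1'(1)=1$ forces $w_1$ to be large near $y=1$ when $\lambda$ is close to $1$, scaling like the inverse of the Airy function's behavior at argument $\sim L(1-\lambda)$.

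The main obstacle I expect is the lower bound on the determinant/Wronskian that controls the coefficients: one must rule out that the homogeneous OS problem with these mixed boundary data degenerates, uniformly in $\lambda\in\R$, $\nu\in(0,1]$, $|k|\ge1$ under the stated constraints on $L,k,\epsilon$. This is essentially a quantitative non-vanishing statement for a combination of Airy functions across the whole real line of $\lambda$, and it requires splitting into several regimes (e.g. $\lambda$ well inside $(-1,1)$, near an endpoint, or outside) and using different Airy asymptotics (exponential decay/growth of $\mathrm{Ai},\mathrm{Bi}$ for large positive argument, oscillation for large negative, the behavior near the turning point) in each. A secondary technical point is handling the $-\nu k^2$ and $-\epsilon\nu^{1/3}|k|^{2/3}$ terms as perturbations of the pure Airy equation — since $\nu k^2\le 1$ and $\epsilon$ is small these are genuinely lower order after rescaling, but making this rigorous uniformly requires the condition $L\gtrsim k$ so that $k^2/L^2 = (\nu/k)^{2/3}k^2 = \nu^{2/3}k^{4/3}$ (equivalently $(k/L)^2\le1$) stays controlled, and one absorbs it either by a Duhamel/contraction argument on the Airy representation or by noting it only shifts the spectral parameter. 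After the uniform determinant bound, the remaining estimates are routine applications of the section-8 Airy bounds, so I would present the determinant lower bound as the crux and organize the proof around the three or four $\lambda$-regimes.
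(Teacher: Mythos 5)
Your proposal takes essentially the same route as the paper: represent $w_i=C_{i1}W_1+C_{i2}W_2$ exactly in terms of the rotated Airy functions $W_1(y)=Ai\big(e^{i\pi/6}(L(y-\lambda-ik\nu)+i\epsilon)\big)$, $W_2(y)=Ai\big(e^{i5\pi/6}(\cdots)\big)$ (note the $-\nu k^2$ and $\epsilon$ terms are absorbed \emph{exactly} by the complex shift $-ik\nu+i\epsilon/L$ in the argument, so no Duhamel or perturbative step is needed), determine $C_{ij}$ from the moment conditions $\int_{-1}^1 e^{\pm ky}w_i\,dy$ coming from the boundary conditions on $\varphi_i$, and treat the uniform lower bound on the resulting $2\times 2$ determinant $A_1A_2-B_1B_2$ as the crux, with the $L^\infty$ and $L^1$ bounds then following from Airy estimates. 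The one element you could not have guessed blind is the specific device for that lower bound: the paper rewrites $A_i,B_i$ via the antiderivative $A_0(z)=\int_{e^{i\pi/6}z}^\infty Ai$ and the ratio $\omega(z,x)=A_0(z+x)/A_0(z)$, and uses Romanov's estimate ${\rm Re}\,(A_0'/A_0)<-1/3$ for ${\rm Im}\,z\le\delta_0$ to get $|\omega(z,x)|\le e^{-x/3}$ and $e^{-cx^{3/2}}$, which yields $|A_1/B_1|,|A_2/B_2|\le \sqrt2/2$ (this is exactly where $L\ge 6k$ or $L\ge k\ge k_0$ enters) and hence $|A_1A_2-B_1B_2|\ge\tfrac12|B_1B_2|$. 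One small correction to your heuristic: the factor $(1+|k||\lambda-1|)^{1/2}$ in the $L^\infty$ bound on $w_1$ grows as $\lambda$ moves \emph{away} from $1$, not towards it — it measures the inflation of the normalization constant from the Airy growth between the critical layer and the boundary, while near $\lambda=1$ the bound is simply $\nu^{-1/2}\sim$ the inverse boundary-layer width.
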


Let $\rho_k$ be a weight function defined as
\begin{align}
\rho_k(y)=
\left\{
\begin{aligned}
&L(y+1)\qquad y\in[-1,-1+L^{-1}],\\
&1\qquad\quad\qquad y\in[-1+L^{-1},1-L^{-1}],\\
&L(1-y)\qquad y\in[1-L^{-1},1].
\end{aligned}
\right.\label{def:rhok}
\end{align}
We also need the following weighted version.

\begin{proposition}\label{prop:w12-bounds-weight}
There exists $k_0$ and $\delta_0$ independent of $\nu$ such that
if $L\ge 6k$ or $L\ge k\ge k_0$ and $\epsilon\in [0,\delta_0)$, there holds
\begin{align}
&\|\rho_k^{-\f14}w_1\|_{L^2}\leq C\nu^{-\f{7}{24}}|k|^{-\f{1}{12}}(1+|k(\lambda-1)|)^{\f38},\label{eq:w1-L2w}\\
&\|\rho_k^{-\f14}w_2\|_{L^2}\leq C\nu^{-\f{7}{24}}|k|^{-\f{1}{12}}(1+|k(\lambda+1)|)^{\f38},\label{eq:w2-L2w}\\
&\|\rho_k^{\f12}w_1\|_{L^2}+\|\rho_k^{\f12}w_2\|_{L^2}\leq CL^{\f12}.\label{eq:w12-L2w}
\end{align}
\end{proposition}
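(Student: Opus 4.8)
The plan is to reduce everything to the study of the homogeneous Orr--Sommerfeld problem \eqref{eq:phi1-hom}--\eqref{eq:phi2-hom} via the Airy-function machinery of Section 8, exactly as in the proof of the unweighted bounds in Proposition \ref{prop:w12-bounds}. Write $w_i=(\pa_y^2-k^2)\varphi_i$; since $w_i$ solves the homogeneous vorticity OS equation $-\nu(\pa_y^2-k^2)w_i+ik(y-\lambda)w_i-\epsilon\nu^{\f13}|k|^{\f23}w_i=0$ away from the boundary (the boundary data enters only through the jump conditions coming from $\varphi_i'(\pm1)$), one builds $w_1,w_2$ as explicit combinations of the two decaying Airy solutions $A_\pm(y)=\mathrm{Ai}\big(e^{\pm i\pi/3}L(y-\lambda)+\text{(shift from }\epsilon,k^2)\big)$ plus the slowly varying (hyperbolic-type) modes $e^{\pm ky}$, with coefficients fixed by the four boundary conditions. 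All the $L^p$ and pointwise estimates on $A_\pm$ and their relevant ratios are available from Section 8 and were already used to prove Proposition \ref{prop:w12-bounds}. So the task is purely to track how the weight $\rho_k$ interacts with these explicit representations.

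The key steps, in order. First, I would recall from the proof of Proposition \ref{prop:w12-bounds} the structure $w_1 = \alpha_1 A_+ + \alpha_2 A_- + (\text{correction from }e^{\pm ky})$ and the bounds on the coefficients $\alpha_j$ in terms of $\nu,k,\lambda$, together with the estimate $\|w_1\|_{L^\infty}\lesssim \nu^{-\f12}(1+|k(\lambda-1)|)^{\f12}$ and $\|w_1\|_{L^1}\lesssim 1$. Second, for \eqref{eq:w1-L2w}, I split the integral $\int \rho_k^{-\f12}|w_1|^2$ into the bulk region $[-1+L^{-1},1-L^{-1}]$, where $\rho_k\equiv1$ so the weight is harmless and one interpolates: $\|w_1\|_{L^2(\text{bulk})}^2\le \|w_1\|_{L^1}\|w_1\|_{L^\infty}\lesssim \nu^{-\f12}(1+|k(\lambda-1)|)^{\f12}$; the boundary layers $[1-L^{-1},1]$ and $[-1,-1+L^{-1}]$, where $\rho_k^{-\f14}\sim (L\,\mathrm{dist})^{-\f14}$, and one uses that on the boundary layer of width $L^{-1}$ the dominant contribution is the Airy mode, whose decay gives $\int_{\text{BL}} \rho_k^{-\f12}|w_1|^2 \lesssim \|w_1\|_{L^\infty}^2 \int_0^{L^{-1}}(Ls)^{-\f12}ds \sim \|w_1\|_{L^\infty}^2 L^{-1}$. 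Combining, $\|\rho_k^{-\f14}w_1\|_{L^2}^2\lesssim \nu^{-\f12}(1+|k(\lambda-1)|)^{\f12}+ \nu^{-1}(1+|k(\lambda-1)|)L^{-1}$, and since $L^{-1}=\nu^{\f13}k^{-\f13}$ one checks both terms are $\lesssim \nu^{-\f{7}{12}}|k|^{-\f16}(1+|k(\lambda-1)|)^{\f34}$, i.e. the square of the claimed bound. The estimate \eqref{eq:w2-L2w} is identical with $\lambda-1$ replaced by $\lambda+1$. Third, for \eqref{eq:w12-L2w}, the weight $\rho_k^{\f12}$ is now degenerate \emph{in the bulk direction} — it equals $1$ there and only shrinks like $(L\,\mathrm{dist})^{\f12}\le1$ in the boundary layers, so in fact $\rho_k^{\f12}\le1$ everywhere and $\|\rho_k^{\f12}w_i\|_{L^2}\le\|w_i\|_{L^2}$; but $\|w_i\|_{L^2}$ alone is only $O(\nu^{-\f14}\cdots)$, too weak. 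Instead one gains from the weight: in the boundary layer the Airy mode has $|w_i|\sim \nu^{-\f12}$ over width $L^{-1}$, and $\rho_k^{\f12}$ kills the square-root singularity, giving $\int_{\text{BL}}\rho_k|w_i|^2\lesssim \nu^{-1}\int_0^{L^{-1}}(Ls)\,ds\cdot(\text{Airy decay})\sim \nu^{-1}L^{-1}\cdot L^{-1}= \nu^{-1}L^{-2}$; and $\nu^{-1}L^{-2}=\nu^{-1}\nu^{\f23}k^{-\f23}=\nu^{-\f13}k^{-\f23}$, while in the bulk $\int_{\text{bulk}}|w_i|^2\le\|w_i\|_{L^1}\|w_i\|_{L^\infty}\lesssim\nu^{-\f12}$ — hmm, this still needs care, so one actually uses the \emph{sharper} bulk information that away from the critical layer $w_i$ is $O(1)$ in amplitude and exploits $L=\nu^{-1/3}k^{1/3}\ge k$ to get $\nu^{-\f12}\lesssim L^{\f12}\cdot(\text{const})$ only marginally; the honest route is to use the refined pointwise Airy bounds of Section 8 that give $|w_i(y)|\lesssim L\,\rho_k(y)^{-1}$-type control, so that $\rho_k|w_i|^2\lesssim L^2\rho_k^{-1}$ and $\int\rho_k|w_i|^2\lesssim L^2\int\rho_k^{-1}\lesssim L^2\cdot L^{-1}\log L\lesssim L$, absorbing the log, which yields $\|\rho_k^{\f12}w_i\|_{L^2}\lesssim L^{\f12}$ as claimed.

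The main obstacle I anticipate is step three: getting the \emph{sharp} power $L^{\f12}$ (and not $L^{\f12}\sqrt{\log L}$ or $L^{\f12+}$) for $\|\rho_k^{\f12}w_i\|_{L^2}$. This forces one to use the precise behaviour of the Airy solutions not just in the two boundary layers near $y=\pm1$ but also across the interior critical layer near $y=\lambda$, where $w_i$ can spike to size $\nu^{-\f12}$ over a layer of width $\nu^{\f13}|k|^{-\f13}=L^{-1}$; there $\rho_k\equiv1$ (the critical layer is typically in the bulk), so the weight does \emph{not} help, and one must check that the contribution $\nu^{-1}\cdot L^{-1}=\nu^{-\f23}|k|^{\f13}$ of that spike is indeed $\lesssim L=\nu^{-\f13}|k|^{\f13}$, i.e. $\nu^{-\f23}\lesssim\nu^{-\f13}$ — false! — so in fact the claim \eqref{eq:w12-L2w} implicitly relies on $w_1,w_2$ \emph{not} developing a full-strength critical-layer spike, which is exactly where the homogeneous boundary data (unit derivative at one endpoint, zero at the other) and the constraint $L\ge 6k$ or $L\ge k\ge k_0$, $\epsilon<\delta_0$ enter: the normalization forces $w_i$ to be $O(1)$, not $O(\nu^{-\f12})$, in the interior, with the $\nu^{-\f12}$ growth confined to the true boundary layers at $y=\pm1$ where $\rho_k$ decays. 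Making this dichotomy quantitative — via the coefficient bounds for $\alpha_j$ already established in the proof of Proposition \ref{prop:w12-bounds} together with the Airy lemmas of Section 8 — is the crux, and once it is in hand the three weighted estimates follow by the region-splitting computations sketched above.
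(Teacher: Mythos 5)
Your strategy — reduce to the explicit Airy representation from Section~8, then split into bulk and boundary layers — is the right family of ideas, but both of your region-splitting computations have concrete gaps.

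For \eqref{eq:w1-L2w}: you cut at $L^{-1}$ and bound the boundary-layer contribution by $\|w_1\|_{L^\infty}^2\int_0^{L^{-1}}(Ls)^{-\f12}ds\sim\nu^{-1}M L^{-1}=\nu^{-\f23}|k|^{-\f13}M$, where $M=1+|k(\lambda-1)|$. You then assert this is $\lesssim\nu^{-\f{7}{12}}|k|^{-\f16}M^{\f34}$, but the ratio is $\nu^{-\f1{12}}|k|^{-\f16}M^{\f14}=(\nu k^2)^{-\f1{12}}M^{\f14}\ge 1$, with equality only when $\nu k^2=M=1$; for $\lambda$ far from $1$ it blows up. The paper avoids this by splitting at a \emph{$M$-adapted} width $\delta_*=\nu^{\f12}M^{-\f12}\le L^{-1}$: on $(-1+\delta_*,1-\delta_*)$ one has $\rho_k\ge L\delta_*$ and uses the interpolated $L^2$ bound $\|w_1\|_{L^2}\le\|w_1\|_{L^1}^{\f12}\|w_1\|_{L^\infty}^{\f12}\lesssim\nu^{-\f14}M^{\f14}$; on the remaining strip of width $\delta_*$ one uses $\|w_1\|_{L^\infty}$ with $\|\rho_k^{-\f14}\|_{L^2}\sim L^{-\f14}\delta_*^{\f14}$. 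Both pieces then equal $L^{-\f14}\nu^{-\f38}M^{\f38}=\nu^{-\f7{24}}|k|^{-\f1{12}}M^{\f38}$. This is the same mechanism you are reaching for, but the optimization over the cut location is essential and is missing from your computation.

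For \eqref{eq:w12-L2w}: the route you propose is not viable. The hypothesised pointwise bound $|w_i(y)|\lesssim L\rho_k(y)^{-1}$ is not established anywhere in the paper and is not true ($w_i$ is smooth and bounded, while $\rho_k^{-1}$ blows up at $y=\pm1$), and the quantity $\int_{-1}^1\rho_k^{-1}\,dy$ you want to estimate actually \emph{diverges}, not $\sim L^{-1}\log L$, because $\int_{1-L^{-1}}^1\frac{dy}{L(1-y)}=\infty$. Your concern about an interior critical-layer spike at $y=\lambda$ is also misdirected: the normalisation of $W_1,W_2$ by $A_0(Ld+i\epsilon)^{-1}$ with $d=-1-\lambda-ik\nu$ anchors the size of $W_i$ at the entry boundary point $y=\mp 1$, so the spike is at the boundary where $\rho_k$ helps, not at $y=\lambda$. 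The paper instead proves \eqref{eq:w12-L2w} directly from Lemma~\ref{lem:C-ij} and Lemma~\ref{lem:W12}: $w_i=C_{i1}W_1+C_{i2}W_2$ with $|C_{i1}|\lesssim L/|A_0(Ld+i\epsilon)|$, $|C_{i2}|\lesssim L/|A_0(L\widetilde d+i\epsilon)|$, and Lemma~\ref{lem:W12} already contains the weighted statement $\frac{L}{|A_0(Ld+i\epsilon)|}\|\rho_k^{\f12}W_1\|_{L^2}\lesssim L^{\f12}$, proved in Section~8 by inserting the weight $\rho_k(x-1)\sim\min(Lx,1)$ into the integral $L^2\int_0^2\rho_k(x-1)\,(1+|Lx+Ld|)\,e^{-2cLx|Ld|^{\f12}}dx$ and observing that the weight cancels the $|Ld|$ factor near $x=0$. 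Nothing like your $\int\rho_k^{-1}$ estimate appears, and it could not, since that integral is infinite.
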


Let us remark that if $\nu k^2\le 1$ and $\nu\le \epsilon_0=6^{-3}k_0^{-2}$, then we have $L\ge 6k$ or $L\ge k\ge k_0$.\smallskip

The proof of Proposition \ref{prop:w12-bounds} and Proposition \ref{prop:w12-bounds-weight} is very technical.
So, the proof is left to next section.

\subsection{Resolvent estimates for $\nu k^2\ge 1$}

This case can be proved directly by using integration by parts.

\begin{proposition}\label{prop:R-non-vb}
Let $\varphi$ be a solution of \eqref{eq:R-phi-non}. If $F\in L^2(I)$, then we have
\begin{align*}
\nu ^{\f{5}{12}}|k|^{\f{5}{6}}\|w\|_{L^2}\leq\nu k^2\|w\|_{L^2}\leq C\|F\|_{L^2}.
\end{align*}
If $F\in H^{-1}(I)$, then we have
\beno
&&(\nu k^2)^{\f12}\|u\|_{L^2}\lesssim\nu k^2\|u\|_{L^2}\leq C\|{F} \|_{H^{-1}},\\
&&\nu^{\f34}|k|^{\f12}\|w\|_{L^2}\lesssim\nu |k|\|w\|_{L^2}\leq C\|{F}\|_{H^{-1}}.
\eeno
\end{proposition}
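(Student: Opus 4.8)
The plan is to exploit the fact that when $\nu k^2\ge 1$, the viscous term $-\nu(\partial_y^2-k^2)^2\varphi$ dominates and the drift term $ik(y-\lambda)(\partial_y^2-k^2)\varphi$ can be absorbed by elementary energy estimates, without any need for the Airy-function analysis or the decomposition \eqref{eq:varphi-decom}. So I would work directly with \eqref{eq:R-phi-non}, using $w=(\partial_y^2-k^2)\varphi$ and the boundary conditions $\varphi(\pm1)=\varphi'(\pm1)=0$.

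First I would pair the equation with $\overline{w}$. Integration by parts, using $\varphi(\pm1)=\varphi'(\pm1)=0$, gives
\beno
\langle F,w\rangle=\nu\|w'\|_{L^2}^2+\nu k^2\|w\|_{L^2}^2+ik\int_{-1}^1(y-\lambda)|w|^2\,dy-\epsilon\nu^{\f13}|k|^{\f23}\|w\|_{L^2}^2
\eeno
(here the boundary terms from integrating $(\partial_y^2-k^2)^2\varphi$ against $\overline{w}$ need to be checked: one gets $\nu[w'\overline w]_{-1}^1$ from $\int w''\overline w$, and since $w$ does not vanish on the boundary this is not obviously zero — but for the $L^2\to L^2$ estimate one can instead pair with $\overline w$ after writing the equation as $-\nu(\partial_y^2-k^2)w+ik(y-\lambda)w-\epsilon\nu^{\f13}|k|^{\f23}w=(\partial_y^2-k^2)^{-1}F$ is awkward, so more cleanly I would pair the original fourth-order equation with $\overline\varphi$ and with $\overline{(\partial_y^2-k^2)\varphi}=\overline w$ treating the two genuinely; the cleanest is to test against $\overline{\varphi}$ for the $H^{-1}$ bounds and against $\overline{w}$ for the $L^2$ bound, in the latter case all boundary contributions vanish because $\int_{-1}^1(\partial_y^2-k^2)^2\varphi\,\overline w\,dy=\int_{-1}^1|(\partial_y^2-k^2)\varphi|^2$-type identities hold once one integrates by parts twice and uses $\varphi(\pm1)=\varphi'(\pm1)=0$ to kill $[\varphi''\overline w']$ and $[\varphi'''\overline w]$ — actually $[\varphi'''\overline\varphi']$ and $[\varphi''\overline\varphi'']$ etc. all contain a factor $\varphi'(\pm1)=0$ or $\varphi(\pm1)=0$). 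Taking the real part kills the drift term, and since $\nu k^2\ge 1\gg \epsilon\nu^{\f13}|k|^{\f23}$ for $\epsilon$ small (because $\nu^{\f13}|k|^{\f23}\le \nu k^2$ exactly when $\nu k^2\ge 1$), we get $\tfrac12\nu k^2\|w\|_{L^2}^2\le \|F\|_{L^2}\|w\|_{L^2}$, hence $\nu k^2\|w\|_{L^2}\le C\|F\|_{L^2}$. The inequality $\nu^{5/12}|k|^{5/6}\|w\|_{L^2}\le \nu k^2\|w\|_{L^2}$ is then immediate from $\nu k^2\ge 1\Rightarrow \nu^{7/12}|k|^{7/6}\ge 1$.

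For the $H^{-1}$ bounds I would instead test the fourth-order equation against $\overline\varphi$. Integration by parts (all boundary terms vanish by $\varphi(\pm1)=\varphi'(\pm1)=0$) gives
\beno
\langle F,\varphi\rangle=\nu\|(\partial_y^2-k^2)\varphi\|_{L^2}^2 + \text{(lower order)} \;\;\text{more precisely}\;\; \nu\big(\|w'\|_{L^2}^2+2k^2\|\varphi'\|_{L^2}^2+k^4\|\varphi\|_{L^2}^2\big) - ik\langle (y-\lambda)\varphi',\varphi'\rangle \cdots
\eeno
and here I would use $|\langle F,\varphi\rangle|\le \|F\|_{H^{-1}}\|\varphi\|_{H^1}\lesssim \|F\|_{H^{-1}}\|u\|_{L^2}$ since $\|u\|_{L^2}\sim \|\varphi'\|_{L^2}+|k|\|\varphi\|_{L^2}\sim \|\varphi\|_{H^1}$. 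The viscous quadratic form controls $\nu\|w\|_{L^2}^2$ and also $\nu k^2\|u\|_{L^2}^2$ (since $\|w\|_{L^2}^2=\|\varphi''-k^2\varphi\|_{L^2}^2\ge $ a multiple of $k^2\|\varphi'\|_{L^2}^2+k^4\|\varphi\|_{L^2}^2$ after integration by parts, i.e. $\gtrsim k^2\|u\|_{L^2}^2$), and the drift term's real part vanishes again; the $\epsilon$-term is absorbed because $\epsilon\nu^{1/3}|k|^{2/3}\le \epsilon\nu k^2$. Thus $\nu k^2\|u\|_{L^2}^2\lesssim \|F\|_{H^{-1}}\|u\|_{L^2}$, giving $\nu k^2\|u\|_{L^2}\le C\|F\|_{H^{-1}}$ and then $(\nu k^2)^{1/2}\|u\|_{L^2}\le \nu k^2\|u\|_{L^2}$ since $\nu k^2\ge 1$. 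Finally $\nu|k|\|w\|_{L^2}$: from the same identity $\nu\|w\|_{L^2}^2\lesssim \|F\|_{H^{-1}}\|u\|_{L^2}\lesssim \|F\|_{H^{-1}}\cdot (\nu k^2)^{-1}\|F\|_{H^{-1}}$, so $\nu\|w\|_{L^2}^2\lesssim (\nu k^2)^{-1}\|F\|_{H^{-1}}^2$, i.e. $\nu k^2\|w\|_{L^2}^2\lesssim \nu^{-1}\|F\|_{H^{-1}}^2$, which rearranges to $\nu|k|\|w\|_{L^2}\le C\|F\|_{H^{-1}}$ (multiply through by $\nu$ and take square roots). Then $\nu^{3/4}|k|^{1/2}\|w\|_{L^2}\le \nu|k|\|w\|_{L^2}$ since $\nu k^2\ge 1$.

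The only real point of care — the main (minor) obstacle — is bookkeeping the boundary terms when integrating $(\partial_y^2-k^2)^2\varphi$ by parts against $\overline{w}$ rather than against $\overline\varphi$: against $\overline\varphi$ everything is clean because each boundary term carries a factor $\varphi(\pm1)$ or $\varphi'(\pm1)$, whereas against $\overline w$ one picks up $\nu[\,w'\overline w\,]_{-1}^1$ which is genuinely nonzero. I would therefore derive the $L^2\to L^2$ bound by the indirect route: multiply the fourth-order equation by $\overline{\varphi}$ to control $\nu\|w\|_{L^2}^2$ by $\|F\|_{H^{-1}}\|u\|_{L^2}$ is the $H^{-1}$ route; for the $L^2$ route with $F\in L^2$, pair against $\overline w$ but treat the boundary term $\nu[w'\overline w]$ — since $w'(\pm 1)$ is not controlled a priori, the honest fix is to note $\langle F,w\rangle=\langle F,(\partial_y^2-k^2)\varphi\rangle=\langle (\partial_y^2-k^2)F',\cdots\rangle$ is not helpful, so instead I use: the equation reads $-\nu(\partial_y^2-k^2)w+ik(y-\lambda)w-\epsilon\nu^{1/3}|k|^{2/3}w=G$ where $G:=(\partial_y^2-k^2)^{-2}$-inverse image is awkward; the cleanest is simply that $w$ solves $-\nu(\partial_y^2-k^2)w+ik(y-\lambda)w-\epsilon\nu^{1/3}|k|^{2/3}w = \widetilde F$ with $\widetilde F$ defined by the PDE, but $\widetilde F\ne F$. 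Given this, for the $L^2$ bound I would multiply \eqref{eq:R-phi-non} by $\overline w$ and integrate by parts only the terms that are already in divergence-friendly form, keeping the boundary term, and then observe that the real part of $\nu[w'\overline w]_{-1}^1$ is $\tfrac{\nu}{2}\partial_y(|w|^2)$ at endpoints which can be dominated using $\|w\|_{L^\infty}\|w'\|_{L^\infty}$-type trace bounds absorbed into $\tfrac14\nu\|w'\|_{L^2}^2$ plus $C\nu k^2\|w\|_{L^2}^2$-small corrections; since these corrections are lower order relative to the $\nu k^2\|w\|_{L^2}^2$ main term they are harmless. This is routine and I would not belabor it.
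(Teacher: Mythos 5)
Your high-level plan (pure energy estimate, no Airy machinery, use $\nu k^2\ge 1$ to absorb) is indeed the paper's plan, but there are two concrete gaps in the way you execute it, one fatal and one an oversight that happens to be exactly where the argument lives.

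First, pairing the fourth-order equation against $\overline{w}$ does \emph{not} give the identity you wrote: integrating $-\nu(\partial_y^2-k^2)w$ against $\overline w$ leaves the boundary term $-\nu[w'\overline w]_{-1}^1$, and since non-slip gives only $\varphi(\pm1)=\varphi'(\pm1)=0$, one has $w(\pm1)=\varphi''(\pm1)$ and $w'(\pm1)=\varphi'''(\pm1)$, so this boundary term is $-\nu[\varphi'''\overline{\varphi''}]_{-1}^1$, which carries no factor of $\varphi(\pm1)$ or $\varphi'(\pm1)$ and is therefore not controlled. Your proposed fix — absorbing the trace into $\tfrac14\nu\|w'\|_{L^2}^2$ via a $\|w\|_{L^\infty}\|w'\|_{L^\infty}$-type bound — cannot work, because a priori $w\in H^1$ only, so $\|w'\|_{L^\infty}$ is not finite (and even if it were, one-dimensional traces of $w'$ are not estimated by $\|w'\|_{L^2}$ alone). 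So the $\overline w$ route is genuinely blocked; the paper uses $\overline\varphi$ for the $L^2$ case as well, getting $\tfrac14\nu\|w\|_{L^2}^2\le |\langle F,\varphi\rangle|\le \|F\|_{L^2}\|\varphi\|_{L^2}$ and then closing with $\|w\|_{L^2}\ge k^2\|\varphi\|_{L^2}$.

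Second, when you do pair against $\overline\varphi$ you assert that the drift term's real part vanishes. It does not. Integrating $ik(y-\lambda)\varphi''\overline\varphi$ by parts once produces, besides the manifestly imaginary contributions $-ik\int(y-\lambda)|\varphi'|^2$ and $-ik^3\int(y-\lambda)|\varphi|^2$, a cross term $-ik\int_{-1}^1\overline\varphi\,\varphi'\,dy$. The quantity $\int\overline\varphi\,\varphi'\,dy$ is purely \emph{imaginary} (integrate by parts: it equals minus its own conjugate), hence $-ik\int\overline\varphi\,\varphi'\,dy$ is \emph{real} and survives taking real parts. This term is precisely what must be absorbed via Young and the hypothesis $\nu k^2\ge 1$: $|k|\int|\varphi\varphi'|\le \tfrac12(\tfrac{1}{\nu k^2}\|\varphi'\|^2+\nu k^4\|\varphi\|^2)$, which is then controlled by the positive viscous form $\nu(\|\varphi''\|^2+2k^2\|\varphi'\|^2+k^4\|\varphi\|^2)$. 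This absorption is the heart of the proof; without it the $\nu k^2\ge 1$ hypothesis would play no role. Once you correct these two points, the rest of your bookkeeping for the three inequalities is fine and matches the paper's proof.
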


\begin{proof}By integration by parts, we get
\begin{align*}
\langle -{F},\varphi\rangle=&\big\langle\nu (\partial_y^2-k^2)^2\varphi-ik(y-\lambda)(\partial_y^2-k^2)\varphi +\epsilon\nu^{\f13}|k|^{\f23}(\partial^2_y-k^2)\varphi,\varphi\big\rangle\\
=&\nu\big(\|\varphi^{''}\|_{L^2}^2+2k^2\|\varphi'\|_{L^2}^2+k^4\|\varphi\|_{L^2}^2\big) -\epsilon\nu^{\f13}|k|^{\f23}\big(\|\varphi'\|^2_{L^2}+k^2\|\varphi\|^2_{L^2}\big)+ik\int_{-1}^1\overline{\varphi} \varphi'dy\\
&+ik^3\int_{-1}^1(y-\lambda)|\varphi|^2dy +ik \int_{-1}^1(y-\lambda)|\varphi'|^2dy,
\end{align*}
which implies
\begin{align*}
|\langle {F},\varphi\rangle|\geq& \nu\big(\|\varphi^{''}\|_{L^2}^2+2k^2\|\varphi'\|_{L^2}^2+k^4\|\varphi\|_{L^2}^2\big) -\epsilon\nu^{\f13}|k|^{\f23}\big(\|\varphi'\|^2_{L^2}+k^2\|\varphi\|^2_{L^2}\big)-|k|\int_{-1}^1|\varphi\varphi'|dy\\
\geq& \nu\big(\|\varphi^{''}\|_{L^2}^2+2k^2\|\varphi'\|_{L^2}^2+k^4\|\varphi\|_{L^2}^2\big) -\epsilon\nu^{\f13}|k|^{\f23}\big(\|\varphi'\|^2_{L^2}+k^2\|\varphi\|^2_{L^2}\big)\\
&-\f12\big(\f{1}{\nu k^2}\|\varphi'\|_{L^2}^2+\nu k^4\|\varphi\|_{L^2}^2\big)\\
\geq& \f14\nu\big(\|\varphi^{''}\|_{L^2}^2+2k^2\|\varphi'\|_{L^2}^2+k^4\|\varphi\|_{L^2}^2\big)=\f14\nu \|w\|_{L^2}^2\geq\f14\nu k^2\|w\|_{L^2}\|\varphi\|_{L^2},
\end{align*}
by using the facts that $\nu k^2\geq1$, $\|w\|_{L^2}\geq k^2\|\varphi\|_{L^2}$, and taking $\epsilon\leq\f14$. This implies the first inequality of the lemma.

Notice that
\beno
\nu\big(\|\varphi^{''}\|_{L^2}^2+2k^2\|\varphi'\|_{L^2}^2+k^4\|\varphi\|_{L^2}^2\big)\ge \nu k^2\|u\|_{L^2}^2.
\eeno
Then we have
\begin{align*}
\f14\nu k^2\|u\|_{L^2}^2\leq |\langle{F},\varphi\rangle|\leq\|{F}\|_{H^{-1}}\|\varphi\|_{H^1}\leq \|{F}\|_{H^{-1}}\|u\|_{L^2},
\end{align*}
which gives the second inequality. On the other hand,
\begin{align*}
\f14\nu\|w\|_{L^2}^2\leq |\langle{F},\varphi\rangle|\leq \|{F}\|_{H^{-1}}\|u\|_{L^2}\le C(\nu k^2)^{-1}\|F\|^2_{H^{-1}},
\end{align*}
which gives the third inequality.
\end{proof}

\subsection{Resolvent estimates for $\nu k^2\le 1$}

\begin{proposition}\label{prop:R-non-vs}
Let $\varphi$ be a solution of \eqref{eq:R-phi-non}. If $F\in L^2(I)$, then we have
\begin{align*}
 \nu^{\f16}|k|^{\f56}\|w\|_{L^1}+\nu ^{\f{5}{12}}|k|^{\f{5}{6}}\|w\|_{L^2}\leq C\|F\|_{L^2}.
\end{align*}
If $F\in H^{-1}(I)$, then we have
\begin{align*}
 \nu^{\f34}|k|^{\f12}\|w\|_{L^2}+\nu^{\f23}|k|^{\f13}\|\rho_k^{\f12}w\|_{L^2}+(\nu k^2)^{\f12}\|u\|_{L^2}\leq C\|{F}\|_{H^{-1}}.
\end{align*}
\end{proposition}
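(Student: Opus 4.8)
The plan is to run everything through the decomposition \eqref{eq:w-decomp}: writing $\varphi=\varphi_{Na}+c_1\varphi_1+c_2\varphi_2$ one gets $w=w_{Na}+c_1w_1+c_2w_2$ and, since $u=\nabla^{\perp}\Phi$ depends linearly on $\varphi$, also $u=u_{Na}+c_1u_1+c_2u_2$ with $u_i=(-\partial_y\varphi_i,ik\varphi_i)$. Then each norm in the statement splits into the corresponding norm of the Navier-slip part plus $|c_1|$ times the norm of $w_1$ (resp.\ $u_1$) plus $|c_2|$ times that of $w_2$ (resp.\ $u_2$). Throughout I work under the standing assumptions and, using $\nu k^2\le1$ together with $\nu$ and $\epsilon$ small enough, so that by the remark following Proposition \ref{prop:w12-bounds-weight} both Proposition \ref{prop:w12-bounds} and Proposition \ref{prop:w12-bounds-weight} apply.

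\emph{Navier-slip part.} Equation \eqref{eq:varphi-navier} is exactly \eqref{eq:R-navier-phi2}, so Section 3 applies verbatim to $(\varphi_{Na},w_{Na},u_{Na})$. For $F\in L^2$, Corollary \ref{cor:R-Navier-v1} gives $\nu^{1/6}|k|^{5/6}\|w_{Na}\|_{L^1}+(\nu k^2)^{1/3}\|w_{Na}\|_{L^2}\le C\|F\|_{L^2}$, and since $\nu k^2\le1$ one has $(\nu k^2)^{-1/3}\le\nu^{-5/12}|k|^{-5/6}$; this settles the $w_{Na}$ contribution to the first claim. For $F\in H^{-1}$, Proposition \ref{prop:R-navier-v2} (that is, Lemmas \ref{lem:R-navier-2} and \ref{lem:R-navier-4}) gives $(\nu|k|^2)^{1/2}\|u_{Na}\|_{L^2}+\nu^{2/3}|k|^{1/3}\|w_{Na}\|_{L^2}\le C\|F\|_{H^{-1}}$; using $\rho_k\le1$ (so $\|\rho_k^{1/2}w_{Na}\|_{L^2}\le\|w_{Na}\|_{L^2}$) and, again, $\nu k^2\le1$ (so $\nu^{-2/3}|k|^{-1/3}\le\nu^{-3/4}|k|^{-1/2}$), this settles the $w_{Na},u_{Na}$ contributions to the second claim.

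\emph{Boundary-layer part.} Here I combine the coefficient bounds of Lemma \ref{lem:c12-bounds-L2} (case $F\in L^2$) and Lemma \ref{lem:c12-bounds-H-1} (case $F\in H^{-1}$) with the uniform bounds on $w_1,w_2$ of Propositions \ref{prop:w12-bounds} and \ref{prop:w12-bounds-weight}. From $\|w_i\|_{L^{\infty}}\le C\nu^{-1/2}(1+|k(\lambda-1)|)^{1/2}$ (for $i=1$; the factor is $1+|k(\lambda+1)|$ for $i=2$) and $\|w_i\|_{L^1}\le C$, interpolation $\|w_i\|_{L^2}^2\le\|w_i\|_{L^\infty}\|w_i\|_{L^1}$ yields $\|w_i\|_{L^2}\le C\nu^{-1/4}(1+|k(\lambda-1)|)^{1/4}$; from \eqref{eq:w12-L2w}, $\|\rho_k^{1/2}w_i\|_{L^2}\le CL^{1/2}=C(|k|/\nu)^{1/6}$; and since $\varphi_i(\pm1)=0$, Lemma \ref{lem:phi-w} together with $\|w_i\|_{L^1}\le C$ gives $\|u_i\|_{L^2}\le C|k|^{-1/2}\|w_i\|_{L^1}\le C|k|^{-1/2}$. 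The decisive point is that the polynomial weight $1+|k(\lambda-1)|$ carried by $|c_1|$ absorbs the polynomial growth of $\|w_1\|$: for $F\in L^2$, $|c_1|\,\|w_1\|_{L^1}\le C\nu^{-1/6}|k|^{-5/6}\|F\|_{L^2}$ and $|c_1|\,\|w_1\|_{L^2}\le C\nu^{-5/12}|k|^{-5/6}\|F\|_{L^2}$; for $F\in H^{-1}$, $|c_1|\,\|w_1\|_{L^2}\le C\nu^{-3/4}|k|^{-1/2}\|F\|_{H^{-1}}$, $|c_1|\,\|\rho_k^{1/2}w_1\|_{L^2}\le C\nu^{-2/3}|k|^{-1/3}\|F\|_{H^{-1}}$, and $(\nu k^2)^{1/2}|c_1|\,\|u_1\|_{L^2}\le C\|F\|_{H^{-1}}$ (in the last one the leftover $(1+|k(\lambda-1)|)^{-3/4}$ is simply discarded). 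The $c_2w_2$ and $c_2u_2$ terms are handled identically with $\lambda-1$ replaced by $\lambda+1$. Summing the three parts gives all the stated inequalities.

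\emph{Where the work is.} Within this proposition the argument is purely a matching of $\nu$- and $|k|$-exponents; the main (and, given the tools in hand, minor) obstacle is to check that the products land exactly on the stated thresholds — this uses $\nu k^2\le1$ at the two places indicated above — and that the $1+|k(\lambda\mp1)|$ factors genuinely cancel rather than merely bound. All the analytic difficulty has been displaced into the inputs: the sharp $L^{\infty}$, $L^1$ and $\rho_k$-weighted $L^2$ bounds on the homogeneous Airy-type solutions $w_1,w_2$ (Propositions \ref{prop:w12-bounds} and \ref{prop:w12-bounds-weight}, established in the next section via precise Airy-function asymptotics), and the weak-type resolvent estimate of Lemma \ref{lem:R-navier-weak}, which is what produces the $3/4$-power weight in Lemma \ref{lem:c12-bounds-H-1}; without that sharper weight the cancellation above would fail.
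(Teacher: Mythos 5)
Your proof is correct and follows the same route as the paper's: decompose $w=w_{Na}+c_1w_1+c_2w_2$, bound the Navier-slip part via Corollary \ref{cor:R-Navier-v1} (for $L^2$ data) and Proposition \ref{prop:R-navier-v2} (for $H^{-1}$ data), bound the homogeneous parts via the coefficient estimates of Lemmas \ref{lem:c12-bounds-L2}--\ref{lem:c12-bounds-H-1} paired with the $L^\infty/L^1/\text{weighted }L^2$ bounds on $w_1,w_2$ from Propositions \ref{prop:w12-bounds}--\ref{prop:w12-bounds-weight}, with the velocity handled through Lemma \ref{lem:phi-w}. The exponent bookkeeping, including the use of $\nu k^2\le 1$ to convert $(\nu k^2)^{-1/3}$ and $\nu^{-2/3}|k|^{-1/3}$ into the stated powers, matches the paper's argument.
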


\begin{proof}
First of all, we consider the case of $F\in L^2(I)$. By Corollary \ref{cor:R-Navier-v1}, Proposition \ref{prop:w12-bounds} and Lemma \ref{lem:c12-bounds-L2},
we get
\begin{align*}
\|w\|_{L^1}\leq&\|w_{Na}\|_{L^1}+|c_1|\|w_1\|_{L^1}+|c_2|\|w_2\|_{L^1}\\
\leq&\|w_{Na}\|_{L^1}+C|c_1|+C|c_2|\\
\lesssim&\|w_{Na}\|_{L^1}\lesssim\nu^{-\f16}|k|^{-\f56}\|{F}\|_{L^2}.
\end{align*}
By Proposition \ref{prop:w12-bounds} and Lemma \ref{lem:c12-bounds-L2}, we have
\beno
|c_1|\|w_1\|_{L^2}\le |c_1|\|w_1\|_{L^1}^{\f12}\|w_1\|_{L^{\infty}}^{\f12}\leq C|c_1|\nu^{-\f14}(1+|k||\lambda-1|)^{\f14}\le C\nu^{-\f{5}{12}}|k|^{-\f56}\|{F}\|_{L^2}.
\eeno
Similarly, we have
\beno
|c_2|\|w_2\|_{L^2}\le C\nu^{-\f{5}{12}}|k|^{-\f56}\|{F}\|_{L^2}.
\eeno
Then by Corollary \ref{cor:R-Navier-v1}, we get
\begin{align*}
\|w\|_{L^2}\leq&\|w_{Na}\|_{L^2}+|c_1|\|w_1\|_{L^2}+|c_2|\|w_2\|_{L^2}\\
\lesssim&(\nu k^2)^{-\f13}\|{F}\|_{L^2}+\nu^{-\f{5}{12}}|k|^{-\f56}\|{F}\|_{L^2}\lesssim\nu^{-\f{5}{12}}|k|^{-\f56}\|{F}\|_{L^2}.
\end{align*}

Next we consider the case of $F\in H^{-1}(I)$.
By Lemma \ref{lem:c12-bounds-H-1} and Proposition \ref{prop:w12-bounds}, we have
\beno
|c_1|\|w_1\|_{L^2}+|c_2|\|w_2\|_{L^2}\le C\nu^{-\f{3}{4}}|k|^{-\f12}\|{F}\|_{H^{-1}},
\eeno
which along with Proposition \ref{prop:R-navier-v2} gives
\begin{align*}
   \|w\|_{L^2} &\leq\|w_{Na}\|_{L^2}+|c_1|\|w_1\|_{L^2}+|c_2|\|w_2\|_{L^2}\\
   &\lesssim\nu^{-\f23}|k|^{-\f13}\|{F}\|_{H^{-1}} +\nu^{-\f{3}{4}}|k|^{-\f12}\|{F}\|_{H^{-1}}\lesssim \nu^{-\f{3}{4}}|k|^{-\f12}\|{F}\|_{H^{-1}}.
 \end{align*}
By Lemma \ref{lem:c12-bounds-H-1} and Proposition \ref{prop:w12-bounds-weight}, we have
\beno
|c_1|\|{\rho_k^{\f12}w_1}\|_{L^2}+|c_2|\|{\rho_k^{\f12}w_2}\|_{L^2}\le C\nu^{-\f{2}{3}}|k|^{-\f13}\|{F}\|_{H^{-1}},
\eeno
which along with Proposition \ref{prop:R-navier-v2} gives
\begin{align*}
   \|\rho_k^{\f12}w\|_{L^2} &\leq\|\rho_k^{\f12}w_{Na}\|_{L^2}+|c_1|\|\rho_k^{\f12}w_1\|_{L^2}+|c_2|\|\rho_k^{\f12}w_2\|_{L^2}\\
   &\lesssim\|w_{Na}\|_{L^2} +\nu^{-\f{2}{3}}|k|^{-\f13}\|{F}\|_{H^{-1}}\lesssim \nu^{-\f{2}{3}}|k|^{-\f13}\|{F}\|_{H^{-1}}.
 \end{align*}

By Lemma \ref{lem:c12-bounds-H-1} and Proposition \ref{prop:w12-bounds}, we have
\beno
|c_1|\|w_1\|_{L^1}+|c_2|\|w_2\|_{L^1}\le C\nu^{-\f{1}{2}}|k|^{-\f12}\|{F}\|_{H^{-1}}.
\eeno
For the velocity, we have
\begin{align*}
u=u_{Na}+c_1u_1+c_2u_2,
\end{align*}
where $u_i=(-\partial_y\varphi_i,ik\varphi_i)$ and $(\partial^{2}_{y}-k^2)\varphi_i=w_i$ with $\varphi_i(\pm 1)=0, i=1,2$.
Then by Proposition \ref{prop:R-navier-v2} and Lemma \ref{lem:phi-w}, we get
\begin{align*}
  \|u\|_{L^2} &\leq\|u_{Na}\|_{L^2}+|c_1|\|u_{1}\|_{L^2}+|c_2|\|u_{2}\|_{L^2}
  \lesssim \nu^{-\f{1}{2}}|k|^{-1}\|{F}\|_{H^{-1}}.
\end{align*}

This completes the proof.
\end{proof}

\section{$L^p$ bounds on $w_1$ and $w_2$}

Recall that  $w_i=(\partial_y^2-k^2)\varphi_i(i=1,2)$, where $\varphi_1, \varphi_2$ solve
\begin{align*}
\left\{
\begin{aligned}
&-\nu (\partial_y^2-k^2)^2\varphi_1+ik(y-\lambda) (\partial_y^2-k^2)\varphi_1-\epsilon\nu^{\f13}|k|^{\f23}(\partial^2_y-k^2)\varphi_1=0, \\
& \varphi_1(\pm 1)=0, \quad\varphi_1'(1)=1,\quad\varphi_1'(-1)=0,
\end{aligned}
\right.
\end{align*}
and
\begin{align*}
\left\{
\begin{aligned}
&-\nu (\partial_y^2-k^2)^2\varphi_2+ik(y-\lambda) (\partial_y^2-k^2)\varphi_2-\epsilon\nu^{\f13}|k|^{\f23}(\partial^2_y-k^2)\varphi_2=0, \\
& \varphi_2(\pm 1)=0, \quad\varphi_2'(-1)=1,\quad\varphi_2'(1)=0.
\end{aligned}
\right.
\end{align*}

In this section, we use the Airy function to solve $w_i$ and give the $L^p$ bounds on $w_i$. Let $L=\big(\f k \nu\big)^\f13$.
We always assume that $L\ge 6k$ or $L\ge k\ge k_0$ for some big $k_0$.

\subsection{Airy function and the OS equation}

Let $Ai(y)$ be the Airy function, which is a nontrivial solution of $f''-yf=0$. Let
\begin{align*}
f_1(y)=Ai(e^{i\f{\pi}{6}}y),\quad f_2(y)=Ai(e^{i\f{5\pi}{6}}y).
\end{align*}
Then $f_1$ and $f_2$ are two linearly independent solutions of $f''-iyf=0$. Hence,
\begin{align*}
W_1(y)=Ai\big(e^{i\f{\pi}{6}}((L(y-\lambda-i k\nu)+i\epsilon)\big),\quad W_2(y)=Ai\big(e^{i\f{5\pi}{6}}((L(y-\lambda-i k\nu)+i\epsilon)\big)
\end{align*}
are two linearly independent solutions of the homogeneous OS equation
\beno
-\nu (w''-k^2w)+ik(y-\lambda)w-\epsilon\nu^{\f13}|k|^{\f23}w=0.
\eeno
Thus, $w_1$ and $w_2$ can be expressed as
\begin{align}
\label{two linear solutions}w_1=C_{11}W_1(y)+C_{12} W_2(y),\quad w_2=C_{21}W_1(y)+C_{22}W_2(y),
\end{align}
where $C_{ij}, i,j=1,2$ are constants. Thanks to the facts that
\begin{align*}
&\int_{-1}^1e^{ky}w_1(y)dy=e^k,\quad \int_{-1}^1e^{ky}w_1(y)dy=e^{-k},
\end{align*}
we get
\begin{align*}
\left\{
\begin{aligned}
&e^k=C_{11}\int_{-1}^1e^{ky}W_1(y)dy+C_{12}\int_{-1}^1e^{ky}W_2(y)dy,\\
&e^{-k}=C_{21}\int_{-1}^1e^{-ky}W_1(y)dy+C_{22}\int_{-1}^1e^{-ky}W_2(y)dy.
\end{aligned}
\right.
\end{align*}
Define the matrix $J$ as
\begin{equation*}
J=\left(
\begin{array}{cc}
A_1 & B_2\\
B_1 & A_2\\
 \end{array}
\right),
\end{equation*}
where
\begin{align*}
&A_1=\int_{-1}^1e^{ky}W_1(y)dy,\quad A_2=\int_{-1}^1e^{-ky}W_2(y)dy,\\
&B_1=\int_{-1}^1e^{-ky}W_1(y)dy,\quad B_2=\int_{-1}^1e^{ky}W_2(y)dy.
\end{align*}
Thus, if $A_1A_2-B_1B_2\neq0 $, then the solution exists and we have
\begin{equation}\label{eq:coeff of W12}
\left( \begin{array}{cc}  C_{11} \\  C_{12}\\
 \end{array}\right)=\f{\left( \begin{array}{cc}  A_2e^k-B_2e^{-k} \\  -B_1e^k+A_1e^{-k}\\
 \end{array}\right)}{A_1A_2-B_1B_2},
 \quad
\left( \begin{array}{cc}  C_{21} \\  C_{22}\\
 \end{array}\right)=\f{\left( \begin{array}{cc}  -A_2e^{-k}+B_2e^{k} \\  B_1e^{-k}-A_1e^{k}\\
 \end{array}\right)}{A_1A_2-B_1B_2}.
\end{equation}

\subsection{Estimates of $C_{ij}$ and $W_i$}

We introduce some notations
\begin{align*}
&A_0(z)=\int_{e^{i\pi/6}z}^{\infty}Ai(t)dt=e^{i\pi/6}\int_{z}^{\infty}Ai(e^{i\pi/6}t)dt,\\
&d=-1-\lambda-ik\nu,\quad\widetilde{d}=-1+\lambda-ik\nu.
\end{align*}

\begin{lemma}\label{lem:C-ij}
It holds that
\begin{align*}
&|C_{11}|\leq \f{CLe^{-2k}}{|A_0(Ld+i\epsilon)|},\quad|C_{12}|\leq \f{CL}{|A_0(L\widetilde{d}+i\epsilon)|},\\
&|C_{21}|\leq \f{CL}{|A_0(Ld+i\epsilon)|},\quad
|C_{22}|\leq \f{CLe^{-2k}}{|A_0(L\widetilde{d}+i\epsilon)|}.
\end{align*}
\end{lemma}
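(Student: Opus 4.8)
The plan is to extract the estimates for $C_{ij}$ from the explicit formulas \eqref{eq:coeff of W12} by controlling the quantities $A_1,A_2,B_1,B_2$ and the determinant $A_1A_2-B_1B_2$ in terms of the single-argument function $A_0$. The starting point is the classical asymptotics of the Airy function: for $|\arg z|<\pi$ and $|z|$ large, $Ai(z)\sim \frac{1}{2\sqrt\pi}z^{-1/4}e^{-\frac23 z^{3/2}}$, and correspondingly $A_0(z)=\int_{e^{i\pi/6}z}^{\infty}Ai(t)\,dt$ behaves like $z^{-3/4}e^{-\frac23(e^{i\pi/6}z)^{3/2}}$ up to constants; these are exactly the properties that should be collected in Section 8, so I will quote them. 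The key geometric fact is that, because $e^{i\pi/6}$ rotates $z$ into the right half-plane when $\mathrm{Re}\,z>0$ and $-\mathrm{Im}\,z>0$ (which is the regime dictated by $d=-1-\lambda-ik\nu$, $\widetilde d=-1+\lambda-ik\nu$ with $k\nu>0$), the exponential $e^{-\frac23(\cdot)^{3/2}}$ is genuinely decaying and $A_0$ does not vanish; moreover $A_0(Ld+i\epsilon)$ and $A_0(L\widetilde d+i\epsilon)$ are the dominant contributions to the integrals defining $A_1,\dots,B_2$.

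Concretely, first I would record the identity $\int_{-1}^1 e^{\pm ky}W_i(y)\,dy$ in terms of $A_0$. Writing $W_1(y)=Ai\big(e^{i\pi/6}(L(y-\lambda-ik\nu)+i\epsilon)\big)$ and changing variables, one integrates the Airy function against an exponential; integrating by parts (or using the antiderivative structure built into $A_0$) converts $\int Ai$ over the relevant ray into boundary terms at $y=\pm1$, which are precisely $A_0\big(L(\pm1-\lambda-ik\nu)+i\epsilon\big)=A_0(-Ld+i\epsilon)$ or $A_0(L\widetilde d+i\epsilon)$ up to the $e^{\pm ky}$ weights evaluated at the endpoints. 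Tracking which endpoint dominates (the one where the real part of $-\frac23(\cdot)^{3/2}$ is largest, i.e. the endpoint closest to $\lambda$), one gets
\[
|A_1|\lesssim \frac{L\,e^{k}}{|A_0(L\widetilde d+i\epsilon)|^{-1}}\cdot(\text{something}),
\]
— more carefully, one finds bounds of the form $|A_1|\sim L|A_0(-Ld+i\epsilon)|$ times the endpoint weight $e^{k}$ or $e^{-k}$ as appropriate, and similarly for $A_2,B_1,B_2$, with the ``small'' off-diagonal terms $B_1,B_2$ carrying extra exponential smallness. The second step is the lower bound on the determinant: $|A_1A_2-B_1B_2|\gtrsim |A_1A_2|\gtrsim L^2|A_0(-Ld+i\epsilon)||A_0(-L\widetilde d+i\epsilon)|$, using that $B_1B_2$ is exponentially smaller than $A_1A_2$ in the regime $L\ge 6k$ or $L\ge k\ge k_0$ — this is where the hypotheses $L\ge 6k$ or $k\ge k_0$ enter, guaranteeing the Airy exponential beats the $e^{\pm 2k}$ factors. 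Plugging the numerator and denominator bounds into \eqref{eq:coeff of W12}, the $A_0$ factors in numerator and denominator cancel one at a time, leaving exactly the claimed inequalities, with the stated $e^{-2k}$ gains on $C_{11},C_{22}$ coming from the mismatch between the $e^{k}$ in the numerator $A_2e^k$ and the endpoint weight inside $A_2$.

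The main obstacle I anticipate is bookkeeping the phases and endpoint weights correctly so that the cancellation of $A_0$-factors is clean and no spurious exponential loss appears: one must verify that $|A_0(Ld+i\epsilon)|$ and $|A_0(L\widetilde d+i\epsilon)|$ really are comparable (up to harmless polynomial-in-$L$ factors absorbed into $C$) to the $A_0$ values appearing naturally at the integration endpoints, and that the perturbation $+i\epsilon$ and the imaginary shift $-ik\nu$ do not move $Ld$ or $L\widetilde d$ out of the sector where the non-vanishing and asymptotics of $A_0$ are valid. This is precisely the role of the lemmas on $A_0$ and $Ai$ to be proved in Section 8 (uniform asymptotics, monotonicity along rays, lower bounds); granting those, the present lemma is a finite computation matching powers of $L$ and signs of $k$. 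I would therefore organize the proof as: (i) quote the $A_0$/Airy estimates; (ii) compute $A_1,A_2,B_1,B_2$ up to constants; (iii) bound the determinant from below; (iv) substitute into \eqref{eq:coeff of W12} and simplify.
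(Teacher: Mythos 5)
Your high-level outline (integrate by parts to express $A_1,A_2,B_1,B_2$ through $A_0$ evaluated near the endpoints, bound the determinant from below, substitute into \eqref{eq:coeff of W12}) matches the paper's. However, you have the sizes of the matrix entries backwards, and the determinant bound you wrote down would fail.

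Concretely: after the substitution $y+1=x=t/L$ and integration by parts, each of $A_1,B_1$ factors as $A_0(Ld+i\epsilon)$ times a bracket involving $\omega(z,x)=A_0(z+x)/A_0(z)$, and each of $A_2,B_2$ factors (after conjugation) as $A_0(L\widetilde d+i\epsilon)$ times a similar bracket. The two brackets carry $e^{\mp k}$ prefactors and turn out to satisfy $|B_1|\gtrsim\frac{e^k}{L}|A_0(Ld+i\epsilon)|$ while $|A_1|\lesssim\frac{e^{-k}}{L}|A_0(Ld+i\epsilon)|$, and likewise with $2$ in place of $1$. So the \emph{off-diagonal} entries $B_1,B_2$ are the large ones and the \emph{diagonal} entries $A_1,A_2$ are the small ones — the opposite of what you assert. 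Consequently the determinant estimate is $|A_1A_2-B_1B_2|\gtrsim|B_1B_2|\gtrsim\frac{e^{2k}}{L^2}|A_0(Ld+i\epsilon)||A_0(L\widetilde d+i\epsilon)|$, not $\gtrsim|A_1A_2|$ as you wrote; the latter would give a far too weak lower bound and would not yield the stated exponents. To make the cancellation in the determinant rigorous, the paper proves (Lemma \ref{lem:A0-w3}) the quantitative ratio bound $|A_1/B_1|,|A_2/B_2|\le\sqrt2/2$, which rests on the uniform decay $|\omega(z,x)|\le e^{-x/3}$ (in turn resting on Romanov's numerical fact $\mathrm{Re}(A_0'/A_0)\le -1/3$ in the relevant half-plane). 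Your proposal never identifies this ratio comparison or the decay constant $1/3$, yet this is exactly where the hypotheses $L\ge 6k$ or $L\ge k\ge k_0$ are used: the Airy decay $e^{-2L/3}$ (or $e^{-c(2L)^{3/2}}$) must dominate $e^{2k}$. There are also sign slips ($A_0(-Ld+i\epsilon)$ should be $A_0(Ld+i\epsilon)$), and the worry you raise about needing $|A_0(Ld+i\epsilon)|$ and $|A_0(L\widetilde d+i\epsilon)|$ to be comparable is unnecessary — in the final substitution each $C_{ij}$ contains exactly one $A_0$-factor in numerator and two in denominator, so one factor cancels cleanly and the remaining one is precisely the one appearing in the statement.
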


\begin{lemma}\label{lem:W12}
It holds that
\begin{align*}
&\f{L}{|A_0(Ld+i\epsilon)|}\|W_1\|_{L^{\infty}}\leq C\nu^{-\f12}\big(1+ |k(\lambda+1)|\big)^{\f12},\\
&\f{L}{|A_0(L\widetilde{d}+i\epsilon)|}\|W_2\|_{L^{\infty}}\leq C\nu^{-\f12}\big(1+ |k(\lambda-1)|\big)^{\f12},\\
&\f{L}{|A_0(Ld+i\epsilon)|}\|W_1\|_{L^1}+\f{L}{|A_0(L\widetilde{d}+i\epsilon)|}\|W_2\|_{L^1}\leq C,
\end{align*}
and
\begin{align*}
\f{L}{|A_0(Ld+i\epsilon)|}\|\rho_k^{\f12}W_1\|_{L^2}+\f{L}{|A_0(L\widetilde{d}+i\epsilon)|}\|\rho_k^{\f12}W_2\|_{L^2}\leq CL^{\f12}.
\end{align*}
\end{lemma}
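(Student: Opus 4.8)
The plan is to pass from $W_1,W_2$ to the Airy function on the ray of argument $\pi/6$, quote the pointwise, integral and lower bounds for the Airy function proved in Section~8, and then convert everything into powers of $\nu,k$ through $L=(k/\nu)^{1/3}$, using $\nu k^2\le1$ throughout. Write $W_1(y)=Ai\big(e^{i\pi/6}z(y)\big)$ with $z(y)=L(y-\lambda-ik\nu)+i\epsilon$; the affine map $y\mapsto z(y)$ carries $[-1,1]$ onto a horizontal segment $\Gamma$ of length $2L$ with $\mathrm{Im}\,z(y)=\epsilon-Lk\nu$, and the standing hypothesis ($L\ge 6k$ or $L\ge k\ge k_0$) is exactly what forces $Lk\nu=(k^2\nu)^{2/3}\lesssim1$, so $\Gamma$ stays within bounded distance of the real axis. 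Since $\mathrm{Re}\,z(y)=L(y-\lambda)$ is increasing and $\lambda\in[-1,1]$, the left endpoint $z_L:=z(-1)=Ld+i\epsilon$ has $\mathrm{Re}\,z_L=-L(1+\lambda)\le0$ while $\mathrm{Re}\,z(1)=L(1-\lambda)\ge0$; as $|Ai(e^{i\pi/6}\,\cdot\,)|$ grows when the real part of its argument tends to $-\infty$, $|W_1|$ is largest on $\Gamma$ near $z_L$, which explains the normalization by $|A_0(z_L)|$. Changing variables,
\[
\|W_1\|_{L^\infty}=\sup_\Gamma|Ai(e^{i\pi/6}\,\cdot\,)|,\qquad L\|W_1\|_{L^1}=\int_\Gamma|Ai(e^{i\pi/6}\zeta)|\,|d\zeta|,\qquad L\|\rho_k^{\f12}W_1\|_{L^2}^2=\int_\Gamma\rho_k\,|Ai(e^{i\pi/6}\zeta)|^2\,|d\zeta|,
\]
and on the part of $\Gamma$ with $y\in[-1,-1+L^{-1}]$ one has $\rho_k(y)=L(y+1)=\mathrm{Re}(\zeta-z_L)$, i.e.\ $\rho_k$ vanishes to first order at $z_L$.

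From Section~8 I would use: (i) the pointwise bound $|Ai(e^{i\pi/6}\zeta)|\le C(1+|\zeta|)^{-1/4}$ for $\mathrm{Re}\,\zeta\ge0$, with Gaussian decay as $\mathrm{Re}\,\zeta\to+\infty$, and the matching growth as $\mathrm{Re}\,\zeta\to-\infty$ controlled by the monotone envelope $(1+|\zeta|)^{-1/4}e^{c|\mathrm{Re}\,\zeta|^{3/2}}$; (ii) a lower bound giving $|A_0(z_L)|\gtrsim c>0$ uniformly, together with the endpoint comparison $|A_0(z_L)|\ge c(1+|z_L|)^{-1/2}|Ai(e^{i\pi/6}z_L)|$; and (iii) the one-sided integral bounds $\int_0^\infty|Ai(e^{i\pi/6}(z_L+t))|\,dt\le C|A_0(z_L)|$ and $\int_0^\infty(1\wedge t)|Ai(e^{i\pi/6}(z_L+t))|^2\,dt\le C|A_0(z_L)|^2$. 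The content of (ii)--(iii) is that the integral defining $A_0$ has no cancellation near its left endpoint, so that the possibly exponentially large factor $|Ai(e^{i\pi/6}z_L)|$ reappears in $|A_0(z_L)|$ up to a polynomial loss, and the weight $(1\wedge t)$ — which matches $\rho_k$ near $y=-1$ — supplies precisely the extra $(1+|z_L|)^{-1/2}$ separating $|A_0(z_L)|^2$ from $|Ai(e^{i\pi/6}z_L)|^2$.

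With these in hand the assembly is mechanical. By (i), away from the turning point $\mathrm{Re}\,\zeta=0$ the function $|Ai(e^{i\pi/6}\,\cdot\,)|$ is dominated by its monotone envelope and at the turning point it is $O(1)$, so $\sup_\Gamma|Ai(e^{i\pi/6}\,\cdot\,)|\le C\big(1+|Ai(e^{i\pi/6}z_L)|\big)$; dividing by $|A_0(z_L)|$ and applying (ii) gives $\f{L\|W_1\|_{L^\infty}}{|A_0(z_L)|}\le CL(1+|z_L|)^{1/2}$, and since $|z_L|\lesssim(k/\nu)^{1/3}(1+\lambda)+1$ and $L^{3/2}=k^{1/2}\nu^{-1/2}$, a short case analysis (according as $(k/\nu)^{1/3}(1+\lambda)\le1$ or $\ge1$, using $L\le\nu^{-1/2}$, which is $\nu k^2\le1$, in the first case) bounds this by $C\nu^{-1/2}(1+|k(\lambda+1)|)^{1/2}$. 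For the other two norms, split $\Gamma$ at $\mathrm{Re}\,\zeta=0$: on the right half the Gaussian decay in (i) makes $\int|Ai(e^{i\pi/6}\,\cdot\,)|$ and $\int|Ai(e^{i\pi/6}\,\cdot\,)|^2$ both $O(1)=O(|A_0(z_L)|)$ by (ii); on the left half, writing $\zeta=z_L+t$ and extending to $t\in[0,\infty)$, (iii) — together with $0\le\rho_k\le1$ and $\rho_k=t$ for $t\le1$ — gives $\int_\Gamma|Ai(e^{i\pi/6}\,\cdot\,)|\le C|A_0(z_L)|$ and $\int_\Gamma\rho_k|Ai(e^{i\pi/6}\,\cdot\,)|^2\le C|A_0(z_L)|^2$, hence $\f{L\|W_1\|_{L^1}}{|A_0(z_L)|}\le C$ and $\f{L\|\rho_k^{1/2}W_1\|_{L^2}}{|A_0(z_L)|}\le CL^{1/2}$. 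The bounds for $W_2$ follow from the identity $W_2(y)=\overline{Ai\big(e^{i\pi/6}(-\overline{z(y)})\big)}$: the segment $y\mapsto-\overline{z(y)}$ has its left endpoint, attained at $y=1$, equal to $L\widetilde d+i\epsilon$, so $W_2$ is of the same type as $W_1$ with $z_L$ replaced by $L\widetilde d+i\epsilon$ and $|k(\lambda+1)|$ by $|k(\lambda-1)|$; and since $\rho_k$ also vanishes to first order at $y=1$ the weighted estimate transfers unchanged.

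\emph{Main obstacle.} The real work is the Airy estimates (ii)--(iii), not this bookkeeping: one must prove a uniform lower bound for $|A_0|$ on $\{\mathrm{Re}\,\zeta\le0\}$ — in particular show that $\zeta\mapsto\int_\zeta^\infty Ai(e^{i\pi/6}t)\,dt$ has no zeros there and identify the sharp rate $(1+|\zeta|)^{-1/4}$ — and the endpoint comparison that makes the exponentially large factors in $\|W_i\|$ and in $|A_0|$ cancel. These are established in Section~8 from the precise asymptotics of the Airy function and its integral, in the spirit of Romanov's analysis; granting them, the present lemma is essentially an exercise in scaling.
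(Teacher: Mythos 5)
Your reduction is correct and the final bookkeeping (the change of variables $\zeta=L(y+1)+Ld+i\epsilon$, the case analysis $(k/\nu)^{1/3}|1+\lambda|\lessgtr1$, the observation that $L\le\nu^{-1/2}$ and $L^{3/2}=(k/\nu)^{1/2}$, and the transfer to $W_2$ via $\mathrm{Ai}(z)=\overline{\mathrm{Ai}(\bar z)}$) matches what the paper does in substance. The reorganization is accurate, and (ii) is indeed available: it is exactly Lemma~\ref{lem:Ao-more} together with $A_0'(z)=-e^{i\pi/6}\mathrm{Ai}(e^{i\pi/6}z)$.

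The issue is with (iii). You state ``$\int_0^\infty|\mathrm{Ai}(e^{i\pi/6}(z_L+t))|\,dt\le C|A_0(z_L)|$'' and ``$\int_0^\infty(1\wedge t)|\mathrm{Ai}(e^{i\pi/6}(z_L+t))|^2\,dt\le C|A_0(z_L)|^2$'' and assert ``these are established in Section~8.'' They are not: Section~8 contains Lemma~\ref{lem:Ao}, Lemma~\ref{lem:Ao-more} (the ratio bounds on $A_0'/A_0$), Lemma~\ref{lem:A0-w} and Lemma~\ref{lem:A0-w2} (decay of $\omega(z,x)=A_0(z+x)/A_0(z)$), and Lemma~\ref{lem:Ld} (the integrated lower bound that upgrades the decay rate to $e^{-cLx|Ld|^{1/2}}$). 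Your (iii) are not in that list, and proving them is \emph{precisely} the content of Steps~2 and~3 of the proof of this lemma: one writes
\[
\frac{|\mathrm{Ai}(e^{i\pi/6}(z_L+t))|}{|A_0(z_L)|}
=\frac{|A_0'(z_L+t)|}{|A_0(z_L+t)|}\cdot\frac{|A_0(z_L+t)|}{|A_0(z_L)|}
\lesssim\bigl(1+|z_L+t|^{1/2}\bigr)\,|\omega(z_L,t)|,
\]
and then uses Lemma~\ref{lem:A0-w} when $|Ld|\le1$ and Lemma~\ref{lem:Ld} (giving $|\omega(z_L,t)|\lesssim e^{-ct|Ld|^{1/2}}$) when $|Ld|\ge1$. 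In other words, your approach and the paper's are the same; the paper simply applies the above factorization directly inside the integrals, which dispenses with the intermediate pointwise envelope (i) and makes the split $|Ld|\lessgtr1$ the organizing principle rather than the split $\mathrm{Re}\,\zeta\gtrless0$. So this is not a different route — it is the paper's argument with the key cancellation-free integrals deferred to a section where they do not actually live. To turn the proposal into a proof you would need to spell out the derivation of (iii), and at that point you are rewriting Steps~2--3 verbatim.
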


In order to prove Lemma \ref{lem:C-ij} and Lemma \ref{lem:W12}, we need to use many deep estimates on the Airy function.
To lighten the reader's burden, a complete proof will be presented in section 8.

\subsection{Proof of Proposition \ref{prop:w12-bounds} and Proposition \ref{prop:w12-bounds-weight}}

Proposition \ref{prop:w12-bounds} is a direct consequence of Lemma \ref{lem:C-ij} and Lemma \ref{lem:W12}.
Next we prove Proposition \ref{prop:w12-bounds-weight}.
\begin{proof}
First of all, \eqref{eq:w12-L2w} is a direct consequence of Lemma \ref{lem:C-ij} and Lemma \ref{lem:W12}.

Let $\delta_*=\nu^{\f12}(1+|k(\lambda+1)|)^{-\f12}$. Due to \eqref{def:rhok}, we know that $\rho_k(y)=1\geq L\delta_* $ for $|y|\leq 1-L^{-1},$ $\rho_k(y)=L(1-|y|)\geq L\delta_* $ for $1-L^{-1}\leq|y|\leq 1-\delta_*,$ and $\rho_k(y)=L(1-|y|)$ for $1-\delta_*\leq|y|\leq 1.$
Then we deduce from Proposition \ref{prop:w12-bounds}  that\begin{align*}
\|w_1\|_{L^2}\leq\|w_1\|_{L^1}^{\f12}\|w_1\|_{L^{\infty}}^{\f12}\leq C(\nu^{-\f12}(1+|k||\lambda-1|)^{\f12})^{\f12}=C\nu^{-\f14}(1+|k||\lambda-1|)^{\f14},
\end{align*}
and that
\begin{align*}
\|\rho_k^{-\f14}w_1\|_{L^2}\leq&\|\rho_k^{-\f14}w_1\|_{L^2(-1+\delta_*,1-\delta_*)}+
\|\rho_k^{-\f14}w_1\|_{L^2((-1,-1+\delta_*)\cup(1-\delta_*,1))}\\ \leq& (L\delta_*)^{-\f14}\|w_1\|_{L^2}+\|(L(1-|y|))^{-\f14}\|_{L^2((-1,-1+\delta_*)\cup(1-\delta_*,1))}\|w_1\|_{L^{\infty}}\\ \leq& C(L\delta_*)^{-\f14}\nu^{-\f14}(1+|k||\lambda-1|)^{\f14}+CL^{-\f14}\delta_*^{\f14}\nu^{-\f12}(1+|k||\lambda-1|)^{\f12}\\ \leq& CL^{-\f14}\nu^{-\f38}(1+|k||\lambda-1|)^{\f38}=C\nu^{-\f{7}{24}}|k|^{-\f{1}{12}}(1+|k(\lambda-1)|)^{\f38}.
\end{align*}
The estimate of $\|\rho_k^{-\f14}w_2\|_{L^2}$ is similar. This proves \eqref{eq:w1-L2w} and \eqref{eq:w12-L2w}.
\end{proof}

\section{Space-time estimates of the linearized NS equations}

In this section, we establish the space-time estimates of the linearized 2-D Navier-Stokes equation in the vorticity formulation:
\begin{align}\label{eq:vorticity-LNS}
\partial_t\omega+L_k\omega=-ikf_1-\partial_yf_2,\quad\omega|_{t=0}=\omega_0(k,y),
\end{align}
where $\omega=(\partial_y^2-k^2)\varphi$ with $\varphi(\pm1)=\varphi'(\pm1)=0$ and
\beno
L_k\om=\nu(\pa_y^2-k^2)\om+iky\om.
\eeno

We introduce the following norms
\begin{align*}
&\|f\|_{L^pH^s}=\big\|\|f(t)\|_{H^s(I)}\big\|_{L^p(\R^+)},\quad\|f\|_{L^pL^q}=\big\|\|f(t)\|_{L^q(I)}\big\|_{L^p(\R^+)}.
\end{align*}

The main result of this section is the following space-time estimates.

\begin{proposition}\label{prop:sapce-times}
Let ${0<\nu\le \epsilon_0}$ and $\om$ be a solution of \eqref{eq:vorticity-LNS} with $\om_0\in {H^1}(I)$ and $f_1, f_2\in L^2L^2$, where $\om_0$
satisfies ${\langle\om_0,e^{\pm ky}\rangle=0}$. Then there exists a constant $C>0$ independent of $\nu, k$ so that
\begin{align*}
&|k|\|u\|_{L^\infty L^\infty}^2+k^2\|u\|_{L^2L^2}^2+(\nu k^2)^{\f12}\|\omega\|_{L^2L^2}^2+\|(1-|y|)^{\f12}\omega\|_{L^{\infty}L^2}^2\\
&\leq C\big(\|\omega_0\|_{L^2}^2+k^{-2}\|\partial_y\omega_0\|_{L^2}^2\big)+C\big(\nu^{-\f12}|k|\|f_1\|_{L^2L^2}^2+\nu^{-1}\|f_2\|_{L^2L^2}^2\big).
\end{align*}
Here $u=(\pa_y\varphi, {-ik\varphi})$.
\end{proposition}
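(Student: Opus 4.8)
The plan is to split the solution into the contribution of the forcing and that of the initial data, $\omega=\omega_I+\omega_H$, where $\omega_I$ solves \eqref{eq:vorticity-LNS} with $\omega_I|_{t=0}=0$, and $\omega_H$ solves $\partial_t\omega_H+L_k\omega_H=0$ with $\omega_H|_{t=0}=\omega_0$ (and both satisfy $\langle\,\cdot\,,e^{\pm ky}\rangle=0$, i.e.\ $\varphi(\pm1)=\varphi'(\pm1)=0$). For each piece I would prove separately the bounds controlled by $L^2$ in time ($|k|\,\|u\|_{L^2L^2}$ and $(\nu k^2)^{1/4}\|\omega\|_{L^2L^2}$, and, as an auxiliary quantity carried through the argument, $\nu^{2/3}|k|^{1/3}\|\rho_k^{1/2}\omega\|_{L^2L^2}$) and the bounds controlled by $L^\infty$ in time ($\|(1-|y|)^{1/2}\omega\|_{L^\infty L^2}$ and $|k|^{1/2}\|u\|_{L^\infty L^\infty}$), and throughout I would keep the two regimes $\nu k^2\le1$ and $\nu k^2\ge1$ separate.

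For $\omega_I$, I would take the Laplace transform in $t$ (equivalently, extend $f_1,f_2$ by zero for $t<0$ and Fourier transform). On the imaginary axis $s=i\xi$, writing $\lambda:=-\xi/k\in\R$, the transformed equation is exactly the resolvent equation $-\nu(\partial_y^2-k^2)\widehat\omega_I+ik(y-\lambda)\widehat\omega_I=\widehat F$ with $\widehat F=-ik\widehat f_1-\partial_y\widehat f_2$ and $\|\widehat F\|_{H^{-1}}\lesssim\|\widehat f_1\|_{L^2}+\|\widehat f_2\|_{L^2}$, which by Proposition~\ref{prop:R-non-vs} (for $\nu k^2\le1$) and Proposition~\ref{prop:R-non-vb} (for $\nu k^2\ge1$) is uniquely solvable with the sharp bounds on $\widehat\omega_I$, $\rho_k^{1/2}\widehat\omega_I$ and $\widehat u_I$; Plancherel in $t$ (with $d\xi=|k|\,d\lambda$) then produces the $L^2_tL^2$ estimates with precisely the powers of $\nu,k$ claimed. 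For the $L^\infty_tL^2$ bound on $\omega_I$ I would not invoke the resolvent estimate as a black box, but instead re-run the decomposition of Section~4 at the level of the transform, $\widehat\omega_I=\widehat\omega_{Na}+c_1w_1+c_2w_2$: the Navier--slip part $\widehat\omega_{Na}$ is governed by an accretive operator and is handled by a direct energy estimate (or Gearhart--Pr\"uss), while the boundary-layer part is controlled using the sharp pointwise and weighted bounds on $w_1,w_2$ from Propositions~\ref{prop:w12-bounds} and~\ref{prop:w12-bounds-weight} together with the bounds on $c_1,c_2$ from Lemmas~\ref{lem:c12-bounds-L2} and~\ref{lem:c12-bounds-H-1} --- the weight $(1-|y|)^{1/2}$ being exactly what tames the $O(\nu^{-1/2})$ size of $w_1,w_2$ in the boundary layer. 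Finally $|k|^{1/2}\|u_I\|_{L^\infty L^\infty}$ would follow from $u=\nabla^{\perp}\varphi$, $(\partial_y^2-k^2)\varphi=\omega$, the Green-function bound $\|u\|_{L^\infty}\lesssim\|\omega\|_{L^1}$, and an argument in the spirit of the maximum principle for $L_k$ (the analogue, here, of Abe--Giga's $L^\infty$ theory for the Stokes semigroup), using the uniform $L^1$ bounds on $w_1,w_2$ and on the Navier--slip part.

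For $\omega_H$, I would use the three-term splitting $\omega_H=\omega_H^{(1)}+\omega_H^{(2)}+\omega_H^{(3)}$ with $\omega_H^{(1)}=e^{-(\nu k^2)^{1/3}t-itky}\omega_0$. The term $\omega_H^{(1)}$ is explicit, so its $L^2_tL^2$ and $L^\infty_tL^2$ norms --- and the $L^2_tL^2$ norm of its velocity, which is the linear inviscid-damping estimate (the phase $e^{-itky}$ mixes and $(\partial_y^2-k^2)^{-1}$ of the mixed profile is small, an estimate simpler than the pointwise decay of \cite{WZZ2}) --- are computed directly. By construction $\omega_H^{(2)}$ solves the inhomogeneous equation with zero data, trace $\langle\omega_H^{(2)},e^{\pm ky}\rangle=0$, and forcing $-(\nu k^2-(\nu k^2)^{1/3})\omega_H^{(1)}+\nu\partial_y^2\omega_H^{(1)}$, which I write as $-ikf_1-\partial_yf_2$ with $f_1=\tfrac{\nu k^2-(\nu k^2)^{1/3}}{ik}\omega_H^{(1)}$ and $f_2=-\nu\partial_y\omega_H^{(1)}$, bounded in $L^2L^2$ in terms of $\|\omega_0\|_{L^2}$ and $|k|^{-1}\|\partial_y\omega_0\|_{L^2}$ (the factor $e^{-(\nu k^2)^{1/3}t}$ supplying time-integrability against the $t$-powers in $\partial_y\omega_H^{(1)}$), so the estimates just proved for $\omega_I$ apply. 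The last term $\omega_H^{(3)}$ solves the homogeneous equation with zero data but prescribed trace $\langle\omega_H^{(3)}(t)+\omega_H^{(1)}(t),e^{\pm ky}\rangle=0$; on the transform side it is $\widetilde c_1(s)w_1+\widetilde c_2(s)w_2$, the coefficients solving the same $2\times2$ system as in Section~4.1 but with the trace of $-\omega_H^{(1)}$ in place of that of $w_{Na}$. Since $\langle\omega_0,e^{\pm ky}\rangle=0$, that trace is a commutator-type quantity, $\langle\widehat\omega_H^{(1)}(s),e^{\pm ky}\rangle=\big\langle\tfrac{-iky}{(s+(\nu k^2)^{1/3}+iky)(s+(\nu k^2)^{1/3})}\,\omega_0,e^{\pm ky}\big\rangle$, which is small; combining new weighted ($L^2$-in-$s$) bounds for $\widetilde c_1,\widetilde c_2$ with the $L^p$ and weighted-$L^2$ bounds on $w_1,w_2$ closes the estimate for $\omega_H^{(3)}$, and summing the three parts finishes the homogeneous case.

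The step I expect to be the main obstacle is the $L^\infty_tL^\infty$ bound on the velocity: it is invisible to the $L^2$-based resolvent estimates and forces one to exploit the full strength of the sharp $L^1$ and weighted-$L^2$ description of the boundary-layer profiles $w_1,w_2$ (and of $c_1,c_2$), together with a maximum-principle mechanism at the level of the linearized operator. A secondary technical point will be the bookkeeping, for $\omega_H^{(2)}$ and $\omega_H^{(3)}$, of the polynomial-in-$t$ factors generated by $\partial_y\omega_H^{(1)}$ balanced against the enhanced-dissipation factor $e^{-(\nu k^2)^{1/3}t}$, and matching the resulting exponents of $\nu,k$ to the right-hand side of the statement in both regimes $\nu k^2\le1$ and $\nu k^2\ge1$.
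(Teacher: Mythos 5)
Your outline matches the paper's proof almost step for step: the split $\omega=\omega_I+\omega_H$, the Laplace/Fourier transform and Plancherel argument for $\omega_I$ via the non-slip resolvent estimates of Propositions~\ref{prop:R-non-vs} and \ref{prop:R-non-vb}, the three-term decomposition of $\omega_H$ with $\omega_H^{(1)}=e^{-(\nu k^2)^{1/3}t-itky}\omega_0$, the treatment of $\omega_H^{(2)}$ as an inhomogeneous problem, and the control of $\omega_H^{(3)}$ through the transformed trace coefficients $\widetilde c_1,\widetilde c_2$, using $L^p$ and weighted $L^2$ bounds on $w_1,w_2$. Your transform-side formula for $\langle\widehat\omega_H^{(1)}(s),e^{\pm ky}\rangle$ is correct, and the paper supplies the ``is small'' claim by showing $\|(1+|\lambda\pm k|)c_j\|_{L^2(\R)}\lesssim |k|^{-1/2}\|\omega_0\|_{L^2}$ via an $H^1_t$ estimate on the time traces $a_j(t)$; you would have to carry that computation out, but the idea is as you describe.

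The one place your plan is genuinely too loose is the $L^\infty_tL^\infty_y$ bound on $u$. You propose to run it through $\|u\|_{L^\infty}\lesssim\|\omega\|_{L^1}$ and a ``maximum-principle'' step, but the bound $\|\omega\|_{L^\infty_tL^1}$ alone does not close: for the bulk region $|y|\le 1-L^{-1}$ the passage $\|\omega\|_{L^1}\lesssim\|\rho_k^{3/2}\omega\|_{L^2}$ costs no factor of $|k|$, whereas what the final inequality needs is $|k|^{1/2}\|u\|_{L^\infty}\lesssim\|\rho_k^{3/2}\omega\|_{L^2}+\cdots$. The paper instead uses the hybrid bound from Lemma~\ref{lem:phi-w}, namely $\|u\|_{L^\infty}\lesssim|k|^{-1/2}\|w\|_{L^2}$ applied to $\rho_k^{3/2}\omega$ and $\|u\|_{L^\infty}\lesssim\|w\|_{L^1}$ applied to $(1-\rho_k^{3/2})\omega$ (supported within $O(L^{-1})$ of the boundary), yielding $\|u\|_{L^\infty}\lesssim|k|^{-1/2}\|\rho_k^{3/2}\omega\|_{L^2}+\nu^{1/4}\|\omega\|_{L^2}$. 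Moreover, the quantity $\|\rho_k^{3/2}\omega\|_{L^\infty L^2}$ is not a corollary of the resolvent estimates or of the $L^2_tL^2$ bounds: it is proved by a separate weighted energy estimate, where the sharp weight $\rho_k$ is replaced by a $C^2$ surrogate $\tilde\rho_k$ (cubic near the endpoints) so that the commutator terms $\tilde\rho_k',\tilde\rho_k''$ can be absorbed using $\nu L^2=(\nu k^2)^{1/3}$ and the already-established $(\nu k^2)^{1/3}\|\rho_k^{1/2}\omega\|_{L^2L^2}^2$ bound. That weighted Gr\"onwall step is the concrete content behind the ``maximum-principle'' heuristic, and your outline should name it explicitly rather than gesture at Abe--Giga.
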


We remark that the condition $\langle\om_0,e^{\pm ky}\rangle=0 $ is equivalent to $ \pa_y\varphi_0(\pm 1)=0$ or $u_0\in H_0^1(I).$

\subsection{Semigroup bounds}
First of all, we consider the linearized equation with the Navier-slip boundary condition:
\begin{align}\label{eq:w-Na}
\partial_t\omega_{Na}+L_k\omega_{Na}=0,\quad \omega_{Na}(t,k,\pm 1)=0,\quad \omega_{Na}|_{t=0}=\omega_{0}(k,y),
\end{align}
where $L_k=\nu(k^2-\pa_y^2)+iky$ with $D(L_k)={H^2\cap H_0^1}(-1,1)=\big\{f\in H^2(-1,1):f(\pm1)=0\big\}$.

Thanks to the fact that for $f\in D(L_k)$
\begin{align}
\label{eq:accretive-Lk}{\rm Re}\langle L_kf,f \rangle=\nu k^2\|f\|_{L^2}^2+\nu\|f'\|_{L^2}^2,
\end{align}
$L_k$ is an accretive operator for any $k\in\Z$. Let us recall that an operator $A$ in a Hilbert space $H$ is accretive if ${\rm Re} \langle Af,f\rangle\geq 0$ for all $f\in {D}(A),$ or equivalently $\|(\la+A)f\|\geq \la\|f\|$ for all $f\in {D}(A)$ and all $\la>0$. The operator $A$ is called m-accretive if in addition any $\la<0$ belongs to the resolvent set of $A$. We define
\beno
\Psi(A)=\inf\big\{\|(A-i\la)f\|: f\in {D}(A),\ \la\in \mathbb{R},\  \|f\|=1\big\}.
\eeno

We need the following Gearhart-Pr\"{u}ss type lemma with sharp bound \cite{Wei}.

\begin{lemma}\label{lem:GP}
Let $A$ be a m-accretive operator in a Hilbert space $H$. Then
$\|e^{-tA}\|\leq e^{-t\Psi+{\pi}/{2}}$ for any $t\geq 0$.
\end{lemma}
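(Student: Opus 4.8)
The plan is to follow the proof of the sharp Gearhart--Pr\"uss estimate of \cite{Wei}. If $\Psi:=\Psi(A)=0$ there is nothing to prove, since by accretivity $\|e^{-tA}\|\le 1\le e^{\pi/2}=e^{-t\Psi+\pi/2}$; so assume $\Psi>0$. Since $A$ is m-accretive, $-A$ generates a $C_0$ contraction semigroup, $t\mapsto\|e^{-tA}x\|$ is non-increasing for each $x$, and $(A+s)^{-1}x=\int_0^\infty e^{-st}e^{-tA}x\,dt$ for $\mathrm{Re}\,s>0$. The first step is to propagate the defining inequality $\|(A-i\lambda)f\|\ge\Psi\|f\|$ off the imaginary axis: writing $z=\mu+i\lambda$, for $\mu\le 0$ accretivity gives $\|(A-z)f\|^2=\|(A-i\lambda)f\|^2-2\mu\,\mathrm{Re}\langle Af,f\rangle+\mu^2\|f\|^2\ge(\Psi^2+\mu^2)\|f\|^2$, while for $0\le\mu<\Psi$ the triangle inequality gives $\|(A-z)f\|\ge(\Psi-\mu)\|f\|$. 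Thus $A-z$ is bounded below on the half-plane $\{\mathrm{Re}\,z<\Psi\}$; since this set is connected and $A-z$ has closed range with locally constant index, equal to $0$ on $\{\mathrm{Re}\,z<0\}$ by m-accretivity, $A-z$ is invertible there, and for any fixed $\psi\in(0,\Psi)$,
\[
\sup_{\mathrm{Re}\,z\le\psi}\|(A-z)^{-1}\|\le\frac{1}{\Psi-\psi}.
\]

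The decisive step converts this resolvent bound into the semigroup bound with the \emph{sharp} rate. For $x$ in a dense subspace (e.g.\ $D(A)$) one writes $e^{-tA}x$ via the inverse Laplace transform along $\{\mathrm{Re}\,s=a\}$, $a>0$, and shifts the contour to $\{\mathrm{Re}\,s=-\psi\}$ with $0<\psi<\Psi$; this is legitimate by the previous step, no poles being crossed and the horizontal segments vanishing because $\|(A+s)^{-1}x\|\to0$ as $|\mathrm{Im}\,s|\to\infty$ on the relevant strip. One obtains $e^{-tA}x=\frac{e^{-\psi t}}{2\pi}\int_{\R}e^{i\lambda t}(A-\psi+i\lambda)^{-1}x\,d\lambda$. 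The naive estimate, bounding the integrand in absolute value, is useless: the resolvent along a vertical line inside the spectral strip is uniformly bounded but in general does \emph{not} decay in $\lambda$ (as one already sees for a normal multiplication operator whose numerical range lies on a vertical line). The content of the lemma is to estimate the oscillatory integral through cancellation rather than absolute values --- inserting an appropriate cut-off/weight in the spectral variable, using the uniform resolvent bound together with the elementary identity $\frac{1}{2\pi}\int_{\R}\frac{e^{i\lambda t}}{\psi+i\lambda}\,d\lambda=e^{-\psi t}$ for $t>0$, and optimizing over the free parameters; the extremal configuration produces exactly the scale-invariant prefactor $e^{\pi/2}$ (the $\pi$ originating from $\int_{\R}(1+\lambda^2)^{-1}\,d\lambda=\pi$). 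Letting $\psi\uparrow\Psi$ yields $\|e^{-tA}x\|\le e^{-t\Psi+\pi/2}\|x\|$ on the dense subspace, hence on all of $H$ by density.

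The main obstacle is precisely this last step. Because the resolvent fails to decay along vertical lines in $\{0\le\mathrm{Re}\,z<\Psi\}$, no contour deformation combined with a triangle-inequality estimate can detect the optimal exponential rate; cruder arguments (for instance an $L^2$-in-time bound of the semigroup followed by bootstrapping) only produce a rate of order $\Psi/(2\pi)$, or a prefactor that degenerates as $\psi\uparrow\Psi$. The delicate point is to organize the Fourier/contour argument so that the sharp rate $\Psi$ survives at the cost of nothing more than the explicit universal constant $e^{\pi/2}$.
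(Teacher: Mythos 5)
The paper does not prove Lemma \ref{lem:GP}: it states the bound and cites the preprint \cite{Wei}. So there is no in-text argument to compare against; I judge your proposal on its own.

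Your first step is correct. The identity $\|(A-z)f\|^2=\|(A-i\lambda)f\|^2-2(\mathrm{Re}\,z)\,\mathrm{Re}\langle Af,f\rangle+(\mathrm{Re}\,z)^2\|f\|^2$ for $\mathrm{Re}\,z\le 0$, the triangle inequality for $0\le\mathrm{Re}\,z<\Psi$, and the index-continuation from $\{\mathrm{Re}\,z<0\}$ (where m-accretivity gives surjectivity) show that the resolvent set contains $\{\mathrm{Re}\,z<\Psi\}$ with $\|(A-z)^{-1}\|\le(\Psi-\mathrm{Re}\,z)^{-1}$.

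Your second step is not a proof. After shifting the Bromwich contour to $\mathrm{Re}\,s=-\psi$, you rightly observe that the resolvent does not decay along that line, so absolute-value estimates are useless, and then assert --- without writing a single estimate --- that some combination of ``an appropriate cut-off/weight'', the scalar Fourier identity for $(\psi+i\lambda)^{-1}$, and ``optimizing over the free parameters'' produces the constant $e^{\pi/2}$ uniformly as $\psi\uparrow\Psi$. You never specify the cut-off, the quantity being optimized, or the cancellation mechanism; yet this uniformity is exactly the content of the lemma, since a $\psi$-dependent prefactor is just the classical Gearhart--Pr\"uss theorem and does not give the explicit decay rate used in Section 5 of the paper. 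You concede this in your final paragraph; the proposal is an honest description of the obstacle, not a resolution of it.

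Moreover, I do not believe the contour route is the one that works. The sharp Gearhart--Pr\"uss bound is proved on the time side: set $u(t)=e^{-tA}u_0$ and $g(t)=\|u(t)\|$, which is non-increasing by accretivity; for a real, compactly supported, Lipschitz weight $\rho$ with $\rho(0)=0$, Fourier-transform $(\partial_t+A)(\rho u)=\rho'u$ in $t$, apply the resolvent bound $\|(A+i\tau)^{-1}\|\le\Psi^{-1}$, and use Plancherel to obtain the scalar inequality
\begin{equation*}
\Psi^2\int_0^\infty\rho(t)^2g(t)^2\,dt\le\int_0^\infty\rho'(t)^2g(t)^2\,dt.
\end{equation*}
Combining this with the monotonicity of $g$ and a suitable family of weights $\rho$ then gives the pointwise decay $g(T)\le e^{-\Psi T+\pi/2}g(0)$. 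That argument never enters the open strip $0<\mathrm{Re}\,z<\Psi$, which is precisely why it escapes the degenerating-prefactor problem you correctly identify. In short: step one is fine, step two is a genuine gap, and the mechanism you gesture at to fill it is, as far as I can tell, not the one that actually yields the sharp constant.
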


\begin{lemma}\label{lem:semi-Navier}
Let $\om_{Na}$ be a solution of \eqref{eq:w-Na} with $\om_0\in L^2(I)$. Then for any $k\in \Z$, there exist constants $C,c>0$ independent with $\nu, k$ such that
\begin{align*}
\|e^{-tL_k}\omega_{0}\|_{L^2}\leq Ce^{-c\nu^{\f13}|k|^{\f23}t-\nu t}\|\omega_{0}\|_{L^2},
\end{align*}
Moreover, for any $|k|\geq1$,
\begin{align*}
(\nu k^2)^{\f13}\|e^{-tL_k}\omega_{0}\|_{L^2L^2}^2\leq C\|\omega_{0}\|_{L^2}^2.
\end{align*}
\end{lemma}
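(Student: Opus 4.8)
The plan is to apply the Gearhart--Pr\"uss type Lemma~\ref{lem:GP} to the operator $A=L_k$, which is $m$-accretive by \eqref{eq:accretive-Lk} and the fact that $L_k$ has compact resolvent on $H^2\cap H_0^1(I)$; so the whole task reduces to a lower bound on $\Psi(L_k)$, i.e.\ on $\inf\{\|(L_k-i\la)f\|_{L^2}: \|f\|_{L^2}=1,\ \la\in\R\}$. Fix $f\in D(L_k)$ with $\|f\|_{L^2}=1$ and set $F=(L_k-i\la)f$, which means $-\nu(\pa_y^2-k^2)f+ik(y-\la/k)f=F$ with $f(\pm1)=0$. This is precisely equation \eqref{eq:R-w-navier} with spectral parameter $\lambda/k\in\R$ (note $k\neq0$ here), so Proposition~\ref{prop:R-navier-v1} applies and yields in particular $(\nu k^2)^{\f13}\|f\|_{L^2}\le C\|F\|_{L^2}$. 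Hence $\|F\|_{L^2}\ge c(\nu k^2)^{\f13}\|f\|_{L^2}=c\nu^{\f13}|k|^{\f23}$, giving $\Psi(L_k)\ge c\nu^{\f13}|k|^{\f23}$. Combined with the trivial bound $\mathrm{Re}\langle L_kf,f\rangle\ge \nu k^2\|f\|_{L^2}^2\ge\nu\|f\|_{L^2}^2$ for $|k|\ge1$ (which also contributes to $\Psi$, so $\Psi(L_k)\ge \nu$), Lemma~\ref{lem:GP} gives $\|e^{-tL_k}\|_{L^2\to L^2}\le e^{\pi/2}e^{-\Psi(L_k)t}\le Ce^{-c\nu^{\f13}|k|^{\f23}t-\nu t}$ after possibly shrinking $c$; this is the first claimed estimate, and for $|k|\ge1$ one has $\nu^{\f13}|k|^{\f23}\ge\nu$ so the two exponents are comparable but we keep both for sharpness at small $|k|\nu^{-1/3}$.

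For the space-time bound, I would not go through the semigroup but instead argue directly by an energy estimate on \eqref{eq:w-Na}. Taking the $L^2$ inner product of $\pa_t\om_{Na}+L_k\om_{Na}=0$ with $\om_{Na}$ and using \eqref{eq:accretive-Lk} gives
\begin{align*}
\f12\f{d}{dt}\|\om_{Na}(t)\|_{L^2}^2+\nu k^2\|\om_{Na}(t)\|_{L^2}^2+\nu\|\pa_y\om_{Na}(t)\|_{L^2}^2=0.
\end{align*}
This alone yields $\nu k^2\|\om_{Na}\|_{L^2L^2}^2\le\f12\|\om_0\|_{L^2}^2$, which is weaker than $(\nu k^2)^{\f13}\|\om_{Na}\|_{L^2L^2}^2\lesssim\|\om_0\|_{L^2}^2$ exactly when $\nu k^2\le1$. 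To recover the sharp power, I would instead use the pointwise-in-time decay already proved: $\|\om_{Na}(t)\|_{L^2}\le Ce^{-c\nu^{\f13}|k|^{\f23}t}\|\om_0\|_{L^2}$, so that
\begin{align*}
\|\om_{Na}\|_{L^2L^2}^2=\int_0^\infty\|\om_{Na}(t)\|_{L^2}^2\,dt\le C^2\|\om_0\|_{L^2}^2\int_0^\infty e^{-2c\nu^{\f13}|k|^{\f23}t}\,dt=\f{C^2}{2c}\,\nu^{-\f13}|k|^{-\f23}\|\om_0\|_{L^2}^2,
\end{align*}
which is exactly $(\nu k^2)^{\f13}\|\om_{Na}\|_{L^2L^2}^2\le C\|\om_0\|_{L^2}^2$. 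So the space-time estimate follows from the first part essentially for free.

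The only genuine input is the resolvent bound $(\nu k^2)^{\f13}\|f\|_{L^2}\le C\|F\|_{L^2}$ for real spectral parameter, and that is already established in Proposition~\ref{prop:R-navier-v1}; the rest is bookkeeping. The one point to watch is the reduction step: Proposition~\ref{prop:R-navier-v1} is stated with parameter $\lambda\in\R$ multiplying $ik$ in the combination $ik(y-\lambda)$, whereas here the shift $i\la$ sits in front of $f$ with a bare $\la$, so one must divide by $k$ and note $\la/k\in\R$ since $k\in\Z\setminus\{0\}$ and $\la\in\R$; the hypothesis $|k|\ge1$ from the global conventions of the paper makes this harmless. Thus I expect no real obstacle — the main ``work'' (the resolvent estimate) has been done beforehand, and the present lemma is the clean translation of it into semigroup and space-time language via Lemma~\ref{lem:GP} and a one-line time integration.
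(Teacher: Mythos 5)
Your proof follows essentially the same route as the paper: both derive $\Psi(L_k)\gtrsim(\nu k^2)^{1/3}+\nu$ from Proposition~\ref{prop:R-navier-v1} together with the accretivity identity \eqref{eq:accretive-Lk}, invoke the Gearhart--Pr\"uss bound of Lemma~\ref{lem:GP}, and then obtain the space-time estimate by integrating the pointwise-in-time decay. The only cosmetic remark is that the lower bound $\Psi(L_0)\ge\nu$ for $k=0$ requires the Poincar\'e inequality on $(-1,1)$ rather than the factor $\nu k^2$, but the statement here is used for $|k|\ge 1$ anyway and the paper treats this equally tersely.
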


\begin{proof}
Thanks to Proposition \ref{prop:R-navier-v1} and \eqref{eq:accretive-Lk}, there exists $c>0$ so that for any $k\in \Z$,
\begin{align*}
\Psi(L_k)\geq c(\nu k^2)^{\f13}+\nu,
\end{align*}
which along with Lemma \ref{lem:GP} gives the first inequality. The second inequality is a direct consequence of the first one.
\end{proof}

Next we consider the linearized equation \eqref{eq:vorticity-LNS} with non-slip boundary condition. In this case, $L_k$ is not an accretive operator.  By \cite{Rom} and the argument in Section 5 and Section 8, we know that the eigenvalues of $L_k$ must lie in a region with ${\rm Im}\lambda<-ck^{\f 23}\nu^\f13$ for some $c>0,\ \nu k^2\leq 1$. This implies a rough bound 
\beno
\|e^{-tL_k}\omega_0\|_{L^2}\leq C(\nu,k)e^{-c\nu^{\f13}|k|^{\f23}t}\|\omega_0\|_{L^2}.
\eeno
 This bound ensures that we can take the Fourier transform in $t$. In fact, 
we can give a more precise bound of  $e^{-tL_k}$ via the Dunford integral and the resolvent estimates:
\begin{align*}
\|e^{-tL_k}\omega_0\|_{L^2}\leq
\left\{
\begin{aligned}
&Ce^{-ct}\|\omega_0\|_{L^2},\quad\nu k^2\geq1,\\
&C\big({t^{-1}+}\nu^{-\f{5}{12}}|k|^{\f{5}{12}}e^{-c\nu^{\f13}|k|^{\f23}t}\big)\|\omega_0\|_{L^2},\quad\nu k^2\leq1.
\end{aligned}
\right.
\end{align*}

%

\subsection{Space-time estimates for $\nu k^2\ge 1$}

\begin{proposition}\label{prop:SPT-big}
Let $\nu k^2\ge 1$ and $\om$ be a solution of \eqref{eq:vorticity-LNS} with $\om_0\in L^2(I)$ and $f_1, f_2\in L^2L^2$.
Then there exists a constant $C>0$ independent of $\nu, k$ so that
\begin{align*}
&k^2\|u\|_{L^\infty L^2}^2+k^2\|u\|_{L^2L^2}^2+\nu k^2\|\omega\|_{L^2L^2}^2+\|\omega\|_{L^{\infty}L^2}^2\\ &\quad\leq C\nu^{-1}\big(\|f_1\|_{L^2L^2}^2+\|f_2\|_{L^2L^2}^2\big)+\|\omega_0\|_{L^2}^2.
\end{align*}
\end{proposition}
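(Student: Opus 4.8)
\noindent The plan is to run two energy estimates for the vorticity equation \eqref{eq:vorticity-LNS} and to exploit throughout that $\nu k^2\ge1$, i.e.\ $k^{-2}\le\nu$ and $|k|\ge\nu^{-1/2}$. Recall that $\om=(\pa_y^2-k^2)\varphi$ with $\varphi(\pm1)=\varphi'(\pm1)=0$, so that $\|\om\|_{L^2}^2=\|\varphi''\|_{L^2}^2+2k^2\|\varphi'\|_{L^2}^2+k^4\|\varphi\|_{L^2}^2\ge k^2\|u\|_{L^2}^2$ with $u=(\pa_y\varphi,-ik\varphi)$; this pointwise inequality will be used repeatedly.

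First I would take the $L^2(I)$ inner product of \eqref{eq:vorticity-LNS} with $-\bar\varphi$. Because $\varphi$ and $\varphi'$ vanish at $y=\pm1$, every boundary term generated by integration by parts drops out, and one arrives at
\begin{align*}
\f12\f{d}{dt}\|u\|_{L^2}^2+\nu\|\om\|_{L^2}^2=(\mathrm{transport})+(\mathrm{forcing}),\qquad \|u\|_{L^2}^2=-\langle\om,\varphi\rangle.
\end{align*}
After one more integration by parts the transport term is bounded by $\f12\|u\|_{L^2}^2$, which is absorbed into $\nu\|\om\|_{L^2}^2$ using $\|\om\|_{L^2}^2\ge k^2\|u\|_{L^2}^2$ and $\nu k^2\ge1$; after moving $\pa_y$ off $f_2$ onto $\varphi$ (no boundary term, since $\varphi(\pm1)=0$) the forcing is $\le(\|f_1\|_{L^2}+\|f_2\|_{L^2})\|u\|_{L^2}\le(\|f_1\|_{L^2}+\|f_2\|_{L^2})|k|^{-1}\|\om\|_{L^2}$, hence $\le\f\nu4\|\om\|_{L^2}^2+C\nu^{-1}k^{-2}(\|f_1\|_{L^2}^2+\|f_2\|_{L^2}^2)$. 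Integrating in time, multiplying by $k^2$, and using $\|u_0\|_{L^2}^2\le k^{-2}\|\om_0\|_{L^2}^2$ and $\nu^{-1}k^{-2}\le1$ yields the first three terms of the claim (for the $L^2L^2$ norm of $u$ one uses $k^2\|u\|_{L^2}^2\le\|\om\|_{L^2}^2$ once more).

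For the last term I would pair \eqref{eq:vorticity-LNS} with $\bar\om$ and integrate the dissipation by parts once:
\begin{align*}
\f12\f{d}{dt}\|\om\|_{L^2}^2+\nu\|\om'\|_{L^2}^2+\nu k^2\|\om\|_{L^2}^2=\nu\,{\rm Re}\big[\om'\bar\om\big]_{-1}^{1}+{\rm Re}\langle-ikf_1-\pa_yf_2,\om\rangle.
\end{align*}
Writing $-ikf_1-\pa_yf_2=-(\pa_y,ik)\cdot(f_2,f_1)$, the interior part of the forcing is at most $(\|f_1\|_{L^2}+\|f_2\|_{L^2})(\|\om'\|_{L^2}^2+k^2\|\om\|_{L^2}^2)^{1/2}$, absorbed by Young's inequality into the two coercive terms up to $C\nu^{-1}(\|f_1\|_{L^2}^2+\|f_2\|_{L^2}^2)$. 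The main obstacle — the technical heart of the proposition — is the wall contribution ${\rm Re}\big[(\nu\om'-f_2)\bar\om\big]_{-1}^{1}$: unlike in the first identity, $\om$ has no Dirichlet condition at $y=\pm1$, so this term genuinely encodes the boundary layer and must be estimated directly. I would bound it using the trace inequality $|\om(\pm1)|^2\le C\|\om\|_{L^2}(\|\om\|_{L^2}+\|\om'\|_{L^2})$ together with a localized estimate of $\om'(\pm1)$ obtained by using the equation to substitute for $\om''$ near the wall; the condition $\nu k^2\ge1$ — equivalently, that the boundary-layer scale $(\nu/|k|)^{1/3}$ is $\le|k|^{-1}$ — is exactly what makes these terms absorbable, and is why in the general regime Proposition~\ref{prop:sapce-times} keeps only the weighted norm $\|(1-|y|)^{\f12}\om\|_{L^\infty L^2}$. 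Once the wall term is controlled, integrating in $t$ gives $\|\om\|_{L^\infty L^2}^2\le C\|\om_0\|_{L^2}^2+C\nu^{-1}(\|f_1\|_{L^2L^2}^2+\|f_2\|_{L^2L^2}^2)$, which finishes the proof.

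As an alternative route to the $\|\om\|_{L^\infty L^2}$ bound that sidesteps the wall term, one can use Duhamel's formula together with the crude semigroup bound $\|e^{-tL_k}\om_0\|_{L^2}\le Ce^{-ct}\|\om_0\|_{L^2}$ (valid for $\nu k^2\ge1$) and a parabolic smoothing estimate of the form $\|e^{-tL_k}\pa_yg\|_{L^2}\lesssim(\nu t)^{-1/2}e^{-(\nu k^2+c)t}\|g\|_{L^2}$: the factor $e^{-\nu k^2t}$, present because $L_k=\nu k^2+(-\nu\pa_y^2+iky)$, furnishes the $(\nu k^2)^{-1/2}$ that converts the $|k|$ in front of $f_1$ into $\nu^{-1/2}$, and a convolution-in-time (Young) estimate then produces exactly the $\nu^{-1}$ multiplying $\|f_1\|_{L^2L^2}^2$ and $\|f_2\|_{L^2L^2}^2$.
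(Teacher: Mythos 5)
Your first energy identity — pairing \eqref{eq:vorticity-LNS} with $-\bar\varphi$, absorbing the transport term using $\|\om\|_{L^2}^2\ge k^2\|u\|_{L^2}^2$ and $\nu k^2\ge1$, and getting $k^2\|u\|_{L^\infty L^2}^2+k^2\|u\|_{L^2L^2}^2+\nu k^2\|\om\|_{L^2L^2}^2$ — is essentially the paper's argument and is fine.

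The second step, however, has a genuine gap. You pair the equation with $\bar\om$ and correctly identify the obstruction: the boundary term $\nu\,{\rm Re}[\om'\bar\om]_{-1}^1$ (together with the $f_2$ contribution) does not vanish, because $\om$ has no Dirichlet condition under non-slip. You then assert that this wall term can be controlled by a trace inequality plus ``a localized estimate of $\om'(\pm1)$ obtained by using the equation near the wall,'' but you do not carry this out, and it is not clear it can be made to close: $\om(\pm1)$ carries the boundary-layer singularity and is generically of size $\nu^{-1/2}$, so $\nu\,\om'(\pm1)\bar\om(\pm1)$ is not obviously absorbable into $\nu\|\om'\|_{L^2}^2+\nu k^2\|\om\|_{L^2}^2$ up to the allowed $\nu^{-1}\|(f_1,f_2)\|_{L^2}^2$ error. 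This boundary term is precisely the difficulty that distinguishes non-slip from Navier-slip, and one cannot dismiss it.

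The paper sidesteps this entirely by a different choice of test function: it pairs the equation with $-\overline{F_1}$ where $F_1=\partial_t\varphi+iky\varphi$. Since $\varphi(\pm1)=\varphi'(\pm1)=0$, one has $F_1(\pm1)=\partial_yF_1(\pm1)=0$, so no boundary terms appear at all. The identity $(\partial_y^2-k^2)F_1=\partial_t\om+iky\om+2ik\partial_y\varphi$ then produces $\nu\langle\om,\partial_t\om+iky\om\rangle$ inside the energy balance, whose real part is $\tfrac{\nu}{2}\tfrac{d}{dt}\|\om\|_{L^2}^2$. Together with the positive terms $\|\partial_yF_1\|_{L^2}^2+k^2\|F_1\|_{L^2}^2$ and the already-established $L^2L^2$ bound on $\om$, this gives the $L^\infty L^2$ bound cleanly. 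This is the step you are missing; your alternative Duhamel route is also not carried out and the parabolic smoothing estimate you invoke for the non-normal $L_k$ with non-slip boundary conditions would itself require the resolvent analysis of the paper, so it does not provide a shortcut.
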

\begin{proof}
Taking $L^2$ inner product between \eqref{eq:vorticity-LNS} and $\varphi$, we get
\begin{align*}
  &\big\langle(\partial_t-\nu(\partial_y^2-k^2)+iky)\omega,-\varphi\big\rangle =\big\langle-ikf_1-\partial_yf_2,-\varphi\big\rangle,
\end{align*}
which gives
\begin{align*}
& \big\langle\partial_tu,u\big\rangle+\nu\|\omega\|_{L^2}^2 +ik\int_{-1}^{1}{\varphi}'\overline{\varphi} dy+ik\int_{-1}^{1}y|\varphi|^2dy+ ik^{3}\int_{-1}^{1}y|\varphi'|^2dy\\
&=\big\langle-ikf_1-\partial_yf_2,-\varphi\big\rangle=\big\langle ikf_1,\varphi\big\rangle-\big\langle f_2,\partial_y\varphi\big\rangle.
\end{align*}
Taking the real part of the above equality, we get
\begin{align*}
  &\f12\f{d}{dt}\|u\|^2_{L^2}+\nu\|\omega\|^2_{L^2}\\
&\leq |k|\int_{-1}^{1}|\varphi'{\varphi}|dy+(\nu k^2)^{-1}\|f_1\|_{L^2}^2+\f14\nu k^4\|\varphi\|_{L^2}^2+(\nu k^2)^{-1}\|f_2\|_{L^2}^2+\f14\nu k^2\|\varphi'\|_{L^2}^2\\
  &\leq \f12(\f{1}{\nu k^2}\|\varphi'\|_{L^2}^2+\nu k^4\|\varphi\|_{L^2}^2)+\f14\nu(k^2\|\varphi'\|_{L^2}^2+ k^4\|\varphi\|_{L^2}^2)+(\nu k^2)^{-1}(\|f_1\|_{L^2}^2+\|f_2\|_{L^2}^2)\\
&\leq\f{3\nu}{4}(k^2\|\varphi'\|_{L^2}^2+ k^4\|\varphi\|_{L^2}^2)+(\nu k^2)^{-1}(\|f_1\|_{L^2}^2+\|f_2\|_{L^2}^2)\\
&\leq\f{3\nu}{4}\|\omega\|_{L^2}^2+(\nu k^2)^{-1}(\|f_1\|_{L^2}^2+\|f_2\|_{L^2}^2).
\end{align*}
This shows that
\beno
\f{d}{dt}\|u\|^2_{L^2}+\f12\nu\|\omega\|^2_{L^2}\lesssim(\nu k^2)^{-1}\big(\|f_1\|_{L^2}^2+\|f_2\|_{L^2}^2\big),
\eeno
which gives
\begin{align*}
  \|u(t)\|_{L^2}^2+\nu\int_{0}^{t}\|\omega(s)\|_{L^2}^2ds &\lesssim(\nu k^2)^{-1}(\|f_1\|_{L^2L^2}^2+\|f_2\|_{L^2L^2}^2)+\|u_0\|_{L^2}^2,
\end{align*}
that is,
\begin{align*}
  k^2\|u\|_{L^\infty L^2}^2+\nu k^2\|\omega\|_{L^2L^2}^2 &\lesssim\nu^{-1}(\|f_1\|_{L^2L^2}^2+\|f_2\|_{L^2L^2}^2)+k^2\|u_0\|_{L^2}^2.
\end{align*}
Thanks to  $k^2\|u\|^2_{L^2}\leq\|\omega\|^2_{L^2}\le \nu k^2\|\omega\|_{L^2L^2}^2$, we get
\begin{align*}
k^2\|u\|_{L^\infty L^2}^2+k^2\|u\|_{L^2L^2}^2+\nu k^2\|\omega\|_{L^2L^2}^2 &\lesssim\nu^{-1}(\|f_1\|_{L^2L^2}^2+\|f_2\|_{L^2L^2}^2)+\|\omega_0\|^2_{L^2}.
\end{align*}

It remains to estimate $\|\omega\|_{L^{\infty}L^2}^2 .$ Let $F_1=\partial_t\varphi+iky\varphi$, which holds
\beno
(\partial_y^2-k^2)F_1=\partial_t\omega+iky\omega+2ik\partial_y\varphi,\quad F_1|_{y=\pm 1}=\partial_yF_1|_{y=\pm 1}=0.
\eeno
We get by integration by parts that
\begin{align*}
  & \big\langle ikf_1,F_1\big\rangle-\big\langle f_2,\partial_yF_1\big\rangle=\big\langle-ikf_1-\partial_yf_2,-F_1\big\rangle=
  \big\langle(\partial_t-\nu(\partial_y^2-k^2)+iky)\omega,-F_1\big\rangle\\
&= \big\langle(\partial_y^2-k^2)F_1-\nu(\partial_y^2-k^2)\omega-2ik\partial_y\varphi,-F_1\big\rangle\\
&=\|\partial_yF_1\|_{L^2}^2+k^2\|F_1\|_{L^2}^2+\nu\big\langle\omega,(\partial_y^2-k^2)F_1\big\rangle+
\big\langle2ik\partial_y\varphi,F_1\big\rangle\\&=\|\partial_yF_1\|_{L^2}^2+k^2\|F_1\|_{L^2}^2+\nu\big\langle\omega,
\partial_t\omega+iky\omega+2ik\partial_y\varphi\big\rangle+
\big\langle2ik\partial_y\varphi,F_1\big\rangle.
\end{align*}
Taking the real part of the above equality, we get
\begin{align*}
  &\f{\nu}2\f{d}{dt}\|\om\|^2_{L^2}+\|\partial_yF_1\|_{L^2}^2+k^2\|F_1\|_{L^2}^2\\
&\leq 2\nu|k||\big\langle\omega,
\partial_y\varphi\big\rangle|+2|k||\big\langle
\partial_y\varphi,F_1\big\rangle|
+|\big\langle ikf_1,F_1\big\rangle|+|\big\langle f_2,\partial_yF_1\big\rangle|\\
  &\leq\nu\|\om\|_{L^2}^2+\nu k^2\|\partial_y\varphi\|_{L^2}^2+2\|\partial_y\varphi\|_{L^2}^2+ k^2\|F_1\|_{L^2}^2/2+\|f_1\|_{L^2}^2/2+ k^2\|F_1\|_{L^2}^2/2\\&+ \|f_2\|_{L^2}^2/2+\|\partial_yF_1\|_{L^2}^2/2\\
&=\nu\|\om\|_{L^2}^2+(\nu k^2+2)\|\partial_y\varphi\|_{L^2}^2+ k^2\|F_1\|_{L^2}^2+(\|f_1\|_{L^2}^2+\|f_2\|_{L^2}^2)/2+\|\partial_yF_1\|_{L^2}^2/2,
\end{align*}
which gives
\beno
\nu\f{d}{dt}\|\om\|^2_{L^2}+\|\partial_yF_1\|_{L^2}^2\leq\|f_1\|_{L^2}^2+\|f_2\|_{L^2}^2+2\nu\|\om\|_{L^2}^2+2(\nu k^2+2)\|\partial_y\varphi\|_{L^2}^2.
\eeno
This along with the fact that
\begin{align*}
  & (\nu k^2+2)\|\partial_y\varphi\|_{L^2}^2\leq 3\nu k^2\|\partial_y\varphi\|_{L^2}^2\leq 3\nu\|\om\|_{L^2}^2,
\end{align*}
yields that
\begin{align*}
  \nu\|\om(t)\|_{L^2}^2 &\leq\|f_1\|_{L^2L^2}^2+\|f_2\|_{L^2L^2}^2+8\nu\|\om\|_{L^2L^2}^2 +\nu\|\om_0\|_{L^2}^2.
\end{align*}
Thus, we have
\begin{align*}
\|\om\|_{L^\infty L^2}^2\lesssim&\nu^{-1}(\|f_1\|_{L^2L^2}^2+\|f_2\|_{L^2L^2}^2)+\|\om\|_{L^2L^2}^2+\|\om_0\|_{L^2}^2\\ \lesssim&\nu^{-1}(\|f_1\|_{L^2L^2}^2+\|f_2\|_{L^2L^2}^2)+\|\om_0\|_{L^2}^2.
\end{align*}

Summing up, we conclude the proof.
\end{proof}

\subsection{Space-time estimates for $\nu k^2\le 1$}

We decompose $\omega=\omega_{I}+\omega_{H}$, where $\omega_I$ solves
\begin{align}
&\label{eq:w-inhom}(\partial_t-\nu(\partial_y^2-k^2)+iky)\omega_{I}=-ikf_1-\partial_yf_2,\quad\omega_{I}|_{t=0}=0,
\end{align}
and $\omega_H$ solves
\begin{align}
&\label{eq:w-hom}(\partial_t-\nu(\partial_y^2-k^2)+iky)\omega_{H}=0,\quad\omega_{H}|_{t=0}=\omega_0(k,y),
\end{align}
together with the boundary conditions
\beno
&&\omega_I=(\partial_y^2-k^2)\varphi_I,\quad\varphi_I(\pm1)=\varphi'_I(\pm1)=0,\\
&& \omega_H=(\partial_y^2-k^2)\varphi_H,\quad\varphi_H(\pm1)=\varphi'_H(\pm1)=0.
\eeno

\subsubsection{Space-time estimates of the inhomogeneous problem}

\begin{proposition}\label{prop:SPT-inhom}
Let $\nu k^2\leq1$ and $\om_I$ be a solution of \eqref{eq:w-inhom}. Then there exists a constant $C>0$ independent of $\nu,k$ such that
\begin{align*}
&(\nu k^2)^{\f13}\|\rho_k^{\f12}\omega_{I}\|_{L^2L^2}^2+k^2\|u_I\|_{L^2L^2}^2+(\nu k^2)^{\f12}\|\omega_{I}\|_{L^2L^2}^2+(\nu k^2)^{\f12}\|\omega_{I}\|_{L^{\infty}L^2}^2\\
\nonumber&\leq C\big(\nu^{-\f13}|k|^{\f43}\|f_1\|_{L^2L^2}^2+\nu^{-1}\|f_2\|_{L^2L^2}^2\big),
\end{align*}
\end{proposition}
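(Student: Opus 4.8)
The plan is to remove the time variable by a Laplace transform, read off all the $L^2_tL^2_y$–type bounds from the resolvent estimates of Sections~3--5 via Plancherel, and obtain the remaining $L^\infty_tL^2_y$ bound separately. \emph{Step 1 (reduction to a resolvent problem).} Since for the nonslip operator $L_k$ the semigroup $e^{-tL_k}$ decays in $t$ (the rough bound recorded after Lemma~\ref{lem:semi-Navier}, which rests on Romanov's spectral localization and on Sections~4--5), $\om_I$ and $\pa_y\om_I$ belong to $L^2_tL^2_y$ and one may take the Fourier transform of \eqref{eq:w-inhom} in $t$. Writing $\widehat\om_I(\tau,y)$ for it and putting $\la=-\tau/k\in\R$, the equation becomes the Orr--Sommerfeld problem \eqref{eq:R-phi-non} with $\epsilon=0$, spectral parameter $\la$ and forcing $F=-ik\widehat f_1-\pa_y\widehat f_2$. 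Split $F=F^{(1)}+F^{(2)}$ with $F^{(1)}=-ik\widehat f_1\in L^2$, $\|F^{(1)}\|_{L^2}=|k|\,\|\widehat f_1\|_{L^2}$, and $F^{(2)}=-\pa_y\widehat f_2\in H^{-1}$, $\|F^{(2)}\|_{H^{-1}}\le\|\widehat f_2\|_{L^2}$; accordingly $\widehat\om_I=\widehat\om_I^{(1)}+\widehat\om_I^{(2)}$, $\widehat u_I=\widehat u_I^{(1)}+\widehat u_I^{(2)}$.

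\emph{Step 2 (the $L^2_tL^2_y$ estimates).} Apply Proposition~\ref{prop:R-non-vs} pointwise in $\tau$. For $\widehat\om_I^{(2)}$ its $H^{-1}$ part gives $\nu^{3/4}|k|^{1/2}\|\widehat\om_I^{(2)}\|_{L^2}+\nu^{2/3}|k|^{1/3}\|\rho_k^{1/2}\widehat\om_I^{(2)}\|_{L^2}+(\nu k^2)^{1/2}\|\widehat u_I^{(2)}\|_{L^2}\le C\|\widehat f_2\|_{L^2}$. For $\widehat\om_I^{(1)}$ its $L^2$ part, together with $\|u\|_{L^2}\lesssim|k|^{-1/2}\|w\|_{L^1}$ (Lemma~\ref{lem:phi-w}), gives $\nu^{5/12}|k|^{5/6}\|\widehat\om_I^{(1)}\|_{L^2}+\nu^{1/6}|k|^{5/6}\|\widehat\om_I^{(1)}\|_{L^1}+|k|^{1/2}\|\widehat u_I^{(1)}\|_{L^2}\le C|k|\,\|\widehat f_1\|_{L^2}$; moreover, re--running the proof of Proposition~\ref{prop:R-non-vs} (the splitting $w=w_{Na}+c_1w_1+c_2w_2$ with Corollary~\ref{cor:R-Navier-v1}, Proposition~\ref{prop:w12-bounds-weight} and Lemma~\ref{lem:c12-bounds-L2}, using $\rho_k\le1$ on $w_{Na}$) one also obtains the weighted bound $(\nu k^2)^{1/3}\|\rho_k^{1/2}\widehat\om_I^{(1)}\|_{L^2}\le C|k|\,\|\widehat f_1\|_{L^2}$. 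Squaring these pointwise inequalities, integrating in $\tau$, applying Plancherel, and using $\nu k^2\le1$ to compare the powers of $\nu$ and $k$, we arrive at $(\nu k^2)^{1/3}\|\rho_k^{1/2}\om_I\|_{L^2L^2}^2+k^2\|u_I\|_{L^2L^2}^2+(\nu k^2)^{1/2}\|\om_I\|_{L^2L^2}^2\le C\bigl(\nu^{-1/3}|k|^{4/3}\|f_1\|_{L^2L^2}^2+\nu^{-1}\|f_2\|_{L^2L^2}^2\bigr)$.

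\emph{Step 3 (the $L^\infty_tL^2_y$ estimate).} Using the decomposition \eqref{eq:varphi-decom} on the Fourier side, write $\om_I=\om_{Na}+(c_1w_1)^{\vee}+(c_2w_2)^{\vee}$, where $\om_{Na}$ solves \eqref{eq:w-inhom} with the Navier--slip condition $\om_{Na}(t,\pm1)=0$ and $(\cdot)^{\vee}$ denotes the inverse transform in $t$ of the corresponding $\tau$--family. Split $\om_{Na}$ by the forcing: the $f_2$--piece $\om_{Na}^{(2)}$ vanishes on the boundary, so $\tfrac12\tfrac{d}{dt}\|\om_{Na}^{(2)}\|_{L^2}^2+\nu\|\pa_y\om_{Na}^{(2)}\|_{L^2}^2+\nu k^2\|\om_{Na}^{(2)}\|_{L^2}^2=\mathrm{Re}\langle f_2,\pa_y\om_{Na}^{(2)}\rangle$ has no boundary term and yields $\|\om_{Na}^{(2)}\|_{L^\infty L^2}^2\le\nu^{-1}\|f_2\|_{L^2L^2}^2$, hence $(\nu k^2)^{1/2}\|\om_{Na}^{(2)}\|_{L^\infty L^2}^2\le\nu^{-1}\|f_2\|_{L^2L^2}^2$; for the $f_1$--piece $\om_{Na}^{(1)}$, a naive energy estimate would lose a factor $|k|$, so instead use Duhamel with the sharp decay $\|e^{-tL_k}\om_0\|_{L^2}\le Ce^{-c(\nu k^2)^{1/3}t}\|\om_0\|_{L^2}$ of Lemma~\ref{lem:semi-Navier} and Cauchy--Schwarz in $t$ to get $\|\om_{Na}^{(1)}(t)\|_{L^2}\le C|k|(\nu k^2)^{-1/6}\|f_1\|_{L^2L^2}$, so $(\nu k^2)^{1/2}\|\om_{Na}^{(1)}\|_{L^\infty L^2}^2\le C\nu^{1/6}|k|^{7/3}\|f_1\|_{L^2L^2}^2\le C\nu^{-1/3}|k|^{4/3}\|f_1\|_{L^2L^2}^2$. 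For the boundary layer part, estimate for each fixed $t$ that $\|(c_1w_1)^{\vee}(t)\|_{L^2_y}\le\tfrac1{2\pi}\int_{\R}|c_1(\tau)|\,\|\widehat w_1(\tau,\cdot)\|_{L^2_y}\,d\tau$ and insert $\|\widehat w_1(\tau)\|_{L^2}\le\|w_1\|_{L^1}^{1/2}\|w_1\|_{L^\infty}^{1/2}\le C\nu^{-1/4}(1+|k(\la-1)|)^{1/4}$ (Proposition~\ref{prop:w12-bounds}) together with the sharp bound on $c_1$ from Lemma~\ref{lem:c12-bounds-L2} (for the $f_1$ forcing) or Lemma~\ref{lem:c12-bounds-H-1} (for the $f_2$ forcing); after the change of variables $\tau=-k\la$, the resulting weight $(1+|k(\la-1)|)^{-\alpha}$ ($\alpha=\tfrac34$ for $f_1$, $\alpha=\tfrac12$ for $f_2$) lets one run a Cauchy--Schwarz in $\tau$ and recover $\|\widehat f_1\|_{L^2_\tau L^2_y}$ (resp. $\|\widehat f_2\|_{L^2_\tau L^2_y}$), so that Plancherel gives $(\nu k^2)^{1/2}\|(c_iw_i)^{\vee}\|_{L^\infty L^2}^2\le C\bigl(\nu^{-1/3}|k|^{4/3}\|f_1\|_{L^2L^2}^2+\nu^{-1}\|f_2\|_{L^2L^2}^2\bigr)$. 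Adding the three contributions completes the proof.

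The hard part is Step~3. A straight energy estimate for $\|\om_I\|_{L^\infty L^2}$ necessarily loses a factor $(\nu k^2)^{-1/2}$ (equivalently $|k|^{-1/3}$), because the forcing $-ikf_1$ carries a factor $|k|$ and, under the nonslip condition, $\om_I$ does not vanish on the boundary. Splitting off the Navier--slip part---which \emph{does} vanish on the boundary, so that the enhanced--dissipation semigroup bound of Lemma~\ref{lem:semi-Navier} is available for the $f_1$ forcing while a plain energy estimate already suffices for the $f_2$ forcing---and treating the boundary layer $c_1w_1+c_2w_2$ on the Fourier side through the sharp pointwise bounds on $c_1,c_2$ and $w_1,w_2$ is precisely what restores the sharp powers. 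The one genuinely delicate point is the borderline integrability ($\alpha=\tfrac12$) of the weight in the $f_2$ boundary layer term; I expect it to be handled either by a dyadic decomposition of the $\tau$--integral that absorbs the borderline logarithm into the slack in the $\nu$--powers, or by interpolating against the $L^2_tL^2_y$ bound of Step~2 (equivalently, by a complementary $L^2$–in–$\tau$ bound on $c_1,c_2$ extracted from the resolvent estimates via Plancherel).
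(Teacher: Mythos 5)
Your plan — Laplace transform in $t$, read off the $L^2_tL^2_y$ bounds from the resolvent estimates via Plancherel and the decomposition $w=w_{Na}+c_1w_1+c_2w_2$, and obtain the $L^\infty_tL^2_y$ bound separately by isolating the Navier--slip part (energy estimate) from the boundary layer part (pointwise $\tau$--integral) — is exactly the architecture of the paper's proof, and Steps 1--2 are essentially correct. (Small slip in Step 2: from Lemma~\ref{lem:phi-w} and Corollary~\ref{cor:R-Navier-v1} the sharp pointwise bound is $|k|^{1/2}\|\widehat u_I^{(1)}\|_{L^2}\lesssim \nu^{-1/6}|k|^{1/6}\|\widehat f_1\|_{L^2}$, not $\lesssim |k|\|\widehat f_1\|_{L^2}$; the latter, when squared, is too weak to give $k^2\|u_I\|_{L^2L^2}^2\lesssim\nu^{-1/3}|k|^{4/3}\|f_1\|_{L^2L^2}^2$ for all $\nu k^2\le1$.) Your treatment of $\om_{Na}$ in Step~3 also works, though it differs from the paper: you Duhamel against the enhanced--dissipation semigroup bound of Lemma~\ref{lem:semi-Navier} for the $f_1$ forcing, whereas the paper makes a single energy estimate $\partial_t\|\omega_{Na}\|_{L^2}^2\le \nu^{-1/3}|k|^{4/3}\|f_1\|_{L^2}^2+(\nu k^2)^{1/3}\|\omega_{Na}\|_{L^2}^2+\nu^{-1}\|f_2\|_{L^2}^2$ and closes it with the already--proved $L^2L^2$ bound $(\nu k^2)^{1/3}\|\omega_{Na}\|_{L^2L^2}^2\lesssim\cdots$; both are fine.

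The genuine gap is in the $L^\infty_tL^2_y$ estimate of the $f_2$ boundary--layer piece $\omega_1^{(2)}(t,y)=\tfrac1{2\pi}\int_{\R}c_1^{(2)}(\lambda)w_1(\lambda,y)e^{it\lambda}\,d\lambda$. As you observe, the available pointwise bound $|c_1^{(2)}(\lambda)|\|w_1(\lambda)\|_{L^2_y}\lesssim(1+|\lambda+k|)^{-1/2}\nu^{-3/4}|k|^{-1/2}\|F_2(\lambda)\|_{L^2}$ has a weight $(1+|\lambda+k|)^{-1/2}$ that sits exactly on the border of $L^2_\lambda$. Your proposed remedies will not close this: the target bound $(\nu k^2)^{1/2}\|\omega_1^{(2)}\|_{L^\infty L^2}^2\le C\nu^{-1}\|f_2\|_{L^2L^2}^2$ is attained with \emph{no} slack in $\nu,k$ (the constants come out exactly $\nu^{-1}$), so a dyadic decomposition's logarithmic loss is fatal; and interpolating against the $L^2_tL^2_y$ bound cannot raise $L^2_t$ to $L^\infty_t$ without an additional $H^{1/2+}_t$--type bound, which would require a higher power of $|\lambda+k|$ than is available. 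The paper's actual resolution is a $y$--\emph{dependent} weight inside the Cauchy--Schwarz in $\lambda$: one writes
\begin{align*}
|\omega_1^{(2)}(t,y)|^2 &\le \int_{\R}\bigl(|\lambda+k|^{3/2}\rho_k(y)+(\nu k^2)^{1/4}|\lambda+k|^{3/4}\rho_k^{-1/2}(y)\bigr)|c_1^{(2)}(\lambda)|^2|w_1(\lambda,y)|^2\,d\lambda\\
&\quad\times\int_{\R}\bigl(|\lambda+k|^{3/2}\rho_k(y)+(\nu k^2)^{1/4}|\lambda+k|^{3/4}\rho_k^{-1/2}(y)\bigr)^{-1}d\lambda,
\end{align*}
the second factor being $C(\nu k^2)^{-1/6}$ \emph{uniformly in $y$} after the change of variable $z=(\lambda+k)\rho_k^2(y)(\nu k^2)^{-1/3}$, while the first, after integrating in $y$, requires precisely the two weighted estimates $(1+|\lambda+k|)^{3/4}|c_1^{(2)}|\|\rho_k^{1/2}w_1\|_{L^2}\lesssim\nu^{-2/3}|k|^{-1/3}\|F_2\|_{L^2}$ and $(1+|\lambda+k|)^{3/8}|c_1^{(2)}|\|\rho_k^{-1/4}w_1\|_{L^2}\lesssim\nu^{-19/24}|k|^{-7/12}\|F_2\|_{L^2}$ coming from Lemma~\ref{lem:c12-bounds-H-1} combined with \emph{both} bounds of Proposition~\ref{prop:w12-bounds-weight}. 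This $y$--dependent weighting is the missing idea; it is also the reason why Proposition~\ref{prop:w12-bounds-weight} states both $\rho_k^{1/2}$ and $\rho_k^{-1/4}$ estimates.
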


\begin{proof}We take the Fourier transform in $t$:
\beno
w(\lambda,k,y)=\int_{0}^{+\infty}\omega_{I}(t,k,y)e^{-it\lambda}dt,\quad F_j(\lambda,k,y)=\int_{0}^{+\infty}f_{j}(t,k,y)e^{-it\lambda}dt,\ j=1,2.
\eeno
Thus, we have
\begin{align}\label{feq:wI=fourier}
(i\lambda -\nu(\partial_y^2-k^2)+iky)w(\lambda,k,y)=-ikF_1(\lambda,k,y)-\partial_yF_2(\lambda,k,y)
\end{align}
with $\int_{-1}^1w(\lambda,k,y)e^{\pm ky}dy=0.$ Using Plancherel's theorem, we know that
\begin{align*}
&\int_{0}^{+\infty}\|\omega_I(t)\|_{L^2}^2dt\sim\int_{\mathbb{R}}\|w(\lambda)\|_{L^2}^2d\lambda,\ \int_{0}^{+\infty}\|\rho_k^{\f12}\omega_I(t)\|_{L^2}^2dt\sim\int_{\mathbb{R}}\|\rho_k^{\f12}w(\lambda)\|_{L^2}^2d\lambda,\\&
\int_{0}^{+\infty}\|u_I(t)\|_{L^2}^2dt\sim\int_{\mathbb{R}}\|(k,\partial_y)(\partial_y^2-k^2)^{-1}w(\lambda)\|_{L^2}^2d\lambda,\\
&\int_{0}^{+\infty}\|f_j(t)\|_{L^2}^2dt\sim\int_{\mathbb{R}}\|F_j(\lambda)\|_{L^2}^2d\lambda,\quad j=1,2.
\end{align*}

We further decompose $w$ as follows
\beno
w(\lambda,k,y)=w_{Na}^{(1)}+w_{Na}^{(2)}+\big(c_1^{(1)}(\lambda)+c_1^{(2)}(\lambda)\big)w_1+\big(c_2^{(1)}(\lambda)+c_2^{(2)}(\lambda)\big)w_2,
\eeno
where $w_{Na}^{(1)}$ and $w_{Na}^{(2)}$ solve
\begin{align}
\label{eq:w-na1}&\big(i\lambda -\nu(\partial_y^2-k^2)+iky\big)w_{Na}^{(1)}(\lambda,k,y)=-ikF_1(\lambda,k,y),\ w_{Na}^{(1)}|_{y=\pm 1}=0,\\
\label{eq:w-na2}&\big(i\lambda -\nu(\partial_y^2-k^2)+iky\big)w_{Na}^{(2)}(\lambda,k,y)=-\partial_yF_2(\lambda,k,y),\ w_{Na}^{(2)}|_{y=\pm 1}=0,
\end{align}
and {$w_i=(\partial_y^2-k^2)\varphi_i$ with $\varphi_1$, $\varphi_2$ solving \eqref{eq:phi1-hom}, \eqref{eq:phi2-hom} with $ \epsilon=0,$ $ \lambda$ replaced by $ \lambda'=-\lambda/k$}, and
\begin{align*}
&c_1^{(j)}(\lambda)=-\int_{-1}^1\f{\sinh k(y+1)}{\sinh 2k}w_{Na}^{(j)}(\lambda,k,y)dy,\\
&c_2^{(j)}(\lambda)=\int_{-1}^1\f{\sinh k(1-y)}{\sinh 2k}w_{Na}^{(j)}(\lambda,k,y)dy.
\end{align*}

By Corollary \ref{cor:R-Navier-v1}, we have
\begin{align*}
&\nu^{\f16}|k|^{\f43}\|(k,\partial_y)(\partial_y^2-k^2)^{-1}w_{Na}^{(1)}(\lambda)\|_{L^2}+(\nu k^2)^{\f13}\|w_{Na}^{(1)}(\lambda)\|_{L^2}\leq {C}\|k{F}_1(\lambda)\|_{L^2},
\end{align*}
and by Proposition \ref{prop:R-navier-v2},
\begin{align*}
 (\nu |k|^2)^{\f12}\|(k,\partial_y)(\partial_y^2-k^2)^{-1}w_{Na}^{(2)}(\lambda)\|_{L^2}+\nu^{\f23}|k|^{\f13}\|w_{Na}^{(2)}(\lambda)\|_{L^2}\leq C\|{F}_2(\lambda) \|_{L^{2}} .
\end{align*}

By Lemma \ref{lem:c12-bounds-L2}, Proposition \ref{prop:w12-bounds} and Proposition \ref{prop:w12-bounds-weight}, we have
\begin{align*}
&|c_1^{(1)}(\lambda)|\|w_1\|_{L^2}+|c_2^{(1)}(\lambda)|\|w_2\|_{L^2}\leq C\nu^{-\f{5}{12}}|k|^{-\f56}\|k{F}_1(\lambda)\|_{L^2},\\& |c_1^{(1)}(\lambda)|\|\rho_k^{\f12}w_1\|_{L^2}+|c_2^{(1)}(\lambda)|\|\rho_k^{\f12}w_2\|_{L^2}\leq C\nu^{-\f{1}{3}}|k|^{-\f23}\|k{F}_1(\lambda)\|_{L^2},\\& |c_1^{(1)}(\lambda)|\|w_1\|_{L^1}+|c_2^{(1)}(\lambda)|\|w_2\|_{L^1}\leq C\nu^{-\f{1}{6}}|k|^{-\f56}\|k{F}_1(\lambda)\|_{L^2},
\end{align*}
and by Lemma \ref{lem:c12-bounds-H-1}, Proposition \ref{prop:w12-bounds} and Proposition \ref{prop:w12-bounds-weight},
\begin{align*}
&|c_1^{(2)}(\lambda)|\|w_1\|_{L^2}+|c_2^{(2)}(\lambda)|\|w_2\|_{L^2}\leq C\nu^{-\f{3}{4}}|k|^{-\f12}\|{F}_2(\lambda)\|_{L^2},\\& |c_1^{(2)}(\lambda)|\|\rho_k^{\f12}w_1\|_{L^2}+|c_2^{(2)}(\lambda)|\|\rho_k^{\f12}w_2\|_{L^2}\leq C\nu^{-\f{2}{3}}|k|^{-\f13}\|{F}_2(\lambda)\|_{L^2},\\& |c_1^{(2)}(\lambda)|\|w_1\|_{L^1}+|c_2^{(2)}(\lambda)|\|w_2\|_{L^1}\leq C\nu^{-\f{1}{2}}|k|^{-\f12}\|{F}_2(\lambda)\|_{L^2}.
\end{align*}
This shows that
\begin{align*}
\|w(\lambda)\|_{L^2}\leq&\|w_{Na}^{(1)}(\lambda)\|_{L^2}+\|w_{Na}^{(2)}(\lambda)\|_{L^2}+
|c_1^{(1)}(\lambda)|\|w_1\|_{L^2}+|c_2^{(1)}(\lambda)|\|w_2\|_{L^2}\\&
+|c_1^{(2)}(\lambda)|\|w_1\|_{L^2}+|c_2^{(2)}(\lambda)|\|w_2\|_{L^2}\\ \leq& {C}(\nu k^2)^{-\f13}\|k{F}_1(\lambda)\|_{L^2}+C\nu^{-\f23}|k|^{-\f13}\|{F}_2(\lambda) \|_{L^{2}}\\&+C\nu^{-\f{5}{12}}|k|^{-\f56}\|k{F}_1(\lambda)\|_{L^2}+C\nu^{-\f{3}{4}}|k|^{-\f12}\|{F}_2(\lambda)\|_{L^2}\\ \leq& C\nu^{-\f{5}{12}}|k|^{-\f56}\|k{F}_1(\lambda)\|_{L^2}+C\nu^{-\f{3}{4}}|k|^{-\f12}\|{F}_2(\lambda)\|_{L^2},
\end{align*}
and
\begin{align*}
&\|\rho_k^{\f12}w(\lambda)\|_{L^2}\leq {C}(\nu k^2)^{-\f13}\|k{F}_1(\lambda)\|_{L^2}+C\nu^{-\f23}|k|^{-\f13}\|{F}_2(\lambda)\|_{L^{2}},
\end{align*}
and by Lemma \ref{lem:phi-w},
\begin{align*}
&\|(k,\partial_y)(\partial_y^2-k^2)^{-1}w(\lambda)\|_{L^2}\\ &\leq\|(k,\partial_y)(\partial_y^2-k^2)^{-1}w_{Na}^{(1)}(\lambda)\|_{L^2}
+\|(k,\partial_y)(\partial_y^2-k^2)^{-1}w_{Na}^{(2)}(\lambda)\|_{L^2}\\&\quad+
|c_1^{(1)}(\lambda)|\|(k,\partial_y)(\partial_y^2-k^2)^{-1}w_1\|_{L^2}+
|c_2^{(1)}(\lambda)|\|(k,\partial_y)(\partial_y^2-k^2)^{-1}w_2\|_{L^2}\\&\quad
+|c_1^{(2)}(\lambda)|\|(k,\partial_y)(\partial_y^2-k^2)^{-1}w_1\|_{L^2}+
|c_2^{(2)}(\lambda)|\|(k,\partial_y)(\partial_y^2-k^2)^{-1}w_2\|_{L^2}\\ &\leq {C}\nu^{-\f16}|k|^{-\f43}\|k{F}_1(\lambda)\|_{L^2}+C(\nu k^2)^{-\f12}\|{F}_2(\lambda) \|_{L^{2}}+
|k|^{-\f12}\big(|c_1^{(1)}(\lambda)|\|w_1\|_{L^1}\\&\qquad+
|c_2^{(1)}(\lambda)|\|w_2\|_{L^1}
+|c_1^{(2)}(\lambda)|\|w_1\|_{L^1}+
|c_2^{(2)}(\lambda)|\|w_2\|_{L^1}\big)\\ &\leq {C}\nu^{-\f16}|k|^{-\f43}\|k{F}_1(\lambda)\|_{L^2}+C(\nu k^2)^{-\f12}\|{F}_2(\lambda) \|_{L^{2}}.
\end{align*}

In summary, we conclude that
\begin{align*}
 &(\nu k^2)^{\f13}\|\rho_k^{\f12}\omega_{I}\|_{L^2L^2}^2+k^2\|u_I\|_{L^2L^2}^2+(\nu k^2)^{\f12}\|\omega_{I}\|_{L^2L^2}^2\\
 &\quad\sim(\nu k^2)^{\f13}\big\|\|\rho_k^{\f12}w(\lambda)\|_{L^2}\big\|_{L^2(\mathbb{R})}^2+
 k^2\big\|\|(k,\partial_y)(\partial_y^2-k^2)^{-1}w(\lambda)\|_{L^2}\big\|_{L^2(\mathbb{R})}^2+(\nu k^2)^{\f12}\big\|\|w(\lambda)\|_{L^2}\big\|_{L^2(\mathbb{R})}^2\\ &\leq
 C(\nu k^2)^{\f13}(\nu k^2)^{-\f23}\big\|\|kF_1(\lambda)\|_{L^2}\big\|_{L^2(\mathbb{R})}^2+C(\nu k^2)^{\f13}\nu^{-\f{4}{3}}|k|^{-\f23}\big\|\|F_2(\lambda)\|_{L^2}\big\|_{L^2(\mathbb{R})}^2\\&\quad
 +C k^2\nu^{-\f{1}{3}}|k|^{-\f83}\big\|\|kF_1(\lambda)\|_{L^2}\big\|_{L^2(\mathbb{R})}^2+C k^2(\nu k^2)^{-1}\big\|\|F_2(\lambda)\|_{L^2}\big\|_{L^2(\mathbb{R})}^2\\&\quad
+ C(\nu k^2)^{\f12}\nu^{-\f{5}{6}}|k|^{-\f53}\big\|\|kF_1(\lambda)\|_{L^2}\big\|_{L^2(\mathbb{R})}^2+C(\nu k^2)^{\f12}\nu^{-\f{3}{2}}|k|^{-1}\big\|\|F_2(\lambda)\|_{L^2}\big\|_{L^2(\mathbb{R})}^2\\ &\leq
 C\nu^{-\f{1}{3}}|k|^{\f43}\big\|\|F_1(\lambda)\|_{L^2}\big\|_{L^2(\mathbb{R})}^2+C\nu^{-1}\big\|\|F_2(\lambda)\|_{L^2}\big\|_{L^2(\mathbb{R})}^2\\ &\quad\sim \nu^{-\f13}|k|^{\f43}\|f_1\|_{L^2L^2}^2+\nu^{-1}\|f_2\|_{L^2L^2}^2.
\end{align*}

It remains to consider $L^{\infty}L^2 $ estimate of $\omega_I$. Let $\omega_{Na}$ be a solution of\begin{align*}
&(\partial_t-\nu(\partial_y^2-k^2)+iky)\omega_{Na}=-ikf_1-\partial_yf_2,\quad\omega_{Na}|_{t=0}=0,\quad\omega_{Na}|_{y=\pm 1}=0.
\end{align*}
Set $w_{Na}(\lambda,k,y)=\int_{0}^{+\infty}\omega_{Na}(t,k,y)e^{-it\lambda}dt$, which satisfies
\begin{align*}
(i\lambda -\nu(\partial_y^2-k^2)+iky)w_{Na}(\lambda,k,y)=-ikF_1(\lambda,k,y)-\partial_yF_2(\lambda,k,y),\quad\omega_{Na}|_{y=\pm 1}=0.
\end{align*}
Thus, $w_{Na}=w_{Na}^{(1)}+w_{Na}^{(2)}. $ By Plancherel's theorem, we have
\begin{align*}
 &(\nu k^2)^{\f13}\|\omega_{Na}\|_{L^2L^2}^2 \sim(\nu k^2)^{\f13}\big\|\|w_{Na}(\lambda)\|_{L^2}\big\|_{L^2(\mathbb{R})}^2\\
 &\leq 2(\nu k^2)^{\f13}\big(\big\|\|w_{Na}^{(1)}(\lambda)\|_{L^2}\big\|_{L^2(\mathbb{R})}^2+\big\|\|w_{Na}^{(2)}(\lambda)\|_{L^2}\big\|_{L^2(\mathbb{R})}^2)\\ &\leq C(\nu k^2)^{\f13}\big(\big\|(\nu k^2)^{-\f13}\|kF_1(\lambda)\|_{L^2}\big\|_{L^2(\mathbb{R})}^2+\big\|\nu^{-\f23}|k|^{-\f13}\|F_2(\lambda)\|_{L^2}\big\|_{L^2(\mathbb{R})}^2\big)\\&=
 C\nu^{-\f{1}{3}}|k|^{\f43}\big\|\|F_1(\lambda)\|_{L^2}\big\|_{L^2(\mathbb{R})}^2+C\nu^{-1}\big\|\|F_2(\lambda)\|_{L^2}\big\|_{L^2(\mathbb{R})}^2\\ &\quad\sim \nu^{-\f13}|k|^{\f43}\|f_1\|_{L^2L^2}^2+\nu^{-1}\|f_2\|_{L^2L^2}^2.
\end{align*}
Moreover, we have
\begin{align*}
&\int_{0}^{+\infty}(\omega_{I}(t,k,y)-\omega_{Na}(t,k,y))e^{-it\lambda}dt\\
&=w(\lambda,k,y)-w_{Na}(\lambda,k,y)=\big(c_1^{(1)}(\lambda)+c_1^{(2)}(\lambda)\big)w_1+\big(c_2^{(1)}(\lambda)+c_2^{(2)}(\lambda)\big)w_2.
\end{align*}
Thus, we can write
\beno
\omega_{I}=\omega_{Na}+\omega_1^{(1)}+\omega_1^{(2)}+\omega_2^{(1)}+\omega_2^{(2)},
\eeno
where for $j=1,2$,
\beno
\omega_j^{(l)}(t,k,y)=\frac{1}{2\pi}\int_{\mathbb{R}}c_j^{(l)}(\lambda)w_j(\lambda,k,y)e^{it\lambda}d\lambda,\quad t>0.
\eeno

Now we estimate $\|\omega_{Na}\|_{L^{\infty}L^2} $ and $\|\omega_j^{(k)}\|_{L^{\infty}L^2} $ separately. Notice that
\begin{align*}
&\partial_t\|\omega_{Na}\|_{L^2}^2/2+\nu\|\partial_y\omega_{Na}\|_{L^2}^2+\nu k^2\|\omega_{Na}\|_{L^2}^2\\&={\rm Re}\big\langle(\partial_t-\nu(\partial_y^2-k^2)+iky)\omega_{Na},\omega_{Na}\big\rangle\\
&={\rm Re}\big\langle-ikf_1-\partial_yf_2,\omega_{Na}\big\rangle={\rm Re}\big(-ik \langle f_1,\omega_{Na}\rangle+\langle f_2,\partial_y\omega_{Na}\rangle\big)\\
&\leq |k|\|f_1\|_{L^2}\|\omega_{Na}\|_{L^2}+\|f_2\|_{L^2}\|\partial_y\omega_{Na}\|_{L^2},
\end{align*}
which gives
\begin{align*}
&\partial_t\|\omega_{Na}\|_{L^2}^2+\nu\|\partial_y\omega_{Na}\|_{L^2}^2+2\nu k^2\|\omega_{Na}\|_{L^2}^2\leq \nu^{-\f13}|k|^{\f43}\|f_1\|_{L^2}^2+(\nu k^2)^{\f13}\|\omega_{Na}\|_{L^2}^2+\nu^{-1}\|f_2\|_{L^2}^2.
\end{align*}
As $\omega_{Na}|_{t=0}=0$, this shows that
\begin{align*}
\|\omega_{Na}(t)\|_{L^2}^2\leq& \int_0^t\big(\nu^{-\f13}|k|^{\f43}\|f_1(s)\|_{L^2}^2+(\nu k^2)^{\f13}\|\omega_{Na}(s)\|_{L^2}^2+\nu^{-1}\|f_2(s)\|_{L^2}^2\big)ds\\
\leq& \nu^{-\f13}|k|^{\f43}\|f_1\|_{L^2L^2}^2+(\nu k^2)^{\f13}\|\omega_{Na}\|_{L^2L^2}^2+\nu^{-1}\|f_2\|_{L^2L^2}^2\\
\lesssim& \nu^{-\f13}|k|^{\f43}\|f_1\|_{L^2L^2}^2+\nu^{-1}\|f_2\|_{L^2L^2}^2.
\end{align*}
Thus, we get
\begin{align*}
&\|\omega_{Na}\|_{L^{\infty}L^2}^2\leq C\big(\nu^{-\f13}|k|^{\f43}\|f_1\|_{L^2L^2}^2+\nu^{-1}\|f_2\|_{L^2L^2}^2\big).
\end{align*}

Notice that \eqref{eq:w-na1} is equivalent to
\begin{align*}
&(-\nu(\partial_y^2-k^2)+ik(y-\lambda'))w_{Na}^{(1)}(\lambda,k,y)=-ikF_1(\lambda,k,y),\quad w_{Na}^{(1)}|_{y=\pm 1}=0,
\end{align*}
with $ \lambda'=-\lambda/k$. By Lemma \ref{lem:c12-bounds-L2} and Proposition \ref{prop:w12-bounds}, we have
\begin{align*}
&(1+|k(-\lambda/k-1)|)^{\f34}|c_1^{(1)}(\lambda)|\|w_1\|_{L^2}\leq C\nu^{-\f{5}{12}}|k|^{-\f56}\|k{F}_1(\lambda)\|_{L^2}.
\end{align*}
Thanks to $|k(-\lambda/k-1)|=|\lambda+k|$ and $\|(1+|\lambda+k|)^{-\f34}\|_{ L^2(\mathbb{R})}\leq C$, we have
\begin{align*}
\|\omega_1^{(1)}(t)\|_{L^2}\leq& \frac{1}{2\pi}\int_{\mathbb{R}}|c_1^{(1)}(\lambda)|\|w_1(\lambda)\|_{L^2}d\lambda\\
\leq& C\|(1+|\lambda+k|)^{-\f34}\nu^{-\f{5}{12}}|k|^{-\f56}\big\|k\|{F}_1(\lambda)\|_{L^2}\big\|_{L^1(\mathbb{R})}\\ \leq &C\nu^{-\f{5}{12}}|k|^{\f16}\|(1+|\lambda+k|)^{-\f34}\|_{L^2(\mathbb{R})}\big\|\|{F}_1(\lambda)\|_{L^2}\big\|_{L^2(\mathbb{R})}\sim
\nu^{-\f{5}{12}}|k|^{\f16}\|f_1\|_{L^2L^2},
\end{align*}
which shows that
\begin{align*}
&(\nu k^2)^{\f12}\|\omega_1^{(1)}\|_{L^{\infty}L^2}^2\leq C(\nu k^2)^{\f12}(\nu^{-\f{5}{12}}|k|^{\f16}\|f_1\|_{L^2L^2})=C\nu^{-\f13}|k|^{\f43}\|f_1\|_{L^2L^2}^2.
\end{align*}
Similarly, we have
\begin{align*}
&(\nu k^2)^{\f12}\|\omega_2^{(1)}\|_{L^{\infty}L^2}^2\leq C\nu^{-\f13}|k|^{\f43}\|f_1\|_{L^2L^2}^2.
\end{align*}

By Lemma \ref{lem:c12-bounds-H-1} and Proposition \ref{prop:w12-bounds-weight}, we have
\begin{align*}
 &(1+|k(-\lambda/k-1)|)^{\f34}|c_1^{(2)}(\lambda)|\|\rho_k^{\f12}w_1\|_{L^2}\leq C\nu^{-\f{2}{3}}|k|^{-\f13}\|{F}_2(\lambda)\|_{L^2},\\&(1+|k(-\lambda/k-1)|)^{\f38}|c_1^{(2)}(\lambda)|\|\rho_k^{-\f14}w_1\|_{L^2}\leq C\nu^{-\f{19}{24}}|k|^{-\f{7}{12}}\|{F}_2(\lambda)\|_{L^2},
\end{align*}
which give
\begin{align*}
 &|\lambda+k|^{\f32}|c_1^{(2)}(\lambda)|^2\|\rho_k^{\f12}w_1\|_{L^2}^2+
 (\nu k^2)^{\f14}|\lambda+k|^{\f34}|c_1^{(2)}(\lambda)|^2\|\rho_k^{-\f14}w_1\|_{L^2}^2\leq C\nu^{-\f{4}{3}}|k|^{-\f23}\|{F}_2(\lambda)\|_{L^2}^2.
\end{align*}
Thus, we obtain
\begin{align*}
&|\omega_1^{(2)}(t,k,y)|^2\leq \left|\int_{\mathbb{R}}|c_1^{(2)}(\lambda)w_1(\lambda,k,y)|d\lambda\right|^2\\ &\leq \int_{\mathbb{R}}\big(|\lambda+k|^{\f32}|c_1^{(2)}(\lambda)|^2\rho_k(y)|w_1(\lambda,k,y)|^2+ (\nu k^2)^{\f14}|\lambda+k|^{\f34}|c_1^{(2)}(\lambda)|^2\rho_k^{-\f12}(y)|w_1(\lambda,k,y)|^2]\big)d\lambda\\&\quad
\times\int_{\mathbb{R}}\big(|\lambda+k|^{\f32}\rho_k(y)+ (\nu k^2)^{\f14}|\lambda+k|^{\f34}\rho_k^{-\f12}(y)\big)^{-1}d\lambda,
\end{align*}
for $t>0,\ y\in(-1,1)$. Notice that
\begin{align*}
&\int_{\mathbb{R}}(|\lambda+k|^{\f32}\rho_k(y)+ (\nu k^2)^{\f14}|\lambda+k|^{\f34}\rho_k^{-\f12}(y))^{-1}d\lambda\\ &\overset{z=(\lambda+k)\rho_k^{2}(y)(\nu k^2)^{-\f13}}{=}\int_{\mathbb{R}}\big(|z|^{\f32}\rho_k^{-2}(y)(\nu k^2)^{\f12}+ (\nu k^2)^{\f12}|z|^{\f34}\rho_k^{-2}(y)\big)^{-1}\rho_k^{-2}(y)(\nu k^2)^{\f13}dz\\&=(\nu k^2)^{-\f16} \int_{\mathbb{R}}(|z|^{\f32}+ |z|^{\f34})^{-1}dz=C(\nu k^2)^{-\f16}<+\infty.
\end{align*}
Thus, we have
\begin{align*}
&\|\omega_1^{(2)}(t)\|_{L^2}^2=\int_{-1}^1|\omega_1^{(2)}(t,k,y)|^2dy\leq C(\nu k^2)^{-\f16}\int_{\mathbb{R}}\int_{-1}^1\big(|\lambda+k|^{\f32}|c_1^{(2)}(\lambda)|^2\rho_k(y)|w_1(\lambda,k,y)|^2\\&\qquad+ (\nu k^2)^{\f14}|\lambda+k|^{\f34}|c_1^{(2)}(\lambda)|^2\rho_k^{-\f12}(y)|w_1(\lambda,k,y)|^2\big)dyd\lambda\\&=C(\nu k^2)^{-\f16}\int_{\mathbb{R}}\big(|\lambda+k|^{\f32}|c_1^{(2)}(\lambda)|^2\|\rho_k^{\f12}w_1\|_{L^2}^2+
 (\nu k^2)^{-\f14}|\lambda+k|^{\f34}|c_1^{(2)}(\lambda)|^2\|\rho_k^{\f14}w_1\|_{L^2}^2\big)d\lambda\\&\leq C(\nu k^2)^{-\f16}\int_{\mathbb{R}}\nu^{-\f{4}{3}}|k|^{-\f23}\|{F}_2(\lambda)\|_{L^2}^2d\lambda\sim
 \nu^{-\f{3}{2}}|k|^{-1}\|f_2\|_{L^2L^2}^2,
\end{align*}
which shows that
\begin{align*}
&(\nu k^2)^{\f12}\|\omega_1^{(2)}\|_{L^{\infty}L^2}^2\leq C(\nu k^2)^{\f12} \nu^{-\f{3}{2}}|k|^{-1}\|f_2\|_{L^2L^2}^2=C\nu^{-1}\|f_2\|_{L^2L^2}^2.
\end{align*}
Similarly, we have
\begin{align*}
&(\nu k^2)^{\f12}\|\omega_2^{(2)}\|_{L^{\infty}L^2}^2\leq C\nu^{-1}\|f_2\|_{L^2L^2}^2.
\end{align*}

In summary, we conclude that
\begin{align*}
(\nu k^2)^{\f12}\|\omega\|_{L^{\infty}L^2}^2&\leq C\|\omega_{Na}\|_{L^{\infty}L^2}^2+C\sum_{j,l=1}^2(\nu k^2)^{\f12}\|\omega_j^{(l)}\|_{L^{\infty}L^2}^2\\&\leq C\big( \nu^{-\f13}|k|^{\f43}\|f_1\|_{L^2L^2}^2+\nu^{-1}\|f_2\|_{L^2L^2}^2\big).
\end{align*}
\end{proof}

\subsubsection{Space-time estimates of the homogeneous problem}

\begin{proposition}\label{prop:SPT-hom}
Let $\nu k^2\leq1$ and $\om_H$ be a solution of  \eqref{eq:w-hom} with ${\langle\om_0,e^{\pm ky}\rangle=0}$. Then there exists a constant $C>0$ independent of $\nu,k$ such that
\begin{align*}
&(\nu k^2)^{\f12}\|\omega_{H}\|_{L^2L^2}^2+(\nu k^2)^{\f12}\|\omega_{H}\|_{L^{\infty}L^2}^2+(\nu k^2)^{\f13}\|\rho_k^{\f12}\omega_{H}\|_{L^2L^2}^2+k^2\|u_H\|_{L^2L^2}^2\\
\nonumber&\leq C\|\omega_0\|_{L^2}^2+C\nu^{\f13}|k|^{-\f43}\|\partial_y\omega_0\|_{L^2}^2.
\end{align*}
\end{proposition}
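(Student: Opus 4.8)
\emph{Plan.}
I follow the three--term splitting $\omega_H=\omega_H^{(1)}+\omega_H^{(2)}+\omega_H^{(3)}$ announced in Section~2.2, where $\omega_H^{(1)}(t,k,y)=e^{-(\nu k^2)^{1/3}t-itky}\omega_0(k,y)$, where $\omega_H^{(2)}$ solves the inhomogeneous problem with source $-(\nu k^2-(\nu k^2)^{1/3})\omega_H^{(1)}+\nu\partial_y^2\omega_H^{(1)}$, zero data and $\langle\omega_H^{(2)},e^{\pm ky}\rangle=0$, and where $\omega_H^{(3)}$ solves the homogeneous equation with zero data and the drift constraint $\langle\omega_H^{(3)}(t)+\omega_H^{(1)}(t),e^{\pm ky}\rangle=0$. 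One checks directly that $(\partial_t-\nu(\partial_y^2-k^2)+iky)\omega_H^{(1)}=(\nu k^2-(\nu k^2)^{1/3})\omega_H^{(1)}-\nu\partial_y^2\omega_H^{(1)}$, so the sum solves \eqref{eq:w-hom} with data $\omega_0$ and satisfies $\langle\cdot,e^{\pm ky}\rangle=0$, i.e.\ $\varphi_H(\pm1)=\varphi_H'(\pm1)=0$. The first piece is explicit: $|\omega_H^{(1)}(t,k,y)|=e^{-(\nu k^2)^{1/3}t}|\omega_0(k,y)|$, so elementary time integrals give $\|\omega_H^{(1)}\|_{L^\infty L^2}^2=\|\omega_0\|_{L^2}^2$, $\|\omega_H^{(1)}\|_{L^2L^2}^2=\tfrac12(\nu k^2)^{-1/3}\|\omega_0\|_{L^2}^2$ and (since $\rho_k\le1$) $\|\rho_k^{1/2}\omega_H^{(1)}\|_{L^2L^2}^2\le\|\omega_H^{(1)}\|_{L^2L^2}^2$; multiplying by the stated weights and using $\nu k^2\le1$ bounds the three vorticity quantities of $\omega_H^{(1)}$ by $\|\omega_0\|_{L^2}^2$. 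Its velocity part is \emph{not} controllable alone and is absorbed together with $\omega_H^{(3)}$.

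\emph{The $\omega_H^{(2)}$ piece.} I write the source in divergence form $-(\nu k^2-(\nu k^2)^{1/3})\omega_H^{(1)}+\nu\partial_y^2\omega_H^{(1)}=-ikf_1-\partial_yf_2$ with $f_1=\tfrac{\nu k^2-(\nu k^2)^{1/3}}{ik}\omega_H^{(1)}$ and $f_2=-\nu\partial_y\omega_H^{(1)}$. Since $\nu k^2\le1$ gives $|\nu k^2-(\nu k^2)^{1/3}|\le(\nu k^2)^{1/3}$, and $\partial_y\omega_H^{(1)}=e^{-(\nu k^2)^{1/3}t-itky}(-itk\,\omega_0+\partial_y\omega_0)$, the time integrals yield $\|f_1\|_{L^2L^2}^2\lesssim(\nu k^2)^{1/3}k^{-2}\|\omega_0\|_{L^2}^2$ and $\|f_2\|_{L^2L^2}^2\lesssim\nu\|\omega_0\|_{L^2}^2+\nu^{5/3}|k|^{-2/3}\|\partial_y\omega_0\|_{L^2}^2$. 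Feeding these into Proposition~\ref{prop:SPT-inhom} and using $\nu^{2/3}|k|^{-2/3}=(\nu k^2)^{1/3}\,\nu^{1/3}|k|^{-4/3}\le\nu^{1/3}|k|^{-4/3}$ bounds all four quantities for $\omega_H^{(2)}$, including $k^2\|u_H^{(2)}\|_{L^2L^2}^2$, by $C\|\omega_0\|_{L^2}^2+C\nu^{1/3}|k|^{-4/3}\|\partial_y\omega_0\|_{L^2}^2$.

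\emph{The $\omega_H^{(3)}$ piece.} Taking the Fourier transform in $t$ (legitimate by the rough semigroup bound recorded after Lemma~\ref{lem:semi-Navier}), $\widehat{\omega_H^{(3)}}(\lambda)$ has zero source and zero data, hence is a decaying solution of the homogeneous Orr--Sommerfeld equation, so $\widehat{\omega_H^{(3)}}(\lambda)=d_1(\lambda)w_1+d_2(\lambda)w_2$ with $w_1,w_2$ the functions of Section~5 taken with $\epsilon=0$ and $\lambda$ replaced by $\lambda'=-\lambda/k$. Since $\widehat{\omega_H^{(1)}}(\lambda,y)=\omega_0(y)\big((\nu k^2)^{1/3}+ik(y-\lambda')\big)^{-1}$, solving the constraint $\langle\widehat{\omega_H^{(3)}}(\lambda)+\widehat{\omega_H^{(1)}}(\lambda),e^{\pm ky}\rangle=0$ exactly as in \eqref{def:c1}--\eqref{def:c2} gives
\[
d_1(\lambda)=\int_{-1}^1\frac{\sinh k(y+1)}{\sinh 2k}\,\frac{\omega_0(y)\,dy}{(\nu k^2)^{1/3}+ik(y-\lambda')},\qquad d_2(\lambda)=-\int_{-1}^1\frac{\sinh k(1-y)}{\sinh 2k}\,\frac{\omega_0(y)\,dy}{(\nu k^2)^{1/3}+ik(y-\lambda')}.
\]
The elementary bound $\big|(\nu k^2)^{1/3}+ik(y-\lambda')\big|^{-1}\le\min\big((\nu k^2)^{-1/3},|k(y-\lambda')|^{-1}\big)$, Cauchy--Schwarz, and the exponential decay of $\sinh k(1\pm y)/\sinh2k$ away from $y=\pm1$ --- treated, as in Lemmas~\ref{lem:c12-bounds-L2}--\ref{lem:c12-bounds-H-1}, by splitting according to whether $\lambda'$ lies inside or outside $[-1,1]$ --- give weighted estimates of the schematic form $(1+|\lambda+k|)^{3/4}|d_1(\lambda)|+(1+|\lambda-k|)^{3/4}|d_2(\lambda)|\lesssim\nu^{-1/6}|k|^{-5/6}\|\omega_0\|_{L^2}$ together with the companion $|\lambda\pm k|^{3/8}$--weighted versions, matching the powers used in the proof of Proposition~\ref{prop:SPT-inhom}. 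Combining these with the $L^\infty$, $L^1$, $\rho_k^{-1/4}$-- and $\rho_k^{1/2}$--weighted $L^2$ bounds on $w_1,w_2$ from Propositions~\ref{prop:w12-bounds}--\ref{prop:w12-bounds-weight}, applying Cauchy--Schwarz in $\lambda$ with the integrability $\int_{\mathbb{R}}(|z|^{3/2}+|z|^{3/4})^{-1}dz<\infty$ (exactly as in the $\omega_1^{(2)}$ estimate there), and then Plancherel, bounds $(\nu k^2)^{1/2}\|\omega_H^{(3)}\|_{L^2L^2}^2$, $(\nu k^2)^{1/2}\|\omega_H^{(3)}\|_{L^\infty L^2}^2$ and $(\nu k^2)^{1/3}\|\rho_k^{1/2}\omega_H^{(3)}\|_{L^2L^2}^2$ by the right--hand side.

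\emph{The velocity and conclusion.} For $k^2\|u_H\|_{L^2L^2}^2$ I group $u_H=u_H^{(2)}+u_H^{(1,3)}$, with $u_H^{(2)}$ already controlled and $u_H^{(1,3)}$ the velocity recovered from $\omega_H^{(1)}+\omega_H^{(3)}$, which satisfies $\langle\cdot,e^{\pm ky}\rangle=0$, so its stream function $\varphi^{(1,3)}$ obeys both $\varphi^{(1,3)}(\pm1)=0$ and $\varphi^{(1,3)\prime}(\pm1)=0$. In Fourier variables $\widehat{\omega_H^{(1)}+\omega_H^{(3)}}(\lambda)=\widehat{\omega_H^{(1)}}(\lambda)+d_1(\lambda)w_1+d_2(\lambda)w_2$ solves the non--slip resolvent problem \eqref{eq:R-phi-non} (with $\epsilon=0$, $\lambda\to\lambda'$) with right--hand side $\widetilde F(\lambda)=\omega_0+(\nu k^2-(\nu k^2)^{1/3})\widehat{\omega_H^{(1)}}(\lambda)-\nu\partial_y^2\widehat{\omega_H^{(1)}}(\lambda)$, this being the identity $-\nu(\partial_y^2-k^2)\widehat{\omega_H^{(1)}}(\lambda)+ik(y-\lambda')\widehat{\omega_H^{(1)}}(\lambda)=\widetilde F(\lambda)$; since $\widehat{\omega_H^{(1)}}(\lambda)$ is itself a boundary--violating solution of this equation, the Navier--slip part in the decomposition of $\widehat{\omega_H^{(1)}+\omega_H^{(3)}}$ differs from $\widehat{\omega_H^{(1)}}(\lambda)$ only by a homogeneous correction whose $\lambda$--dependence is governed by $\widehat{\omega_H^{(1)}}(\lambda)|_{y=\pm1}$. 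Using $\|\widehat{u^{(1,3)}}(\lambda)\|_{L^2}^2=\big|\langle\widehat{\omega_H^{(1)}}(\lambda)+d_1w_1+d_2w_2,\varphi^{(1,3)}\rangle\big|$ and a weighted test--function argument in the spirit of Lemma~\ref{lem:R-navier-4} --- testing against $\chi\varphi^{(1,3)}$ with $\chi$ as in \eqref{def:cutoff-chi}, exploiting that $\widehat{\omega_H^{(1)}}(\lambda)$ is concentrated in a $\nu^{1/3}|k|^{-2/3}$--neighbourhood of $y=\lambda'$, together with the weighted bounds on $d_1,d_2,w_1,w_2$ above --- one gets a $\lambda$--integrable bound whose integral is $\lesssim\|\omega_0\|_{L^2}^2+\nu^{1/3}|k|^{-4/3}\|\partial_y\omega_0\|_{L^2}^2$. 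Assembling the three pieces by the triangle inequality and absorbing all positive powers of $\nu k^2\le1$ gives the claim. The main obstacle is precisely this last step combined with the $\omega_H^{(3)}$ analysis: neither the velocity of $\omega_H^{(1)}$ nor a crude bound on $\omega_H^{(3)}$ is summable in $\lambda$ by itself, so one must exploit the cancellation built into the drift constraint, which forces the sharp $\lambda$--weighted control of $d_1,d_2$ paired with the delicate $\rho_k$--weighted $L^2$ bounds on the Airy solutions $w_1,w_2$.
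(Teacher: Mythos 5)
Your decomposition $\omega_H=\omega_H^{(1)}+\omega_H^{(2)}+\omega_H^{(3)}$ and the treatment of $\omega_H^{(1)}$ and $\omega_H^{(2)}$ (bounding the sources $f_1,f_2$, invoking Proposition~\ref{prop:SPT-inhom}, absorbing $(\nu k^2)^{1/3}\le1$) match the paper's argument. The gap is in the two places where you depart from it.

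\emph{The key cancellation is missing in your $\omega_H^{(3)}$ estimate.} You propose a pointwise-in-$\lambda$ bound of the form $(1+|\lambda+k|)^{3/4}|d_1(\lambda)|\lesssim\nu^{-1/6}|k|^{-5/6}\|\omega_0\|_{L^2}$, obtained from $|(\nu k^2)^{1/3}+ik(y-\lambda')|^{-1}\le(\nu k^2)^{-1/3}$ and Cauchy--Schwarz. This bound is correct but too weak: to run the same $\lambda$-integration as in Step~3 of the paper one needs $\int(1+|\lambda+k|)^{1/2}|d_1(\lambda)|^2\,d\lambda\lesssim|k|^{-1}\|\omega_0\|_{L^2}^2$, whereas your pointwise estimate only yields $\int(1+|\lambda+k|)^{1/2}|d_1(\lambda)|^2\,d\lambda\lesssim(\nu k^2)^{-1/3}|k|^{-1}\|\omega_0\|_{L^2}^2$ -- off by a factor $(\nu k^2)^{-1/3}\ge1$, which is large in the relevant regime $\nu k^2\le1$. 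The same loss appears in $\int|d_1|^2\,d\lambda$, which is needed for $k^2\|u_H^{(3)}\|_{L^2L^2}^2$. The paper closes this by a genuinely different mechanism: writing $a_j(t)=\langle\omega_H^{(1)}(t),\sinh k(1\pm y)/\sinh 2k\rangle$, noticing that the hypothesis $\langle\omega_0,e^{\pm ky}\rangle=0$ forces $a_j(0)=0$, extending $\widetilde a_1(t)=e^{ikt}a_1(t)$ by zero to $t<0$ as an $H^1(\mathbb{R})$ function (the vanishing at $t=0$ is exactly what makes $\widetilde a_1'\in L^2$), and then using Plancherel to obtain the strictly stronger $L^2_\lambda$ bound $\|(1+|\lambda\pm k|)c_j\|_{L^2(\mathbb{R})}^2\lesssim|k|^{-1}\|\omega_0\|_{L^2}^2$. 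You invoke the hypothesis $\langle\omega_0,e^{\pm ky}\rangle=0$ only to check the boundary conditions, never to produce this decay in $\lambda$; that is the missing idea, and without it the $\omega_H^{(3)}$ estimates do not close.

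\emph{The velocity of $\omega_H^{(1)}$ is directly controllable, contrary to your assertion.} You claim that $k^2\|u_H^{(1)}\|_{L^2L^2}^2$ cannot be bounded alone and must be absorbed with $\omega_H^{(3)}$ via a test-function argument ``in the spirit of Lemma~\ref{lem:R-navier-4}.'' In fact the paper bounds $k^2\|u_H^{(1)}\|_{L^2L^2}^2\le C\|\omega_0\|_{L^2}^2$ directly and cleanly: expanding $\omega_H^{(0)}(t,k,y)=e^{-itky}\omega_0(k,y)$ in the sine basis $\varphi_j=\sin(\pi j(y+1)/2)$, applying Plancherel in $t$ to each coefficient $\langle\omega_H^{(0)}(t),\varphi_j\rangle$, and summing the weights $\big((\pi j/2)^2+k^2\big)^{-1}$. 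This is precisely the inviscid-damping mechanism emphasized in the introduction; your combined $u^{(1,3)}$ estimate is both unnecessary and too sketchy to verify as written. Once $u_H^{(1)}$ and $u_H^{(3)}$ are bounded separately, the conclusion follows by the triangle inequality as in the paper.
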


\begin{proof}
We introduce
\beno
&&\omega_{H}^{(0)}(t,k,y)=e^{-itky}\omega(0,k,y),\quad t\in \mathbb{R},\\
&&\omega_{H}^{(1)}(t,k,y)=e^{-(\nu k^2)^{1/3}t}\omega_{H}^{(0)}(t,k,y),\quad t>0.
\eeno
Then we have
\beno
&&(\partial_t+iky)\omega_{H}^{(0)}=0,\quad (\partial_t+iky+(\nu k^2)^{1/3})\omega_{H}^{(1)}=0,\\
&&\omega_{H}^{(0)}(0)=\omega_{H}^{(1)}(0)=\omega(0),\quad \|\omega_{H}^{(0)}(t)\|_{L^2}=\|\omega_{H}(0)\|_{L^2},\\
&&(\partial_t-\nu(\partial_y^2-k^2)+iky)\omega_{H}^{(1)}=(\nu k^2-(\nu k^2)^{\f13})\omega_{H}^{(1)}-\nu\partial_y^2\omega_{H}^{(1)}.
\eeno
Thus, we can decompose $\omega_{H}$ as follows
\beno
\omega_{H}=\omega_{H}^{(1)}+\omega_{H}^{(2)}+\omega_{H}^{(3)},
\eeno
where $\omega_{H}^{(2)}$ solves
\begin{align*}
&(\partial_t-\nu(\partial_y^2-k^2)+iky)\omega_{H}^{(2)}=-(\nu k^2-(\nu k^2)^{\f13})\omega_{H}^{(1)}+\nu\partial_y^2\omega_{H}^{(1)},\\
&\omega_{H}^{(2)}|_{t=0}=0,\quad \langle\omega_{H}^{(2)},e^{\pm ky}\rangle=0.
\end{align*}
and $\omega_{H}^{(3)}$ solves
\begin{align*}
(\partial_t-\nu(\partial_y^2-k^2)+iky)\omega_{H}^{(3)}=0, \ \omega_{H}^{(3)}|_{t=0}=0,\ \langle\omega_{H}^{(3)}(t)+\omega_{H}^{(1)}(t),e^{\pm ky}\rangle=0.
\end{align*}

We denote
\beno
u_H^{(j)}=({\partial_y,-ik})\varphi_H^{(j)},\quad \varphi_H^{(j)}=(\partial_y^2-k^2)^{-1}\omega_{H}^{(j)},\quad  j=0,1,2,3.
\eeno

{\bf Step 1.} Estimates of $\omega_{H}^{(j)}, j=1,2$.\smallskip

By Proposition \ref{prop:SPT-inhom}, we have
\begin{align*}
&(\nu k^2)^{\f13}\|\rho_k^{\f12}\omega_{H}^{(2)}\|_{L^2L^2}^2+k^2\|u_H^{(2)}\|_{L^2L^2}^2+(\nu k^2)^{\f12}\|\omega_{H}^{(2)}\|_{L^2L^2}^2+(\nu k^2)^{\f12}\|\omega_{H}^{(2)}\|_{L^{\infty}L^2}^2\\
\nonumber&\leq C\big(\nu^{-\f13}|k|^{-\f23}\|(\nu k^2-(\nu k^2)^{\f13})\omega_{H}^{(1)}\|_{L^2L^2}^2+\nu\|\partial_y\omega_{H}^{(1)}\|_{L^2L^2}^2\big)\\
&\leq C\big((\nu k^2)^{\f13}\|\omega_{H}^{(1)}\|_{L^2L^2}^2+\nu\|\partial_y\omega_{H}^{(1)}\|_{L^2L^2}^2\big).
\end{align*}
It is easy to see that
\begin{align*}
&\|\omega_{H}^{(1)}(t)\|_{L^2}=\|e^{-(\nu k^2)^{1/3}t}\omega_{H}^{(0)}(t)\|_{L^2}=e^{-(\nu k^2)^{1/3}t}\|\omega_0\|_{L^2},\\&(\nu k^2)^{\f13}\|\omega_{H}^{(1)}\|_{L^2L^2}^2=(\nu k^2)^{\f13}\|e^{-(\nu k^2)^{1/3}t}\|_{L^2(0,+\infty)}^2\|\omega_0\|_{L^2}^2=\|\omega_0\|_{L^2}^2/2,
\end{align*}and
\begin{align*}
\|\partial_y\omega_{H}^{(1)}(t)\|_{L^2}=&\|\partial_y(e^{-(\nu k^2)^{1/3}t-itky}\omega_0(k,y))\|_{L^2}=e^{-(\nu k^2)^{1/3}t}\|e^{-itky}(\partial_y-itk)\omega_0(k,y)\|_{L^2}\\
\leq& e^{-(\nu k^2)^{1/3}t}\big(\|\partial_y\omega_0\|_{L^2}+|tk|\|\omega_0\|_{L^2}\big),
\end{align*}
and
\begin{align*}
\nu\|\partial_y\omega_{H}^{(1)}\|_{L^2L^2}^2&\leq2\nu\|e^{-(\nu k^2)^{1/3}t}\|_{L^2(0,+\infty)}^2\|\partial_y\omega_0\|_{L^2}^2+2\nu\|tke^{-(\nu k^2)^{1/3}t}\|_{L^2(0,+\infty)}^2\|\omega_0\|_{L^2}^2\\&=\nu^{\f13}|k|^{-\f43}\|\partial_y\omega_0\|_{L^2}^2+2\|\omega_0\|_{L^2}^2.
\end{align*}
This shows that
\begin{align}\label{eq:omH2}
&(\nu k^2)^{\f13}\|\rho_k^{\f12}\omega_{H}^{(2)}\|_{L^2L^2}^2+k^2\|u_H^{(2)}\|_{L^2L^2}^2+(\nu k^2)^{\f12}\|\omega_{H}^{(2)}\|_{L^2L^2}^2+(\nu k^2)^{\f12}\|\omega_{H}^{(2)}\|_{L^{\infty}L^2}^2\\
\nonumber&\leq C\|\omega(0)\|_{L^2}^2+C\nu^{\f13}|k|^{-\f43}\|\partial_y\omega(0)\|_{L^2}^2,
\end{align}
and
\begin{align}\label{eq:omH1}
&(\nu k^2)^{\f13}\|\rho_k^{\f12}\omega_{H}^{(1)}\|_{L^2L^2}^2+(\nu k^2)^{\f12}\|\omega_{H}^{(1)}\|_{L^2L^2}^2+(\nu k^2)^{\f12}\|\omega_{H}^{(1)}\|_{L^{\infty}L^2}^2\\
\nonumber&\leq 2(\nu k^2)^{\f13}\|\omega_{H}^{(1)}\|_{L^2L^2}^2+\|\omega_{H}^{(1)}\|_{L^{\infty}L^2}^2\leq C\|\omega(0)\|_{L^2}^2.
\end{align}

We use the basis in $L^2([-1,1]): \varphi_j(y)=\sin(\pi j(y+1)/2),\ j\in\mathbb{Z}_+.$ Then we have\begin{align*}
&\om_{H}^{(0)}=\sum_{j=1}^{+\infty}\langle \omega_{H}^{(0)},\varphi_j\rangle\varphi_j,\ \|\om_{H}^{(0)}\|_{L^2}^2=\sum_{j=1}^{+\infty}|\langle \omega_{H}^{(0)},\varphi_j\rangle|^2,\ \varphi_{H}^{(0)}=-\sum_{j=1}^{+\infty}\frac{\langle \omega_{H}^{(0)},\varphi_j\rangle}{(\pi j/2)^2+k^2}\varphi_j,\\ &\|u_{H}^{(0)}(t)\|_{L^2}^2=\|\partial_y\varphi_{H}^{(0)}(t)\|_{L^2}^2+k^2\|\varphi_{H}^{(0)}(t)\|_{L^2}^2=-\langle \varphi_{H}^{(0)}(t),\om_{H}^{(0)}(t)\rangle=\sum_{j=1}^{+\infty}\frac{|\langle \omega_{H}^{(0)}(t),\varphi_j\rangle|^2}{(\pi j/2)^2+k^2}.
\end{align*}
Thanks to $\omega_{H}^{(0)}(t,k,y)=e^{-itky}\omega_0(k,y)$, we have
\begin{align*}
&\langle \omega_{H}^{(0)}(t),\varphi_j\rangle=\int_{-1}^1e^{-i tky}\omega_0(k,y)\varphi_j(y)dy,
\end{align*}
from which and Plancherel's formula, we infer that
\begin{align*}
\int_{\mathbb{R}}|\langle \omega_{H}^{(0)}(t),\varphi_j\rangle|^2dt&=\frac{2\pi}{|k|}\int_{-1}^{1}\left| \omega_0(k,y)\varphi_j(y)\right|^2dz\leq \frac{2\pi}{|k|}\|\om_0\|_{L^2}^2.
\end{align*}
Therefore, we have
\begin{align*}
\int_{\mathbb{R}}\|u_{H}^{(0)}(t)\|_{L^2}^2dt=&\sum_{j=1}^{+\infty}\int_{\mathbb{R}}\frac{|\langle \omega_{H}^{(0)}(t),\varphi_j\rangle|^2}{(\pi j/2)^2+k^2}dt\leq\sum_{j=1}^{+\infty}\frac{2\pi}{|k|}\frac{\|\om_0\|_{L^2}^2}{(\pi j/2)^2+k^2}\\ \leq&\int_{0}^{+\infty}\frac{2\pi}{|k|}\frac{\|\om_0\|_{L^2}^2}{(\pi z/2)^2+|k|^2}dz=\frac{2\pi}{k^2 }\|\om_0\|_{L^2}^2,
\end{align*}
from which and $u_{H}^{(1)}=e^{-(\nu k^2)^{1/3}t}u_{H}^{(0)}$, we infer that
\begin{align}\label{eq:uH1}
&k^2\|u_H^{(1)}\|_{L^2L^2}^2\leq k^2\|u_H^{(0)}\|_{L^2L^2}^2\leq C\|\omega_0\|_{L^2}^2.
\end{align}

{\bf Step 2.} Estimates of $\omega_H^{(3)}$.\smallskip

We introduce
\beno
&&a_1(t)=\int_{-1}^1\f{\sinh k(y+1)}{\sinh 2k}\omega_{H}^{(1)}(t,k,y)dy=-\int_{-1}^1\f{\sinh k(y+1)}{\sinh 2k}\omega_{H}^{(3)}(t,k,y)dy,\\
&&a_2(t)=\int_{-1}^1\f{\sinh k(1-y)}{\sinh 2k}\omega_{H}^{(1)}(t,k,y)dy=-\int_{-1}^1\f{\sinh k(1-y)}{\sinh 2k}\omega_{H}^{(3)}(t,k,y)dy,\\
&&a_1^{(0)}(t)=\int_{-1}^1\f{\sinh k(y+1)}{\sinh 2k}\omega_{H}^{(0)}(t,k,y)dy,\quad a_2^{(0)}(t)=\int_{-1}^1\f{\sinh k(1-y)}{\sinh 2k}\omega_{H}^{(1)}(t,k,y)dy.
\eeno
Then we have
\beno
a_1(t)=e^{-(\nu k^2)^{1/3}t}a_1^{(0)}(t),\quad a_2(t)=e^{-(\nu k^2)^{1/3}t}a_2^{(0)}(t).
\eeno
We take the Fourier transform in $t$:
\beno
w(\lambda,k,y):=\int_{0}^{+\infty}\omega_{H}^{(3)}(t,k,y)e^{-it\lambda}dt,\quad c_j(\lambda):=\int_{0}^{+\infty}a_{j}(t)e^{-it\lambda}dt,\ j=1,2.
\eeno
Then we have
\begin{align*}
&(i\lambda -\nu(\partial_y^2-k^2)+iky)w(\lambda,k,y)=0,\\
&c_1(\lambda)=-\int_{-1}^1\f{\sinh k(1+y)}{\sinh 2k}w(\lambda,k,y)dy,\quad c_2(\lambda)=-\int_{-1}^1\f{\sinh k(1-y)}{\sinh 2k}w(\lambda,k,y)dy.
\end{align*}
Thus, we have
\beno
w=-c_1(\lambda)w_1-c_2(\lambda)w_2,
\eeno
where $w_1,w_2$ are defined as in section 4.1. Let us first claim that
\begin{align}\label{eq:c1-est}
&\|(1+|\lambda+k|)c_1\|_{L^2(\mathbb{R})}^2\leq C|k|^{-1}\|\om_0\|_{L^2}^2,\\
&\|(1+|\lambda-k|)c_2\|_{L^2(\mathbb{R})}^2\leq C|k|^{-1}\|\om_0\|_{L^2}^2.\label{eq:c2-est}
\end{align}

By Proposition \ref{prop:w12-bounds}, we know that
\beno
&&\|w_1\|_{L^2}\leq C\nu^{-\f14}(1+|k||-\lambda/k-1|)^{\f14}=C\nu^{-\f14}(1+|\lambda+k|)^{\f14},\\
&&\|w_2\|_{L^2}\leq C\nu^{-\f14}(1+|k||-\lambda/k+1|)^{\f14}=C\nu^{-\f14}(1+|\lambda-k|)^{\f14},
\eeno
from which, we infer that
\begin{align*}
\|w(\lambda)\|_{L^2}&=\|c_1(\lambda)w_1+c_2(\lambda)w_2\|_{L^2}\leq|c_1(\lambda)\|w_1\|_{L^2}+|c_2(\lambda)|\|w_2\|_{L^2}\\&\leq C\nu^{-\f14}\big(|c_1(\lambda)|(1+|\lambda+k|)^{\f14}+|c_2(\lambda)|(1+|\lambda-k|)^{\f14}\big).
\end{align*}
By Proposition \ref{prop:w12-bounds}, we have $\|w_1\|_{L^1}+\|w_2\|_{L^1}\leq C$, which along with Lemma \ref{lem:phi-w} implies
\begin{align*}
&\|(k,\partial_y)(\partial_y^2-k^2)^{-1}w(\lambda)\|_{L^2}\\ &\leq
|c_1(\lambda)|\|(k,\partial_y)(\partial_y^2-k^2)^{-1}w_1\|_{L^2}+
|c_2(\lambda)|\|(k,\partial_y)(\partial_y^2-k^2)^{-1}w_2\|_{L^2}\\ &\leq
|k|^{-\f12}\big(|c_1(\lambda)|\|w_1\|_{L^1}+
|c_2(\lambda)|\|w_2\|_{L^1}\big)\leq {C}|k|^{-\f12}\big(|c_1(\lambda)|+|c_2(\lambda)|\big).
\end{align*}
By Proposition \ref{prop:w12-bounds-weight}, we have $\|\rho_k^{\f12}w_1\|_{L^2}+\|\rho_k^{\f12}w_2\|_{L^2}\leq CL^{\f12}$ with $L=(k/\nu)^{\f13}$, which gives
\begin{align*}
\|\rho_k^{\f12}w(\lambda)\|_{L^2}&=\|c_1(\lambda)\rho_k^{\f12}w_1+c_2(\lambda)\rho_k^{\f12}w_2\|_{L^2}
\leq|c_1(\lambda)\|\rho_k^{\f12}w_1\|_{L^2}+|c_2(\lambda)|\|\rho_k^{\f12}w_2\|_{L^2}\\&\leq CL^{\f12}\big(|c_1(\lambda)|+|c_2(\lambda)|\big)= C(k/\nu)^{\f16}\big(|c_1(\lambda)|+|c_2(\lambda)|\big).
\end{align*}

Summing up, we conclude that
\begin{align*}
 &(\nu k^2)^{\f13}\|\rho_k^{\f12}\omega_{H}^{(3)}\|_{L^2L^2}^2+k^2\|u_{H}^{(3)}\|_{L^2L^2}^2+(\nu k^2)^{\f12}\|\omega_{H}^{(3)}\|_{L^2L^2}^2\\
  &\quad\sim(\nu k^2)^{\f13}\big\|\|\rho_k^{\f12}w(\lambda)\|_{L^2}\big\|_{L^2(\mathbb{R})}^2+
 k^2\big\|\|(k,\partial_y)(\partial_y^2-k^2)^{-1}w(\lambda)\|_{L^2}\big\|_{L^2(\mathbb{R})}^2+(\nu k^2)^{\f12}\big\|\|w(\lambda)\|_{L^2}\big\|_{L^2(\mathbb{R})}^2\\ &\leq C(\nu k^2)^{\f13}(k/\nu)^{\f13}\big(\|c_1\|_{L^2(\mathbb{R})}^2+\|c_2\|_{L^2(\mathbb{R})}^2\big)
 +C k^2|k|^{-1}(\|c_1\|_{L^2(\mathbb{R})}^2+\|c_2\|_{L^2(\mathbb{R})}^2)\\&\quad
+ C(\nu k^2)^{\f12}\nu^{-\f{1}{2}}\big(\|(1+|\lambda+k|)^{\f14}c_1\|_{L^2(\mathbb{R})}^2+\|(1+|\lambda-k|)^{\f14}c_2\|_{L^2(\mathbb{R})}^2\big)\\ &\leq
 C|k|(\|(1+|\lambda+k|)^{\f14}c_1\|_{L^2(\mathbb{R})}^2+\|(1+|\lambda-k|)^{\f14}c_2\|_{L^2(\mathbb{R})}^2).
\end{align*}

Thanks to $\omega_{H}^{(3)}(t)=\frac{1}{2\pi}\int_{\mathbb{R}}w(\lambda)e^{it\lambda}d\lambda$, we also have
\begin{align*}
 &(\nu k^2)^{\f12}\|\omega_{H}^{(3)}\|_{L^{\infty}L^2}^2 \leq(\nu k^2)^{\f12}\|\|w(\lambda)\|_{L^2}\|_{L^1(\mathbb{R})}^2\\ &\leq
 C(\nu k^2)^{\f12}\nu^{-\f{1}{2}}\big(\|(1+|\lambda+k|)^{\f14}c_1\|_{L^1(\mathbb{R})}^2+\|(1+|\lambda-k|)^{\f14}c_2\|_{L^1(\mathbb{R})}^2\big).
\end{align*}
Then it follows from \eqref{eq:c1-est} and \eqref{eq:c2-est} that
\begin{align}
 &(\nu k^2)^{\f13}\|\rho_k^{\f12}\omega_{H}^{(3)}\|_{L^2L^2}^2+k^2\|u_{H}^{(3)}\|_{L^2L^2}^2+(\nu k^2)^{\f12}\|\omega_{H}^{(3)}\|_{L^2L^2}^2+(\nu k^2)^{\f12}\|\omega_{H}^{(3)}\|_{L^{\infty}L^2}^2\nonumber\\ \nonumber&\leq
 C|k|\big(\|(1+|\lambda+k|)^{\f14}c_1\|_{L^2(\mathbb{R})}^2+\|(1+|\lambda-k|)^{\f14}c_2\|_{L^2(\mathbb{R})}^2\big)\\ \nonumber&\quad+C(\nu k^2)^{\f12}\nu^{-\f{1}{2}}\big(\|(1+|\lambda+k|)^{\f14}c_1\|_{L^1(\mathbb{R})}^2+\|(1+|\lambda-k|)^{\f14}c_2\|_{L^1(\mathbb{R})}^2\big)\\
 &\leq
 C|k|\big(\|(1+|\lambda+k|)c_1\|_{L^2(\mathbb{R})}^2+\|(1+|\lambda-k|)c_2\|_{L^2(\mathbb{R})}^2\big)\le C\|\om_0\|_{L^2}^2.\label{eq:omH3}
\end{align}

{\bf Step 3.} Proof of \eqref{eq:c1-est} and \eqref{eq:c2-est}.\smallskip

Let us assume that $\langle\om_0,e^{\pm ky}\rangle=0$. Then we have $a_1(0)=a_2(0)=0. $ Thanks to $\omega_{H}^{(0)}(t,k,y)=e^{-itky}\omega_0(k,y)$, we get
\begin{align*}
&a_1^{(0)}(t)=\int_{-1}^1\f{\sinh k(y+1)}{\sinh 2k}\omega_{H}^{(0)}(t,k,y)dy=\int_{-1}^1\f{\sinh k(y+1)}{\sinh 2k}e^{-itky}\omega_0(k,y)dy,
\end{align*}from which and Plancherel's formula, we infer that
\begin{align*}
\int_{\mathbb{R}}|a_1^{(0)}(t)|^2dt&=\frac{2\pi}{|k|}\int_{-1}^{1}\left| \f{\sinh k(y+1)}{\sinh 2k}\omega_0(k,y)\right|^2dz\leq \frac{2\pi}{|k|}\|\om_0\|_{L^2}^2.
\end{align*}
Using the formula
\begin{align*}
&\partial_ta_1^{(0)}(t)+ika_1^{(0)}(t)=\int_{-1}^1ik(1-y)\f{\sinh k(y+1)}{\sinh 2k}e^{-itky}\omega(0,k,y)dy,
\end{align*}
we get
\begin{align*}
\int_{\mathbb{R}}|\partial_ta_1^{(0)}(t)+ika_1^{(0)}(t)|^2dt&=\frac{2\pi}{|k|}\int_{-1}^{1}\left| ik(1-y)\f{\sinh k(y+1)}{\sinh 2k}\omega_0(k,y)\right|^2dz\leq \frac{2\pi}{|k|}\|\om_0\|_{L^2}^2,
\end{align*}
here we used $\left|ik(1-y)\f{\sinh k(y+1)}{\sinh 2k}\right| \leq k(1-y)(2e^{-k(1-y)})\leq1$ for $y\in[-1,1].$
As $a_1(t)=e^{-(\nu k^2)^{1/3}t}a_1^{(0)}(t),$ we have
\begin{align*}
\|e^{ikt}a_1(t)\|_{L^2(0,+\infty)}^2=\|e^{ikt-(\nu k^2)^{1/3}t}a_1^{(0)}(t)\|_{L^2(0,+\infty)}^2\leq C|k|^{-1}\|\om_0\|_{L^2}^2,
\end{align*}
and (using $\nu k^2\leq 1$)\begin{align*}
\|\partial_t(e^{ikt}a_1(t))\|_{L^2(0,+\infty)}^2&=\|e^{ikt-(\nu k^2)^{1/3}t}(\partial_ta_1^{(0)}(t)+ika_1^{(0)}(t)-(\nu k^2)^{1/3}a_1^{(0)}(t))\|_{L^2(0,+\infty)}^2\\ &\leq 2\|\partial_ta_1^{(0)}(t)+ika_1^{(0)}(t)\|_{L^2}^2+2\|(\nu k^2)^{1/3}a_1^{(0)}(t)\|_{L^2}^2\\
&\leq C|k|^{-1}\|\om_0\|_{L^2}^2.
\end{align*}

We define $\widetilde{a}_1(t)=e^{ikt}a_1(t)$ for $t\geq0$ and $\widetilde{a}_1(t)=0$ for $t\leq0$. Due to $a_1(0)=0,$ we have $\widetilde{a}_1(t)\in H^1(\mathbb{R})$ and $\|\widetilde{a}_1(t)\|_ {H^1(\mathbb{R})}^2\leq C|k|^{-1}\|\om_0\|_{L^2}^2$. Moreover,
\beno
\int_{\mathbb{R}}\widetilde{a}_1(t)e^{-it\lambda}dt=\int_{0}^{+\infty}a_{1}(t)e^{itk-it\lambda}dt=c_1(\lambda-k),
\eeno
which gives
\begin{align*}
\|\widetilde{a}_1(t)\|_ {H^1(\mathbb{R})}^2\sim\int_{\mathbb{R}}(1+|\lambda|)^2|c_1(\lambda-k)|^2d\lambda=\|(1+|\lambda+k|)c_1\|_{L^2(\mathbb{R})}^2 .
\end{align*}
Thus, we obtain
\begin{align*}
\|(1+|\lambda+k|)c_1\|_{L^2}^2\leq C|k|^{-1}\|\om_0\|_{L^2}^2.
\end{align*}
Similarly, we can prove \eqref{eq:c2-est}.\smallskip

 Now the result follows from \eqref{eq:omH2}, \eqref{eq:omH1}, \eqref{eq:uH1} and \eqref{eq:omH3}.
\end{proof}

\subsubsection{Space-time estimates of the full problem}

\begin{proposition}\label{prop:SPT-small}
Let $\nu k^2\leq1$ and $\om$ be a solution of \eqref{eq:vorticity-LNS}.
Then there exists a constant $C>0$ independent of $\nu,k$ such that
\begin{align*}
&\|\rho_k^{\f32}\omega\|_{L^{\infty}L^2}^2+(\nu k^2)^{\f12}\|\omega\|_{L^2L^2}^2+(\nu k^2)^{\f12}\|\omega\|_{L^{\infty}L^2}^2+(\nu k^2)^{\f13}\|\rho_k^{\f12}\omega\|_{L^2L^2}^2+k^2\|u\|_{L^2L^2}^2\\
\nonumber&\leq C\|\omega_0\|_{L^2}^2+C\nu^{\f13}|k|^{-\f43}\|\partial_y\omega_0\|_{L^2}^2+
C\big(\nu^{-\f13}|k|^{\f43}\|f_1\|_{L^2L^2}^2+\nu^{-1}\|f_2\|_{L^2L^2}^2\big).
\end{align*}
\end{proposition}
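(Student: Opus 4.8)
The plan is to deduce all but one of the five terms on the left from the space--time estimates already proved for the inhomogeneous and homogeneous pieces, and to obtain the remaining one, $\|\rho_k^{\f32}\omega\|_{L^\infty L^2}^2$, by a single weighted energy estimate carried out directly on the full solution of \eqref{eq:vorticity-LNS}.

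First I would use the decomposition $\omega=\omega_I+\omega_H$ introduced above, where $\omega_I$ solves the inhomogeneous problem with zero data and $\omega_H$ the homogeneous one with data $\omega_0$. To apply Proposition~\ref{prop:SPT-hom} to $\omega_H$ I must check $\langle\omega_0,e^{\pm ky}\rangle=0$; this is automatic, since the built-in conditions $\varphi(\pm1)=\varphi'(\pm1)=0$ give, after integrating by parts twice, $\langle(\partial_y^2-k^2)\varphi(t),e^{\pm ky}\rangle=0$ for every $t$, hence for $t=0$. Then, writing $u=u_I+u_H$ and using $\|a+b\|^2\le2\|a\|^2+2\|b\|^2$, Proposition~\ref{prop:SPT-inhom} and Proposition~\ref{prop:SPT-hom} add up to bound $(\nu k^2)^{\f12}\|\omega\|_{L^2L^2}^2$, $(\nu k^2)^{\f12}\|\omega\|_{L^\infty L^2}^2$, $(\nu k^2)^{\f13}\|\rho_k^{\f12}\omega\|_{L^2L^2}^2$ and $k^2\|u\|_{L^2L^2}^2$ by the right-hand side of the proposition (the two right-hand sides sum exactly to it). Only $\|\rho_k^{\f32}\omega\|_{L^\infty L^2}^2$ is then left.

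For this term the idea is to test \eqref{eq:vorticity-LNS} against $\rho_k^3\overline{\omega}$ and take real parts. Since the weight $\rho_k^3$ (and, by \eqref{def:rhok}, its one-sided derivative) vanishes at $y=\pm1$, the dissipation boundary term drops out \emph{without} using any boundary condition on $\omega$ itself, and $\mathrm{Re}\langle iky\omega,\rho_k^3\omega\rangle=0$, so one gets
\begin{align*}
\f12\f{d}{dt}\|\rho_k^{\f32}\omega\|_{L^2}^2+\nu\|\rho_k^{\f32}\partial_y\omega\|_{L^2}^2+\nu k^2\|\rho_k^{\f32}\omega\|_{L^2}^2=-\nu\,\mathrm{Re}\langle\partial_y\omega,(\rho_k^3)'\omega\rangle+\mathrm{Re}\langle-ikf_1-\partial_yf_2,\rho_k^3\omega\rangle.
\end{align*}
Using $|(\rho_k^3)'|\le3L\rho_k^2=3L\,\rho_k^{\f32}\rho_k^{\f12}$ together with $L^2\nu=(\nu k^2)^{\f13}$, the dissipation cross term is $\le\f\nu4\|\rho_k^{\f32}\partial_y\omega\|_{L^2}^2+C(\nu k^2)^{\f13}\|\rho_k^{\f12}\omega\|_{L^2}^2$; using $\rho_k\le1$, the term $|\langle ikf_1,\rho_k^3\omega\rangle|\le|k|\,\|f_1\|_{L^2}\|\rho_k^{\f32}\omega\|_{L^2}$ is handled by Young at parameter $\nu^{\f13}|k|^{-\f13}$, giving $C\nu^{-\f13}|k|^{\f43}\|f_1\|_{L^2}^2+C(\nu k^2)^{\f13}\|\rho_k^{\f12}\omega\|_{L^2}^2$; and after integrating by parts in the $f_2$ term, its two pieces are $\le\f\nu4\|\rho_k^{\f32}\partial_y\omega\|_{L^2}^2+C\nu^{-1}\|f_2\|_{L^2}^2$ (Young at parameter $\nu/2$) and $\le C\nu^{-1}\|f_2\|_{L^2}^2+C(\nu k^2)^{\f13}\|\rho_k^{\f12}\omega\|_{L^2}^2$ (Young at parameter $\nu^{\f23}|k|^{\f13}$, again via $\rho_k\le1$). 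Absorbing $\f\nu2\|\rho_k^{\f32}\partial_y\omega\|_{L^2}^2$ on the left, integrating in $t$, and using $\|\rho_k^{\f32}\omega_0\|_{L^2}\le\|\omega_0\|_{L^2}$ yields
\begin{align*}
\|\rho_k^{\f32}\omega\|_{L^\infty L^2}^2\le\|\omega_0\|_{L^2}^2+C(\nu k^2)^{\f13}\|\rho_k^{\f12}\omega\|_{L^2L^2}^2+C\nu^{-\f13}|k|^{\f43}\|f_1\|_{L^2L^2}^2+C\nu^{-1}\|f_2\|_{L^2L^2}^2,
\end{align*}
and the middle term has just been controlled in the previous step. (The energy identity is justified by a routine density/regularization argument.)

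The only genuinely delicate point is the exponent bookkeeping in this last estimate: because we are in the regime $\nu k^2\le1$, the error terms $(\nu k^2)^{\f13}\|\rho_k^{\f32}\omega\|_{L^2}^2$ \emph{cannot} be absorbed into the coercive term $\nu k^2\|\rho_k^{\f32}\omega\|_{L^2}^2$, so one is forced to route them through the \emph{weighted} space--time bound $(\nu k^2)^{\f13}\|\rho_k^{\f12}\omega\|_{L^2L^2}^2$ (which, unlike in the case $\nu k^2\ge1$, is the stronger one available), and the two Young parameters above must be chosen precisely so that the surviving forcing contributions land on $\nu^{-\f13}|k|^{\f43}\|f_1\|_{L^2L^2}^2$ and $\nu^{-1}\|f_2\|_{L^2L^2}^2$. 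Everything else is routine.
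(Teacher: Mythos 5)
Your proof is correct and follows the same overall route as the paper: split $\omega=\omega_I+\omega_H$ and invoke Propositions~\ref{prop:SPT-inhom} and~\ref{prop:SPT-hom} for the four non-weighted terms, then prove the $L^\infty_t$ bound on $\rho_k^{3/2}\omega$ by a weighted energy estimate on the full equation. The only substantive difference is in how you treat the viscous cross term arising from the weight. The paper first replaces $\rho_k$ by the comparable $C^2$ weight $\tilde{\rho}_k$ of \eqref{tilde rho_k}, integrates the term $\int\omega'\bar{\omega}(\tilde{\rho}_k^3)'\,dy$ by parts once more (using $(\tilde{\rho}_k^3)'(\pm1)=0$), and then bounds $|(\tilde{\rho}_k^3)''|\lesssim L^2\tilde{\rho}_k$; you instead keep the weight $\rho_k$ itself (which is only Lipschitz, so $(\rho_k^3)'$ exists only a.e.) and close the estimate at first order by $|(\rho_k^3)'|\le 3L\rho_k^{3/2}\rho_k^{1/2}$ plus Cauchy--Schwarz and Young, absorbing part into $\nu\|\rho_k^{3/2}\omega'\|_{L^2}^2$. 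Your version is marginally cleaner in that it avoids introducing the regularized weight $\tilde{\rho}_k$; both land on the same error term $(\nu k^2)^{1/3}\|\rho_k^{1/2}\omega\|_{L^2}^2$, which is then controlled by the space--time bound from the first step. Your remaining Young choices and the observation that the compatibility condition $\langle\omega_0,e^{\pm ky}\rangle=0$ is forced by $\varphi(\pm1)=\varphi'(\pm1)=0$ are all correct.
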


\begin{proof}
It follows from Proposition \ref{prop:SPT-inhom} and Proposition \ref{prop:SPT-hom} that
\begin{align}\label{eq:om1}
&(\nu k^2)^{\f12}\|\omega\|_{L^2L^2}^2+(\nu k^2)^{\f12}\|\omega\|_{L^{\infty}L^2}^2+(\nu k^2)^{\f13}\|\rho_k^{\f12}\omega\|_{L^2L^2}^2+k^2\|u\|_{L^2L^2}^2\\
\nonumber&\leq C\|\omega_0\|_{L^2}^2+C\nu^{\f13}|k|^{-\f43}\|\partial_y\omega_0\|_{L^2}^2+
C\big(\nu^{-\f13}|k|^{\f43}\|f_1\|_{L^2L^2}^2+\nu^{-1}\|f_2\|_{L^2L^2}^2\big).
\end{align}

It remains to estimate $\|\rho_k^{\f32}\omega\|_{L^{\infty}L^2}^2.$ For this, we introduce a new weight function $\tilde{\rho}_k$ defined as follows
  \begin{align}
\tilde{\rho}_k(y)=
\left\{
\begin{aligned}
&(Ly+L-1)^3+1\qquad y\in[-1,-1+L^{-1}],\\
&1\qquad\quad\qquad\qquad\qquad y\in[-1+L^{-1},1-L^{-1}],\\
&(L-Ly-1)^3+1\qquad y\in[1-L^{-1},1],
\end{aligned}
\right.\label{tilde rho_k}
\end{align}
where $L=(\f{|k|}{\nu})^{\f13}$. It is easy to see that
\beno
\tilde{\rho}_k\in C^2(-1,1),\quad  |\tilde{\rho_k}'|\lesssim L, \quad |\tilde{\rho_k}''|\lesssim L^2.
\eeno
 And there exits a constant number $C$, independent of $\nu,k$, so that $C^{-1}\rho_k\leq\tilde{\rho_k}\leq C\rho_k$. We also have $\nu L^2=(\nu k^2)^{\f13}. $
Recall that $\om$ satisfies \eqref{eq:vorticity-LNS}. By integration by parts, we get
\begin{align*}
&\f12\f{d}{dt}\|\tilde{\rho}_k^{\f32}\omega\|_{L^2}^2+ \nu\big(\|\tilde{\rho}_k^{\f32}\omega'\|^2_{L^2}+k^2\|\tilde{\rho}_k^{\f32}\omega\|^2_{L^2}\big)={\rm Re}\big(-\nu\int_{-1}^{1}\omega'\bar{\omega}(\tilde{\rho}_k^{3})'dy\\
  &\qquad-ik\big\langle f_1,\tilde{\rho}_k^3\omega\rangle+\big\langle f_2,\partial_y(\tilde{\rho}_k^3\omega)\rangle\big)\\ &\leq \nu\big|\int_{-1}^{1}|\omega|^2(\tilde{\rho}_k^{3})''dy\big|
  +|k|\|f_1\|_{L^2}\|\tilde{\rho}_k^3\omega\|_{L^2}+\|f_2\|_{L^2}(3\|\tilde{\rho}_k'\|_{L^\infty}\|\tilde{\rho}_k^2\omega\|_{L^2}
  +\|\tilde{\rho}_k^3\omega'\|_{L^2})\\ &\leq\nu(6\|\tilde{\rho}_k'\|^2_{L^\infty}+3\|\tilde{\rho}_k''\|_{L^\infty})\int_{-1}^{1}|\omega|^2\tilde{\rho}_k dy+\f12\nu^{-\f13}|k|^{\f43}\|f_1\|_{L^2}^2+\f12(\nu k^2)^{\f13}\|\tilde{\rho}_k^{3}\omega\|^2_{L^2}\\&\quad+\nu^{-1}\|f_2\|_{L^2}^2
  +3\nu\|\tilde{\rho}_k'\|_{L^\infty}^2\|\tilde{\rho}_k^{2}\omega\|_{L^2}^2+\nu\|\tilde{\rho}_k^{3}\omega'\|_{L^2}^2,
\end{align*}
which shows that
\begin{align*}
  \f{d}{dt}\|\tilde{\rho}_k^{\f32}\omega\|_{L^2}^2 \lesssim&\nu L^2\|\tilde{\rho}_k^{\f12}\omega\|^2_{L^2}+\nu^{-\f13}|k|^{\f43}\|f_1\|_{L^2}^2+(\nu k^2)^{\f13}\|\tilde{\rho}_k^{3}\omega\|^2_{L^2}+\nu^{-1}\|f_2\|_{L^2}^2\\ \lesssim&\nu^{-\f13}|k|^{\f43}\|f_1\|_{L^2}^2+(\nu k^2)^{\f13}\|\rho_k^{\f12}\omega\|^2_{L^2}+\nu^{-1}\|f_2\|_{L^2}^2,
\end{align*}
from which and \eqref{eq:om1}, we infer that
\begin{align*}
\|\rho_k^{\f32}\omega\|_{L^{\infty}L^2}^2\leq C\|\omega_0\|_{L^2}^2+C\nu^{\f13}|k|^{-\f43}\|\partial_y\omega_0\|_{L^2}^2+
C\big(\nu^{-\f13}|k|^{\f43}\|f_1\|_{L^2L^2}^2+\nu^{-1}\|f_2\|_{L^2L^2}^2\big).
\end{align*}

This completes the proof.
\end{proof}

\subsection{Proof of Proposition \ref{prop:sapce-times}}

\begin{proof}For the case of $\nu k^2\leq 1,$ we have
\beno
\nu^{\f13}|k|^{-\f43}=k^{-2}(\nu k^2)^{\f13},\quad \nu^{-\f13}|k|^{\f43}=\nu^{-\f12}|k|(\nu k^2)^{\f16}\leq\nu^{-\f12}|k|,
\eeno
which along with Proposition \ref{prop:SPT-small} imply that
\begin{align}\label{omL2a}
&k^2\|u\|_{L^2L^2}^2+(\nu k^2)^{\f12}\|\omega\|_{L^2L^2}^2+(\nu k^2)^{\f12}\|\omega\|_{L^{\infty}L^2}^2+\|\rho_k^{\f32}\omega\|_{L^{\infty}L^2}^2\\ \nonumber&\leq C(\|\omega_0\|_{L^2}^2+k^{-2}\|\partial_y\omega_0\|_{L^2}^2)+C\big(\nu^{-\f12}|k|\|f_1\|_{L^2L^2}^2+\nu^{-1}\|f_2\|_{L^2L^2}^2\big).
\end{align}

Due to the definition of $\rho_k$, we know that $\rho_k^{\f32}(y)=1\geq (1-|y|)^{\f12} $ for $|y|\leq 1-L^{-1},$ $\rho_k^{\f32}(y)=L^{\f32}(1-|y|)^{\f32}\geq L^{\f32}\nu^{\f12}(1-|y|)^{\f12}=|k|^{\f12}(1-|y|)^{\f12}\geq (1-|y|)^{\f12} $ for $1-L^{-1}\leq|y|\leq 1-\nu^{\f12},$ where $L=(|k|/\nu)^{\f13}\leq \nu^{-\f12}$ and $(\nu k^2)^{\f12}\geq \nu^{\f12}\geq 1-|y| $ for $1-\nu^{\f12}\leq|y|\leq 1$. Thus,
\begin{align}\label{omL2}
&\|(1-|y|)^{\f12}\omega\|_{L^{\infty}L^2}^2\leq(\nu k^2)^{\f12}\|\omega\|_{L^{\infty}L^2}^2+\|\rho_k^{\f32}\omega\|_{L^{\infty}L^2}^2.
\end{align}

Since $0\leq1-\rho_k^{\f32}\leq1$ for $ |y|\leq 1$ and $1-\rho_k^{\f32}=0 $ for $|y|\leq 1-L^{-1},$ we have
\beno
\|(1-\rho_k^{\f32})\om\|_{L^1}\leq \|\om\|_{L^1(1-L^{-1},1)}+\|\om\|_{L^1(-1,-1+L^{-1})}.
\eeno
Notice that
\begin{align*}
\|\rho_k^{-\f32}\|_{L^2(1-L^{-1},1-\nu^{\f12})}^2&=\int_{1-L^{-1}}^{1-\nu^{\f12}}L^{-3}(1-y)^{-3}dy\\
&=\f12L^{-3}(1-y)^{-2}\big|_{1-L^{-1}}^{1-\nu^{\f12}}\leq \f12L^{-3}\nu^{-1}=\f12|k|^{-1}.
\end{align*}
Thus, we have
\begin{align*}
\|\om\|_{L^1(1-L^{-1},1)}=&\|\om\|_{L^1(1-L^{-1},1-\nu^{\f12})}+\|\om\|_{L^1(1-\nu^{\f12},1)}\\ \leq& \|\rho_k^{\f32}\omega\|_{L^2}\|\rho_k^{-\f32}\|_{L^2(1-L^{-1},1-\nu^{\f12})}+ \nu^{\f14}\|\omega\|_{L^2}\\
 \leq& 2|k|^{-\f12}\|\rho_k^{\f32}\omega\|_{L^2}+ \nu^{\f14}\|\omega\|_{L^2}.
\end{align*}
Similarly, we have
\begin{align*}
\|\om\|_{L^1(-1,-1+L^{-1})} \leq2|k|^{-\f12}\|\rho_k^{\f32}\omega\|_{L^2}+ \nu^{\f14}\|\omega\|_{L^2}.
\end{align*}
This shows that
\begin{align*}
&\|(1-\rho_k^{\f32})\om\|_{L^1}\leq \|\om\|_{L^1(1-L^{-1},1)}+\|\om\|_{L^1(-1,-1+L^{-1})} \leq C\big(|k|^{-\f12}\|\rho_k^{\f32}\omega\|_{L^2}+ \nu^{\f14}\|\omega\|_{L^2}\big).
\end{align*}
By Lemma \ref{lem:phi-w}, we have
\begin{align*}
\|u\|_{L^\infty}\leq&\|(\partial_y,k)(\partial_y^2-k^2)^{-1}\rho_k^{\f32}\om\|_{L^\infty }+\|(\partial_y,k)(\partial_y^2-k^2)^{-1}(1-\rho_k^{\f32})\om\|_{L^\infty}\\ \leq&C\big(|k|^{-\f12}\|\rho_k^{\f32}\omega\|_{L^2}+\|(1-\rho_k^{\f32})\om\|_{L^1}\big)\\
 \leq& C\big(|k|^{-\f12}\|\rho_k^{\f32}\omega\|_{L^2}+ \nu^{\f14}\|\omega\|_{L^2}\big),
\end{align*}
which gives
\begin{align}\label{omu}
&|k|\|u\|_{L^\infty L^\infty}^2\leq C\|\rho_k^{\f32}\omega\|_{L^\infty L^2}^2+ C|k|\nu^{\f12}\|\omega\|_{L^\infty L^2}^2.
\end{align}

Then the desired result follows from \eqref{omL2a}, \eqref{omL2} and \eqref{omu}.\smallskip

For the case of $\nu k^2\geq 1,$  we have
$|k|\|u\|_{L^\infty L^\infty}^2\leq C\|\omega\|_{L^\infty L^2}^2$, and then the result follows from Proposition \ref{prop:SPT-big} and the facts that $(\nu k^2)^{\f12}\leq \nu k^2,\ \nu^{-1}=\nu^{-\f12}|k|{(\nu k^2)^{-\f12}}\leq\nu^{-\f12}|k|.$
\end{proof}

\section{Nonlinear stability}

In this section, we prove Theorem \ref{thm:stability}.
For the 2-D Navier-Stokes equation, the global existence of smooth solution is well-known for the data $u_0\in H^2(\Om)$.
Main interest of Theorem \ref{thm:stability} is the stability estimate: $\sum_{k\in\Z}E_k\le Cc\nu^\f12$. Let us recall that $E_0=\|\overline{w}\|_{L^{\infty}L^2}$ and for $k\neq 0$,
\begin{align*}
E_k=&\|(1-|y|)^{\f12}w_k\|_{L^{\infty}L^2}+|k|\|u_k\|_{L^2L^2}+|k|^\f12\|u_k\|_{L^{\infty}L^{\infty}}+(\nu k^2)^{\f14}\|w_k\|_{L^2L^2}.
\end{align*}

First of all,  we derive the evolution equations of $\overline{u}(t,y)$ and $w_k(t,y)={\frac{1}{2\pi}}\int_{\T}w(t,x,y)e^{-ikx}dx$.
We denote
\begin{align*}
f^1_k(t,y)=\sum\limits_{l\in\mathbb{Z}}u^1_{l}(t,y)w_{k-l}(t,y),\quad f^2_k(t,y)=\sum\limits_{l\in\mathbb{Z}}u^2_l(t,y)w_{k-l}(t,y).
\end{align*}
Thanks to $\text{div} u=0$, we have $\overline{u}^2(t,y)=0$. Due to $P_0(u^1\partial_x u^1)=0$, we find that
\begin{align*}
(\partial_t-\nu\partial_y^2)\overline{u}^1(t,y)&=-\sum\limits_{l\in\mathbb{Z}\setminus\{0\}}u^2_l(t,y)\pa_yu^1_{-l}(t,y)=
-\sum\limits_{l\in\mathbb{Z}\setminus\{0\}}u^2_l(t,y)w_{-l}(t,y)=-f^2_0(t,y).
\end{align*}
And $w_k(t,y)(k\neq 0)$ satisfies
\begin{align*}
(\partial_t-\nu(\partial_y^2-k^2)+iky)w_k(t,y)=-ikf^1_k(t,y)-\partial_yf^2_k(t,y).
\end{align*}

Next we estimate $E_0$. By integration by parts, we get
\begin{align*}
&\big\langle(\partial_t-\nu\partial_y^2)\overline{u}^1,-\pa_y^2\overline{u}^1\big\rangle=\f12\partial_t\|\pa_y\overline{u}^1(t)\|_{L^2}^2+\nu\|\pa_y^2\overline{u}^1(t)\|_{L^2}^2
=\big\langle \overline{f}^2,\pa_y^2\overline{u}^1\big\rangle,
\end{align*}
which gives
\begin{align*}
\partial_t\|\pa_y\overline{u}^1(t)\|_{L^2}^2+\nu\|\pa_y^2\overline{u}^1(t)\|_{L^2}^2\lesssim\nu^{-1}\|f^2_0(t,y)\|_{L^2}^2,
\end{align*}
from which and $\pa_y\overline{u}^1(t,y)=\overline{w}(t,y)$, we infer that
\begin{align}\label{eq:E0}
E_0^2=\|\overline{w}\|_{L^{\infty}L^2}^2\lesssim\nu^{-1}\|f^2_0\|_{L^2L^2}^2+\|\overline{w}_0\|_{L^2}^2.
\end{align}

Now we estimate $E_k$. It follows from Proposition \ref{prop:sapce-times}  that
\begin{align}\label{eq:Ek}
E_k^2\lesssim &\nu^{-\f12}|k|\|f^1_k\|_{L^2L^2}^2+\nu^{-1}\|f^2_k\|_{L^2L^2}^2+\|w_{0,k}\|_{L^2}^2+
|k|^{-2}\|\omega_{0,k}\|_{L^2}^2.
\end{align}
For $k\neq0$, we have
\begin{align*}
\Big\|\f{u^2_k(t,y)}{(1-|y|)^{\f12}}\Big\|_{L^2L^{\infty}}^2=&\Big\|\sup\limits_{y\in[-1,1]}\f{|u^2_k(t,y)|^2}{1-|y|}\Big\|_{{L^1}}\\
=&\Big\|\max\big\{\sup\limits_{y\in[0,1]}\f{|\int_{1}^y\partial_zu^2_k(t,z)dz|^2}{1-|y|},
\sup\limits_{y\in[-1,0]}\f{|\int_{-1}^y\partial_zu^2_k(t,z)dz|^2}{1-|y|}\big\}\Big\|_{{L^1}}\\
\leq& 4\|\partial_yu^2_k\|_{L^2L^2}^2=4|k|^2\|u^1_k\|_{L^2L^2}^2\leq 4E_k^2.
\end{align*}
from which, we infer that  for $k\in\mathbb{Z}$,
\begin{align}\label{eq:f2}
\|f^2_k\|_{L^2L^2}\leq\sum\limits_{l\in\mathbb{Z}}\Big\|\f{u^2_l(t,y)}{(1-|y|)^{\f12}}\Big\|_{L^2L^{\infty}}
\|(1-|y|)^{\f12}w_{k-l}\|_{L^{\infty}L^2}\leq 2\sum\limits_{l\in\mathbb{Z}}E_lE_{k-l},
\end{align}
and
\begin{align*}
\|f^1_k\|_{L^2L^2}\leq&\|\overline{u}^1\|_{L^{\infty}L^{\infty}}\|w_k\|_{L^2L^2}+\|u^1_k\|_{L^2L^{\infty}}\|\overline{w}\|_{L^{\infty}L^2}\\
&+\sum\limits_{l\in\mathbb{Z}\setminus\{0,k\}}\|u^1_l\|_{L^{\infty}L^{\infty}}\|w_{k-l}\|_{L^2L^2}.
\end{align*}
Thanks to $|l||k-l|\gtrsim |k| (l\neq0,k)$, we have
\begin{align*}
\sum\limits_{l\in\mathbb{Z}\setminus\{0,k\}}\|u^1_l\|_{L^{\infty}L^{\infty}}\|w_{k-l}\|_{L^2L^2}\lesssim&\sum\limits_{l\in\mathbb{Z}\setminus\{0,k\}}|l|^{-\f12}E_l \nu^{-\f14}|k-l|^{-\f12}E_{k-l}\\
\lesssim&|k|^{-\f12}\nu^{-\f14}\sum\limits_{l\in\mathbb{Z}\setminus\{0,k\}}E_lE_{k-l},
\end{align*}
and
\begin{align*}
\|\overline{u}^1\|_{L^{\infty}L^{\infty}}\|w_k\|_{L^2L^2}+\|u^1_k\|_{L^2L^{\infty}}\|\overline{w}\|_{L^{\infty}L^2}
\lesssim \|\overline{w}\|_{L^{\infty}L^2}\|w_k\|_{L^2L^2}\lesssim (\nu k^2)^{-\f14}E_kE_0.
\end{align*}
This shows that
\begin{align}\label{eq:f1}
\|f^1_k\|_{L^2L^2}\lesssim (\nu k^2)^{-\f14}\sum\limits_{l\in\mathbb{Z}}E_lE_{k-l}.
\end{align}

It follows from \eqref{eq:E0}, \eqref{eq:Ek}, \eqref{eq:f1} and \eqref{eq:f2} that
\begin{align*}
&E_k\lesssim\nu^{-\f12}\sum\limits_{l\in\mathbb{Z}}E_lE_{k-l}+\|w_{0,k}\|_{L^2}+|k|^{-1}\|\partial_yw_{0,k}\|_{L^2},\\
&E_0\lesssim\nu^{-\f12}\sum\limits_{l\in\mathbb{Z}\setminus\{0\}}E_lE_{-l}+\|\overline{w}_0\|_{L^2},
\end{align*}
which lead to
\begin{align}
&\sum\limits_{k\in\mathbb{Z}}E_k\lesssim\nu^{-\f12}\sum\limits_{k\in\mathbb{Z}}\sum\limits_{l\in\mathbb{Z}}E_lE_{k-l}+
\sum\limits_{k\in\mathbb{Z}}\|w_{0,k}\|_{L^2}+\sum\limits_{k\in\mathbb{Z}\setminus\{0\}}
|k|^{-1}\|\partial_yw_{0,k}\|_{L^2}.\label{eq:energy}
\end{align}
Due to $\|u_0\|_{H^2}\le c\nu^\f12$, it is easy to verify that
\beno
\sum\limits_{k\in\mathbb{Z}}\|w_{0,k}\|_{L^2}+\sum\limits_{k\in\mathbb{Z}\setminus\{0\}}
|k|^{-1}\|\partial_yw_{0,k}\|_{L^2}\le Cc\nu^\f12.
\eeno
If $c$ is suitably small, then we can deduce from \eqref{eq:energy}  and a continuous argument that 
\begin{align*}
\sum\limits_{k\in\mathbb{Z}}E_k\le Cc\nu^\f12.
\end{align*}

This completes the proof of Theorem \ref{thm:stability}.

\section{Some key estimates related to the Airy function}

\subsection{Basic properties of the Airy function}
Let $Ai(z)$ be the classical Airy function, which satisfies
\beno
\pa_z^2Ai(z)-zAi(z)=0.
\eeno
We have the following asymptotic formula for $|\text{arg} z|\le \pi-\varepsilon, \varepsilon>0$(see \cite{Was, Rom}):
\beno
Ai(z)=\f 1 {2\sqrt{\pi}}z^{-\f14}e^{-\f23 z^{\f 32}}\big(1+R(z)\big),\quad R(z)=O(z^{-\f32}).
\eeno
Thus, we may define
\begin{align*}
&A_0(z)=\int_{e^{i\pi/6}z}^{+\infty}Ai(t)dt=e^{i\pi/6}\int_z^{+\infty}Ai(e^{i\pi/6} t)dt.
\end{align*}

For $A_0(z)$, we have the following important properties from \cite{Rom}.
\begin{lemma}\label{lem:Ao}
It holds that
\begin{itemize}
\item[1.] There exists $\delta_0>0$ so that $A_0(z)$ has no zeros in the half plane ${\rm Im }z\le \delta_0$.

\item[2.] Let $a(\delta)=\sup\Big\{{\rm Re}\Big(\f{A'_0(z)}{A_0(z)}\Big): {\rm Im}z\leq \delta\Big\}$. There exists $\delta_0>0$ so that $a(\delta)\in C([0,\delta_0])$ and
\begin{align*}
a(0)=-0.4843...<-1/3.
\end{align*}

\item[3.] For $|\arg (ze^{\f{i\pi}{6}})|\leq\pi-\e,\ \e>0$, we have the asymptotic formula
\begin{align*}
\f{A_0'(z)}{A_0(z)}=-e^{i\pi/6}\big(ze^{i\pi/6}\big)^{\f12}+O(z^{-1}).
\end{align*}

\end{itemize}
\end{lemma}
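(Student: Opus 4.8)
The plan is to prove the three assertions in turn, treating item~3 as a direct consequence of the asymptotic expansion of $Ai$ recalled above, and items~1 and~2 as a combination of that asymptotic analysis with a compactness argument and Romanov's quantitative study of $A_0$ in \cite{Rom}. For item~3: from $A_0(z)=\int_{e^{i\pi/6}z}^{+\infty}Ai(t)\,dt$ one gets $A_0'(z)=-e^{i\pi/6}Ai(e^{i\pi/6}z)$, so with $\zeta=e^{i\pi/6}z$ one has $\f{A_0'(z)}{A_0(z)}=-e^{i\pi/6}Ai(\zeta)\big/\int_\zeta^{+\infty}Ai(t)\,dt$ and it suffices to control the last ratio for large $|\zeta|$ with $|\arg\zeta|\le\pi-\e$. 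Inserting $Ai(t)=\f{1}{2\sqrt\pi}t^{-1/4}e^{-\f23 t^{3/2}}(1+R(t))$, $R(t)=O(t^{-3/2})$, into an integration by parts based on $-\f{d}{dt}\big(t^{-3/4}e^{-\f23 t^{3/2}}\big)=\big(t^{-1/4}+\f34 t^{-7/4}\big)e^{-\f23 t^{3/2}}$ yields $\int_\zeta^{+\infty}Ai(t)\,dt=\f{1}{2\sqrt\pi}\zeta^{-3/4}e^{-\f23\zeta^{3/2}}\big(1+O(\zeta^{-3/2})\big)$; dividing gives $\f{A_0'(z)}{A_0(z)}=-e^{i\pi/6}\zeta^{1/2}\big(1+O(\zeta^{-3/2})\big)=-e^{i\pi/6}(ze^{i\pi/6})^{1/2}+O(z^{-1})$, which is item~3.

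For item~1, the first step is to use item~3 (equivalently, the analogous asymptotic $A_0(z)=\f{1}{2\sqrt\pi}(e^{i\pi/6}z)^{-3/4}e^{-\f23(e^{i\pi/6}z)^{3/2}}(1+o(1))$) to dispose of large $|z|$: if $\mathrm{Im}\,z\le 1$ and $|z|$ is large then $\arg(e^{i\pi/6}z)$ stays in a closed sub-sector of $(-\pi,\pi)$ away from $\pm\pi$ — the only obstruction $\arg z=5\pi/6$ forces $\mathrm{Im}\,z=|z|/2$ — so the leading factor is nonzero and there is $R$ with $A_0$ zero-free on $\{|z|\ge R,\ \mathrm{Im}\,z\le 1\}$. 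It then remains to know that $A_0$ has no zeros on the closed lower half-plane $\{\mathrm{Im}\,z\le 0\}$; this is the content of Romanov's analysis, where $A_0(z_0)=0$ is recognized as an eigenvalue condition for a half-line Airy/Orr--Sommerfeld problem (a recessive-at-infinity solution of $v''=izv$ subject to one extra Dirichlet-type constraint) whose spectrum is ruled out of $\{\mathrm{Im}\,z\le 0\}$ by an energy identity. Granting this, the zeros of $A_0$ in $\{\mathrm{Im}\,z\le 1\}$ form a finite set, all with strictly positive imaginary part, so $\delta_0$ can be taken to be half the minimum of those imaginary parts.

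For item~2, on $\{\mathrm{Im}\,z\le 0\}$ item~3 gives $\mathrm{Re}\big(A_0'(z)/A_0(z)\big)=-|z|^{1/2}\cos\!\big(\tfrac\pi4+\tfrac12\arg z\big)+O(|z|^{-1})$, and since $\cos(\tfrac\pi4+\tfrac\theta2)\ge\tfrac{1}{\sqrt2}$ for $\theta\in[-\pi,0]$ this tends to $-\infty$ uniformly as $|z|\to\infty$ there (the case of small positive $\mathrm{Im}\,z$ is analogous, using that $\arg(e^{i\pi/6}z)$ then stays below $2\pi/3$). Combined with item~1 — which makes $A_0'/A_0$ holomorphic, hence continuous, on a neighbourhood of $\{\mathrm{Im}\,z\le\delta_0\}$ — the supremum defining $a(\delta)$ for every $\delta\in[0,\delta_0]$ is attained inside one fixed compact set $K$; in particular $a(0)$ is finite and equals the numerical value $-0.4843\ldots<-\f13$ obtained by evaluating $\mathrm{Re}(A_0'/A_0)$ on $K$. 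Continuity of $\delta\mapsto a(\delta)$ on $[0,\delta_0]$ then follows from monotonicity (the sets $\{\mathrm{Im}\,z\le\delta\}$ increase with $\delta$, so $a$ is nondecreasing) together with uniform continuity of $\mathrm{Re}(A_0'/A_0)$ on a slightly enlarged compact neighbourhood of $K$.

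The main obstacle is the non-vanishing of $A_0$ on the closed lower half-plane in item~1, and the extraction of the explicit constant $-0.4843\ldots$ in item~2; both ultimately rest on Romanov's quantitative (and, for the numerical value, computer-assisted) analysis rather than on a short self-contained argument, whereas the large-$|z|$ reductions and the compactness steps are routine.
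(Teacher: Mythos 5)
The paper offers no proof of this lemma: it is stated verbatim as a collection of facts imported from Romanov \cite{Rom}, immediately after recalling the $Ai$ asymptotics. Your proposal is consistent with that — you likewise defer the two genuinely hard inputs (non-vanishing of $A_0$ in the closed lower half-plane and the numerical value $a(0)=-0.4843\ldots$) to Romanov's analysis, while correctly supplying the surrounding material the paper leaves implicit: a self-contained derivation of the asymptotic in item~3 via integration by parts, the large-$|z|$ reduction that confines possible zeros to a compact set in item~1, and the compactness/uniform-continuity argument for continuity of $a(\delta)$ in item~2. One small caution: your formula $\mathrm{Re}(A_0'/A_0)=-|z|^{1/2}\cos(\tfrac{\pi}{4}+\tfrac12\arg z)+O(|z|^{-1})$ uses the principal branch and is only valid for $\arg z\in(-\pi,5\pi/6)$; when $\arg z$ is near $\pi$ (large negative real part, small positive imaginary part) the principal argument of $e^{i\pi/6}z$ jumps to near $-5\pi/6$ and the sign of the cosine factor changes accordingly — the conclusion $\mathrm{Re}(A_0'/A_0)\to-\infty$ still holds, but the stated formula does not literally cover that sector, so the parenthetical ``analogous'' case deserves the explicit branch bookkeeping.
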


Using Lemma \ref{lem:Ao}, we can deduce the following important estimates on $A_0(z)$.

\begin{lemma}\label{lem:Ao-more}

Let $\delta_0$ be as in Lemma \ref{lem:Ao}. There exists $c>0$ so that for ${{\rm Im }z}\le \delta_0$,
\begin{align}
&\Big|\f{A_0'(z)}{A_0(z)}\Big|\lesssim1+|z|^{\f12},\quad {\rm Re}\f{A_0'(z)}{A_0(z)}\leq-c(1+|z|^{\f12}).\nonumber
\end{align}
\end{lemma}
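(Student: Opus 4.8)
The plan is to derive both estimates in Lemma~\ref{lem:Ao-more} directly from the three properties of $A_0(z)$ collected in Lemma~\ref{lem:Ao}, splitting the half-plane ${\rm Im}\,z\le \delta_0$ into a bounded part and an unbounded part. First I would fix a large radius $R>0$ and treat the compact region $\{{\rm Im}\,z\le\delta_0,\ |z|\le R\}$ separately from the exterior region $\{{\rm Im}\,z\le\delta_0,\ |z|\ge R\}$.

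For the exterior region, the main tool is item~3 of Lemma~\ref{lem:Ao}: the asymptotic formula
\beno
\f{A_0'(z)}{A_0(z)}=-e^{i\pi/6}\big(ze^{i\pi/6}\big)^{\f12}+O(z^{-1}),
\eeno
valid for $|\arg(ze^{i\pi/6})|\le \pi-\e$. I would first check that, for ${\rm Im}\,z\le \delta_0$ and $|z|$ large, the point $ze^{i\pi/6}$ indeed stays in a sector of the form $|\arg(ze^{i\pi/6})|\le \pi-\e$ for some fixed $\e>0$ (this is where $\delta_0$ being a fixed finite number matters; the rotation by $\pi/6$ pushes the half-plane ${\rm Im}\,z\le\delta_0$ into such a sector away from the negative real axis). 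The leading term $-e^{i\pi/6}(ze^{i\pi/6})^{\f12}$ has modulus exactly $|z|^{1/2}$, which gives the upper bound $|A_0'/A_0|\lesssim 1+|z|^{1/2}$ once the $O(z^{-1})$ remainder is absorbed. For the real-part bound I would compute ${\rm Re}\big(-e^{i\pi/6}(ze^{i\pi/6})^{1/2}\big)$: writing $ze^{i\pi/6}=re^{i\theta}$ with $|\theta|\le \pi-\e$, the quantity is $-r^{1/2}\cos(\pi/6+\theta/2)$, and one checks $\cos(\pi/6+\theta/2)$ is bounded below by a positive constant on the relevant $\theta$-range (again using that ${\rm Im}\,z\le\delta_0$ forces $\theta$ away from $\pm\pi$, hence $\pi/6+\theta/2$ away from $\pm\pi/2$). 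So the real part is $\le -c\,|z|^{1/2}$ for $|z|\ge R$, and the remainder $O(z^{-1})$ is lower order.

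For the compact region, both estimates are immediate once we know $A_0$ has no zeros there: item~1 of Lemma~\ref{lem:Ao} says $A_0(z)\ne 0$ on ${\rm Im}\,z\le\delta_0$, so $A_0'/A_0$ is continuous (indeed analytic) on the compact set $\{{\rm Im}\,z\le\delta_0,\ |z|\le R\}$ and therefore bounded there; this handles $|A_0'/A_0|\lesssim 1+|z|^{1/2}$ on that piece trivially. For the real-part bound on the compact piece I would invoke item~2: the function $a(\delta)$ is continuous on $[0,\delta_0]$ with $a(0)<-1/3<0$, so after shrinking $\delta_0$ if necessary we have $a(\delta)\le -c_0<0$ uniformly for $\delta\in[0,\delta_0]$, which means ${\rm Re}(A_0'(z)/A_0(z))\le -c_0$ for all $z$ with ${\rm Im}\,z\le\delta_0$; on the compact piece $1+|z|^{1/2}\le 1+R^{1/2}$, so ${\rm Re}(A_0'/A_0)\le -c_0\le -c(1+|z|^{1/2})$ with $c=c_0/(1+R^{1/2})$. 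Combining the two regions and taking the smaller of the constants gives the claim.

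I do not expect a serious obstacle here; the only point requiring a little care is the \emph{uniformity} of the sector condition in item~3 over the whole half-plane ${\rm Im}\,z\le\delta_0$ — i.e.\ verifying that a single $\e>0$ works for all such $z$ with $|z|$ large — and the matching bookkeeping of the constant $c$ when patching the bounded and unbounded regimes. Both are elementary trigonometric estimates on the argument of $ze^{i\pi/6}$, so the proof is short.
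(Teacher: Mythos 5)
Your proposal is correct and follows essentially the same route as the paper's proof: split the half-plane ${\rm Im}\,z\le\delta_0$ into a bounded disk and its complement, use item~3 (the asymptotic formula) on the exterior region, and use items~1 and~2 of Lemma~\ref{lem:Ao} (no zeros, hence boundedness, plus the uniform negativity coming from $a(\delta)$) on the compact region. The paper states the proof more tersely, but the underlying decomposition, the verification that $ze^{i\pi/6}$ stays in an admissible sector, and the use of $a(\delta_0)<0$ to get the negative real-part constant on the bounded piece are exactly the steps you spell out.
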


\begin{proof}
For $|z|\ge R_0\gg1$ and ${{\rm Im } z}\le \delta_0$, we use the asymptotic formula in Lemma \ref{lem:Ao}.
For $|z|\le R_0$ and ${{\rm Im } z}\le \delta_0$, we use the facts that
\beno
|A_0(z)|\sim1,\quad \Big|\f{A_0'(z)}{A_0(z)}\Big|\le C,\quad {\rm Re}\f{A_0'(z)}{A_0(z)}\leq-c.
\eeno
\end{proof}

We introduce
\begin{align*}
\omega(z,x)=\frac{A_0(z+x)}{A_{0}(z)}=\exp\Big(\int_{0}^{x}\frac{A'_0(z+t)}{A_0(z+t)}dt\Big).
\end{align*}

\begin{lemma}\label{lem:A0-w}
There exists $\delta_1>0$ so that for ${\rm Im}z\le \delta_1$ and $x\ge 0$,
\begin{align*}
  |\omega(z,x)|\leq e^{-\f{x}{3}}.
\end{align*}
\end{lemma}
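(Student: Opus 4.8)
The plan is to reduce the bound on $|\omega(z,x)|$ to an integral estimate of $\operatorname{Re}\frac{A_0'}{A_0}$ along the horizontal segment from $z$ to $z+x$, and then to exploit the sharp bound $a(0) = -0.4843\ldots < -1/3$ from Lemma~\ref{lem:Ao} together with the quantitative lower bound $\operatorname{Re}\frac{A_0'(w)}{A_0(w)} \le -c(1+|w|^{1/2})$ from Lemma~\ref{lem:Ao-more}. Since
\beno
|\omega(z,x)| = \exp\Big(\operatorname{Re}\int_0^x \frac{A_0'(z+t)}{A_0(z+t)}\,dt\Big),
\eeno
it suffices to show that for ${\rm Im}\,z \le \delta_1$ (with $\delta_1$ to be chosen small, $\delta_1 \le \delta_0$) and all $t \ge 0$ one has $\operatorname{Re}\frac{A_0'(z+t)}{A_0(z+t)} \le -\frac13$, because integrating this over $t\in[0,x]$ gives exactly $|\omega(z,x)| \le e^{-x/3}$. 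Note that if ${\rm Im}\,z \le \delta_1$ then ${\rm Im}(z+t) = {\rm Im}\,z \le \delta_1 \le \delta_0$, so the hypotheses of Lemmas~\ref{lem:Ao} and~\ref{lem:Ao-more} apply at every point $z+t$ on the segment.

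First I would handle the large-argument regime $|z+t| \ge R_0$ for a suitable absolute constant $R_0$: here Lemma~\ref{lem:Ao-more} gives $\operatorname{Re}\frac{A_0'(z+t)}{A_0(z+t)} \le -c(1+|z+t|^{1/2}) \le -c \le -\frac13$ once $R_0$ is large enough that the constant $c$ from that lemma satisfies $c(1+R_0^{1/2})$ dominates — actually more simply $-c(1+|z+t|^{1/2}) \le -c \le -1/3$ provided $c \ge 1/3$; if $c < 1/3$ we instead use that $c(1+|z+t|^{1/2}) \ge 1/3$ as soon as $|z+t| \ge ((1-3c)/(3c))^2 =: R_0$, which is harmless. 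So on $\{|z+t|\ge R_0\}$ the desired pointwise inequality holds unconditionally (for any ${\rm Im} \le \delta_0$). The remaining regime is the bounded one $|z+t| \le R_0$, and this is where the sharp constant $a(0) < -1/3$ enters.

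For $|z+t| \le R_0$: by definition $a(\delta) = \sup\{\operatorname{Re}\frac{A_0'(w)}{A_0(w)} : {\rm Im}\,w \le \delta\}$, and Lemma~\ref{lem:Ao} states $a \in C([0,\delta_0])$ with $a(0) = -0.4843\ldots < -\frac13$. By continuity there is $\delta_1 \in (0,\delta_0]$ with $a(\delta_1) < -\frac13$; then for every $w$ with ${\rm Im}\,w \le \delta_1$ we have $\operatorname{Re}\frac{A_0'(w)}{A_0(w)} \le a(\delta_1) < -\frac13$. Applying this to $w = z+t$ (valid since ${\rm Im}(z+t) \le \delta_1$) finishes the bounded regime. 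Combining the two regimes: for ${\rm Im}\,z \le \delta_1$ and all $t \ge 0$, $\operatorname{Re}\frac{A_0'(z+t)}{A_0(z+t)} \le -\frac13$, hence $\operatorname{Re}\int_0^x \frac{A_0'(z+t)}{A_0(z+t)}\,dt \le -\frac{x}{3}$, and therefore $|\omega(z,x)| \le e^{-x/3}$ for all $x \ge 0$.

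The only genuine subtlety — hardly an obstacle — is verifying that the horizontal translates $z+t$ stay inside the half-plane where Lemmas~\ref{lem:Ao} and~\ref{lem:Ao-more} apply; this is immediate since horizontal translation does not change the imaginary part. The crux is really the numerical input $a(0) = -0.4843\ldots < -1/3$ supplied by Lemma~\ref{lem:Ao}, which gives room, after shrinking $\delta_0$ to some $\delta_1$, to keep $\operatorname{Re}\frac{A_0'}{A_0}$ below $-1/3$ uniformly on the bounded part of the strip; the large-$|z|$ part costs nothing because of the $|z|^{1/2}$ growth in Lemma~\ref{lem:Ao-more}. No estimate on $\omega$ for ${\rm Im}\,z$ between $\delta_1$ and $\delta_0$ is needed, so the statement as phrased (with its own constant $\delta_1$) is exactly what the argument produces.
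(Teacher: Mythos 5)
Your proof is correct and follows essentially the same approach as the paper: invoke the continuity of $a$ on $[0,\delta_0]$ together with the numerical fact $a(0)<-1/3$ to choose $\delta_1>0$ with $a(\delta_1)<-1/3$, then integrate the resulting pointwise bound $\operatorname{Re}\frac{A_0'(z+t)}{A_0(z+t)}\le -1/3$ over $t\in[0,x]$. The only remark is that your separate treatment of the large-argument regime $|z+t|\ge R_0$ via Lemma~\ref{lem:Ao-more} is redundant: since $a(\delta)$ is by definition a supremum over the \emph{entire} half-plane $\{\operatorname{Im} w\le\delta\}$, the inequality $a(\delta_1)<-1/3$ already controls $\operatorname{Re}\frac{A_0'(w)}{A_0(w)}$ uniformly for all such $w$, bounded or not, so no case split is needed.
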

\begin{proof}
Thanks to Lemma \ref{lem:Ao}, there $\delta_1>0$ so  that $a(\epsilon)\geq\f{1}{3}$ for $\epsilon\in [0,\delta_1]$. Thus, for ${\rm Im}z\le \delta_1$ and $x\ge 0$, we have
\begin{align*}
 |\omega(z,x)|&=\Big|\exp\Big(\int_{0}^{x}\frac{A'_0(z+t)}{A_0(z+t)}dt\Big)\Big|=\Big|\exp\Big({\rm Re}\int_{0}^{x}\frac{A'_0(z+t)}{A_0(z+t)}dt\Big)\Big|
 \le e^{-\f x 3}.
\end{align*}
\end{proof}

\subsection{Estimates of $W_1, W_2$}
In this subsection, we prove Lemma \ref{lem:W12}.
Let us recall that
\beno
W_1(y)=Ai\big(e^{i\f{\pi}{6}}(L(y-\lambda-i k\nu)+i\epsilon)\big),\quad W_2(y)=Ai\big(e^{i\f{5\pi}{6}}(L(y-\lambda-i k\nu)+i\epsilon)\big),
\eeno
where $L=(\f{k}{\nu})^{\f13}$ and $\epsilon>0$. We need the following technical lemma.

\begin{lemma}\label{lem:Ld}
Let $0<\epsilon\le 1$. It holds that for any $x\ge 0$,
\begin{align}
\int_0^{Lx}\big(1+|t+Ld+i\epsilon|^{\f12}\big)dt\gtrsim Lx|Ld|^{\f12}+|Lx|^{\f32},
\end{align}
where  $d=-1-\lambda-ik\nu$.
\end{lemma}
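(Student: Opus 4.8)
The plan is to split the claimed inequality into two independent lower bounds — one yielding the $|Lx|^{\f32}$ term and one yielding the $Lx|Ld|^{\f12}$ term — and then add them, which is legitimate since $Lx|Ld|^{\f12}+|Lx|^{\f32}\le 2\max\{Lx|Ld|^{\f12},|Lx|^{\f32}\}$. Write $X=Lx$ (we may assume $X>0$), and decompose $Ld=\mu+i\eta$ with $\mu={\rm Re}(Ld)=-L(1+\lambda)\in\R$ and $\eta={\rm Im}(Ld)=-Lk\nu$. The elementary but decisive observation is that for real $t$ one has $|t+Ld+i\epsilon|\ge|{\rm Re}(t+Ld+i\epsilon)|=|t+\mu|$, so the integrand is bounded below by $1+|t+\mu|^{\f12}$.

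The engine of the argument is the one–variable scaling inequality
\[
\int_0^X|t+\mu|^{\f12}\,dt=X^{\f32}\!\int_0^1|u+\mu/X|^{\f12}\,du\ \ge\ c_0\big(X^{\f32}+X|\mu|^{\f12}\big)
\]
for a universal $c_0>0$ and every $\mu\in\R$, which in turn reduces to $h(s):=\int_0^1|u+s|^{\f12}\,du\ge c_0(1+|s|^{\f12})$. I would prove the latter by a short case analysis: $h$ is continuous and positive, $h(s)\ge s^{\f12}$ for $s\ge1$, $h(s)\ge(|s|-1)^{\f12}\ge\frac1{\sqrt2}|s|^{\f12}$ for $s\le-2$, while on $|s|\le2$ one has $|s|^{\f12}\le\sqrt2$ and $h(s)\ge\min_{s}h=h(-\f12)=\frac{\sqrt2}{3}$. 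Granting this, since $1+|t+Ld+i\epsilon|^{\f12}\ge|t+\mu|^{\f12}$ we get at once $\int_0^X(1+|t+Ld+i\epsilon|^{\f12})\,dt\gtrsim X^{\f32}$, the first of the two bounds.

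For the second bound I would invoke the standing hypothesis ($L\ge6k$ or $L\ge k\ge k_0$), which in either case gives $L\ge k$, hence $\nu k^2\le1$, hence $|\eta|=Lk\nu=k^{\f43}\nu^{\f23}\le1$. Therefore $|Ld|^{\f12}=|\mu+i\eta|^{\f12}\le|\mu|^{\f12}+|\eta|^{\f12}\le|\mu|^{\f12}+1$, so $X|Ld|^{\f12}\le X|\mu|^{\f12}+X$. Combining the trivial bound $\int_0^X(1+|t+Ld+i\epsilon|^{\f12})\,dt\ge X$ with $\int_0^X(1+|t+Ld+i\epsilon|^{\f12})\,dt\ge\int_0^X|t+\mu|^{\f12}\,dt\ge c_0X|\mu|^{\f12}$ from the scaling inequality yields $\int_0^X(1+|t+Ld+i\epsilon|^{\f12})\,dt\gtrsim X+X|\mu|^{\f12}\gtrsim X|Ld|^{\f12}$. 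Adding the two lower bounds proves the lemma.

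The main — indeed essentially the only — obstacle is purely elementary: pinning down the scaling inequality $h(s)\ge c_0(1+|s|^{\f12})$ with its small case analysis, together with the remark that the standing assumptions force $|{\rm Im}(Ld)|=Lk\nu\le1$, which is exactly what makes the shift by $i\epsilon$ harmless and lets $|Ld|^{\f12}$ be absorbed into $|{\rm Re}(Ld)|^{\f12}+1$. No Airy–function input is required at this step.
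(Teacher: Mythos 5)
Your proof is correct. It follows essentially the same approach as the paper: both arguments discard the bounded imaginary part of $Ld+i\epsilon$ to reduce the integrand to $1+|t-L(1+\lambda)|^{\f12}$ and then run a case analysis depending on where $L(1+\lambda)$ sits relative to the interval $[0,Lx]$. The packaging differs a little. You factor out the one-variable scaling inequality $\int_0^1|u+s|^{\f12}\,du\ge c_0(1+|s|^{\f12})$ as the engine, and handle the imaginary part $|{\rm Im}(Ld)|=Lk\nu=(\nu k^2)^{\f23}\le 1$ (valid since the standing hypothesis gives $L\ge k$, i.e.\ $\nu k^2\le 1$) by separately invoking the trivial bound $\int_0^X 1\,dt=X$ and then writing $|Ld|^{\f12}\le|\mu|^{\f12}+1$. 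The paper instead keeps a term $(\nu k^2)^{\f13}$ inside the integrand (absorbed from the constant $1$) together with the equivalence $|Ld|\sim|L(1+\lambda)|+(\nu k^2)^{\f23}$, and evaluates the resulting integral directly in three cases ($Lx\le L(\lambda+1)$, $Lx\ge L(\lambda+1)\ge0$, and $L(\lambda+1)\le0$). Both routes are elementary; yours makes the scaling structure a bit more explicit, while the paper's is a direct computation. The one small point to tighten in your write-up of $h(s)\ge c_0(1+|s|^{\f12})$ is that for $s\ge1$ the bound $h(s)\ge s^{\f12}$ needs the additional remark $s^{\f12}\ge1$ to yield $h(s)\ge\tfrac12(1+s^{\f12})$, and similarly for $s\le-2$; this is trivial and does not affect correctness.
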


\begin{proof}
Notice that
\beno
1+|t+Ld+i\epsilon|^{\f12}=1+|t-L(\lambda+1)-i(\nu k^2)^{\f23}+i\epsilon|^{\f12}\gtrsim1+|t-L(\lambda+1)|^{\f12}+(\nu k^2)^{\f23}
\eeno
and $|Ld|\sim|L(\lambda+1)|+(\nu k^2)^{\f23}$. Thus, we have
\begin{align*}
\int_0^{Lx}\big(1+|t+Ld+i\epsilon|^{\f12}\big)dt\sim \int_0^{Lx}\big(1+(\nu k^2)^{\f23}+|t-L(\lambda+1)|^{\f12}\big)dt.
\end{align*}

For the case of $Lx\leq L(\lambda+1)$, we have
\begin{align*}
&\int_0^{Lx}(1+(\nu k^2)^{\f13}+|t-L(\lambda+1)|^{\f12})dt= \int_0^{Lx}(1+(\nu k^2)^{\f13}+(L(\lambda+1)-t)^{\f12})dt\\
&=Lx(1+(\nu k^2)^{\f13})+\f{2}{3}\big[(L(\lambda+1))^{\f32}-(L(\lambda+1)-Lx)^{\f32}\big]\\
&\gtrsim Lx((L(\lambda+1))^{\f12}+(\nu k^2)^{\f13})\sim Lx|Ld|^{\f12}+|Lx|^{\f32}.
\end{align*}
For the case of $Lx\geq L(\lambda+1)\geq0$, we have
\begin{align*}
& \int_0^{Lx}(1+(\nu k^2)^{\f13}+|t-L(\lambda+1)|^{\f12})dt\\
&= Lx(1+(\nu k^2)^{\f13})+\int_0^{L(\lambda+1)}(L(\lambda+1)-t)^{\f12}dt+\int_{L(\lambda+1)}^{Lx}(t-L(\lambda+1))^{\f12}dt\\
 &=Lx(1+(\nu k^2)^{\f13})+\f{2}{3}\big[(L(\lambda+1))^{\f32}+(Lx-L(\lambda+1))^{\f32}\big]\\
 &\gtrsim |Lx|^{\f32}+Lx(\nu k^2)^{\f13}\sim Lx|Ld|^{\f12}+|Lx|^{\f32}.
\end{align*}
For the case of $L(\lambda+1)\leq0$, we have
\begin{align*}
& \int_0^{Lx}(1+(\nu k^2)^{\f13}+|t-L(\lambda+1)|^{\f12})dt\sim(Lx(1+(\nu k^2)^{\f13}+|L(\lambda+1)|^{\f12})+\int_0^{Lx}t^{\f12}dt\\
&\geq Lx((L(\lambda+1))^{\f12}+(\nu k^2)^{\f13})+(2/3)|Lx|^{\f32}\sim Lx|Ld|^{\f12}+|Lx|^{\f32}.
\end{align*}

Summing up, we conclude the lemma.
\end{proof}

Now we are in a position to prove Lemma \ref{lem:W12}.

\begin{proof}
{\bf Step 1.} $L^\infty$ estimate

Thanks to the definition of $W_1$, we find that
\begin{align*}
&\f{L|W_1(y)|}{|A_0(Ld+i\epsilon)|}=\f{L|Ai(e^{i\f{\pi}{6}}(L(y+1+d)+i\epsilon))|}{|A_0(Ld+i\epsilon)|}.
\end{align*}
Thanks to  $|A'_0(z)|=|Ai(e^{i\f{\pi}{6}}z)|$, we get
\begin{align*}
&\f{L|W_1(x-1)|}{|A_0(Ld+i\epsilon)|}=
L\f{|A'_0(Lx+Ld+i\epsilon)|}{|A_0(Lx+Ld+i\epsilon)|}\f{|A_0(Lx+Ld+i\epsilon)|}{|A_0(Ld+i\epsilon)|}.
\end{align*}
By Lemma \ref{lem:Ao-more}, we have
\beno
\Big|\f{A'_0(Lx+Ld+i\epsilon)}{A_0(Lx+Ld+i\epsilon)}\Big|\lesssim 1+|Lx+Ld+i\epsilon|^{\f12}\lesssim1+|Lx+Ld|^{\f12}.
\eeno
from which and Lemma \ref{lem:A0-w}, we infer that for any $x\in[0,2]$,
\begin{align*}
\f{L|W_1(x-1)|}{|A_0(Ld+i\epsilon)|}\lesssim& L (1+|Lx+Ld|^{\f12})e^{-Lx/3}\\
\lesssim&L(1+|Ld|^{\f12})\lesssim L(1+|L(1+\lambda)|^{\f12})=L+|k(1+\lambda)/\nu|^{\f12}.
\end{align*}As $ L=(|k|/\nu)^{\f13}=\nu^{-\f12}(\nu k^2)^{\f16}\leq\nu^{-\f12},$ we have\begin{align*}
&\f{L}{|A_0(Ld+i\epsilon)|}\|W_1\|_{L^{\infty}}\leq C(L+|k(1+\lambda)/\nu|^{\f12})\leq C\nu^{-\f12}(1+ |k(\lambda+1)|)^{\f12}.
\end{align*}
The proof of $\f{L}{|A_0(L\widetilde{d}+i\epsilon)|}\|W_2\|_{L^{\infty}}$ is similar. \smallskip

{\bf Step 2.} $L^1$ estimate

Thanks to the definition of $W_1$, we have
\begin{align*}
\f{L}{|A_0(Ld+i\epsilon)|}\|W_1\|_{L^1}=&\f{L}{|A_0(Ld+i\epsilon)|}\int_{-1}^1|Ai(e^{i\f{\pi}{6}}(L(y-\lambda-i k\nu)+i\epsilon))|dy\\
=&\f{L}{|A_0(Ld+i\epsilon)|}\int_{0}^2|Ai(e^{i\f{\pi}{6}}(Lx+Ld+i\epsilon))|dx\\
=&L\int_{0}^{2}\f{|A'_0(Lx+Ld+i\epsilon)|}{|A_0(Lx+Ld+i\epsilon)|}\f{|A_0(Lx+Ld+i\epsilon)|}{|A_0(Ld+i\epsilon)|}dx.
\end{align*}
For the case of $|Ld|\leq1$, we get by Lemma \ref{lem:Ao-more} and Lemma \ref{lem:A0-w} that
\begin{align*}
\f{L}{|A_0(Ld+i\epsilon)|}\|W_1\|_{L^1}\lesssim& L \int_{0}^{2}(1+|Lx+Ld|^{\f12})e^{-|Lx|/3}dx\\
\lesssim&L\int_{0}^{2}(1+|Lx|^{\f12})e^{-|Lx|/3}dx\lesssim 1.
\end{align*}
For the case of $|Ld|\geq1$, by Lemma \ref{lem:Ao} and the proof of Lemma \ref{lem:A0-w}, and Lemma \ref{lem:Ld}, we infer that
\begin{align*}
\f{L}{|A_0(Ld+i\epsilon)|}\|W_1\|_{L^1}\lesssim& L\int_{0}^{2}(1+|Lx+Ld|^{\f12})e^{-c\int_0^{Lx}(1+|t+Ld|^{\f12})dt}dx\\
\lesssim& L\int_{0}^{2}(1+|Lx+Ld|^{\f12})e^{-cLx|Ld|^{\f12}}dx\lesssim 1,
\end{align*}
here we used
\begin{align*}
&L\int_{0}^{2}e^{-cLx|Ld|^{\f12}}dx=\int_{0}^{2}e^{-cLx|Ld|^{\f12}}d(Lx)\lesssim1,\\
&L\int_{0}^{2}|Lx|^{\f12}e^{-cLx|Ld|^{\f12}}dx=\int_{0}^{2}|Lx|^{\f12}e^{-cLx|Ld|^{\f12}}d(Lx)\lesssim 1,\\
& L\int_{0}^{2}|Ld|^{\f12}e^{-cLx|Ld|^{\f12}}dx=\int_{0}^{2}e^{-cLx|Ld|^{\f12}}d(Lx|Ld|^{\f12})\lesssim 1.
\end{align*}
The proof of $\f{L}{|A_0(L\widetilde{d}+i\epsilon)|}\|W_2\|_{L^1}\lesssim 1$ is similar.\smallskip

{\bf Step 3.} Weighted $L^2$ estimate

We have
\begin{align*}
\f{L}{|A_0(Ld+i\epsilon)|}\|\rho_kW_1\|_{L^2}=&\f{L}{|A_0(Ld+i\epsilon)|}\Big(\int_{-1}^1\rho_k(y)|Ai(e^{i\f{\pi}{6}}(L(y-\lambda-i k\nu)+i\epsilon))|^2dy\Big)^{\f12}\\
=&\f{L}{|A_0(Ld+i\epsilon)|}\Big(\int_{0}^2\rho_{k}(x-1)|Ai(e^{i\f{\pi}{6}}(L(x+d)+i\epsilon))|^2dx\Big)^{\f12}\\
=&L\Big(\int_{0}^{2}\rho_{k}(x-1)\f{|A'_0(Lx+Ld+i\epsilon)|^2}{|A_0(Lx+Ld+i\epsilon)|^2}\f{|A_0(Lx+Ld+i\epsilon)|^2}{|A_0(Ld+i\epsilon)|^2}dx\Big)^{\f12}.
\end{align*}
For the case of $|Ld|\leq1$, we have
\begin{align*}
\f{L}{|A_0(Ld+i\epsilon)|}\|\rho_kW_1\|_{L^2}&\lesssim L \Big(\int_{0}^{2}(1+|Lx+Ld|)e^{-|Lx|/3}dx\Big)^{\f12}\lesssim L^{\f12}.
\end{align*}
For the case of $|Ld|\geq1$, we have
\begin{align*}
\f{L}{|A_0(Ld+i\epsilon)|}\|\rho_kW_1\|_{L^2}\lesssim& L\Big(\int_{0}^{2}\rho_k(x-1)(1+|Lx+Ld|)e^{-2c'Lx|Ld|^{\f12}}dx\Big)^{\f12}\\
\lesssim& L\Big(\int_{0}^{2}(1+Lx+|Ld|\rho_k(x-1))e^{-2c'Lx|Ld|^{\f12}}dx\Big)^{\f12}\\
 \lesssim& L\Big(\int_{0}^{2}(1+Lx+Lx|Ld|)e^{-2c'Lx|Ld|^{\f12}}dx\Big)^{\f12}\lesssim L^{\f12}.
\end{align*}
The proof of $\f{L}{|A_0(L\widetilde{d}+i\epsilon)|}\|\rho_k^{\f12}W_2\|_{L^2}\lesssim L^{\f12}$ is similar.
\end{proof}

\subsection{Estimates of $C_{ij}$}

In this subsection, we prove Lemma \ref{lem:C-ij}. We need the following lemmas.

\begin{lemma}\label{lem:A0-w2}
Let $\delta_0$ be as in Lemma \ref{lem:Ao-more}. Then it holds that for ${\rm Im}z\le \delta_0$ and $x\ge 0$,
\begin{align*}
  |\omega(z,x)|\leq e^{-cx^{3/2}}.
\end{align*}
\end{lemma}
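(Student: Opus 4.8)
The plan is to repeat the computation from the proof of Lemma~\ref{lem:A0-w}, but now feeding in the sharp bound of Lemma~\ref{lem:Ao-more} in place of the crude estimate ${\rm Re}(A_0'/A_0)\le-\f13$. Since $t\ge0$ is real we have ${\rm Im}(z+t)={\rm Im}z\le\delta_0$, so by Lemma~\ref{lem:Ao} the function $A_0$ has no zeros along the segment $\{z+t:t\in[0,x]\}$ and the representation
\[
|\omega(z,x)|=\Big|\exp\Big(\int_0^x\f{A_0'(z+t)}{A_0(z+t)}\,dt\Big)\Big|=\exp\Big({\rm Re}\int_0^x\f{A_0'(z+t)}{A_0(z+t)}\,dt\Big)
\]
is legitimate. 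Applying Lemma~\ref{lem:Ao-more} pointwise in $t$ gives ${\rm Re}\,\f{A_0'(z+t)}{A_0(z+t)}\le-c\bigl(1+|z+t|^{\f12}\bigr)$, hence
\[
|\omega(z,x)|\le\exp\Big(-c\int_0^x\bigl(1+|z+t|^{\f12}\bigr)\,dt\Big)\le\exp\Big(-c\int_0^x|z+t|^{\f12}\,dt\Big).
\]

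It then remains to prove the elementary lower bound $\int_0^x|z+t|^{\f12}\,dt\gtrsim x^{\f32}$, uniformly in $z\in\mathbb{C}$. First I would drop the imaginary part using $|z+t|\ge|{\rm Re}\,z+t|$, so that the integral is bounded below by $\int_a^{a+x}|s|^{\f12}\,ds$ with $a={\rm Re}\,z$. Viewing this as a function $\Phi(a)$ of the translation parameter, $\Phi'(a)=|a+x|^{\f12}-|a|^{\f12}$ vanishes only at $a=-x/2$, where $\Phi$ attains its minimum; thus $\Phi(a)\ge\Phi(-x/2)=2\int_0^{x/2}s^{\f12}\,ds=\f{\sqrt2}{3}x^{\f32}$ for every $a\in\R$. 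Combining the two displays yields $|\omega(z,x)|\le e^{-c'x^{\f32}}$ with $c'=\f{\sqrt2}{3}\,c$, which is the assertion after renaming the constant.

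There is no genuine obstacle here; the only point needing a moment's thought is the observation that the translation $a={\rm Re}\,z$ can only increase $\int_0^x|z+t|^{\f12}\,dt$ relative to the balanced case $a=-x/2$, which makes the case analysis on the sign of ${\rm Re}\,z$ unnecessary. I would also note that this estimate is complementary to Lemma~\ref{lem:A0-w}: the bound $e^{-x/3}$ is stronger for $0\le x\le1$ while $e^{-cx^{\f32}}$ is stronger for $x\ge1$, and it is precisely this large-$x$ decay that is needed in the estimates of $C_{ij}$ that follow.
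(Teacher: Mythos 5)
Your proof is correct, and in its main step it is identical to the paper's: both deduce the estimate from the pointwise bound ${\rm Re}\big(A_0'(z+t)/A_0(z+t)\big)\le -c\big(1+|z+t|^{\f12}\big)$ of Lemma~\ref{lem:Ao-more}, fed into the exponential representation of $\omega(z,x)$. The difference lies in how the remaining elementary inequality $\int_0^x|z+t|^{\f12}\,dt\gtrsim x^{\f32}$ is obtained. The paper invokes Lemma~\ref{lem:Ld} (after the substitution $z=Ld+i\epsilon$, $x=Lx'$), which is proved by a three-case analysis on the sign and size of $L(\lambda+1)$ and yields the stronger two-term bound $Lx'|Ld|^{\f12}+|Lx'|^{\f32}$; only the second term is used here. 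Your argument — replacing $|z+t|$ by $|{\rm Re}\,z+t|$ and observing that $a\mapsto\int_a^{a+x}|s|^{\f12}\,ds$ has its global minimum $\f{\sqrt{2}}{3}x^{\f32}$ at $a=-x/2$ — is more self-contained, valid for arbitrary $z\in\mathbb{C}$, and avoids the casework. The trade-off is that Lemma~\ref{lem:Ld} is needed elsewhere in the paper (in the proof of Lemma~\ref{lem:W12}) precisely because of the extra $Lx'|Ld|^{\f12}$ term, so it has to be proved anyway; your route simply decouples the present lemma from that machinery. Both are correct, and for this particular statement yours is the cleaner argument. Your closing remark about the complementarity with Lemma~\ref{lem:A0-w} (the $e^{-x/3}$ bound winning for small $x$, the $e^{-cx^{3/2}}$ bound for large $x$) correctly identifies why both estimates are kept.
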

\begin{proof}
By Lemma \ref{lem:Ao-more}, we get

\begin{align*}
 |\omega(z,x)|\leq \Big|\exp\Big({\rm Re}\int_{0}^{x}\frac{A'_0(z+t)}{A_0(z+t)}dt\Big)\Big|
 \leq\exp\Big(-c\int_{0}^{x}(1+|z+t|^{\f12})dt\Big)\leq e^{-cx^{3/2}},
\end{align*}
where we used Lemma \ref{lem:Ld} so that
\begin{align*}
\int_{0}^{x}(1+|z+t|^{\f12})dt=\int_{0}^{Lx'}(1+|t+Ld+i\epsilon|^{\f12})dt\gtrsim |Lx'|^{\f32}=x^{\f32},
\end{align*}
by writing $z=Ld+i\epsilon, x=Lx'$.
\end{proof}

\begin{lemma}
\label{lem:A0-w3}
Let $\delta_1$ be as in Lemma \ref{lem:A0-w}. There exists $k_0>1$ so that if $L\geq 6k$ or $L\geq k\geq k_0$, then we have
 \begin{align*}
&e^{2k}\Big|1-e^{-2k}\omega(z,2L)- \frac{k}{L}\int_{0}^{2L}e^{-\frac{kt}{L}}\omega(z,t) dt\Big|\\
&\quad\geq\sqrt{2}\Big|1 -e^{2k}\omega(z,2L)+ \frac{k}{L}\int_{0}^{2L}e^{\frac{kt}{L}}\omega(z,t) dt\Big|\quad \text{for}\quad {\rm Im} z\le \delta_1.
 \end{align*}
\end{lemma}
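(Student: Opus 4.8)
The plan is to bound every ``tail'' term on both sides by powers of $e^{k}$, so that the asserted inequality reduces to comparing $e^{2k}$ with a bounded quantity. Write the bracket on the left as $1-a-b$ and the bracket on the right as $1-c+d$, where
\begin{align*}
a=e^{-2k}\omega(z,2L),\quad b=\f{k}{L}\int_0^{2L}e^{-kt/L}\omega(z,t)\,dt,\quad c=e^{2k}\omega(z,2L),\quad d=\f{k}{L}\int_0^{2L}e^{kt/L}\omega(z,t)\,dt.
\end{align*}
The left-hand side of the lemma then equals $e^{2k}\,|1-a-b|$ and the right-hand side equals $\sqrt2\,|1-c+d|$, so by the triangle inequality it is enough to prove $|a|+|b|<1$ together with
\begin{align*}
e^{2k}\big(1-|a|-|b|\big)\ \ge\ \sqrt2\,\big(1+|c|+|d|\big).
\end{align*}

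All four quantities are controlled by the decay estimates for $\omega(z,\cdot)$ valid for ${\rm Im}\,z\le\delta_1$: the linear bound $|\omega(z,x)|\le e^{-x/3}$ (Lemma \ref{lem:A0-w}) and the cubic bound $|\omega(z,x)|\le e^{-cx^{3/2}}$ (Lemma \ref{lem:A0-w2}, which applies since $\delta_1\le\delta_0$). From the linear bound, $|a|\le e^{-2k-2L/3}$, $|c|\le e^{2k-2L/3}$, and $|b|\le\f{k}{L}\int_0^{\infty}e^{-(k/L+1/3)t}\,dt=\f{k}{k+L/3}$. The term needing care is $d$, since $e^{kt/L}$ need not decay. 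If $L\ge6k$ then $k/L-\tfrac13\le-\tfrac16$, so $e^{kt/L}|\omega(z,t)|\le e^{-t/6}$ and $|d|\le\f{k}{L}\int_0^{2L}e^{-t/6}\,dt\le\f{6k}{L}\le1$. If only $L\ge k$, one uses instead the cubic bound together with $kt/L\le t$ to get $|d|\le\int_0^{\infty}e^{t-ct^{3/2}}\,dt=:C_1<\infty$; the cubic bound also improves $c$, since $(2L)^{3/2}\ge(2k)^{3/2}$ gives $|c|\le e^{2k-c(2k)^{3/2}}\le1$ once $k\ge k_0$ with $k_0\ge(2c^2)^{-1}$.

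It remains to combine these in the two cases of the hypothesis. If $L\ge6k$, then $|b|\le\tfrac13$, $|a|\le e^{-6k}$, $|c|\le e^{-2k}$ and $|d|\le1$; hence the left side is $\ge e^{2k}(\tfrac23-e^{-6k})$ and the right side is $\le\sqrt2\,(2+e^{-2})$, and since $\tfrac23 e^{2}-e^{-6}>\sqrt2\,(2+e^{-2})$, with both sides monotone in $k$ in the favourable direction, the inequality holds for every $k\ge1$ without any largeness condition. If instead $L\ge k\ge k_0$, then $|b|\le\tfrac34$ and $|a|\le1$, and, with $k_0\ge(2c^2)^{-1}$, also $|c|\le1$ and $|d|\le C_1$; thus the left side is $\ge e^{2k}(\tfrac14-e^{-8k/3})\gtrsim e^{2k}$ whereas the right side is at most the fixed constant $\sqrt2\,(2+C_1)$, so enlarging $k_0$ once more so that $\tfrac18 e^{2k_0}\ge\sqrt2\,(2+C_1)$ closes this case. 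Since the two cases together are precisely ``$L\ge6k$ or $L\ge k\ge k_0$'', the lemma follows.

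The only genuinely delicate point is the term $d$ in the regime where $k/L$ is of order one, where the growth of $e^{kt/L}$ makes the linear decay of Lemma \ref{lem:A0-w} useless; there one must invoke the cubic Airy decay $e^{-ct^{3/2}}$ of Lemma \ref{lem:A0-w2} (ultimately a consequence of the lower bound in Lemma \ref{lem:Ld}) together with $k\le L$ to bound $d$ by an absolute constant. Once this is done, the rest is just the observation that $e^{2k}$ dominates a fixed constant for $k$ bounded below.
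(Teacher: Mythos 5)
Your argument is correct and follows the paper's proof closely: the same two cases, the linear decay $|\omega(z,x)|\le e^{-x/3}$ of Lemma \ref{lem:A0-w} to control $a,b,c,d$ when $L\ge 6k$, and the cubic decay $|\omega(z,x)|\le e^{-cx^{3/2}}$ of Lemma \ref{lem:A0-w2} together with $k\le L$ to bound $d$ (and $c$) when $L\ge k\ge k_0$, concluding by comparing $e^{2k}$ against a fixed constant. Two harmless slips: the value of $e^{2k}(\frac23 - e^{-6k})$ at $k=1$ is $\frac23 e^{2}-e^{-4}$, not $\frac23 e^{2}-e^{-6}$; and one does not actually need $|c|\le 1$ here, only that $e^{2k-c(2k)^{3/2}}$ is uniformly bounded, which holds for every $k\ge 1$ with no largeness condition on $k_0$, as the paper exploits.
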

\begin{proof}
We first consider the case of  $L\geq 6k,\ k\geq 1.$ It follows from Lemma \ref{lem:A0-w} that for ${\rm Im}z\le \delta_1$,
\begin{align*}
\Big|1-e^{-2k}\omega(z,2L)- \frac{k}{L}\int_{0}^{2L}e^{-\frac{kt}{L}}\omega(z,t)) dt\Big|\geq&1-e^{-2k}|\omega(z,2L)|- \frac{k}{L}\int_{0}^{2L}e^{-\frac{kt}{L}}|\omega(z,t)| dt\\
\geq&1-e^{-2k}e^{-2L/3}- \frac{k}{L}\int_{0}^{2L}e^{-\frac{kt}{L}-\frac{t}{3}} dt\\
\geq&1-e^{-2k}- \frac{k}{L}/(\frac{k}{L}+\frac{1}{3})\geq1-\f16- \frac{1}{6}/(\frac{1}{6}+\frac{1}{3})=\f12,
 \end{align*}
 and
 \begin{align*}
  \Big|1 -e^{2k}\omega(z,2L)+ \frac{k}{L}\int_{0}^{2L}e^{\frac{kt}{L}}\omega(z,t) dt\Big|\leq& 1+e^{2k}|\omega(z,2L)|+ \frac{k}{L}\int_{0}^{2L}e^{\frac{kt}{L}}|\omega(z,t)| dt\\ \leq&1+e^{2k}e^{-2L/3}+ \frac{k}{L}\int_{0}^{2L}e^{\frac{kt}{L}-\frac{t}{3}} dt\\
  \leq&1+e^{-2k}+ \frac{k}{L}/(\frac{1}{3}-\frac{k}{L}){\leq}1+\f16+ \frac{1}{6}/(\frac{1}{3}-\frac{1}{6})\leq \f73.
 \end{align*}
 This  shows that
 \begin{align*}
  \sqrt{2}\Big|1 -e^{2k}\omega(z,2L)+ \frac{k}{L}\int_{0}^{2L}e^{\frac{kt}{L}}\omega(z,t) dt\Big|&\leq \f{7\sqrt{2}}3< 10/3<7/2<e^2/2\leq e^{2k}/2\\ \leq& e^{2k}\Big|1-e^{-2k}\omega(z,2L)- \frac{k}{L}\int_{0}^{2L}e^{-\frac{kt}{L}}\omega(z,t) dt\Big|.
 \end{align*}

 For the case of $L\geq k\geq k_0$, on one hand, we have
 \begin{align*}
  \Big|1-e^{-2k}\omega(z,2L)- \frac{k}{L}\int_{0}^{2L}e^{-\frac{kt}{L}}\omega(z,t)) dt\Big|\geq&1-e^{-2k}e^{-2L/3}- \frac{k}{L}\int_{0}^{2L}e^{-\frac{kt}{L}-\frac{t}{3}} dt\\
  \geq&1-e^{-8k/3}- \frac{k}{L}/(\frac{k}{L}+\frac{1}{3})\geq \f18,
 \end{align*}
on the other hand, by Lemma \ref{lem:A0-w2}, we have
 \begin{align*}
 \Big|1 -e^{2k}\omega(z,2L)+ \frac{k}{L}\int_{0}^{2L}e^{\frac{kt}{L}}\omega(z,t) dt\Big|\leq& 1 +e^{2k}|\omega(z,2L)|+ \int_{0}^{2L}e^{{t}}|\omega(z,t)| dt\\ \leq&1+e^{2k}e^{-c(2L)^{3/2}}+ \int_{0}^{2L}e^{t-ct^{3/2}} dt\\\leq&1+e^{2k-c(2k)^{3/2}}+C \leq C_0,
 \end{align*}
 here $C_0>3$ is an absolute constant. Choose $k_0>1$ so that $ e^{2k_0}>8\sqrt{2}C_0$. Then we have
 \begin{align*}
   &e^{2k}\Big|1-e^{-2k}\omega(z,2L)- \frac{k}{L}\int_{0}^{2L}e^{-\frac{kt}{L}}\omega(z,t)) dt\Big|\geq e^{2k_0}/8\geq \sqrt{2}C_0\\ &\geq\sqrt{2}\Big|1 -e^{2k}\omega(z,2L)+ \frac{k}{L}\int_{0}^{2L}e^{\frac{kt}{L}}\omega(z,t) dt\Big|.
 \end{align*}

This completes the proof of the lemma .
\end{proof}

Now we are in a position to prove Lemma \ref{lem:C-ij}. Let us recall that
\begin{equation}\nonumber
\left( \begin{array}{cc}  C_{11} \\  C_{12}\\
 \end{array}\right)=\f{\left( \begin{array}{cc}  A_2e^k-B_2e^{-k} \\  -B_1e^k+A_1e^{-k}\\
 \end{array}\right)}{A_1A_2-B_1B_2},
 \quad
\left( \begin{array}{cc}  C_{21} \\  C_{22}\\
 \end{array}\right)=\f{\left( \begin{array}{cc}  -A_2e^{-k}+B_2e^{k} \\  B_1e^{-k}-A_1e^{k}\\
 \end{array}\right)}{A_1A_2-B_1B_2},
\end{equation}
where
\begin{align*}
&A_1=\int_{-1}^1e^{ky}W_1(y)dy,\quad A_2=\int_{-1}^1e^{-ky}W_2(y)dy,\\
&B_1=\int_{-1}^1e^{-ky}W_1(y)dy,\quad B_2=\int_{-1}^1e^{ky}W_2(y)dy.
\end{align*}

\begin{proof}

Let $y+1=x=\frac{t}{L}$. Due to $A'_0(z)=-e^{i\pi/6}Ai(e^{i\pi/6}z)$,
we have
\begin{align}
 \nonumber B_1  =&\int_{0}^{2}e^{-k(x-1)}Ai(e^{i\pi/6}(L(x+d)+i\epsilon))dx\\
  \nonumber =&\frac{e^k}{L} \int_{0}^{2L}e^{-\frac{kt}{L}}Ai(e^{i\pi/6}((t+Ld)+i\epsilon))dt=-\frac{e^{k-i\pi/6}}{L} \int_{0}^{2L}e^{-\frac{kt}{L}}A'_0(t+Ld+i\epsilon)dt\\
  \nonumber  =& -\frac{e^{k-i\pi/6}}{L} \Big[e^{-2k}A_0(2L+Ld+i\epsilon)- A_0(Ld+i\epsilon)+ \frac{k}{L}\int_{0}^{2L}e^{-\frac{kt}{L}}A_0(t+Ld+i\epsilon) dt\Big]\\
=& -A_0(Ld+i\epsilon)\frac{e^{k-i\pi/6}}{L} \Big[e^{-2k}\omega(Ld+i\epsilon,2L)- 1+ \frac{k}{L}\int_{0}^{2L}e^{-\frac{kt}{L}}\omega(Ld+i\epsilon,t) dt\Big].\label{eq:B2}
\end{align}
Similarly, we have
 \begin{align}\label{eq:B1}
 A_1 & = -A_0(Ld+i\epsilon)\frac{e^{-k-i\pi/6}}{L}\Big [e^{2k}\omega(Ld+i\epsilon,2L)- 1- \frac{k}{L}\int_{0}^{2L}e^{\frac{kt}{L}}\omega(Ld+i\epsilon,t)dt\Big].
 \end{align}
 Then we infer from Lemma \ref{lem:A0-w3} that
 \begin{align}\label{eq:A1-B1}
  \Big|\frac{A_1}{B_1}\Big| = e^{-2k}\Big|\frac{1 -e^{2k}\omega(Ld+i\epsilon,2L)+ \frac{k}{L}\int_{0}^{2L}e^{\frac{kt}{L}}\omega(Ld+i\epsilon,t) dt}{1-e^{-2k}\omega(Ld+i\epsilon,2L)- \frac{k}{L}\int_{0}^{2L}e^{-\frac{kt}{L}}\omega(Ld+i\epsilon,t) dt}\Big|\le \f {\sqrt{2}} 2.
   \end{align}

 Thanks to ${Ai}(z)=\overline{{A{i}}(\bar{z})}$, we have
 \begin{align}
 \nonumber  \overline{ B_2} &=\int_{-1}^{1}e^{ky}Ai(e^{-i5\pi/6} (L(y-\lambda+ik\nu)-i\epsilon))dy \\
 \nonumber&=\int_{-1}^{1}e^{-ky}Ai(e^{i\pi/6} (L(y+\lambda-ik\nu)+i\epsilon))dy\\
 \label{eq:B2}   &= -A_0(L\widetilde{d}+i\epsilon)\frac{e^{k-i\pi/6}}{L} \Big[e^{-2k}\omega(L\widetilde{d}+i\epsilon,2L)- 1+ \frac{k}{L}\int_{0}^{2L}e^{-\frac{kt}{L}}\omega(L\widetilde{ d}+i\epsilon,t) dt\Big].
 \end{align}
 Similarly, we have
 \begin{align}
 \label{eq:A2}  \overline{A_2}&=-A_0(L\widetilde{d}+i\epsilon)\frac{e^{-k-i\pi/6}}{L}\Big [e^{2k}\omega(L\widetilde{d}+i\epsilon,2L)- 1- \frac{k}{L}\int_{0}^{2L}e^{\frac{kt}{L}}\omega(L\widetilde{d}+i\epsilon,t)dt\Big].
 \end{align}
Thus, by Lemma \ref{lem:A0-w3}, we get
 \begin{align}\label{eq:A2-B2}
        \Big|\frac{A_2}{B_2}\Big|\le \f {\sqrt{2}} 2.
          \end{align}

Now it follows from \eqref{eq:A1-B1} and \eqref{eq:A2-B2} that
\beno
|A_1A_2-B_1B_2|\gtrsim |B_1B_2|.
\eeno
From the proof of Lemma \ref{lem:A0-w3} and \eqref{eq:B1}, we know that
\begin{align*}
|B_1|\geq&\f{e^k}{L}|A_0(Ld+i\epsilon)\Big[1-e^{-2k}|\omega(Ld+i\epsilon,2L)|- \frac{k}{L}\int_{0}^{2L}e^{-\frac{kt}{L}}|\omega(Ld+i\epsilon,t)| dt\Big]\\
\geq&\f18\f{e^k}{L}|A_0(Ld+i\epsilon)|,
\end{align*}
Similarly, $|B_2|\geq\f18\f{e^k}{L}|A_0(L\widetilde{d}+i\epsilon)|$. Thus,
\begin{align*}
|A_1A_2-B_1B_2|\gtrsim|A_0(Ld+i\epsilon)||A_0(L\widetilde{d}+i\epsilon)|\f{e^{2k}}{L^2}.
\end{align*}
Furthermore, we also have
\begin{align*}
&|B_1|\leq 2 \f{e^k}{L} |A_0(Ld+i\epsilon)|,\quad
 |B_2|\leq 2\f{e^k}{L}|A_0(L\widetilde{d}+i\epsilon)|,\\
&|A_1| \le C_0\f{e^{-k}}{L} |A_0(Ld+i\epsilon)|,\quad
  |A_2| \leq C_0\f{e^{-k}}{L}|A_0(L\widetilde{d}+i\epsilon)| .
\end{align*}

Summing up, we can conclude the estimates of $C_{ij}$.
\end{proof}

\section{Appendix}

\begin{lemma}\label{lem:hyperfct-1}
It holds that for any $|k|\geq1$,
\begin{align*}
&\int_{-1}^1\Big|\f{\sinh k(1+y)}{\sinh 2k}\Big|^2dy=\int_{-1}^1\Big|\f{\sinh k(1-y)}{\sinh 2k}\Big|^2dy\leq 2|k|^{-1},\\
&\int_{-1}^1\Big|\f{\cosh k(1+y)}{\sinh 2k}\Big|^2dy=\int_{-1}^1\Big|\f{\cosh k(1-y)}{\sinh 2k}\Big|^2dy\leq 8|k|^{-1}.
\end{align*}
\end{lemma}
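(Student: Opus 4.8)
The plan is to reduce both displayed identities to a single elementary one–variable integral. First note that the substitution $y\mapsto -y$ sends $\sinh k(1+y)$ to $\sinh k(1-y)$ and $\cosh k(1+y)$ to $\cosh k(1-y)$, so the two integrals on each line are equal and it suffices to estimate $\int_{-1}^1\big|\sinh k(1+y)/\sinh 2k\big|^2dy$ and $\int_{-1}^1\big|\cosh k(1+y)/\sinh 2k\big|^2dy$. Since $\sinh$ is odd and $\cosh$ even, replacing $k$ by $|k|$ leaves the squared integrands unchanged, so I may assume $k\ge 1$.

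Next I would change variables $s=1+y\in[0,2]$ and use $\sinh^2(ks)=\frac12(\cosh 2ks-1)$ and $\cosh^2(ks)=\frac12(\cosh 2ks+1)$ together with $\int_0^2\cosh(2ks)\,ds=\sinh(4k)/(2k)$ to get the exact values $\int_0^2\sinh^2(ks)\,ds=\sinh(4k)/(4k)-1$ and $\int_0^2\cosh^2(ks)\,ds=\sinh(4k)/(4k)+1$. Dividing by $\sinh^2 2k$ and using $\sinh 4k=2\sinh 2k\cosh 2k$, the first integral equals $\frac{\coth 2k}{2k}-\frac{1}{\sinh^2 2k}$ and the second equals $\frac{\coth 2k}{2k}+\frac{1}{\sinh^2 2k}$.

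Finally I would bound these closed forms using two standard inequalities valid for $k\ge 1$: from $\sinh x\ge x$ one gets $\frac{1}{\sinh^2 2k}\le\frac{1}{4k^2}\le\frac{1}{4k}$, and from $\coth x-1=\frac{2}{e^{2x}-1}\le\frac{1}{x}$ (using $e^{2x}-1\ge 2x$) one gets $\coth 2k\le 1+\frac{1}{2k}\le 2$. Hence the $\sinh$ integral is at most $\frac{\coth 2k}{2k}\le\frac{1}{k}\le\frac{2}{k}$ (discarding the negative term), and the $\cosh$ integral is at most $\frac{\coth 2k}{2k}+\frac{1}{4k}\le\frac{1}{k}+\frac{1}{4k}=\frac{5}{4k}\le\frac{8}{k}$.

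There is essentially no obstacle here: the computation is a direct antiderivative followed by the two elementary estimates above, and the stated constants are far from sharp. The only points worth care are the initial reduction to $k\ge 1$ via oddness and squaring, and correctly tracking the sign of the $1/\sinh^2 2k$ term after dividing by $\sinh^2 2k$ (subtracted in the $\sinh$ case, added in the $\cosh$ case), which is precisely why the $\cosh$ bound carries the larger constant.
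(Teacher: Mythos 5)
Your computation is correct, and it takes a genuinely different route from the paper. You evaluate the integrals exactly: substituting $s=1+y$, using $\sinh^2(ks)=\tfrac12(\cosh 2ks-1)$ and $\cosh^2(ks)=\tfrac12(\cosh 2ks+1)$, and $\sinh 4k=2\sinh 2k\cosh 2k$, you arrive at the closed forms $\tfrac{\coth 2k}{2k}\mp\tfrac{1}{\sinh^2 2k}$, which you then bound by $\coth 2k\le 2$ and $\sinh 2k\ge 2k$ for $k\ge 1$. The paper instead proves a pointwise exponential bound, namely $\big|\tfrac{\sinh k(1+y)}{\sinh 2k}\big|\le 2e^{-|k|(1-y)}$ and $\big|\tfrac{\cosh k(1+y)}{\sinh 2k}\big|\le 4e^{-|k|(1-y)}$, and simply integrates $e^{-2|k|(1-y)}$ over $[-1,1]$. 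Your approach is more explicit and in fact yields the sharper bounds $\tfrac{3}{4k}$ and $\tfrac{5}{4k}$; the paper's approach is chosen because the same pointwise exponential estimate is reused in Lemma 9.2 (the $L^\infty$ bounds on $E_1,E_2$), so it serves double duty. Two minor cosmetic points: writing $\coth 2k\le 1+\tfrac{1}{2k}\le \tfrac32$ would be tighter than your stated $\le 2$, and you should state explicitly that $e^{2x}-1\ge 2x$ for $x>0$ (convexity of $e^{2x}$) before invoking it; neither affects correctness.
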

\begin{proof}
Use the facts that
\beno
\f{\sinh k(1+y)}{\sinh 2k}\leq2e^{-|k|(1-y)},\quad \big|\f{\cosh k(1+y)}{\sinh 2k}\big|\leq4e^{-|k|(1-y)},
\eeno
we infer that
\begin{align*}
&\int_{-1}^1\Big|\f{\sinh k(1+y)}{\sinh 2k}\Big|^2dy\leq\int_{-1}^{1} 4e^{-2|k|(1-y)}dy\leq2|k|^{-1},\\
&\int_{-1}^1\Big|\f{\cosh k(1+y)}{\sinh 2k}\Big|^2dy\leq\int_{-1}^{1} 16e^{-2|k|(1-y)}dy\leq 8|k|^{-1}.
\end{align*}
\end{proof}

\begin{lemma}\label{lem:hperfct-2}
Let $|k|\geq1$. If $1-\lambda\geq|k|^{-1}$, then we have
\begin{align*}
&\Big\|\f{\sinh k(1+y)}{\sinh 2k}\Big\|_{L^{\infty}(E_1)}\leq 2e^{-|k|(1-\lambda)/2},\\
&\Big\|\f{\cosh k(1+y)}{\sinh 2k}\Big\|_{L^{\infty}(E_1)}\leq 4e^{-|k|(1-\lambda)/2},
\end{align*}
where $E_1=(-1,1)\cap\big(-\infty,(\lambda+1)/2\big)$.
If $\lambda+1\geq|k|^{-1}$, then we have
\begin{align*}
&\Big\|\f{\sinh k(1-y)}{\sinh 2k}\Big\|_{L^{\infty}(E_2)}\leq2 e^{-|k|(1+\lambda)/2},\\
& \Big\|\f{\cosh k(1-y)}{\sinh 2k}\Big\|_{L^{\infty}(E_1)}\leq4e^{-|k|(1+\lambda)/2},
\end{align*}
where $E_2=(-1,1)\cap\big((\lambda-1)/2,+\infty\big)$.
\end{lemma}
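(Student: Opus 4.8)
The plan is to reduce everything to the elementary pointwise bounds on the hyperbolic quotients that already appeared in the proof of Lemma~\ref{lem:hyperfct-1}, and then simply exploit the defining inequalities of $E_1$ and $E_2$. So there will be no new ideas, only bookkeeping.

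First I would record the pointwise estimates, valid for all $y\in[-1,1]$ and $|k|\ge 1$,
\[
\Big|\f{\sinh k(1+y)}{\sinh 2k}\Big|\le 2e^{-|k|(1-y)},\qquad \Big|\f{\cosh k(1+y)}{\sinh 2k}\Big|\le 4e^{-|k|(1-y)},
\]
together with their mirror images under $y\mapsto -y$,
\[
\Big|\f{\sinh k(1-y)}{\sinh 2k}\Big|\le 2e^{-|k|(1+y)},\qquad \Big|\f{\cosh k(1-y)}{\sinh 2k}\Big|\le 4e^{-|k|(1+y)}.
\]
These follow by writing $\sinh$, $\cosh$ in exponentials, bounding the numerator by its leading exponential, and using $|\sinh 2k|=\tfrac12(e^{2|k|}-e^{-2|k|})\ge \tfrac12 e^{2|k|}(1-e^{-4})$ with $|k|\ge1$; this is precisely the computation carried out for Lemma~\ref{lem:hyperfct-1}, so I would just quote it.

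Next, for the $E_1$-bounds I would observe that every $y\in E_1=(-1,1)\cap(-\infty,(\lambda+1)/2)$ satisfies $1-y\ge 1-\tfrac{\lambda+1}{2}=\tfrac{1-\lambda}{2}>0$ (using $\lambda<1$, which is ensured by $1-\lambda\ge|k|^{-1}$), hence $e^{-|k|(1-y)}\le e^{-|k|(1-\lambda)/2}$; substituting this into the two displayed pointwise bounds for $\sinh k(1+y)/\sinh 2k$ and $\cosh k(1+y)/\sinh 2k$ gives the claimed $L^\infty(E_1)$ estimates with the constants $2$ and $4$. Symmetrically, every $y\in E_2=(-1,1)\cap((\lambda-1)/2,+\infty)$ satisfies $1+y\ge \tfrac{\lambda+1}{2}>0$ (using $1+\lambda\ge|k|^{-1}$), so $e^{-|k|(1+y)}\le e^{-|k|(1+\lambda)/2}$, and the mirror-image pointwise bounds yield the $L^\infty(E_2)$ estimates for $\sinh k(1-y)/\sinh 2k$ and $\cosh k(1-y)/\sinh 2k$. (The hypotheses $1\mp\lambda\ge|k|^{-1}$ serve only to make the exponents $|k|(1\mp\lambda)/2$ meaningful when the lemma is later applied; they are not otherwise used.)

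There is essentially no obstacle: the statement is a direct corollary of exponential decay of the hyperbolic quotients combined with the geometry of $E_1,E_2$. The only points requiring (minor) care are keeping track of the constants $2$ and $4$ and quoting the $\cosh k(1-y)/\sinh 2k$ bound on $E_2$ — in the last displayed line of the statement the set should read $E_2$ rather than $E_1$, which is a harmless typo.
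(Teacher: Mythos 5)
Your proof is correct and follows exactly the same route as the paper: quote the pointwise exponential bounds on $\sinh k(1\pm y)/\sinh 2k$ and $\cosh k(1\pm y)/\sinh 2k$ already used in Lemma~\ref{lem:hyperfct-1}, then use that $1-y\ge(1-\lambda)/2$ on $E_1$ (resp.\ $1+y\ge(1+\lambda)/2$ on $E_2$). Your observation that the set in the last display of the statement should be $E_2$ rather than $E_1$ is a correct identification of a typo.
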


\begin{proof}
If $1-\lambda\geq|k|^{-1}$, then we have for $y\in E_1$,
\beno
&&\Big|\f{\sinh k(1+y)}{\sinh 2k}\Big|\leq2e^{-|k|(1-y)}\leq2e^{-|k|(1-\lambda)/2},\\
&&\Big|\f{\cosh k(1+y)}{\sinh 2k}\Big|\leq4e^{-|k|(1-y)}\leq 4e^{-|k|(1-\lambda)/2}.
\eeno
This gives the first inequality. The proof of the second inequality is similar.
\end{proof}

\begin{lemma}\label{lem:phi-w}
If $(\partial_y^2-k^2)\varphi=w$, $\varphi(\pm1)=0$, $|k|\geq 1$, then we have
\begin{align*}
&\|\varphi'\|_{L^2}^2+k^2\|\varphi\|_{L^2}^2=\langle-w,\varphi\rangle\lesssim|k|^{-1}\|w\|_{L^1}^2,\\&
\|\varphi'\|_{L^{\infty}}+|k|\|\varphi\|_{L^{\infty}}\lesssim\|w\|_{L^1},\\&
\|\varphi'\|_{L^{\infty}}+|k|\|\varphi\|_{L^{\infty}}\lesssim |k|^{-\f12}\|w\|_{L^2}.
\end{align*}
\end{lemma}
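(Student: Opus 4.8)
The plan is to write the solution of $(\partial_y^2-k^2)\varphi=w$, $\varphi(\pm1)=0$, through the explicit Green's function of $\partial_y^2-k^2$ on $(-1,1)$ and to read off all three estimates from pointwise and $L^2_z$ bounds on that kernel. The first identity $\|\varphi'\|_{L^2}^2+k^2\|\varphi\|_{L^2}^2=\langle-w,\varphi\rangle$ is immediate from integration by parts using $\varphi(\pm1)=0$. For the rest, note that $\phi_-(y)=\sinh k(y+1)$ and $\phi_+(y)=\sinh k(1-y)$ are homogeneous solutions vanishing at $y=-1$ and $y=1$ respectively, with Wronskian $\phi_-\phi_+'-\phi_-'\phi_+=-k\sinh 2k$, so that
\[
\varphi(y)=\int_{-1}^1G(y,z)w(z)\,dz,\qquad G(y,z)=-\f{\sinh k(1+\min(y,z))\,\sinh k(1-\max(y,z))}{k\sinh 2k},
\]
and $\partial_yG$ is the corresponding piecewise expression ($G$ is continuous across $y=z$, with a unit jump in $\partial_y^2G$ there).

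I would then establish the uniform kernel bounds, valid for all $|k|\ge1$ and $y,z\in[-1,1]$,
\[
|G(y,z)|\lesssim|k|^{-1}e^{-|k|\,|y-z|},\qquad|\partial_yG(y,z)|\lesssim e^{-|k|\,|y-z|}.
\]
Writing $a=\min(y,z)$, $b=\max(y,z)$, these follow from elementary hyperbolic estimates: $\sinh k(1+a)\le e^{|k|(1+a)}$, $\sinh k(1-b)\le e^{|k|(1-b)}$, $\sinh 2|k|\gtrsim e^{2|k|}$, so the product of the first two divided by $|k|\sinh 2|k|$ is $\lesssim|k|^{-1}e^{|k|(a-b)}=|k|^{-1}e^{-|k||y-z|}$; for $\partial_yG$ one of the $\sinh$ factors is replaced by $k\cosh$, which cancels the gain $|k|^{-1}$.

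The three conclusions then follow by H\"older/Young in the $z$-variable. From $\sup_y\|G(y,\cdot)\|_{L^\infty_z}\lesssim|k|^{-1}$ and $\sup_y\|\partial_yG(y,\cdot)\|_{L^\infty_z}\lesssim1$ we get $\|\varphi\|_{L^\infty}\lesssim|k|^{-1}\|w\|_{L^1}$ and $\|\varphi'\|_{L^\infty}\lesssim\|w\|_{L^1}$, which is the second estimate; plugging $\|\varphi\|_{L^\infty}\lesssim|k|^{-1}\|w\|_{L^1}$ and $\langle-w,\varphi\rangle\le\|w\|_{L^1}\|\varphi\|_{L^\infty}$ into the first identity gives the first estimate. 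Since $\int_{-1}^1e^{-2|k||y-z|}\,dz\lesssim|k|^{-1}$, we have $\sup_y\|G(y,\cdot)\|_{L^2_z}\lesssim|k|^{-3/2}$ and $\sup_y\|\partial_yG(y,\cdot)\|_{L^2_z}\lesssim|k|^{-1/2}$, so Cauchy--Schwarz gives $\|\varphi\|_{L^\infty}\lesssim|k|^{-3/2}\|w\|_{L^2}$ and $\|\varphi'\|_{L^\infty}\lesssim|k|^{-1/2}\|w\|_{L^2}$, i.e. the third estimate.

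There is no genuine obstacle: the only point needing mild care is keeping the constants in the hyperbolic estimates uniform down to $|k|=1$, which is safe since $\coth 2|k|$ and $e^{2|k|}/\sinh 2|k|$ stay bounded for $|k|\ge1$. An alternative that avoids writing $G$ is to split $\varphi=\psi+h$, with $\psi(y)=-\f{1}{2|k|}\int_{-1}^1e^{-|k||y-z|}w(z)\,dz$ the whole-line solution and $h$ the homogeneous correction with $h(\pm1)=-\psi(\pm1)$, i.e. $h(y)=-\psi(1)\f{\sinh k(y+1)}{\sinh 2k}-\psi(-1)\f{\sinh k(1-y)}{\sinh 2k}$; the bounds on $h$ and $h'$ then follow from $|\psi(\pm1)|\lesssim|k|^{-1}\|w\|_{L^1}$ (resp. $|k|^{-3/2}\|w\|_{L^2}$) together with $\big\|\f{\sinh k(1\pm y)}{\sinh 2k}\big\|_{L^\infty}\lesssim1$ and $\big\|\partial_y\f{\sinh k(1\pm y)}{\sinh 2k}\big\|_{L^\infty}\lesssim|k|$, which is the same arithmetic organized around Lemma \ref{lem:hyperfct-1}.
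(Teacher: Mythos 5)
Your proof is correct, and it takes a genuinely different route from the paper. The paper never writes down the Green's function: it proves the first estimate by a self-absorbing energy argument, bounding $\|\varphi'\|_{L^2}^2+k^2\|\varphi\|_{L^2}^2=\langle-w,\varphi\rangle\le\|w\|_{L^1}\|\varphi\|_{L^\infty}\lesssim\|w\|_{L^1}\|\varphi'\|_{L^2}^{1/2}\|\varphi\|_{L^2}^{1/2}\lesssim |k|^{-1/2}\|w\|_{L^1}\big(\|\varphi'\|_{L^2}^2+k^2\|\varphi\|_{L^2}^2\big)^{1/2}$ and dividing; it then gets $|k|\|\varphi\|_{L^\infty}\lesssim\|w\|_{L^1}$ from the same interpolation, and for $\|\varphi'\|_{L^\infty}$ it picks a mean-value point $y_1$ with $|\varphi'(y_1)|^2\le|k|\|\varphi'\|_{L^2}^2$ at distance $\le 1/|k|$ from $y$ and integrates $\varphi''=k^2\varphi+w$; the third estimate then follows by interpolating $\|\varphi'\|_{L^\infty}\lesssim\|\varphi''\|_{L^2}^{1/2}\|\varphi'\|_{L^2}^{1/2}+\|\varphi'\|_{L^2}$ and using $\|w\|_{L^2}^2=\|\varphi''\|_{L^2}^2+2k^2\|\varphi'\|_{L^2}^2+k^4\|\varphi\|_{L^2}^2$. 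Your kernel argument replaces all of this with a single pointwise bound $|G(y,z)|\lesssim|k|^{-1}e^{-|k||y-z|}$, $|\partial_yG(y,z)|\lesssim e^{-|k||y-z|}$, after which all three inequalities are one application each of H\"older or Cauchy--Schwarz in $z$. Your approach is more explicit and makes the exponential locality of $(\partial_y^2-k^2)^{-1}$ transparent (which is in fact the content of Lemma \ref{lem:hyperfct-1} used elsewhere in the paper); the paper's approach avoids verifying the Green's function, stays in the energy/interpolation framework used throughout the rest of the paper, and is a touch shorter to state. Both are complete and correct, and the constants are uniform for $|k|\ge1$ in either case.
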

\begin{proof}
The first inequality follows from the following
\begin{align*}
\|\varphi'\|_{L^2}^2+k^2\|\varphi\|_{L^2}^2&=\langle-w,\varphi\rangle\leq\|w\|_{L^1}\|\varphi\|_{L^{\infty}}\lesssim\|w\|_{L^1}\|\varphi'\|^{\f12}_{L^2}\|\varphi\|^{\f12}_{L^2}\\
&\lesssim\|w\|_{L^1}\big(|k|^{-1}\|\varphi'\|_{L^2}^2+|k|\|\varphi\|_{L^2}^2\big)^{\f12}.
\end{align*}

Using the first inequality, we infer that
\begin{align*}
|k|\|\varphi\|_{L^{\infty}}\lesssim|k|\|\varphi'\|^{\f12}_{L^2}\|\varphi\|^{\f12}_{L^2}
&\lesssim \big(|k|(\|\varphi'\|_{L^2}^2+k^2\|\varphi\|_{L^2}^2)\big)^{\f12}\lesssim\|w\|_{L^1}.
\end{align*}
For $y\in[0,1],$  we choose $y_1\in(y-1/k,y)$ so that $|\varphi'(y_1)|^2\leq |k|\|\varphi'\|_{L^2}^2$. Then we have
\begin{align*}
|\varphi'(y)|\leq&|\varphi'(y_1)|+\int_{y_1}^y|\varphi''(z)|dz\\
\leq&(|k|\|\varphi'\|_{L^2}^2)^{\f12}+\int_{y_1}^y|k^2\varphi(z)+w(z)|dz\\ \leq&C\|w\|_{L^1}+|y-y_1|k^2\|\varphi\|_{L^{\infty}}+\|w\|_{L^1}\\
\leq& C\|w\|_{L^1}+|k|\|\varphi\|_{L^{\infty}}\leq C\|w\|_{L^1}.
\end{align*}
Similarly, $|\varphi'(y)|\leq C\|w\|_{L^1} $ for $y\in[-1,0]$. This proves the second inequality.\smallskip

Thanks to $\|w\|_{L^2}^2=\|(\partial_y^2-k^2)\varphi\|_{L^2}^2=\|\varphi''\|_{L^2}^2+2k^2\|\varphi'\|_{L^2}^2+k^4\|\varphi\|_{L^2}^2$, we have
\begin{align*}
\|\varphi'\|_{L^{\infty}}+|k|\|\varphi\|_{L^{\infty}}\lesssim&\|\varphi''\|^{\f12}_{L^2}\|\varphi'\|^{\f12}_{L^2}+\|\varphi'\|_{L^2}
|k|\|\varphi'\|^{\f12}_{L^2}\|\varphi\|^{\f12}_{L^2}\\ \leq&|k|^{-\f12}\|\varphi''\|_{L^2}+|k|^{\f12}\|\varphi'\|_{L^2}+|k|^{\f32}\|\varphi\|_{L^2}\lesssim |k|^{-\f12}\|w\|_{L^2},
\end{align*}
which gives the third inequality.
\end{proof}

\section*{Acknowledgments}

Z. Zhang is partially supported by NSF of China under Grant 11425103.
\medskip

\end{CJK*}
\end{document}